\definecolor{BleuTresFonce}{rgb}{0.215, 0.215, 0.36}
\definecolor{airforceblue}{rgb}{0.36, 0.54, 0.66}
\theoremstyle{plain}
\newtheorem{thm}{Theorem}
\newtheorem*{thm*}{Theorem}
\newtheorem{lemma}{Lemma}
\newtheorem{prop}{Proposition}
\newtheorem*{prop*}{Proposition}
\newtheorem{cor}{Corollary}
\theoremstyle{definition}
\newtheorem{defin}{Definition}
\newtheorem*{nota}{\sc Notations}
\theoremstyle{remark}
\newtheorem{rmk}{\sc Remark}
\newtheorem{eg}{\sc Example}
\newcommand{\mbs}{\mathbb{S}}
\newcommand{\mbk}{\mathbb{K}}
\newcommand{\mbn}{\mathbb{N}}
\newcommand{\PP}{\mathcal{P}}
\newcommand{\Tree}{\mathsf{Tree}}
\newcommand{\ov}{\overline}
\newcommand{\Aa}{\mathcal{A}}
\newcommand{\BB}{\mathcal{B}}
\newcommand{\CC}{\mathcal{C}}
\newcommand{\DD}{\mathcal{D}}
\newcommand{\EE}{\mathcal{E}}
\newcommand{\II}{\mathcal{I}}
\newcommand{\VV}{\mathcal{V}}
\newcommand{\WW}{\mathcal{W}}
\newcommand{\AAA}{\mathscr{A}}
\newcommand{\BBB}{\mathscr{B}}
\newcommand{\CCC}{\mathscr{C}}
\newcommand{\DDD}{\mathscr{D}}
\newcommand{\EEE}{\mathscr{E}}
\newcommand{\FFF}{\mathscr{F}}
\newcommand{\PPP}{\mathscr{P}}
\newcommand{\QQQ}{\mathscr{Q}}
\newcommand{\QQ}{\mathcal{Q}}
\newcommand{\ra}{\rightarrow}
\newcommand{\Tfree}{\mathbb{T}}
\newcommand{\Operad}{\mathsf{dg}-\mathsf{Operad}}
\newcommand{\cCoop}{\mathsf{cCoop}}
\newcommand{\gCoop}{\mathsf{gCoop}}
\newcommand{\dgCoop}{\mathsf{dgCoop}}
\newcommand{\gop}{\mathsf{gOperad}}
\newcommand{\dgMod}{\mathsf{dgMod}}
\newcommand{\gMod}{\mathsf{gMod}}
\newcommand{\catC}{\mathsf{C}}
\newcommand{\catD}{\mathsf{D}}
\newcommand{\colim}{\mathrm{colim}}
\newcommand{\Map}{\mathrm{Map}}
\newcommand{\Tw}{ Tw}
\newcommand{\End}{\mathrm{End}}
\newcommand{\itemt}{\item[$\triangleright$]}
\newcommand{\poubelle}[1]{}
\title{Algebraic operads up to homotopy}
\author{Brice Le Grignou}
\address{Universiteit Utrecht}
\email{b.n.legrignou@uu.nl}
\date{\today}
\begin{document}

\maketitle

\begin{abstract}
This paper deals with the homotopy theory of differential graded operads. We endow the Koszul dual category of curved conilpotent cooperads, where the notion of quasi-isomorphism barely makes sense, with a model category  structure Quillen equivalent to that of operads. This allows us to describe the homotopy properties of differential graded operads in a simpler and richer way, using obstruction methods.
\end{abstract}

\setcounter{tocdepth}{1}
\tableofcontents

\section*{Introduction}

In representation theory, algebras encode some types of endomorphisms on vector spaces, that is linear operations satisfying some relations. More generally, the notion of operads is a tool which governs multilinear operations. More specifically, an operad encodes a type of algebras like associative, commutative, Lie or Batalin--Vilkovisky algebras, in a way that a representation of this operad amounts to the data of a vector space together with a structure of algebra of that type. For instance, the representations of the operad called the Lie operad, see \cite[13.2]{LodayVallette12}, are vector spaces together with a Lie-algebra structure. The correspondence between operads and their types of algebras is functorial. Indeed, any morphism of operads $f: \PPP \ra \QQQ$ induces an adjunction between  the category of $\QQQ$-algebras and the category of $\PPP$-algebras.\\

This paper deals with the homotopy theory of differential graded operads over a field of characteristic zero. For any dg operad $\PPP$, the category of $\PPP$-algebras admits a projective model structure whose weak equivalences are quasi-isomorphisms and whose fibrations are surjections. Moreover, for any morphism of operads $f$ from $\PPP$ to $\QQQ$, the resulting adjunction between the category of $\PPP$-algebras and the category of $\QQQ$-algebras is a Quillen equivalence if and only if $f$ is a quasi-isomorphism on the underlying chain complexes of $\PPP$ and $\QQQ$. So, quasi-isomorphisms provide a suitable notion of equivalence of dg operads. We know that the category of dg operads carries a model structure whose weak equivalences are quasi-isomorphisms and whose fibrations are surjections, see  \cite{Hinich97}, \cite{Spitzweck01} and \cite{BergerMoerdijk03}.\\

Several issues appear when describing the homotopy theory of dg operads with this model structure. For instance, one can ask whether two dg operads are weakly equivalent; for example, whether a dg operad is formal, that is weakly equivalent to its homology. Moreover, how to describe in a concrete manner homotopies between morphism? This last issue is related to the computation of cofibrant resolutions of dg operads. A general tool to produce such cofibrant resolutions is provided by the operadic bar-cobar adjunction introduced first by Getzler and Jones \cite{GetzlerJones94} which relates augmented dg operads to dg cooperads. This is generalized in the article \cite{LeGrignou16} to any kind of dg operads. The absence of augmentation of a dg operad is encoded into a curvature at the level of cooperads. Thus, we have an adjunction relating the category of dg operads to the category of curved conilpotent cooperads.
 \[
\begin{tikzcd}
\text{Curved conilpotent cooperads} \arrow[r, shift left, "\Omega_u"] &  \text{dg Operads} \arrow[l, shift left, "B_c"] 
\end{tikzcd}
\]
The importance of this adjunction with respect to the computation of cofibrant operads lies in the fact that for any operad $\PPP$, the counit map $\Omega_u B_c \PPP \to \PPP$ is a cofibrant resolution of $\PPP$. So, to describe homotopies between morphisms of dg operads from $\PPP$ to $\QQQ$, it is convenient to take place in the larger framework of morphisms from $\Omega_u B_c \PPP$ to $\QQQ$, which are equivalent to morphisms of curved conilpotent cooperads from $B_c \PPP$ to $B_c \QQQ$; so it is convenient to encode the homotopy theory of dg operads not in the category of dg operads itself but in the category of curved conilpotent cooperads. This leads us to the following result.

\begin{thm*}
 There exists a model structure on the category of curved conilpotent cooperads whose cofibrations and weak equivalences are created by the cobar construction functor $\Omega_u$. Moreover, the adjunction $\Omega_u \dashv B_c $ is a Quillen equivalence.
\end{thm*}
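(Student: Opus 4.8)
The plan is to build the model structure by transferring it along the left adjoint $\Omega_u$ (a left-induced model structure). Define a morphism $f$ of curved conilpotent cooperads to be a \emph{weak equivalence} (resp.\ a \emph{cofibration}) precisely when $\Omega_u f$ is a quasi-isomorphism (resp.\ a cofibration) of dg operads, and define the \emph{fibrations} to be the morphisms having the right lifting property against all trivial cofibrations. Since $\Omega_u$ is a functor and the corresponding classes in dg operads belong to a model structure, the two-out-of-three property and the closure under retracts of both weak equivalences and cofibrations are immediate, and one half of the lifting axiom holds by the very definition of fibrations. The category of curved conilpotent cooperads is complete and cocomplete (one checks it is locally presentable), so the colimits used below exist and the small object argument is available.

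The real content is the factorization axiom: every morphism must factor as a cofibration followed by a trivial fibration, and as a trivial cofibration followed by a fibration. One cannot simply factor $\Omega_u f$ in dg operads and transport the factorization back, because $\Omega_u$ is not essentially surjective and $B_c$ does not carry the cofibrant generation of dg operads to useful generators upstairs. Instead I would run a transfinite small object argument directly in the category of curved conilpotent cooperads, after exhibiting a set $I$ (resp.\ $J$) of morphisms whose relative cell complexes are cofibrations and whose injective morphisms are exactly the trivial fibrations (resp.\ whose cell complexes are trivial cofibrations and whose injective morphisms are exactly the fibrations), and then invoking Kan's recognition theorem. Convergence of the argument relies on a precise description of the (filtered) colimits of curved conilpotent cooperads via the coradical filtration, together with the fact that $\Omega_u$, being a left adjoint, preserves them, so that each generator has a small enough source. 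The delicate verifications are that $\Omega_u$ sends the $J$-cell complexes to trivial cofibrations of dg operads and that a fibration which is a weak equivalence is exactly an $I$-injective morphism; both rest ultimately on the fundamental theorem recalled above, namely that the counit $\Omega_u B_c \PPP \to \PPP$ is a cofibrant resolution. (Alternatively, one may appeal to the general existence theorems for left-induced model structures, the nontrivial input being the same kind of acyclicity condition, again established through this resolution.) This factorization step is the main obstacle; once it is in place, the remaining half of the lifting axiom is formal.

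The Quillen equivalence is then almost immediate. By construction $\Omega_u$ preserves cofibrations and weak equivalences, hence trivial cofibrations, so $\Omega_u \dashv B_c$ is a Quillen adjunction. Every dg operad being fibrant, it suffices to check that for any curved conilpotent cooperad $\CCC$ and any dg operad $\PPP$, a morphism $g\colon \Omega_u \CCC \to \PPP$ is a quasi-isomorphism if and only if its adjunct $g^{\flat}\colon \CCC \to B_c \PPP$ is a weak equivalence, i.e.\ if and only if $\Omega_u g^{\flat}\colon \Omega_u \CCC \to \Omega_u B_c \PPP$ is a quasi-isomorphism. But $g = \varepsilon_{\PPP}\circ \Omega_u g^{\flat}$, where $\varepsilon_{\PPP}\colon \Omega_u B_c \PPP \to \PPP$ is the counit, which is a weak equivalence by the fundamental theorem; the claim follows by two-out-of-three. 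In particular, taking $g = \id_{\Omega_u \CCC}$ and using the triangle identity shows that the unit $\CCC \to B_c \Omega_u \CCC$ is always a weak equivalence, while the counit is a weak equivalence on every object, which is more than enough to conclude that $\Omega_u \dashv B_c$ is a Quillen equivalence.
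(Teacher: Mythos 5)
The skeleton of your argument (two-out-of-three, retracts, defining fibrations by lifting, and the deduction of the Quillen equivalence from the fact that the counit $\Omega_u B_c \PPP \to \PPP$ is a quasi-isomorphism) matches the paper, but the heart of the proof --- the factorization axiom --- is missing. You propose to run a small object argument directly in curved conilpotent cooperads ``after exhibiting'' generating sets $I$ and $J$, or alternatively to invoke a general existence theorem for left-induced structures whose acyclicity condition you assert ``rests ultimately'' on the counit being a resolution. Neither generating set is exhibited, and it is far from clear that either exists: in this model structure every object is cofibrant and the cofibrations are exactly the monomorphisms, so there is no reason to expect cofibrant generation, and the acyclicity condition is precisely the hard analytic content, not a formality. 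In effect you have named the obstacle and then deferred it.

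Moreover, the route you dismiss --- ``one cannot simply factor $\Omega_u f$ in dg operads and transport the factorization back'' --- is exactly what the paper does. Given $f:\CCC\to\DDD$, one factors $\Omega_u f$ as $\Omega_u\CCC\to\PPP\to\Omega_u\DDD$ in operads and transports it back as
$\CCC \to B_c\PPP\times_{B_c\Omega_u\DDD}\DDD \to \DDD$,
using the unit and the pullback along $B_c$ of the second map. The nontrivial input making this work is the Key Lemma: for any fibration of operads $p:\PPP\to\Omega_u\CCC$, the induced map from the pullback $B_r\PPP\times_{B_r\Omega_u\CCC}\CCC$ to $B_r\PPP$ is a weak equivalence. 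Its proof is the technical core of the paper (splitting $p$ by Maschke's theorem, identifying the pullback as an alternated tree cooperad $\Tfree^c(\CCC,\Tfree(sK))$, and running admissible filtrations with explicit contracting homotopies on labelled trees); the same lemma is also what makes the remaining half of the lifting axiom work, since the second factor is by construction a pullback of $B_c$ of a fibration of operads, against which cofibrations lift by adjunction. None of this machinery, nor any substitute for it, appears in your proposal, and the counit quasi-isomorphism you rely on for the Quillen equivalence is itself a statement requiring a filtration argument rather than a citation. So the proposal is a plausible outline with the decisive step unproved.
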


This theorem generalises results of Lefevre-Hasegawa (\cite{LefevreHasegawa03}) and Positselski (\cite{Positselski11}) respectively about the homotopy theory of nonunital associative algebras and about the homotopy theory of unital associative algebras. The proof relies on the same kind of method initiated by Hinich. New difficulties appear with the combinatorics of trees and the interplay of symmetric groups.\\

Why switching from dg operads to curved conilpotent cooperads? First, all the objects of the model category of curved conilpotent cooperads are cofibrant. Then, the sub-category of fibrant curved conilpotent cooperads is equivalent to a category whose objects and morphisms are an homotopy loosening of respectively the notion of dg operads and the notion of morphisms of dg operads, that we call homotopy operads and $\infty$-morphisms. These new structures can be built on objects using obstruction methods. Moreover, it is a convenient framework to study formality of dg operads. Indeed, a dg operad $\PPP$ is formal if and only if there exists an $\infty$-morphism from $\PPP$ to its homology and whose first level map is a quasi-isomorphism. Finally, there exists a transfer theorem for homotopy operads as follows.

\begin{thm*}
 Let $f:\PPP \ra \QQQ$ be a morphism of dg-$\mbs$-modules which is both a surjection and a quasi-isomorphism. Suppose that $\PPP$ has a structure of homotopy operad. Then, there exists a structure of homotopy operad on $\QQQ$ and an extension of $f$ into an $\infty$-morphism of homotopy operads. 
\end{thm*}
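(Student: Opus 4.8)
The plan is to pass to the Koszul-dual side, recast the statement as the solvability of a system of equations in a cofree curved conilpotent cooperad, and solve it by an obstruction argument along a weight filtration, after first upgrading the bare map $f$ to a contraction. \textbf{Step 1 (reduce to a contraction).} Over a field of characteristic zero every acyclic complex of $\mbs$-modules is contractible and every short exact sequence of $\mbs$-modules splits equivariantly. Since $K=\ker f$ is acyclic, I would first choose a section $i\colon \QQQ\to\PPP$ which is a morphism of dg-$\mbs$-modules with $fi=\id_{\QQQ}$, together with an $\mbs$-equivariant degree $+1$ map $h\colon\PPP\to\PPP$ satisfying $\id_{\PPP}-if=d_{\PPP}h+hd_{\PPP}$; the standard conjugation-and-iteration trick then lets me also assume $h^2=0$, $fh=0$ and $hi=0$. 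Thus $f$ becomes the ``big-to-small'' projection of a contraction $(\PPP,\QQQ,f,i,h)$ of dg-$\mbs$-modules.

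\textbf{Step 2 (recast for curved conilpotent cooperads).} By the equivalence between homotopy operads and fibrant curved conilpotent cooperads, the structure on $\PPP$ is a square-zero curved coderivation $D_{\PPP}$ on the cofree graded conilpotent cooperad $\mathcal{T}^c(s\PPP)$; write its corestriction onto the cogenerators as $\varphi_{\PPP}=\sum_{k\ge 0}\varphi_{\PPP}^{(k)}$, graded by weight (the number of internal vertices of the tree on which a term is defined), with $\varphi_{\PPP}^{(0)}$ the curvature, $\varphi_{\PPP}^{(1)}=s\,d_{\PPP}\,s^{-1}$, and the $\varphi_{\PPP}^{(k)}$ for $k\ge 2$ the higher operations. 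What must be produced is a square-zero curved coderivation $D_{\QQQ}$ on $\mathcal{T}^c(s\QQQ)$ together with a morphism of curved conilpotent cooperads $F\colon(\mathcal{T}^c(s\PPP),D_{\PPP})\to(\mathcal{T}^c(s\QQQ),D_{\QQQ})$ whose corestriction $\Phi\colon\mathcal{T}^c(s\PPP)\to s\QQQ$ has weight-$1$ component $s f s^{-1}$: by Step 2's dictionary this is exactly a homotopy operad structure on $\QQQ$ together with an $\infty$-morphism extending $f$.

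\textbf{Step 3 (induct on the weight).} In weight $\le 1$ transport everything through $f$: set $\varphi_{\QQQ}^{(0)}:=sfs^{-1}\varphi_{\PPP}^{(0)}$, $\varphi_{\QQQ}^{(1)}:=s\,d_{\QQQ}\,s^{-1}$ and $\Phi^{(1)}:=sfs^{-1}$; the equations $FD_{\PPP}=D_{\QQQ}F$ and $D_{\QQQ}^2=0$ hold there because $f$ is a morphism of dg-$\mbs$-modules carrying the curvature of $\PPP$ to that of $\QQQ$. Assume $D_{\QQQ}$ and $F$ have been built in all weights $<n$, satisfying those two equations there. Corestricting the weight-$n$ part of $FD_{\PPP}=D_{\QQQ}F$ to $s\QQQ$, the unknowns $\varphi_{\QQQ}^{(n)}$ and $\Phi^{(n)}$ enter linearly, and the relation takes the schematic form
\[
\partial\bigl(\Phi^{(n)}\bigr)\;=\;\varphi_{\QQQ}^{(n)}\;+\;\omega_n\bigl(\varphi_{\PPP}^{(<n)},\varphi_{\QQQ}^{(<n)},\Phi^{(<n)}\bigr)
\qquad\text{in}\qquad
\Hom_{\mbs}\bigl(\mathcal{T}^c(s\PPP)^{(n)},\,s\QQQ\bigr),
\]
up to signs and a canonical identification, where $\partial$ is the internal differential on the Hom-complex and, by $D_{\PPP}^2=0$ and the inductive hypothesis, $\omega_n$ is a $\partial$-cocycle. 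The contraction closes the step: transporting $\omega_n$ through $si$ into $\Hom_{\mbs}(\mathcal{T}^c(s\PPP)^{(n)},s\PPP)$ and applying the homotopy $h$ and the projector $if$ produces a solution; concretely $\varphi_{\QQQ}^{(n)}$ and $\Phi^{(n)}$ may be taken to be the usual homotopy-transfer sums over trees with $n$ internal vertices, built from the $\varphi_{\PPP}^{(k)}$, with $h$ inserted on internal edges and $f$ at the root. One then checks the weight-$n$ component of $D_{\QQQ}^2=0$ is automatic, being a $\partial$-cocycle annihilated by the weight-$n$ surjection $\mathcal{T}^c(sf)$. Iterating over $n$ yields $D_{\QQQ}$ and $F$, which by Step 2 are the desired homotopy operad structure on $\QQQ$ and $\infty$-morphism extending $f$.

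\textbf{Main obstacle.} The skeleton above is the Hinich/Lefevre-Hasegawa obstruction scheme, so the genuine work will be, exactly as the introduction warns, combinatorial: organising $\mathcal{T}^c$ into its weight pieces indexed by trees with their internal vertices and cocompositions; verifying that $\omega_n$ is a $\partial$-cocycle once the curvature terms $\varphi^{(0)}$ — absent from the classical augmented $A_\infty$ story — are present; and, above all, ensuring that the maps $\varphi_{\QQQ}^{(n)}$ and $\Phi^{(n)}$ assembled from $h$ remain compatible with the symmetric-group actions, which are interleaved through the cooperadic cocomposition far less transparently than in the associative case. Keeping track of the Koszul signs coming from the suspensions and from commuting $h$ past operations through a tree is the other point demanding care; I expect the cocycle identity for $\omega_n$ together with this equivariance check to be the technical heart of the proof.
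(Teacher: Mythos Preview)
Your obstruction-theoretic skeleton --- induct on weight, show the obstruction is a cocycle, kill it, extend --- is the paper's as well. There are two genuine differences in execution and one gap.

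\textbf{No contraction.} The paper never chooses $(i,h)$. At weight $n$ it packages the inductive step as a lifting problem of chain complexes
\[
\xymatrix{S^{-2}\ar[r]\ar[d] & [\Tfree^n(s\QQ\oplus v),s\QQ]\oplus[\Tfree^n(s\PP\oplus v),s\PP]\oplus s^{-1}[\Tfree^n(s\PP\oplus v),s\PP]\ar[d]\\ D^{-1}\ar[r] & [\Tfree^n(s\PP\oplus v),s\QQ]\ ,}
\]
in which the right vertical arrow is an acyclic fibration precisely because $p$ is one; the lift exists for model-categorical reasons and that is the entire argument. This sidesteps exactly the labour you flag as the main obstacle --- equivariance and signs of the $h$-decorated tree sums --- at the price of losing explicit formulas. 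Your route via a contraction would also succeed, but it is more work, not less.

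\textbf{Shape of the output.} The paper does not build an $\infty$-morphism $\PPP\to\QQQ$ directly. It produces an $\infty$-\emph{isotopy} $\PPP\to\PPP'$ (same underlying $\PP$, new homotopy-operad structure) such that $p\colon\PPP'\to\QQQ$ becomes a \emph{strict} morphism; the induction solves simultaneously for the $n$th structure map $\gamma_n$ on $\QQ$ and for the $n$th isotopy component $f_n$. Composition recovers the statement in the introduction.

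\textbf{The gap.} In this paper a homotopy operad carries a unit, and its bar construction is $\Tfree^c(s\PP\oplus\mbk\cdot v)$, not $\Tfree^c(s\PP)$. The curvature $\theta$ is the fixed map $v\mapsto 1$ with target $\mbk$; it is not a weight-$0$ component of the coderivation landing in $s\PP$, so your initialisation $\varphi_\QQQ^{(0)}:=sfs^{-1}\varphi_\PPP^{(0)}$ is not well-typed. What must actually be initialised is $dv=s1_\QQQ$ with $1_\QQQ:=p(1_\PPP)$, and the weight-$n$ obstruction then carries an extra term $(\theta\otimes\pi-\pi\otimes\theta)\Delta_2$ coming from trees with a $v$-labelled vertex. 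If you omit $v$, the cocycle identity you invoke in Step~3 will not close.
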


One could think that dg operads are themselves algebras over a colored operad and apply the results of the article \cite{LeGrignou16} to get the present theorems. Actually, the actions of the symmetric groups underlying curved conilpotent cooperads seem to prevent them to be coalgebras over a colored cooperad.

\subsection*{Layout}

The article is organized as follows. The first section recalls the notions of operads, cooperads and the operadic bar-cobar adjunction. The second one recalls the Hinich model structure on dg operads and describes their cofibrations and a simplicial enrichment computing mapping spaces. The core of the article is the third part which establishes the model structure on curved conilpotent cooperads. The fourth section studies in details the fibrant objects of this model category which are a notion of operads up to homotopy that we call homotopy operads. The fifth section applies the formalism of homotopy operads to the study of algebras over an operad. Specifically, one interprets infinity-morphisms of algebras in terms of morphisms of homotopy operads.

\subsection*{Acknowledgements} This article is the third part of my PhD thesis. I would like to thank my advisor Bruno Vallette for his precious advice and careful review of this paper. I also would like to thank Damien Calaque and Kathryn Hess for reviewing my thesis. Also, the Laboratory J.A. Dieudonn\'e in the University of Nice provided excellent working conditions. Finally, I was supported by the ANR SAT until September 2016 and then by NWO.

\subsection*{Preliminaries}

\begin{itemize}
 \itemt We work over a field $\mbk$ of characteristic zero.
 \itemt The category of graded $\mbk$-modules is denoted $\gMod$. The category of chain complexes is denoted $\dgMod$. These two categories are endowed with their usual closed symmetric monoidal structure. The internal hom is denoted by $[\ ,\ ]$. The category of chain complexes is also endowed with its projective model structure whose weak equivalences are quasi-isomorphisms and whose fibrations are degreewise surjections. The degree of an homogeneous element $x$ of a graded $\mbk$-module or a chain complex is denoted by $|x|$.
 \itemt For any integer $n$, let $D^n$ be the chain complex generated by one element in degree $n$ and its boundary in degree $n-1$. Let $S^n$ be the chain complex generated by a cycle in degree $n$.
  \itemt The following type of diagram
 \[
\begin{tikzcd}
 \catC \arrow[r, shift left, "L"]  & \catD \arrow[l, shift left, "R"] 
\end{tikzcd}
\]
means that the functor $R$ is right adjoint to $L$.
\itemt Let $(F_n X)_{n \in \mbn}$ be a filtration on a chain complex or a graded $\mbk$-module $X$. We denote by $G_n X$ the quotient $F_nX / F_{n-1} X$ and $GX := \bigoplus_n G_nX$.
\end{itemize}

The following theorem will be of major use.

\begin{thm}[Maschke]
When the characteristic of the field $\mbk$ is zero, any module over the ring $\mbk[\mbs_n]$ is projective and injective.
\end{thm}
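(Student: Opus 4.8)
The plan is to deduce both statements from the single fact that \emph{every short exact sequence of $\mbk[\mbs_n]$-modules splits}, which is the averaging form of Maschke's theorem.

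First I would fix a short exact sequence of $\mbk[\mbs_n]$-modules
\[
0 \ra A \xrightarrow{\ i\ } B \xrightarrow{\ p\ } C \ra 0 .
\]
Forgetting the $\mbs_n$-action, this is a short exact sequence of $\mbk$-vector spaces, hence it admits a $\mbk$-linear retraction $r : B \ra A$, i.e.\ a $\mbk$-linear map with $r \circ i = \id_A$. Then I would symmetrise $r$ by setting
\[
\bar r : B \ra A , \qquad \bar r(b) := \frac{1}{n!} \sum_{\sigma \in \mbs_n} \sigma \cdot r\bigl(\sigma^{-1} \cdot b\bigr) ,
\]
which makes sense precisely because $n!$ is invertible in $\mbk$, as $\mbk$ has characteristic zero. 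A direct computation (re-indexing the sum by $\sigma \mapsto \tau\sigma$) shows that $\bar r$ is $\mbs_n$-equivariant; and $\bar r \circ i = \id_A$, since for $a \in A$ one has $r(\sigma^{-1} \cdot i(a)) = r\bigl(i(\sigma^{-1} \cdot a)\bigr) = \sigma^{-1} \cdot a$ because $i$ is a morphism of modules, so every term of the averaged sum equals $a$. Thus $\bar r$ is a $\mbk[\mbs_n]$-linear retraction of $i$, and the sequence splits, exhibiting $B \cong A \oplus C$ as $\mbk[\mbs_n]$-modules.

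Finally I would invoke the standard dictionary between splittings and projectivity/injectivity: a module $M$ is projective if and only if every short exact sequence ending in $M$ splits (equivalently, $M$ is a direct summand of a free module, which one obtains by splitting a free presentation), and dually $M$ is injective if and only if every short exact sequence starting from $M$ splits (split the pushout of an extension problem, or an embedding into an injective hull). Since we have just shown that \emph{all} short exact sequences of $\mbk[\mbs_n]$-modules split, every $\mbk[\mbs_n]$-module is simultaneously projective and injective. (Equivalently, the splitting statement says that $\mbk[\mbs_n]$ is a semisimple ring, over which every module is projective and injective.)

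I do not expect any genuine obstacle here: the result is classical and the argument is short. The only points deserving care are the verification that the averaged map $\bar r$ is $\mbs_n$-equivariant and restricts to the identity on $A$ — for which one uses solely that $i$ is a morphism of modules — together with the essential use of $\mathrm{char}(\mbk)=0$ in order to divide by $n!$. Everything else is formal.
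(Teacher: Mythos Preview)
Your proof is correct and is the standard averaging argument for Maschke's theorem. Note, however, that the paper does not actually prove this statement: it is stated in the Preliminaries section as a classical result and used throughout without proof. So there is nothing to compare against; your argument simply supplies the well-known justification the paper takes for granted.
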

\vspace{1cm}

\section{Operads and cooperads}

In this first section, we recall the notions of operads and cooperads. We refer to \cite{LodayVallette12} for more details. Moreover, we show that the category of dg operads and the category of curved conilpotent cooperads are presentable. Finally, we recall the refined bar-cobar adjunction introduced in \cite{LeGrignou16}.

\subsection{Symmetric modules, operads and cooperads}

\begin{defin}[Symmetric modules]\leavevmode
Let $\mbs$ be the groupoid whose objects are the integers $\mbn$ and whose morphisms are
 $$
\begin{cases}
 \hom_\mbs(n,m)=  \emptyset, \text{ if }n \neq m,\\
  \hom_\mbs(n,n)= \mbs_n, \text{ otherwise.}
\end{cases}
$$
A \textit{graded} $\mbs$-\textit{module} (resp \textit{dg} $\mbs$-\textit{module}) is a contravariant functor from the category $\mbs$ to the category $\gMod$ of graded $\mbk$-modules (resp. the category $\dgMod$ of chain complexes).
\end{defin}  

The category of $\mbs$-modules may be endowed with the Day monoidal product
\[
	\VV \circledast \WW (n) = \bigoplus_{p+q=n} (\VV (p) \otimes \WW(q)) \otimes_{\mbs_p \times \mbs_q} \mbs_n .
\]
This give us a symmetric monoidal structure whose unit is $(\mbk, 0, 0, \ldots)$.
Then, we can endow the category of $\mbs$-modules with the composition monoidal structure given for any $\mbs$-modules $\VV$ and $\WW$
by the formula
\[
	(\VV \circ \WW) = \bigoplus_n \VV(n) \otimes_{\mbs_n} \WW^{\circledast n} . 
\]
This means that 
$$
(\VV \circ \WW) (0):= \VV (0) \oplus \bigoplus_{k \geq 1 } \VV(k) \otimes_{\mbs_k} (\WW(0) \otimes \cdots \otimes \WW(0))\ ,
$$
and for any integer $n \geq 1$,
$$
(\VV \circ \WW) (n):= \bigoplus_{X_1 \sqcup \cdots \sqcup X_k = \{1, \ldots, n\}\atop k \geq 1 } \VV(k) \otimes_{\mbs_k} ((\WW( \# X_1) \otimes \cdots \otimes \WW( \# X_k))\otimes_{\mbs_{\#X_1} \times \cdots \times \mbs_{\#X_k}}
 \mbs_n) \ ,
$$
where the coproduct is over the ordered partitions of the set $\{1, \cdots, n\}$, that is, the $k$-tuples of subsets of $\{1, \cdots, n\}$ with empty intersections and  union $\{1, \cdots, n\}$. Moreover, $\# X_i$ is the cardinal of the set $X_i$. The monoidal unit is the $\mbs$-module $\II$ which is $\mbk$ in arity $1$ and $\{0\}$ in other arities.
  
\begin{nota}
   Let $f: \VV \ra \VV'$, $g: \WW \ra \WW'$ and $h: \WW \ra \WW'$ be maps between $\mbs$-modules. Then, we denote by $f \circ(g;h)$ the map from $\VV \circ \WW$ to $\VV' \circ \WW'$ defined as follows.
$$
f \circ (g;h) :=\sum_{i+j=n-1}  f \otimes_{\mbs_n} (g^{\circledast i} \circledast h \circledast g^{\circledast j})\ .
$$
In the case where $g=Id$, we use the following notation.
$$
f\circ' h := f \circ (Id;h)\ .
$$
\end{nota}

\begin{nota}
 For any two graded $\mbs$-modules (resp. dg $\mbs$-modules) $\VV$ and $\WW$, we denote by $[\VV,\WW]$ the graded $\mbk$-module (resp. chain complex):
 $$
 [\VV,\WW]_n := \prod_{k \geq 0\atop l \in \mbn} \hom_{\mbk[\mbs_n]}(\VV(k)_l , \WW(k)_{l+n})\ .
 $$
 In that context morphisms of chain complexes from $X$ to $[\VV,\WW]$ are in one-to-one correspondence with morphisms of $\mbs$-modules from $X \otimes \VV$ to $\WW$.
\end{nota}

\begin{defin}[Operads]
A \textit{graded operad} (resp. \textit{dg operad}) is a monoid $\PPP:=(\PP, \gamma, \upsilon)$ in the category of graded $\mbs$-modules (resp. dg $\mbs$-modules). We denote by $\gop$ (resp. $\Operad$) the category of graded operads (resp. dg operads).
\end{defin}

A degree $k$ derivation $d$ on a graded operad $\PPP=(\PP, \gamma,\upsilon)$ consists of degree $k$ maps $d: \PP(n) \ra \PP(n)$ which commute with the actions of $\mbs_n$ and such that
$$
d\ \gamma = \gamma\  (d \circ Id + Id \circ' d)\ .
$$

\begin{defin}[Cooperads]
A \textit{graded cooperad} (resp. \textit{dg cooperad}) is a comonoid $\CCC:=(\CC, \Delta, \epsilon)$ in the category of graded $\mbs$-modules (resp. dg $\mbs$-modules). We denote by $\ov \CC$ the kernel of the morphism $\epsilon : \CC \ra \II$. We denote by $\gCoop$ (resp. $\dgCoop$) the category of graded cooperads (resp. dg cooperads). A cooperad $\CCC$ is said to be coaugmented if it is equipped with a morphism of cooperads $\II \ra \CCC$. In this case, we denote by $1$ the image of the unit of $\mbk$ into $\CC(1)$.
\end{defin}

A degree $k$ coderivation on a cooperad $\CCC=(\CC, \Delta, \epsilon)$ is a degree $k$ map $d$ of $\mbs$-modules from $\CC$ to $\CC$ such that 
$$
\Delta\  d =(d \circ Id + Id \circ' d)\Delta \ .
$$
 If the cooperad is coaugmented, we also require that $d(1)=0$.
 
 \begin{nota}
 Let $(\CC, \Delta,\epsilon, 1)$ be a coaugmented cooperad. Then we denote by $\ov \Delta$ the map from $\ov\CC$ to $\ov\CC \circ \CC$ defined by
 \[
 \ov\Delta = \Delta - 1 \circ Id - Id \circ 1\ .
 \]
 Moreover, we denote by $\Delta_2$ the map from $\ov \CC$ to $\CC \circ \CC$ defined by
$$
\Delta_2 :=  Id \circ (\epsilon ; Id)\ov \Delta\ .
$$
Sometimes, $\Delta_2$ will be extended to all $\CC$ by
\[
\Delta_2 (1) =1 \otimes 1\ .
\]
\end{nota}
 
\begin{defin}[Curved cooperads]
 A curved cooperad is a coaugmented graded cooperad $\CCC=(\CC, \Delta, \epsilon, 1)$ equipped with a degree $-2$ map $\theta : \CC(1) \ra \mbk$ and a degree $-1$ coderivation $d$ such that
$$
\begin{cases}
d^2 = \big(\theta \otimes Id-  Id \otimes \theta \big)  \Delta_2\ ,\\
\theta d =0 \ .
\end{cases}
$$
\end{defin}

\begin{eg}\label{egend}
 Let $\VV$ and $\WW$ be two chain complexes (resp. graded $\mbk$-modules). Let $\End^\VV_\WW$ be the following $\mbs$-module
 \[
 \End^\VV_\WW (n) := [\VV^{\otimes n}, \WW]\ ,
 \]
 where the action of $\mbs_n$ is given by permuting the inputs $\VV^{\otimes n}$. Moreover, we denote
 \[
 \End_\VV := \End^\VV_\VV\ .
 \]
 The composition of multi-linear maps of $\VV$ induces a structure of operad on $\End_\VV$. Moreover, for any morphism of chain complexes $f: \VV \to \WW$, consider the following pullback of $\mbs$-modules
 \[
 \xymatrix{\End_\VV \times_{\End^\VV_\WW} \End_\WW \ar[r] \ar[d] & \End_\VV \ar[d]^{f \circ } \\
 \End_\WW \ar[r]_{\circ f^{\otimes n}}& \End^\VV_\WW\ .}
 \]
 Then, the operad structures on $\End_\VV$ and on $\End_\WW$ induce an operad structure on this pullback.
\end{eg}

\subsection{The tree module}

\begin{defin}[Trees]
 A tree $(\mathrm{edges}(T),<, \mathrm{vert}(T), \mathrm{leaves}(T))$, is the data of a non empty finite poset called the poset of edges $(\mathrm{edges}(T),<)$ so that
\begin{itemize}
 \itemt every subset has a lower limit;
 \itemt for edge $e$, the sub-poset $\{x\leq e\}$ is a linear poset;
\end{itemize}
and partition of the set of edges
\[
	\mathrm{edges}(T)= \mathrm{vert}(T)\sqcup \mathrm{leaves}(T)
\]
where all elements of $\mathrm{leaves}(T)$ are maximal. Moreover, for any $v \in \mathrm{vert}(T)$, the inputs of $v$ are the edges $e>v$ so that there is no
edge $e'$ so that $e>e'>v$. The number of inputs of a vertex $v$ is denoted $\# v$.We have in particular a smallest element called the root.
An isomorphism of trees is an isomorphism of posets that preserves the partition leaves-vertices. This gives is the groupoid of trees $\mathsf{Trees}$.
\end{defin}

\begin{defin}[Tree module] Let $\VV$ be an $\mbs$-module (graded or differential graded) and let $T$ be a tree with $p$ vertices and $q$ leaves.
For any vertex $v$, let $\VV(v)$ be the right module over the automorphism group of inputs of $v$
\[
	\VV(v)= \VV(\#v) \otimes_{\mbs_{\# v}} \mathrm{Bij}(\mathrm{input}(v), \{1, \cdots ,\# v\}) .
\]
Then, let $\bigotimes_T \VV$ be the following graded $\mbk$-module (resp. chain complex)
$$
\bigotimes_T \VV:= \big(\bigoplus_{v_1 ,\ldots, v_p} \VV(v_1)\otimes \cdots \otimes \VV(v_p) \big)_{\mbs_p}\ ,
$$
where the sum is taken over the bijections from the set $\{1,\cdots , p \}$ to the set of vertices of $T$.
The formula $\bigotimes_T -$ defines a contravariant functor from trees to graded $\mbk$-module (resp. chain complexes). 
Besides, let $T(\VV)$ be the $\mbs$-module such that $T(\VV)(k)=0$ for $q \neq k$ and
$$
T(\VV)(q):= \int_{\mathrm{Aut}(T)} \mathrm{Bij}(\{1, \ldots, q\}, \mathrm{leaves}(T))\otimes \bigotimes_T \VV\ ,
$$
where Bij stands fro bijections.
Finally, let $\Tfree \VV$ be the following $\mbs$-module called the tree module
$$
\Tfree \VV := \bigoplus_{[T]} T(\VV)\ ,
$$
where the sum is taken over the isomorphism classes of trees. Moreover, the reduced tree module is
$$
\overline{\Tfree} \VV := \bigoplus_{[T]\neq |} T(\VV)\ ,
$$
where the sum is taken over the isomorphism classes of non trivial trees.
\end{defin}

\begin{nota}
 Thus, an element of $T(\VV)(q)$ may be represented by a a tensor product
 \[
 	\phi \otimes x_1 \otimes \cdots x_p
 \]
 for $\phi$ a bijection from the leaves of $T$ to $\{1, \ldots, q\}$, and where $x_1, \ldots,x_p$ are elements of $\VV$. We will usually omit $\phi$ in the formula, since it is unchanged under all the constructions performed.
\end{nota}

\begin{defin}[Induced filtration on trees]\label{defininducedfilt}
 Given a graded or dg $\mbs$-module $\VV$ equipped with a filtration
 \[
 	F_0 \VV \hookrightarrow F_1 \VV \hookrightarrow  F_2 \VV \hookrightarrow \cdots \hookrightarrow F_n \VV \hookrightarrow \cdots
 \]
 one can endow $\bigotimes_T \VV$ with a filtration for any tree $T$ with $q$ vertices
 \[
 	F_n (\bigotimes_T \VV) = \sum_{p_1+\cdots +p_n\leq n}(F_{p_1} \VV(\# v_1))\otimes \cdots \otimes (F_{p_q}\VV(\#v_q)) 
	\subset \VV(\# v_1)\otimes \cdots \otimes \VV(\#v_q).
 \]
 Then, one can define a filtration on the $\mbs$-modules $T(\VV)$ and $\Tfree (\VV)$
\begin{align*}
 	F_n(T(\VV))(q)&:= \int_{\mathrm{Aut}(T)} \mathrm{Bij}(\{1, \ldots, q\}, \mathrm{leaves}(T))\otimes F_n(\bigotimes_T \VV)\ ,
	\\
	F_n(\Tfree \VV) &:= \bigoplus_{[T]} F_n(T(\VV)).
\end{align*}
\end{defin}

\begin{nota}\leavevmode
\begin{itemize}
\itemt For any $\mbs$-module $\VV$, we denote by $\pi_\VV$ the canonical projection of $\Tfree \VV$ onto $\VV$.
 \itemt  We denote by $\Tfree^{\leq n} \VV$ (resp. $\Tfree^{n} \VV$) the sub-$\mbs$-module of $\Tfree \VV$ made up of trees with $n$ or less than $n$ vertices (resp. with $n$ vertices).
  \itemt  We denote by $\Tfree^{h\leq n} \VV$ (resp. $\ov{\Tfree}^{h\leq n} \VV$) the sub-$\mbs$-module of $\Tfree \VV$ (resp. $\ov{\Tfree}\VV$) made up of trees (resp. non trivial trees) of height equal or lower than $n$.
 \itemt Let $T$ be a tree. We denote by $\{T= T_1 \sqcup \ldots \sqcup T_k \}$ the partition of $T$ by $k$ subtrees.
 \itemt Let $f: \Tfree \VV \ra \WW$ be a morphism of $\mbs$-modules. We denote by $f(T)$ the restriction of $f$ on $T(\VV) \subset \Tfree \VV$. Moreover, if the tree $T$ decomposes into a partition of subtrees $T=T_1 \sqcup \cdots \sqcup T_k$, then we denote by  
 $$
 f(T_1) \otimes \cdots \otimes f(T_k)
 $$
 the map from $T(\VV)$ to $T/T_1, \ldots, T_k (\WW)$ which consists in applying $f(T_i)$ on any subtree $T_i$ of $T$.
\end{itemize}
\end{nota}

The definition of the tree module from \cite{LodayVallette12} is different from ours. Let us build the bridge that links the two definitions. A refined version of this link will appear in the paper \cite{LeGrignouLejay19b}.

Let us choose, a tree for any isomorphism class of trees. Thus, from now on, by a "tree", we mean one these chosen trees. Subsequently, if two trees are isomorphic they are actually equal and we have a canonical isomorphism between them, that is the identity.

Let $T$ be a non trivial tree, whose root vertex $v$ has $k>0$ inputs. Then, we can decompose $T$ as $(v;T_1, \ldots, T_k) $ where $T_1, \ldots, T_k$ are trees whose root is an input of $v$ and which contain all the edges of $T$ above this input.
Let $n$ be the number of leaves of $t$.

\begin{lemma}
Let $n_i$ be the number of leaves of $T_i$ for each $1 \leq i \leq k$.
As left $(\prod_{i=1}^k \mathrm{Aut} (T_i)) \times\mbs^{\mathrm{op}}_n$-modules in sets, we have
\[
	\mathrm{Bij}(\{1, \ldots, n\}, \mathrm{leaves} (T)) = \left(\prod_{i=1}^k
	\mathrm{Bij}(\{1, \ldots, n_i\}, \mathrm{leaves} (T_i))\right)
	\times_{\prod_i
	\mbs_{n_i}} \mbs_n.
\]
\end{lemma}

\begin{proof}
This isomorphism sends $((\phi_1, \ldots, \phi_k), \sigma)$
to the bijection
\[
	\{1, \ldots ,n\}  \xrightarrow{\sigma}
	\{1, \ldots ,n\}
	= \coprod_i \{1, \ldots ,n_i\}
	\xrightarrow{\coprod_i  \phi_i}
	\coprod_i \mathrm{leaves} (T_i)
	= \mathrm{leaves} (T) .
\]
\end{proof}

\begin{lemma}
We have an isomorphism of $\mbs$-modules
\[
	\left(\bigotimes_{i=1}^k \bigotimes_{T_i} \VV
	\right)\otimes_{\prod_i \mathrm{Aut}(T_i)}
	\mathrm{Bij}(\{1, \ldots, n\}, \mathrm{leaves} (T))
	= T_1(\VV) \circledast \cdots \circledast T_k(\VV).
\]
\end{lemma}

\begin{proof}
One has
\begin{align*}
	&\left(\bigotimes_{i=1}^k \bigotimes_{T_i} \VV
	\right)\bigotimes_{\prod_i \mathrm{Aut}(T_i)}
	\mathrm{Bij}(\{1, \ldots, n\}, \mathrm{leaves} (T))
	\\
	&= \left(\bigotimes_{i=1}^k \bigotimes_{T_i} \VV\right)
	\bigotimes_{\prod_i \mathrm{Aut}(T_i)}
	\left(\prod_{i=1}^k
	\mathrm{Bij}(\{1, \ldots, n_i\}, \mathrm{leaves} (T_i))\right)
	\times_{\prod_i
	\mbs_{n_i}} \mbs_n
	\\
	&= \bigotimes_{i=1}^k \left(\bigotimes_{T_i} \VV
	\otimes_{\mathrm{Aut}(T_i)} \mathrm{Bij}(\{1, \ldots, n_i\}, \mathrm{leaves} (T_i))
	\right)
	\bigotimes_{\prod_i \mbs_{n_i}} \mbs_n
	\\
	&= \left(\bigotimes_{i=1}^k  T_i(\VV)\right)
	\bigotimes_{\prod_i \mbs_{n_i}} \mbs_n
	\\
	&= T_1(\VV) \circledast \cdots \circledast T_k(\VV).
\end{align*}
\end{proof}

Suppose also that the trees $T_1, \ldots,T_k$ are counted in a way that isomorphic trees are gathered. This means that for any $1 \leq i <j<l\leq k$, if $T_i$ is isomorphic to $T_l$, then they are also isomorphic to $T_j$. Then, we can reindex the trees $T_1, \ldots, T_k$ as
\[
	(T_1, \ldots, T_k)= (T_{1,1}, \ldots, T_{1, k_1}, T_{2,1}, \ldots,T_{2,k_2}, \ldots, T_{p,1} \ldots, T_{p,k_p}) ,
\]
so that two trees $T_{i,j}$ and $T_{i,l}$ are isomorphic while two trees $T_{i,j}$ and $T_{i',l}$ are not isomorphic when $i\neq i'$.

\begin{lemma}
We have a canonical isomorphism of $\mbs$-modules
\[
	      	T(\VV) =\VV(k) \otimes_{\mbs_k}
	      	\left( \bigoplus_{\sigma}  T_{\sigma^{-1}(1)}(\VV)
	      	\circledast \cdots \circledast T_{\sigma^{-1}(k)}(\VV)
	      	\right),
\]
where the coproduct is taken over the $(k_1,\ldots,k_p)$-shuffle
permutations.
\end{lemma}

\begin{proof}
We have
\[
	\bigotimes_T\VV\otimes \mathrm{Bij}(\{1, \ldots, n\}, \mathrm{leaves}(T))
	= \left(\VV(k) \otimes \bigotimes_{i=1}^k  \bigotimes_{T_i}\VV \right)
	\otimes \mathrm{Bij}(\{1, \ldots, n\}, \mathrm{leaves}(T))
\]
Besides, $\mathrm{Aut} (T)$ is a semi-direct product of $\prod_i \mathrm{Aut}(T_i)$ with $\prod_{j=1}^p \mbs_{k_j}$. Then, to quotient the object above by
$\mathrm{Aut}(T)$, one can first  quotient by the normal subgroup $\prod_i \mathrm{Aut}(T_i)$
\[
	\bigotimes_T\VV\otimes_{\prod_i \mathrm{Aut}(T_i)} \mathrm{Bij}(\{1, \ldots, n\}, \mathrm{leaves}(T))
	= \VV(k) \otimes  \left( T_1(\VV) \circledast \cdots \circledast
	T_k(\VV)\right)
\]
Finally,
\begin{align*}
	T(\VV) &= \VV(k) \otimes_{\prod_{j=1}^p \mbs_{k_j} }
	\left( T_1(\VV) \circledast \cdots \circledast
	T_k(\VV)\right)
	\\
	&= \VV(k) \otimes_{\mbs_k} \mbs_k \otimes_{\prod_{j=1}^p \mbs_{k_j} }
	\left( 
	  T_1(\VV) \circledast \cdots \circledast
	T_k(\VV) \right)
	\\
	&= \VV(k) \otimes_{\mbs_k}
	\left( \bigoplus_{\sigma \in \shuffle (k_1, \cdots , k_p)}
	T_{\sigma^{-1}(1)}(\VV) \circledast \cdots \circledast
	T_{\sigma^{-1} (k)}(\VV)\right) .
\end{align*}
where the symbol $\shuffle (k_1, \cdots , k_p)$ denotes the set of $(k_1, \cdots , k_p)$-shuffle permutations.
\end{proof}

\begin{lemma}\label{propcanonicaliso}
For any natural integer $n$, one has
\[
	{\ov{\Tfree}^{h \leq n+1}}\VV = \VV \circ 
	{\Tfree}^{h \leq n}  \VV.
\]
\end{lemma}

\begin{proof}
For a non trivial tree $T$ with height equal or lower than $n+1$ and whose root has $k$ inputs, let
$(T_1, \dots, T_k)$ be the subtrees above the root where isomorphic subtrees are gathered together.
The coproduct $\bigoplus_\sigma T_{\sigma^{-1}(1)}(\VV)
\circledast \dots \circledast T_{\sigma^{-1}(k)}(\VV)$ taken over the
$(k_1, \dots, k_p)$-shuffle permutations does not depend on the
choice of ordering (provided isomorphic subtrees are gathered together). We have
\[ 
	\bigoplus_{T} \bigoplus_\sigma
	T_{\sigma^{-1}(1)}(\VV)
	\circledast \cdots \circledast
	T_{\sigma^{-1}(k)}(\VV)
	= \bigoplus_{T_1,\ldots,T_k}
	T_1(M) \circledast \cdots \circledast T_k(M)
\]
where the coproduct on the left is taken over all the non trivial trees with height equal or lower than $n+1$ and whose root vertex has $k$-inputs
and over the set of shuffle permutations, and where the 
the coproduct on the right is taken over all trees with height equal or lower than $n$.

One then has
\begin{align*}
	{\ov{\Tfree}^{h \leq n+1}}\VV
	&= \VV(0)\bigoplus_{k >0}
	\bigoplus_{T}
	\VV(k) \otimes_{\mbs_k}
	\left(\bigoplus_\sigma T_{\sigma^{-1}(1)}(\VV)
	\circledast \cdots \circledast
	T_{\sigma^{-1}(k)}(\VV) \right)
	\\
	&= \VV(0)
	\bigoplus_{k>0}  \VV(k) \otimes_{\mbs_k}
	\left( \bigoplus_{T}
	\bigoplus_\sigma
	T_{\sigma^{-1}(1)}(\VV)
	\circledast \cdots \circledast
	T_{\sigma^{-1}(k)}(\VV)
	\right)
	\\
	&= \VV(0)
	\bigoplus_{k>0}  \VV(k) \otimes_{\mbs_k}
	\left( \bigoplus_{T_1, \ldots, T_k}
	T_1(\VV)\circledast \cdots \circledast T_k(\VV)\right)
	 \\
	&=
	 \bigoplus_  \VV(k) \otimes_{\mbs_k}
	\left(\Tfree^{h \leq n} \VV\right)^{\circledast k}
	\\
	&=
	\VV \circ \Tfree^{h \leq n}  \VV .
\end{align*}
\end{proof}

The result of Lemma \ref{propcanonicaliso} was used in the book \cite{LodayVallette12} as a definition of a tree module.

\begin{prop}\cite[\S 5.5]{LodayVallette12}
For any $\mbs$-module $\VV$, the tree module $\Tfree \VV$ has the structure of an operad given by the grafting of trees. Moreover, the functor $\Tfree$ from the category of $\mbs$-modules to the category of operads is left adjoint to the forgetful functor.
\end{prop}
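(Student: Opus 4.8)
The plan is to describe the operad structure on $\Tfree\VV$ explicitly and then verify the universal property directly; both halves are combinatorial bookkeeping, conveniently organized by induction on the number of vertices via the filtration by the sub-$\mbs$-modules $\Tfree^{\leq n}\VV$.

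First I would write down the structure maps. The unit $\upsilon\colon \II \to \Tfree\VV$ picks out, in arity one, the class of the trivial tree with a single leaf and no vertex. The composition map is grafting: given a decorated tree $T$ with $q$ leaves together with decorated trees $S_1,\dots,S_q$, one produces the decorated tree obtained by identifying the root edge of each $S_i$ with the $i$-th leaf of $T$ and keeping all vertex decorations. The one observation that makes this work is that a tree equipped with a partition into subtrees $\{T = T_1 \sqcup \dots \sqcup T_k\}$, in the sense of the notation above, is exactly the same datum as a tree presented as an iterated grafting; unravelling this identification shows that the two bracketings of a triple grafting yield the same decorated tree --- the associativity of $\gamma$ --- and that grafting the trivial tree on either side does nothing --- unitality. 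Compatibility with the coinvariants under relabelling vertices that enter the definition of $T(\VV)$, and with isomorphisms of trees, is automatic, since such a relabelling or isomorphism is merely transported along the grafting; the $\mbs_n$-equivariance in the leaves is built into the definition of $T(\VV)(q)$. In the differential graded case the differential of $\Tfree\VV$ is the unique derivation extending the internal differential of $\VV$, so nothing further is required.

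Next I would produce the unit of the claimed adjunction and the extension map. The unit $\eta_\VV\colon \VV \to \Tfree\VV$ is the inclusion of $\VV$ as the corollas (the one-vertex trees); it is a section of the canonical projection $\pi_\VV$. Given an operad $\PPP=(\PP,\gamma_\PPP,\upsilon_\PPP)$ and a morphism of $\mbs$-modules $f\colon \VV \to \PP$, I define $\widetilde f\colon \Tfree\VV \to \PP$ on the summand indexed by a tree $T$ by replacing each vertex decoration $x_v \in \VV(\# v)$ with $f(x_v)\in\PP(\# v)$ and then composing the resulting elements inside $\PPP$ along the shape of $T$, using $\gamma_\PPP$ and working from the leaves down to the root. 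That $\widetilde f$ is well defined is the crux of the argument: independence of the order in which the internal compositions are performed is precisely the associativity of $\gamma_\PPP$, and compatibility with the vertex coinvariants and with tree isomorphisms is precisely the $\mbs_n$-equivariance of $\gamma_\PPP$ --- again it is cleanest to argue by induction on the number of vertices. One then checks directly that $\widetilde f$ preserves gradings, arities and the symmetric group actions, that it intertwines $\upsilon$ with $\upsilon_\PPP$ and grafting with $\gamma_\PPP$, so that it is a morphism of operads, and, in the differential graded setting, that it commutes with the differentials since $f$ is a chain map. By construction $\widetilde f \circ \eta_\VV = f$.

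Finally I would prove uniqueness, which produces the adjunction. If $g\colon \Tfree\VV \to \PPP$ is a morphism of operads with $g\circ\eta_\VV = f$, then, each decorated tree being an iterated grafting of corollas and $g$ preserving the unit while sending grafting to $\gamma_\PPP$, the value of $g$ on a tree is forced to be the corresponding iterated $\gamma_\PPP$-composite of the elements $f(x_v)$; hence $g = \widetilde f$. Thus $g \mapsto g\circ\eta_\VV$ is a bijection from $\Hom_{\Operad}(\Tfree\VV,\PPP)$ onto the set of morphisms of $\mbs$-modules from $\VV$ to $\PP$, evidently natural in $\PPP$, so $\Tfree$ is left adjoint to the forgetful functor. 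The only real obstacle is bookkeeping: giving a precise meaning to ``compose along the shape of $T$'' and checking that grafting, and then $\widetilde f$, descend through the vertex coinvariants and through the passage to isomorphism classes of trees. Once the reformulation ``a tree with a partition into subtrees $=$ an iterated grafting'' is available, every associativity and equivariance check reduces to a transparent diagram chase, which is why this statement is quoted from \cite{LodayVallette12} rather than reproved.
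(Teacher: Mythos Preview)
The paper does not give its own proof of this proposition; it simply cites \cite[\S 5.5]{LodayVallette12} and moves on. Your sketch is correct and is exactly the standard argument one finds in that reference: grafting supplies the operad structure, the inclusion of corollas is the unit, and the extension $\widetilde f$ is built by decorating vertices with $f$ and then contracting along the tree using $\gamma_\PPP$, with uniqueness forced because every decorated tree is an iterated grafting of corollas. You even note yourself in the last sentence why the paper quotes rather than reproves the result, so there is nothing to compare against here.
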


This structure of an operad may be defined by induction with Lemma \ref{propcanonicaliso}.

There is a one-to-one correspondence between the degree $k$ derivations on the graded free operad $\Tfree \VV$ and the degree $k$ maps from $\VV$ to $\Tfree \VV$. Indeed, from such a map $u$ one can produce the derivation $D_u$ such that
$$
D_u (p_1 \otimes \cdots \otimes p_n) := \sum (-1)^{k(|p_1| + \cdots + |p_{i-1}|)} p_1 \otimes \cdots \otimes u(p_i) \otimes \cdots \otimes p_n\ . 
$$

\begin{defin}[Alternated tree module]
 Let $\VV$ and $\WW$ be two $\mbs$-modules and let $T$ be a tree. The alternated tree module $T (\VV,\WW)$ is the sub $\mbs$-module of $T (\VV \oplus \WW)$ made up of labellings of the tree $T$ such that, if a vertex is labelled by an element of $\VV$ (resp. $\WW$), then its neighbours are labelled by $\WW$ (resp. $\VV$). Moreover, $\Tfree (\VV , \WW) $ is the sum over the isomorphism classes of trees $T$ of $T (\VV , \WW)$.
\end{defin}

\begin{prop}
 Let $f:\VV \ra \VV'$ and $g: \WW \ra \WW'$ be two quasi-isomorphisms of dg-$\mbs$-modules and let $T$ be a tree. Then, the map $T(f):T(\VV) \ra T(\VV')$ and the map $T(f,g): T(\VV,\WW) \ra T(\VV',\WW') $ are quasi-isomorphisms.
\end{prop}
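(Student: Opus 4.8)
The statement asserts that for quasi-isomorphisms $f : \VV \to \VV'$ and $g : \WW \to \WW'$ of dg-$\mbs$-modules and a tree $T$, the induced maps $T(f) : T(\VV) \to T(\VV')$ and $T(f,g) : T(\VV,\WW) \to T(\VV',\WW')$ are quasi-isomorphisms. The plan is to reduce everything to the Künneth formula together with Maschke's theorem, exploiting that we work over a characteristic zero field so that taking coinvariants under a finite group is exact.

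\emph{First I would handle $T(f)$.} Unwinding the definition, $\bigotimes_T \VV$ is the $\mbs_p$-coinvariants (where $p$ is the number of vertices) of a direct sum, indexed by bijections $\{1,\dots,p\}$ to the vertex set of $T$, of tensor products $\VV(\#v_1) \otimes \cdots \otimes \VV(\#v_p)$; then $T(\VV)(q)$ sums this over bijections labelling the leaves. Fix such labellings. Since $f$ is a quasi-isomorphism of chain complexes, each $f(\#v_i) : \VV(\#v_i) \to \VV'(\#v_i)$ is a quasi-isomorphism, so by the Künneth formula (valid over a field) the tensor product $f(\#v_1) \otimes \cdots \otimes f(\#v_p)$ is a quasi-isomorphism. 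A finite direct sum of quasi-isomorphisms is a quasi-isomorphism, which takes care of the sum over bijections before passing to coinvariants. It remains to observe that passing to $\mbs_p$-coinvariants preserves quasi-isomorphisms: this is exactly where Maschke's theorem enters, since over a characteristic zero field the functor $(-)_{\mbs_p}$ is exact (every $\mbk[\mbs_p]$-module is projective), so it commutes with homology. Hence $T(f)$ is a quasi-isomorphism.

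\emph{Then I would treat $T(f,g)$ the same way.} By construction $T(\VV,\WW)$ is a direct summand of $T(\VV \oplus \WW)$, the summand spanned by those labellings of $T$ which alternate between $\VV$ and $\WW$ along edges; the map $T(f,g)$ is the restriction of $T(f \oplus g)$ to this summand, and it lands in the corresponding alternating summand of $T(\VV' \oplus \WW')$ because $(f\oplus g)$ is diagonal with respect to the $\VV$/$\WW$ decomposition. Since $f \oplus g$ is a quasi-isomorphism, the first part applied to $f \oplus g$ shows $T(f\oplus g)$ is a quasi-isomorphism; and a quasi-isomorphism that respects a direct sum decomposition restricts to a quasi-isomorphism on each summand (again using that homology commutes with finite direct sums). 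Therefore $T(f,g)$ is a quasi-isomorphism as well.

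\emph{The main obstacle} is not any single deep step but rather bookkeeping the combinatorics carefully: making sure the coinvariant quotient really is over a finite group acting on a \emph{finite} direct sum so that exactness of $(-)_{\mbs_p}$ and commutation of homology with direct sums both apply, and checking that the alternating-labelling submodule is genuinely a direct summand stable under the relevant differentials and under $T(f\oplus g)$. Once those structural points are pinned down, the proof is a routine combination of Künneth and Maschke. No finiteness or boundedness hypothesis on $\VV, \WW$ is needed because, in each fixed arity, $T(\VV)(q)$ is a finite direct sum of finite tensor products followed by a single finite-group coinvariant quotient.
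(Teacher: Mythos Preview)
Your proof is correct and follows essentially the same approach as the paper, which records only the one-line justification ``It follows from the definition of the tree module and the K\"unneth formula.'' You have simply unpacked that sentence: K\"unneth for the tensor factors, exactness of $(-)_{\mbs_p}$ via Maschke for the coinvariants, and the direct-summand observation for the alternating module.
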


\begin{proof}
It follows from the definition of the tree module and the Kunneth formula.
\end{proof}

\subsection{Conilpotent cooperads}

\begin{prop}\cite[\S 5.8]{LodayVallette12}
For any graded vector space (resp. chain complex) $\VV$, the tree module $\Tfree(\VV)$ has the structure of a coaugmented cooperad given by degrafting trees. This defines a functor $\Tfree^c$ from graded vector space (resp. chain complexes) to coaugmented cooperads in graded vector space (resp. chain complexes).
\end{prop}

\begin{defin}
 A conilpotent cooperad (in chain complexes or graded $\mbk$-modules) is a coaugmented cooperad $\CCC$ so that there exists a morphism of coaugmented cooperads $\CCC\to \Tfree^c \VV$ that is a degreewise monomorphism.
\end{defin}

\begin{prop}\cite[\S 5.8]{LodayVallette12}
The functor $\Tfree^c$ from the category of $\mbs$-modules to the category of conilpotent cooperads is right adjoint to the functor $\CCC \mapsto \ov \CC$. 
\end{prop}

\begin{nota}
 For any conilpotent cooperad $\CCC$, we denote $\delta_\CCC$ the canonical morphism of conilpotent cooperads
 \[
 	\CCC \to \Tfree^c\overline{\CC},
 \]
 induced by the adjunction between $\Tfree^c$ and the forgetful functor.
\end{nota}

\begin{rmk}
Actually, the reduced tree module $\overline \Tfree$ is a comonad on the category of $\mbs$-modules and 
the mapping
\[
	\CCC \to \CCC \oplus \mbk
\]
induces an equivalence of categories from $\overline \Tfree{}$-coalgebras to conilpotent cooperads. 
\end{rmk}

\begin{defin}
 The category of curved conilpotent cooperads is denoted $\cCoop$.
\end{defin}

 \begin{defin}[Coradical filtration]\label{definicorad}
Let $\CCC=(\CC, \Delta, \epsilon,1)$ be a conilpotent cooperad. The coradical filtration $(F_n \CCC)_{n=0}^\infty$ of $\CCC$ is defined as follows
$$
F_n \CCC (k ):= \{x \in \CC(k) |\delta_\CCC (x) \in \Tfree^{\leq n} \ov \CC\}\ .
$$
It has the following property (see \cite{LeGrignou16})
$$
 \Delta (F^{rad}_n \CCC) \subset \sum_{k \in \mbn \atop p_0 + \cdots + p_k \leq n}
 (F^{rad}_{p_0} \CCC)(k) \otimes_{\mbs_k} (F^{rad}_{p_1} \CCC  \circledast \cdots \circledast F^{rad}_{p_k} \CCC )\ .
 $$
\end{defin}

There is a one-to-one correspondence between degree $k$ coderivations on $\Tfree^c (\VV)$ and degree $k$ maps from $\ov \Tfree (\VV)$ to $\VV$. Indeed, from such a map $u$ one can produce the coderivation $D_u$ such that
$$
D_u (T) := \sum_{T' \subset T} Id \otimes \cdots \otimes u(T') \otimes \cdots \otimes Id\ . 
$$
Note that the same statement holds for the sub-cooperads of $\Tfree^c \VV$ of the form $\Tfree^{\leq n} \VV$ for any integer $n$.

\subsection{Presentability}

In this section, we prove that both the category of differential graded operads and the category of curved conilpotent cooperads are presentable.

\begin{prop}
 The category $\Operad$ of differential graded operads is presentable.
\end{prop}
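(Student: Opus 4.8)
The plan is to realize $\Operad$ as the category of algebras over an \emph{accessible} monad on a presentable category, and then invoke the standard fact (see Ad\'amek--Rosick\'y, \emph{Locally presentable and accessible categories}, and Gabriel--Ulmer) that the category of algebras over an accessible monad on a locally presentable category is again locally presentable.

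First I would record that $\dgMod$ is locally presentable: it is the category of chain complexes of $\mbk$-vector spaces, every chain complex is the filtered union of its bounded subcomplexes built on finite-dimensional pieces, and there is only a set of the latter up to isomorphism, so $\dgMod$ is in fact locally finitely presentable. Since $\mbs$ is a small groupoid, the category $\catSmod$ of dg $\mbs$-modules, being the category of contravariant functors from $\mbs$ to the locally presentable category $\dgMod$, is locally presentable as well. Now a dg operad is by definition a monoid in $(\catSmod, \circ)$, equivalently an algebra over the free-operad monad $\Tfree$ on $\catSmod$; the fact that $\Tfree$ is a left adjoint to the forgetful functor, hence underlies a monad, has already been recalled above. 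So it remains only to check that the monad $\Tfree$ is accessible, i.e. that it preserves filtered colimits.

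For this, write $\Tfree \VV = \bigoplus_{[T]} T(\VV)$. Since colimits commute with colimits, and a coproduct is a colimit over a discrete category, it is enough to show that each functor $\VV \mapsto T(\VV)$ preserves filtered colimits. But $T(\VV)$ is assembled from four kinds of operations, each of which is compatible with filtered colimits: the evaluation functors $\VV \mapsto \VV(k)$, which preserve \emph{all} colimits since colimits in $\catSmod$ are computed arity-wise; finite tensor products over $\dgMod$ of such functors, which stay finitary because $\otimes$ preserves colimits in each variable and the diagonal of a filtered category is cofinal in its square; the finite direct sums over the labellings of $T$ by its vertices and leaves; and the coinvariants $(-)_{\mbs_p}$, which is a colimit and hence commutes with filtered colimits. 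Composing these, $\VV\mapsto T(\VV)$ is finitary, so $\Tfree$ is finitary, hence accessible. Applying the theorem quoted above yields that $\Operad \simeq \catSmod^{\Tfree}$ is presentable (and in fact locally finitely presentable).

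The only step that is more than formal is the accessibility of $\Tfree$, that is, the verification that the tree combinatorics interacts well with filtered colimits; once one unpacks $T(\VV)$ into evaluations, finite tensor products, finite sums and coinvariants, the argument is routine, so I expect no real obstacle. The same blueprint --- locally presentable base, accessible free monad --- will be reused in the next proposition to handle curved conilpotent cooperads.
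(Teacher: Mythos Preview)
Your proof is correct and follows essentially the same approach as the paper: the paper also observes that $\Tfree$ is a monad on dg $\mbs$-modules, that it preserves filtered colimits (hence is accessible), and concludes by the Ad\'amek--Rosick\'y theorem. You have simply unpacked the filtered-colimit preservation of $\Tfree$ in more detail than the paper does.

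One caveat about your closing remark: the paper does \emph{not} reuse this blueprint for curved conilpotent cooperads. There is no obvious comonad whose coalgebras are the conilpotent cooperads, and the cofree conilpotent cooperad functor $\Tfree^c$ is a right adjoint, not a left adjoint, so accessibility of a monad is not the relevant mechanism. Instead the paper shows directly that every curved conilpotent cooperad is the filtered colimit of its finite-dimensional sub-cooperads, and that finite-dimensional cooperads are compact. So your anticipation of the next proof is off the mark, even though your proof of the present proposition is fine.
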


\begin{proof}
The tree module endofunctor $\Tfree$ of the category of $\mbs$-modules is a monad and the category of operads is monadic over this monad. Moreover, the functor $\Tfree$ preserves filtered colimits and so is accessible. We conclude by \cite[Theorem~2.78]{AdamekRosicky94}.
\end{proof}

We will now prove that the category of curved conilpotent cooperads is presentable. The essence of this result is that any cooperad is the colimit of the filtered diagram of its finite dimensional sub-cooperads.

\begin{prop}\label{prescoop}
 The category $\cCoop$ of curved conilpotent cooperads is presentable.
\end{prop}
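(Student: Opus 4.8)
The goal is to show $\cCoop$ is presentable (locally presentable). By the standard criterion (e.g. Adámek–Rosický), it suffices to exhibit $\cCoop$ as a cocomplete category with a set of dense/accessible generators, or — more practically here — to realize every curved conilpotent cooperad as a filtered colimit of "small" sub-cooperads and bound the cardinality needed. The key mantra stated right before the proposition is: *any cooperad is the colimit of the filtered diagram of its finite-dimensional sub-cooperads*. So the plan is (1) show $\cCoop$ is cocomplete; (2) show every object is a filtered colimit of sub-objects of bounded size (say, countable, or $\aleph_1$-presentable); (3) conclude via the presentability criterion that $\cCoop$ is $\lambda$-presentable for a suitable regular cardinal $\lambda$.

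**Cocompleteness.** First I would record that $\cCoop$ has all colimits. Since we work over a field, and curved conilpotent cooperads are comonoids with extra structure (coaugmentation, curvature $\theta$, coderivation $d$) in the category of graded $\mbs$-modules, the forgetful-type functors and the free-conilpotent-cooperad functor $\Tfree^c$ (right adjoint to $\CCC \mapsto \ov\CC$, as recalled) give enough structure. More directly: colimits of conilpotent cooperads are computed by taking the colimit of underlying $\mbs$-modules and then restricting to the conilpotent part — this is classical (cf. the dual statement for conilpotent coalgebras in Lefèvre-Hasegawa or Positselski), and the curvature and coderivation data pass through colimits since these are extra natural transformations. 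I would phrase this as: the category of curved conilpotent cooperads is comonadic over an appropriate category, and is closed under the relevant colimits.

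**Bounding the size of sub-cooperads / smallness.** This is the heart of the argument and the main obstacle. Given a curved conilpotent cooperad $\CCC$ and a finite-dimensional sub-$\mbs$-module $V \subset \CC$, I want to produce a sub-cooperad of $\CCC$ containing $V$ whose total dimension is still "small" (countable, since the base field need not be countable — so one should be careful: sub-cooperads are closed under finite-dimensional generation because the decomposition map $\Delta$ applied to a finite-dimensional piece lands, by conilpotence, in $\Tfree^{\leq n}$ of a finite-dimensional piece; iterating, one stays within a countable-dimensional sub-$\mbs$-module). Concretely: start with $V_0 := V + \mbk\cdot 1$; set $V_{n+1} := V_n + (\text{image of } V_n \text{ under all components of } \Delta, \text{ closed under the } \mbs\text{-action and under } d)$. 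Because $\CCC$ is conilpotent, $\Delta$ restricted to $F_n\CCC$ lands in $\Tfree^{\le n}$ of lower filtration pieces (using the coradical filtration property displayed just above), so each $V_n$ is finite-dimensional when $V$ is, and $\bigcup_n V_n$ is a countable-dimensional sub-cooperad on which $\theta$ and $d$ restrict. Hence $\CCC$ is the filtered colimit of its countable-dimensional (equivalently, $\aleph_1$-small) sub-cooperads, and a cardinality count shows there is only a set of isomorphism classes of such. Applying the presentability recognition theorem (\cite[Theorem~2.78]{AdamekRosicky94} or its companion), $\cCoop$ is $\aleph_1$-presentable.

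**Where the difficulty lies.** The routine parts are the closure of sub-cooperads under the structure maps and the fact that filtered colimits are computed "naively." The delicate point — and the one I would spend care on — is verifying that the closure process $V \mapsto \bigcup_n V_n$ genuinely terminates at a *small* (countable-dimensional) $\mbs$-module: this uses conilpotence in an essential way (the coradical filtration ensures $\Delta$ does not push us arbitrarily far up) together with the combinatorics of trees (each $\Tfree^{\le n}$ piece built from a finite-dimensional $\mbs$-module is again finite-dimensional, since there are finitely many trees with $\le n$ vertices and bounded arity). Curvature $\theta: \CC(1) \to \mbk$ and the coderivation $d$ add no size, since they are just additional operations to close under. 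Once smallness is in hand, the presentability criterion applies verbatim, exactly as in the operad case treated just above.
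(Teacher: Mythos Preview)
Your strategy matches the paper's: show $\cCoop$ is cocomplete and that every object is a filtered colimit of small sub-cooperads which are themselves presentable. The paper executes this more sharply by invoking Aubry--Chataur (\cite[2.2.5]{AubryChataur03}, recorded as Proposition~\ref{prop:coopfinite} and extended to the curved case in Corollary~\ref{cor:pres}): any element of a conilpotent cooperad already lies in a \emph{finite-dimensional} sub-cooperad. This yields that $\cCoop$ is finitely presentable, whereas your iterative closure $V_0\subset V_1\subset\cdots$ only gives countable-dimensional sub-cooperads and hence $\aleph_1$-presentability. Both conclusions suffice for the proposition as stated, but the paper's is tighter and avoids rederiving the closure argument by hand.

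There is one genuine gap you should patch: exhibiting a set of small sub-objects and a cardinality bound is not by itself enough---you must verify that those sub-objects are $\lambda$-presentable in $\cCoop$, i.e.\ that $\hom_{\cCoop}(\DDD,-)$ commutes with $\lambda$-filtered colimits. The paper does this explicitly for finite-dimensional curved conilpotent cooperads (Lemma~\ref{lemma:compact}); the same style of argument works at $\aleph_1$ for your countable-dimensional objects, but you should say so rather than leave it implicit. Also, the citation of \cite[Theorem~2.78]{AdamekRosicky94} is misplaced: that result concerns algebras over an accessible monad and is what the paper invokes for \emph{operads}; for cooperads you want the direct characterisation of locally presentable categories via a strong generator of $\lambda$-presentable objects.
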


\begin{lemma}\label{lemma:coopcolim}
The category of curved conilpotent cooperads is cocomplete. The forgetful functor from the category of curved conilpotent cooperads to the category of graded conilpotent cooperads creates colimits.
\end{lemma}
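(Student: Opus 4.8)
The plan is to obtain every colimit of $\cCoop$ by lifting the corresponding colimit of the underlying graded conilpotent cooperads; the curvature $\theta$ and the coderivation $d$ are then \emph{forced}, and this is exactly why the forgetful functor preserves and reflects colimits. The one structural input needed is that the functor $\CCC\mapsto\ov\CC$, from graded conilpotent cooperads to $\mbs$-modules, is a left adjoint — its right adjoint being $\Tfree^c$ — and so preserves colimits. First I would record that the category of graded conilpotent cooperads is cocomplete, checking this by hand as for coalgebras: the coproduct of a family $(\CCC_i)$ is the direct sum $\mbk\,1\oplus\bigoplus_i\ov\CC_i$ with componentwise decomposition (a cooperad morphism out of it is precisely a family of cooperad morphisms, since the decomposition of an element of $\ov\CC_i$ remains within $\CCC_i$); the coequalizer of a pair $f,g\colon\CCC\rightrightarrows\DDD$ is the quotient $\DDD/\mathrm{im}(f-g)$ with its induced structure, the image being a coideal by the identity $\Delta(f-g)(x)=\sum(f-g)(x')\otimes f(x'')+\sum g(x')\otimes(f-g)(x'')$; and filtered colimits are computed at the level of $\mbs$-modules, since $\circ$ preserves them in each variable. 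Conilpotency survives in each case, and — crucially — every colimit cocone $\iota_i\colon\CCC_i\to\CCC$ in graded conilpotent cooperads is jointly surjective on underlying $\mbs$-modules (equivalently: $\ov{(-)}$ preserves colimits and colimit cocones in $\mbs$-modules are jointly epic).

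Next, given a diagram $D\colon I\to\cCoop$ with $D_i=(\CCC_i,d_i,\theta_i)$, I would take $\CCC=\colim_i\CCC_i$ with cocone $(\iota_i)$ to be the colimit of the underlying graded conilpotent cooperads, and equip it with a curved structure. Since morphisms of curved cooperads preserve the curvature, the maps $\theta_i$ form a cocone over $i\mapsto\ov\CC_i(1)$; as $\ov\CC(1)=\colim_i\ov\CC_i(1)$ in $\gMod$ (arity-$1$ evaluation of the colimit-preserving functor $\ov{(-)}$), this produces a unique degree $-2$ map $\theta\colon\ov\CC(1)\to\mbk$ restricting to each $\theta_i$ along $\iota_i$; extend it by $\theta(1)=0$. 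Since the structure maps of $D$ commute with the coderivations, the maps $\iota_i d_i\colon\CCC_i\to\CCC$ form a cocone over $i\mapsto\CCC_i$ in $\mbs$-modules, and as every $d_i(1)=0$ it factors through $\CCC$ — the only identifications made in passing from the $\mbs$-module colimit of the $\CCC_i$ to $\CCC$ that are absent at the $\mbs$-module level concern the coaugmentations, on which the cocone vanishes — yielding a unique degree $-1$ map $d\colon\CC\to\CC$ with $d(1)=0$ and $d\iota_i=\iota_i d_i$. (Equivalently, one may use that a degree $-1$ coderivation with $d(1)=0$ on a graded conilpotent cooperad $\EEE$ is the same datum as a morphism $\EEE\to\EEE\otimes(\mbk\oplus\mbk\epsilon)$ of graded conilpotent cooperads splitting the evident projection, where $\mbk\oplus\mbk\epsilon$ with $|\epsilon|=-1$ is adjoined as a formal square-zero coderivation parameter; then $d$ comes straight from the universal property and is automatically a coderivation.)

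It remains to check the curved relations for $(\CCC,d,\theta)$. The identity $\theta d=0$ holds for degree reasons. For $d^2=(\theta\otimes\mathrm{Id}-\mathrm{Id}\otimes\theta)\Delta_2$ — and, unless one has used the reformulation above, for the coderivation identity $\Delta d=(d\circ\mathrm{Id}+\mathrm{Id}\circ'd)\Delta$ — one writes $E$ for the difference of the two sides, a map out of $\CC$ built naturally from $d$, $\theta$, $\Delta$ and $\epsilon$; using that $\iota_i$ is a morphism of cooperads, that $d\iota_i=\iota_i d_i$, and that $\theta$ restricts to $\theta_i$ in arity $1$, one computes $E\iota_i=\iota_i E_i$ (resp.\ $(\iota_i\circ\iota_i)E_i$), where $E_i$ is the analogous difference for $\CCC_i$, which vanishes since $\CCC_i$ is a curved cooperad; joint surjectivity of the $\iota_i$ then forces $E=0$. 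So $(\CCC,d,\theta)\in\cCoop$, and the same argument shows that for any cocone $f_i\colon\CCC_i\to\EEE$ in $\cCoop$ the cooperad morphism $\CCC\to\EEE$ induced by the underlying colimit commutes with $d$ and preserves $\theta$; hence $(\CCC,(\iota_i))$ is the colimit in $\cCoop$. Therefore $\cCoop$ is cocomplete; the forgetful functor to graded conilpotent cooperads preserves this colimit by construction, and it reflects colimits because it reflects isomorphisms — if the underlying morphism of graded conilpotent cooperads is invertible, its inverse automatically commutes with the coderivations and preserves the curvatures — and so detects colimit cocones through the comparison morphism. The main obstacle is really the structural fact of the first paragraph: once colimit cocones of graded conilpotent cooperads are known to be jointly epic on $\mbs$-modules, every verification reduces to a relation already holding on each $\CCC_i$ and transported along the $\iota_i$, with no need to inspect $\CCC$ itself.
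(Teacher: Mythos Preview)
Your argument is correct and supplies the details that the paper omits --- the paper's proof reads simply ``Straightforward.'' One small slip: the relation $\theta d=0$ does \emph{not} hold ``for degree reasons'' ($\theta d$ is a degree $-3$ map $\CC(1)\to\mbk$, which could well be nonzero on $\CC(1)_3$); rather, it follows by the same joint-surjectivity argument you use for $d^2$ and the coderivation identity, since $\theta d\,\iota_i=\theta_i d_i=0$. A minor notational point: your coequalizer computation is written in Sweedler notation for coalgebras, whereas for cooperads $\Delta$ lands in $\CC\circ\CC$ rather than $\CC\otimes\CC$; the telescoping has to be written multilinearly, but the conclusion that $\mathrm{im}(f-g)$ is a coideal is unchanged.
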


\begin{proof}
Straightforward.
\end{proof}

\begin{defin}
 We say that a graded $\mbs$-module $\VV$ is finite dimensional if the $\mbk$-module $\bigoplus_{n,m \in \mbn} \VV(n)_m$ is finite dimensional. We say that $\VV$ is aritywise finite dimensional if $\bigoplus_{m\in \mbn} \VV(n)_m$ is of finite dimension for any integer $n\in \mbn$.
\end{defin}

\begin{prop}\label{prop:coopfinite}
For any graded conilpotent cooperad $\CCC=(\CC,\Delta,\epsilon)$ and any element $x \in \CC(k)$, there exists a finite dimensional sub-cooperad of $\CCC$ which contains $x$.
\end{prop}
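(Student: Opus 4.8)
The plan is to prove the cooperadic analogue of the classical fundamental theorem of coalgebras: given $x \in \CC(k)$, it suffices to produce a finite dimensional sub-$\mbs$-module $D \subseteq \CC$ containing $x$, stable under all the symmetric group actions, and stable under decomposition in the sense that $\Delta(D) \subseteq D \circ D$ inside $\CC \circ \CC$. Since the composition product preserves monomorphisms over a field of characteristic zero, such a $D$ then embeds as a sub-$\mbs$-module $D \circ D \subseteq \CC \circ \CC$ and acquires a sub-cooperad structure, the counit and coassociativity axioms being restrictions of those of $\CCC$ (and if $\CCC$ happens to be coaugmented one adds the one dimensional piece $\mbk \cdot 1 \subseteq \CC(1)$ to $D$, which changes nothing). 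The two features absent from the coalgebra setting, the nonsymmetric product $\circ$ and the $\mbs_m$-coinvariants in its definition, are handled using characteristic zero: by Maschke's theorem the epimorphism $\CC(m) \otimes (\CC(n_1) \otimes \cdots \otimes \CC(n_m)) \twoheadrightarrow \CC(m) \otimes_{\mbs_m} (\CC(n_1) \otimes \cdots \otimes \CC(n_m))$ admits an averaging section, so the components of an element of $(\CC \circ \CC)(k)$ may be represented by honest tensors on which linear independence arguments make sense.

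First I would unravel $\Delta(x) \in (\CC \circ \CC)(k)$. As an element of a direct sum indexed by the ordered partitions of $\{1, \dots, k\}$ it has finitely many nonzero components, so only finitely many partition shapes, and in particular only finitely many arities $m, n_1, \dots, n_m$, occur. Lifting each component through the averaging section and collecting terms, I would bring $\Delta(x)$, shape by shape, to a reduced form in which the left-hand factors in $\CC(m)$ and, slot by slot, the right-hand factors in the $\CC(n_i)$ form linearly independent families. Then I would repeat the analysis for the twofold decomposition $\Delta^{(2)}(x) = (\Delta \circ \id)\Delta(x) = (\id \circ \Delta)\Delta(x)$ in $(\CC \circ \CC \circ \CC)(k)$, whose components are now indexed by two-level trees and carry three tensor layers. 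Comparing the two reduced expressions of $\Delta^{(2)}(x)$ provided by the two sides of coassociativity, and using the built-in linear independence exactly as in the coalgebra proof, one extracts a finite family of homogeneous elements $\{c_\alpha\} \subseteq \CC$ — the structure coefficients of $x$ — together with formulas writing the decomposition of each $c_\alpha$ as a $\circ$-combination of the other $c_\beta$'s and writing $x$ itself, via the counit, inside their linear span.

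Finally I would let $D$ be the sub-$\mbs$-module generated by $\{c_\alpha\}$: arity by arity, the span of the finitely many $\mbs_m$-orbits of the $c_\alpha$'s. It is finite dimensional and contains $x$, and the formulas above together with the $\mbs$-equivariance of $\Delta$ — and the fact that $D \circ D$ is stable under the ambient symmetric group actions because $D$ is a sub-$\mbs$-module — give $\Delta(D) \subseteq D \circ D$. Thus $D$ is a finite dimensional sub-cooperad of $\CCC$ containing $x$.

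I expect the main obstacle to be the middle step: organizing the combinatorics of the composition product so that honest reduced forms of $\Delta(x)$ and $\Delta^{(2)}(x)$ exist, and so that the two sides of cooperadic coassociativity — which, unlike for coalgebras, differ by a nontrivial reassociation of partition data, i.e. by tree combinatorics — can be matched component by component. Once this bookkeeping is set up, extracting the structure coefficients and checking that their span is $\Delta$-stable is the same linear algebra as in Sweedler's argument for coalgebras.
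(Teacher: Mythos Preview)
The paper does not supply its own proof of this proposition; it simply cites Aubry--Chataur \cite[2.2.5]{AubryChataur03} and remarks afterward that their argument, stated for nonnegatively graded chain complexes, goes through for arbitrary graded $\mbk$-modules. So there is nothing in the paper to compare your argument against beyond the reference itself.

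Your outline is exactly the Aubry--Chataur strategy: adapt Sweedler's fundamental theorem of coalgebras by writing the iterated decomposition $\Delta^{(2)}(x)$ in a reduced form, extracting the finite family of ``middle'' coefficients, and checking that the sub-$\mbs$-module they generate is stable under $\Delta$. Your handling of the two genuinely operadic features --- the $\mbs_m$-coinvariants in the composition product (lifted via the Maschke averaging section) and the tree combinatorics in matching the two sides of coassociativity --- is the right diagnosis, and these are precisely the points where Aubry--Chataur spend their effort. The plan is sound; what you have written is a correct and honest sketch rather than a full proof, and you have correctly identified the bookkeeping step as the place where the real work lies. If you want to turn this into a self-contained argument, the cleanest route is to index everything by two-level trees from the start, fix bases of the outer (root and leaf) layers so that linear independence is built in, and then read off that the span of the middle layer is $\Delta$-stable by comparing $(\Delta \circ \id)\Delta^{(2)}$ with $(\id \circ' \Delta)\Delta^{(2)}$ component by component.
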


\begin{proof}
 Let us prove the proposition by induction on the coradical filtration. If, $x \in F_0^{rad}\CCC$, then $x$ belongs to finite dimensional sub-cooperad which is just
 $F_0^{rad}\CCC=\II$. Suppose that any $x' \in F_n^{rad}\CCC$ is contained in a finite dimensional sub-cooperad and let $x\in F_{n+1}^{rad}\CCC$. The decomposition $\Delta(x)$ is generated by elements of the form
 \[
 	\Delta (x) = x \otimes 1 + 1 \otimes + \sum_n \sum y_{(0)} \otimes ((y_{(1)} \otimes \cdots \otimes y_{(n)} \otimes \sigma)
\]
where the elements $y_{(i)}$ are of finite number and all belong to $F_{n}^{rad}\CCC$. Thus let $\DDD$ be a finite dimensional sub-cooperad that contains all the elements $y_{(i)}$. The smallest sub $\mbs$-module of $\CC$ that contains $\DD$ and $x$ is finite dimensional and is a sub-cooperad.
\end{proof}

\begin{rmk}
 Aubry and Chataur (\cite{AubryChataur03}) prove a similar result in the case of nonnegatively graded $\mbk$-modules (actually for nonnegatively graded chain complexes) and general cooperads.
\end{rmk}

\begin{cor}\label{cor:pres}
For any curved conilpotent cooperad $\CCC=(\CC,\Delta,\epsilon,\theta)$ and any element $x \in \CC(k)$, there exists a finite dimensional sub-cooperad of $\CCC$ which contains $x$.
\end{cor}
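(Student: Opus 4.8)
The plan is to reduce the statement to the already-known graded case, Proposition~\ref{prop:coopfinite}, by a single further enlargement using the coderivation. First I would forget the curvature data $(d,\theta)$ and regard $\CCC$ as a coaugmented graded cooperad; Proposition~\ref{prop:coopfinite} then provides a finite dimensional sub-cooperad $\DD \subseteq \CCC$ with $x \in \DD$, and after adding $\mbk \cdot 1$ if necessary I may assume $1 \in \DD$. The candidate sub-curved-cooperad is
\[
\EE := \DD + d(\DD)\ ,
\]
which is again finite dimensional; it remains to see that $\EE$ is closed under $\Delta$, closed under $d$, and carries the restricted curvature data — then $\EE$ is the desired object, since it contains $x$.

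For the comultiplication, I would check $\Delta(\EE) \subseteq \EE \circ \EE$ (that $\EE$ is a sub-$\mbs$-module containing $1$ and that $\epsilon$ restricts to it is immediate; recall that the composition product $\circ$ preserves injections of $\mbs$-modules since $\mbk$ has characteristic zero, by Maschke, so that $\EE \circ \EE \hookrightarrow \CC \circ \CC$). On $\DD$ this holds because $\DD$ is already a sub-cooperad. On $d(\DD)$ it follows from the coderivation identity $\Delta d = (d \circ Id + Id \circ' d)\Delta$ together with $\Delta(\DD) \subseteq \DD \circ \DD$: the $d \circ Id$ contribution lands in $d(\DD) \circ \DD$, and the $Id \circ' d$ contribution lands in a composite of a factor in $\DD$ with factors all in $\DD$ save one in $d(\DD)$; both sit inside $\EE \circ \EE$. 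For the coderivation, $d(\DD) \subseteq \EE$ by construction, while the curvature equation gives $d^2(\DD) = (\theta \otimes Id - Id \otimes \theta)\Delta_2(\DD)$; since $\DD$ is a sub-cooperad containing $1$, one has $\Delta_2(\DD) \subseteq \DD \circ \DD$, and contracting one of the two factors against $\theta : \CC(1) \to \mbk$ returns, up to scalars, an element of $\DD$, so $d^2(\DD) \subseteq \DD \subseteq \EE$; hence $d(\EE) \subseteq \EE$. Finally $\theta$ restricts to $\EE(1)$, $d(1) = 0$, and the two defining identities of a curved cooperad hold on $\EE$ as restrictions of the corresponding identities on $\CCC$.

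The substantive point — and where I expect the verification to need care — is precisely that $\EE = \DD + d(\DD)$ needs no further enlargement: applying $d$ to $d(\DD)$ does not produce genuinely new elements, because $d^2$ is controlled by $\Delta_2$ and $\theta$ via the curvature equation and therefore falls back into the finite dimensional sub-cooperad $\DD$. Without this, the naive process of closing a finite dimensional sub-cooperad under $d$ could fail to terminate; the curvature relation is exactly what makes it stop after one step.
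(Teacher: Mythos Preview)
Your proof is correct and follows exactly the same approach as the paper's: take a finite dimensional sub-graded cooperad $\DD$ containing $x$ via Proposition~\ref{prop:coopfinite}, and enlarge to $\DD + d\DD$. The paper states this in one line without justification; your write-up supplies the missing verifications (closure under $\Delta$ via the coderivation identity, closure under $d$ via the curvature equation forcing $d^2(\DD)\subseteq\DD$), and your emphasis on why the enlargement terminates after one step is precisely the point.
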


\begin{proof}
 Let $\DDD=(\DD,\Delta_\DDD, \epsilon)$ be a finite dimensional sub-graded cooperad of $\CCC$ which contains $x$. Then $\DD+ d\DD$ is a finite dimensional sub-curved cooperad of $\CCC$ which contains $x$.
\end{proof}

\begin{lemma}\label{lemma:compact}
 Any finite dimensional curved conilpotent cooperad is a compact object in the category $\cCoop$.
\end{lemma}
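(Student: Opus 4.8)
The plan is to show that the functor $\Hom_{\cCoop}(\CCC, -)$ commutes with filtered colimits whenever $\CCC$ is a finite dimensional curved conilpotent cooperad. By Lemma~\ref{lemma:coopcolim}, a filtered colimit in $\cCoop$ is computed by first taking the colimit of the underlying graded conilpotent cooperads and then transporting the (co)derivation and the curvature; so it suffices to understand filtered colimits of graded conilpotent cooperads. The first step is to observe, using Proposition~\ref{prop:coopfinite} together with the conilpotence hypothesis, that any graded conilpotent cooperad is the filtered colimit of its finite dimensional sub-conilpotent-cooperads, and more to the point that the underlying $\mbs$-module functor, while not itself continuous, detects enough: a map out of a finite dimensional $\CCC$ into a filtered colimit $\colim_i \DDD_i$ is determined by where the (finitely many, finite dimensional) generators go.

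Concretely, write $\DDD := \colim_i \DDD_i$ for a filtered diagram in $\cCoop$. The key point is that the underlying graded $\mbs$-module of $\DDD$ is \emph{not} the colimit of the $\DD_i$ in graded $\mbs$-modules in general — but the coradical filtration is preserved, and in each fixed filtration stage and each fixed arity one does get the honest colimit. So I would argue as follows. Given a morphism $f : \CCC \to \DDD$ in $\cCoop$ with $\CCC$ finite dimensional: since $\CCC$ is finite dimensional it sits in some finite stage $F_N \CCC = \CCC$ of its coradical filtration, and $\bigoplus_k \CC(k)$ is a finite dimensional $\mbk$-module, hence spanned by finitely many homogeneous elements $x_1, \dots, x_r$. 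Each $f(x_j)$ lies in $\bigoplus_k \DD(k)$; because $\DDD = \colim_i \DDD_i$ and this colimit in arity-bounded, filtration-bounded pieces is the ordinary filtered colimit of $\mbk$-modules, there is an index $i$ and elements $\tilde x_j \in \DD_i$ mapping to $f(x_j)$. Filteredness lets us choose a single such $i$ for all $j$ simultaneously, and lets us further increase $i$ so that the finitely many relations defining the cooperad structure on $\CCC$ (the values of $\Delta$, $\epsilon$, $\theta$, $d$ on the $x_j$, expressed in the chosen spanning set) are matched in $\DDD_i$ — again this is a finite list of equalities of elements in filtration-bounded, arity-bounded pieces, so each becomes an equality already at some finite stage. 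This produces a factorization $\CCC \to \DDD_i \to \DDD$, giving surjectivity of $\colim_i \Hom(\CCC, \DDD_i) \to \Hom(\CCC, \DDD)$; an entirely analogous (easier) argument, comparing two liftings on the finite spanning set, gives injectivity.

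The main obstacle — and the place that needs care rather than being purely routine — is precisely the fact that the forgetful functor to graded $\mbs$-modules does not preserve filtered colimits, so one cannot simply invoke compactness of finite dimensional $\mbs$-modules and be done. The fix is to work one coradical-filtration layer at a time: one should check that $F_n$ applied to a filtered colimit of conilpotent cooperads is the filtered colimit of the $F_n$'s, which follows from the compatibility of $\Delta$ with the coradical filtration recorded after the Definition of the coradical filtration, and that on each $F_n \DDD(k)$ — a filtration-bounded, arity-fixed piece — the colimit is the ordinary one of $\mbk$-modules. Once this bookkeeping is in place, the finite dimensionality of $\CCC$ confines everything to finitely many such pieces and the standard filtered-colimit manipulations go through. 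One also has to remember to verify that the factorizing map $\CCC \to \DDD_i$ is compatible with the differentials and the curvatures, but since $\theta$ and $d$ are themselves part of the finite list of structure data on the spanning set $x_1,\dots,x_r$, increasing $i$ once more absorbs this.
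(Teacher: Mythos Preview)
Your argument is correct in outline and is the standard route for such compactness results; the paper itself gives no proof beyond referring to \cite[Lemma~6]{LeGrignou16}, where the analogous statement for curved conilpotent coalgebras is proven by essentially the same method.

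That said, the ``main obstacle'' you identify rests on a false premise. The forgetful functor from graded conilpotent cooperads to graded $\mbs$-modules \emph{does} preserve filtered colimits: the functor $\CCC \mapsto \ov\CC$ is left adjoint to $\Tfree^c$ (as recorded in \S1.3 of the paper), hence preserves all colimits, and adding back the constant summand $\II$ preserves filtered ones. Combined with Lemma~\ref{lemma:coopcolim}, the underlying graded $\mbs$-module of $\colim_i \DDD_i$ in $\cCoop$ really is $\colim_i \DD_i$. Once you have this, there is no need to pass through the coradical filtration layer by layer: finite dimensionality of $\CC$ immediately gives a linear lift $\tilde f\colon \CC \to \DD_i$ of $f$ for some $i$, and the finitely many structure equations (compatibility with $\Delta$, $\epsilon$, $d$, $\theta$) hold after further increasing $i$ because $\circ$ and $\otimes$ commute with filtered colimits. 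Your coradical-filtration bookkeeping is not wrong, just superfluous; dropping it leaves exactly the expected short argument.
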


\begin{proof}
 The arguments of \cite[Lemma 6]{LeGrignou16} apply mutatis mutandis.
\end{proof}

\begin{proof}[Proof of Proposition \ref{prescoop}]
By Corollary \ref{cor:pres} and Lemma \ref{lemma:coopcolim}, any curved conilpotent cooperad is the colimit of the filtered diagram of its finite dimensional sub-curved conilpotent cooperads which are compact objects by Lemma \ref{lemma:compact}. Moreover, the subcategory of $\cCoop$ of finite dimensional objects is equivalent to a small category.
\end{proof}

\subsection{Product of two coaugmented cooperads}

Our goal in this section is to describe the product of two conilpotent coooperads (graded or dg).

\begin{thm}\label{prop:productcoop}
 Let $\CCC=(\CC,\Delta,\epsilon,1)$ and $\DDD=(\DD, \Delta',\epsilon',1')$ be two conilpotent cooperads (graded or differential graded). Then, the alternated tree module $\Tfree (\ov\CC, \ov\DD)$ has the structure of a conilpotent cooperad that gives us the product $\CCC \times \DDD$ in the category of conilpotent cooperads.
 \end{thm}

Let us introduce some notations
\begin{itemize}
 \itemt $\overline{\Tfree}(\ov\CC, \ov\DD)$ will denote the reduced tree module, that is the sub-$\mbs$-module of $\Tree (\ov\CC, \ov\DD)$ made up of non trivial labelled trees;
 \itemt $\overline{\Tfree}(\ov\CC, \ov\DD)_\DD$ will denote the sub-$\mbs$-module of $\Tree (\ov\CC, \ov\DD)$ made up of non trivial labelled trees whose root vertex is labelled by $\DD$;
 \itemt $\overline{\Tfree}(\ov\CC, \ov\DD)_\CC$ will denote the sub-$\mbs$-module of $\Tree (\ov\CC, \ov\DD)$ made up of non trivial labelled trees whose root vertex is labelled by $\CC$;
 \itemt finally, let ${\Tfree}(\ov\CC, \ov\DD)_\DD= \overline{\Tfree}(\ov\CC, \ov\DD)_\DD \oplus \II$ and let ${\Tfree}(\ov\CC, \ov\DD)_\CC= \overline{\Tfree}(\ov\CC, \ov\DD)_\CC \oplus \II$.
\end{itemize}

Let us describe the expected structure of a conilpotent cooperad $(\Delta_{alt},\epsilon_{alt},1_{alt})$ on $\Tfree (\ov\CC, \ov\DD)$. We describe the map $\Delta_{alt}$ by induction on the height of the trees.
\begin{itemize}
 \itemt First, on $\CC$, the map $\Delta_{alt}$ is just given by $\Delta_\CC$ and on $\DD$, this is just $\Delta_{\DD}$. We can notice that the two maps agree on their intersection $\II$.
 \itemt Let us suppose that $\Delta_{alt}$ has been defined on $\Tfree^{h\leq n} (\ov\CC, \ov\DD)$ for some $n \in \mbn$. 
 We have $\overline{\Tfree}^{h \leq n+1} (\ov\CC, \ov\DD)_\DD = \ov\DD \circ {\Tfree}^{h \leq n}(\ov\CC, \ov\DD)_\CC$.
 Then, we define $\Delta_{alt}$ on $\overline{\Tfree}^{h = n+1} (\ov\CC, \ov\DD)_\DD$ as
 \[
\begin{tikzcd}
 	 \ov\DD \circ {\Tfree}^{h \leq n}(\ov\CC, \ov\DD)_\CC
	 \arrow[d,"{\Delta \circ \Delta_{alt}}"]
	 \\
	\DD \circ \DD \circ {\Tfree}(\ov\CC, \ov\DD)_\CC^{h \leq n} \circ {\Tfree}(\ov\CC, \ov\DD)^{h \leq n}
	\arrow[d,"Id \circ Ex \circ Id "]
	\\
	\DD \circ {\Tfree}(\ov\CC, \ov\DD)^{h \leq n} \circ \DD   \circ {\Tfree}(\ov\CC, \ov\DD)^{h \leq n}
	\arrow[d,"Comp \circ Comp"]
	\\
	{\Tfree}(\ov\CC, \ov\DD)^{h \leq n+1} \circ {\Tfree}(\ov\CC, \ov\DD)^{h \leq n+1}
\end{tikzcd}
 \]
 where the exchange map $Ex: \DD \circ {\Tfree}(\ov\CC, \ov\DD)^{h \leq n} \to {\Tfree}(\ov\CC, \ov\DD)^{h \leq n}\circ \DD$ is defined as 
\begin{itemize}
 \item the canonical isomorphism $\II \circ {\Tfree}(\ov\CC, \ov\DD)^{h \leq n} = {\Tfree}(\ov\CC, \ov\DD)^{h \leq n}= {\Tfree}(\ov\CC, \ov\DD)^{h \leq n}\circ \II $
 on the subobject $\II \circ {\Tfree}(\ov\CC, \ov\DD)^{h \leq n} \subset \DD \circ {\Tfree}(\ov\CC, \ov\DD)^{h \leq n}$;
 \item the canonical isomorphism $\DD \circ \II = \DD = \II \circ \DD$ on the subobject $\II \circ \DD \subset \DD \circ {\Tfree}(\ov\CC, \ov\DD)^{h \leq n}$;
 \item the zero map on the other summands;
\end{itemize}
and where the composition map $Comp: \DD \circ {\Tfree}(\ov\CC, \ov\DD)^{h \leq n} \to {\Tfree}(\ov\CC, \ov\DD)^{h \leq n+1}$ is defined as the usual operadic composition on $\DD \circ {\Tfree}(\ov\CC, \ov\DD)_\CC^{h \leq n}$ but it is the zero map on the other summands of $\DD \circ {\Tfree}(\ov\CC, \ov\DD)^{h \leq n}$.

We define $\Delta_{alt}$ on $\overline{\Tfree}^{h = n+1} (\ov\CC, \ov\DD)_\CC$ in a similar way.
\end{itemize}
The map $\epsilon_{alt}$ is given by the projection on the trivial tree, while $1_{alt}$ is given by the inclusion of this trivial tree among all trees.

\begin{lemma}
 Suppose that $\CCC= \Tfree^c \VV$ and $\DDD= \Tfree^c\WW$. Then, we have a canonical isomorphism of $\mbs$-modules
 \[
 	\Tfree(\ov{\CC}, \ov{\DD}) = \Tfree(\VV \oplus \WW).
 \]
 Moreover, this isomorphism commutes with the decomposition structure in the sense that the following diagram commutes
 \[
\begin{tikzcd}
 	\Tfree(\ov{\CC}, \ov{\DD})
	\arrow[r,"\simeq"] \arrow[d,"\Delta_{alt}"']
	&\Tfree(\VV \oplus \WW)
	\arrow[d,"\Delta_{\Tfree^c(\VV \oplus \DD)}"]
	\\
	\Tfree(\ov{\CC}, \ov{\DD})\circ \Tfree(\ov{\CC}, \ov{\DD})
	\arrow[r,"\simeq"']
	& \Tfree(\VV \oplus \WW) \circ \Tfree(\VV \oplus \WW)
\end{tikzcd}
 \]
It also commutes with the counit and the coaugmentation. Subsequently, the maps $(\Delta_{alt},\epsilon_{alt},1_{alt})$ define the structure of a conilpotent cooperad on $\Tfree(\ov{\CC}, \ov{\DD})$ and we have an isomorphism of conilpotent cooperads
\[
	(\Tfree(\ov{\CC}, \ov{\DD}), \Delta_{alt})= \Tfree^c(\VV \oplus \WW)= \CCC \times \DDD.
\]
\end{lemma}

\begin{proof}
Any tree labelled by $\VV$ and $\WW$ has a canonical partition which consists in maximal subtrees that are only labelled by $\VV$ or that are only labelled by $\WW$. The subtrees of this partition are then elements of $\CC$ or elements of $\DD$. Moreover, since we have only taken maximal subtrees, the neighbour of an element of $\CC$ (resp. $\DD$) is necessarily an element of $\DD$ (resp. $\CC$). Thus, such a tree labelled by  $\VV$ and $\WW$ may be considered as an element of $\Tfree(\ov{\CC}, \ov{\DD})$. This gives us the isomorphism relating the $\mbs$-module $\Tfree(\ov{\CC}, \ov{\DD})$ to the $\mbs$-module $\Tfree(\VV \oplus \WW)$. The fact that it commutes with the cooperad structure maps follows from an induction on the height of trees.
\end{proof}

\begin{proof}[Proof of Theorem \ref{prop:productcoop}]
The morphism of $\mbs$-modules $\Tfree(\ov{\CC}, \ov{\DD}) \to \Tfree(\ov\Tfree\ov{\CC}, \ov\Tfree\ov{\DD})$ commutes with the maps $(\Delta_{alt},\epsilon_{alt},1_{alt})$. Thus, $(\Tfree(\ov{\CC}, \ov{\DD}, \Delta_{alt},\epsilon_{alt},1_{alt})$ is a sub-conilpotent cooperad of
\[
(\Tfree(\ov\Tfree\ov{\CC}, \ov\Tfree\ov{\DD}),\Delta_{alt},\epsilon_{alt},1_{alt} )= \Tfree^c(\ov\CC \oplus \ov\DD).
\]

Besides, let $\EEE$ be a conilpotent cooperad. Let us consider two morphisms $f: \EEE \to \CCC$ and $g: \EEE \to \DDD$. We can build a morphism of $\mbs$-modules denoted $(f,g)$ as follows
\[
	\EE \xrightarrow{\delta_\EEE} \Tfree^c \ov\EE \to \II \oplus \ov\Tfree(\ov\CC, \ov\DD)_\CC \oplus \ov\Tfree(\ov\CC, \ov\DD)_\DD
	= \Tfree(\ov\CC, \ov\DD)
\]
where the last map consists in
\begin{itemize}
 \itemt sending the trivial tree to the trivial tree;
 \itemt sending a nontrivial tree labelled by $\ov\EE$ to the same tree labelled whose root vertex is labelled by $\ov\CC$ (using $f$), the vertices just above are labelled by $\ov\DD$ (using $g$), \ldots
  \itemt sending a nontrivial tree labelled by $\ov\EE$ to the same tree labelled whose root vertex is labelled by $\ov\DD$ (using $g$), the vertices just above are labelled by $\ov\CC$ (using $f$), \ldots
\end{itemize}
One can check that the composite map
\[
	\EE \to \Tfree(\ov\CC, \ov\DD) \to \Tfree(\ov\Tfree\ov{\CC}, \ov\Tfree\ov{\DD})= \Tfree(\ov\CC \oplus \ov\DD) 
\]
is equal to the map of cooperads adjoint to the morphism of $\mbs$-modules $f+g: \ov\EE \to \ov\CC \oplus \ov\DD$. Since the map $\Tfree(\ov\CC, \ov\DD) \to \Tfree(\ov\Tfree\ov{\CC}, \ov\Tfree\ov{\DD})$ has a left inverse in the category of $\mbs$-modules and is a morphism of cooperads, then the first map $(f,g):\EE \to \Tfree(\ov\CC, \ov\DD)$ is also a morphism of cooperads. Moreover, we can recover $f$ and $g$ as the composite maps
\begin{align*}
 & \EE \xrightarrow{(f,g)}\Tfree(\ov\CC, \ov\DD) \to \Tfree(\ov\CC, \ov\II) = \CC;
 \\
 &  \EE \xrightarrow{(f,g)}\Tfree(\ov\CC, \ov\DD) \to \Tfree(\ov\II, \ov\DD) = \DD.
\end{align*}
Suppose that there we have morphism of cooperads $h: \EE \to \Tfree(\ov\CC, \ov\DD)$ that factorises $f$ and $g$ as just above. Then the two composite maps
\begin{align*}
 & \EE \xrightarrow{(f,g)}\Tfree(\ov\CC, \ov\DD) \to \Tfree^c(\ov\CC\oplus \ov\DD) ;
 \\
 &  \EE \xrightarrow{h}\Tfree(\ov\CC, \ov\DD) \to \Tfree^c(\ov\CC\oplus \ov\DD) ;
\end{align*}
are the same. Since the map $\Tfree(\ov\CC, \ov\DD) \to \Tfree^c(\ov\CC\oplus \ov\DD)$ is injective, then $h=(f,g)$. This shows that $(\Tfree(\ov{\CC}, \ov{\DD}), \Delta_{alt},\epsilon_{alt},1_{alt})= \CCC \times \DDD$.
\end{proof}

\subsection{Bar-cobar adjunction}

In this section, we recall the bar-cobar adjunction introduced in \cite{LeGrignou16} and which relates operads with curved conilpotent cooperads. Let $\PPP:=(\PP, \gamma, 1)$ be an operad. Its bar construction is the curved conilpotent cooperad $B_c \PPP := \Tfree^c (s\PP \oplus \mbk \cdot v)$,
where $v$ is a degree $2$ element. It is equipped with the coderivation which extends the following map
\begin{align*}
 \Tfree (s\PP \oplus \mbk \cdot v) \twoheadrightarrow \ov \Tfree^{\leq 2} (s\PP \oplus \mbk \cdot v) & \ra s\PP \\
 sx \otimes sy &\mapsto (-1)^{|x|} s \gamma_\PPP (x \otimes y)\\
 sx \otimes v & \mapsto 0\\
 v & \mapsto s1_\PPP\\
 sx & \mapsto -sdx\ .
\end{align*}
Its curvature is the degree $-2$ map.
\begin{align*}
 \Tfree (s\PP \oplus \mbk \cdot v) \twoheadrightarrow  \mbk \cdot v & \ra \mbk\\
 v &\mapsto 1\ .
 \end{align*}

Let $\CCC:=(\CC,\Delta, \epsilon, 1 ,\theta)$ be a curved conilpotent cooperad. Its cobar construction is the operad
$$
\Omega_u \CCC := \Tfree (s^{-1} \ov \CC)\ .
$$
It is equipped with the following derivation,
$$
s^{-1}x \mapsto  \theta (x) 1 - s^{-1} dx - \sum (-1)^{|x_1|} s^{-1}x_1 \otimes s^{-1}x_2\ ,
$$
where $\sum x_1 \otimes x_2 = \Delta_2 x$.\\

A twisting morphism is a degree $-1$ map $\alpha$ from a curved conilpotent cooperad $\CCC$ to an operad $\PPP$ such that $\alpha (1)=0$ and such that
$$
\partial (\alpha) + \gamma_\PPP (\alpha \otimes \alpha ) \Delta_\CCC = \theta_\CCC (-) 1_\PPP\ .
$$
We denote by $\Tw (\CCC, \PPP)$ the set of twisting morphisms from $\CCC$ to $\PPP$.

\begin{prop}\cite{LeGrignou16}
The bar construction and the cobar construction are both functorial. Moreover, there exist functorial isomorphisms:
 $$
 \hom_{\Operad} (\Omega_u \CCC, \PPP) \simeq \Tw (\CCC, \PPP) \simeq \hom_\cCoop (\CCC, B_c \PPP)\ .
 $$
 Therefore, the functor $\Omega_u$ is left adjoint to the functor $B_c$.
\end{prop}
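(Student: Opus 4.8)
The plan is to identify each of the two hom-sets with the set $\Tw(\CCC,\PPP)$ of twisting morphisms, and then to check that the resulting bijection is natural in both variables, which is exactly the statement that $\Omega_u$ is left adjoint to $B_c$. Functoriality of the two constructions is formal: an operad morphism $\PPP\to\PPP'$ induces an $\mbs$-module map $s\PP\oplus\mbk\cdot v\to s\PP'\oplus\mbk\cdot v$ compatible with the structure maps defining the bar coderivation and the curvature, hence, by the universal property of $\Tfree^c$, a morphism of curved conilpotent cooperads $B_c\PPP\to B_c\PPP'$; dually a morphism $\CCC\to\CCC'$ restricts to $\ov\CC\to\ov{\CC'}$, and freeness of $\Tfree$ turns the induced map $s^{-1}\ov\CC\to s^{-1}\ov{\CC'}$ into an operad morphism $\Omega_u\CCC\to\Omega_u\CCC'$ which one checks commutes with the cobar derivations.

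For the isomorphism $\hom_{\Operad}(\Omega_u\CCC,\PPP)\simeq\Tw(\CCC,\PPP)$ I would use that $\Omega_u\CCC=\Tfree(s^{-1}\ov\CC)$ is free as a graded operad: a graded-operad morphism $F\colon\Omega_u\CCC\to\PPP$ is the same as an $\mbs$-module map $s^{-1}\ov\CC\to\PP$, equivalently a degree $-1$ map $\alpha\colon\ov\CC\to\PP$, which we extend to $\CC$ by $\alpha(1)=0$. Since a derivation is determined by its value on generators, $F$ commutes with the differentials if and only if, for every $x$, the image under $F$ of the cobar formula equals $d_\PPP\alpha(x)$; term by term, $-s^{-1}dx$ accounts for the $\alpha\,d_\CCC$ part of $\partial(\alpha)$, $-\sum(-1)^{|x_1|}s^{-1}x_1\otimes s^{-1}x_2$ accounts for $\gamma_\PPP(\alpha\otimes\alpha)\Delta_\CCC$ (the sign $(-1)^{|x_1|}$ being produced by moving the second copy of $\alpha$ past $x_1$), and $\theta(x)1$ accounts for the curvature term $\theta_\CCC(-)1_\PPP$. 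Thus the condition is precisely the Maurer--Cartan equation defining $\Tw(\CCC,\PPP)$. The only subtlety is the bookkeeping of the Koszul signs coming from the desuspension.

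For the isomorphism $\hom_\cCoop(\CCC,B_c\PPP)\simeq\Tw(\CCC,\PPP)$ I would use the adjunction with $\Tfree^c$ right adjoint to $\CCC\mapsto\ov\CC$ on the level of graded conilpotent cooperads, together with the conilpotency of $\CCC$, which guarantees that the sums below are locally finite: a morphism of graded conilpotent cooperads $\CCC\to\Tfree^c(s\PP\oplus\mbk\cdot v)=B_c\PPP$ corresponds to an $\mbs$-module map $\ov\CC\to s\PP\oplus\mbk\cdot v$, i.e. a pair $(s\alpha,\beta)$ with $\alpha\colon\ov\CC\to\PP$ of degree $-1$ and $\beta\colon\ov\CC(1)\to\mbk$ of degree $-2$. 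Compatibility with the curvatures forces $\beta=\theta_\CCC$, since the curvature of $B_c\PPP$ reads off exactly the $v$-coefficient. Compatibility with the coderivations, expanded using the explicit bar coderivation --- which only involves the arity $\leq 2$ part $\Delta_2$ of $\CCC$, the internal differential of $\PPP$, and the unit $1_\PPP$ --- yields the same Maurer--Cartan equation for $\alpha$: the component $sx\otimes sy\mapsto(-1)^{|x|}s\gamma_\PPP(x\otimes y)$ produces $\gamma_\PPP(\alpha\otimes\alpha)\Delta_\CCC$, the component $sx\mapsto -sdx$ produces the $\partial$-term, and $v\mapsto s1_\PPP$ produces $\theta_\CCC(-)1_\PPP$. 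So again $\hom_\cCoop(\CCC,B_c\PPP)\simeq\Tw(\CCC,\PPP)$.

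Composing the two bijections gives $\hom_{\Operad}(\Omega_u\CCC,\PPP)\simeq\hom_\cCoop(\CCC,B_c\PPP)$, and the remaining step is a diagram chase showing this composite is natural in $\CCC$ and in $\PPP$; since all the maps involved are determined by their restriction to the generators of $\Omega_u\CCC$, respectively their corestriction to the cogenerators of $B_c\PPP$, this reduces to a routine comparison of $\mbs$-module maps. This establishes $\Omega_u\dashv B_c$. I expect the main obstacle to be not the naturality step but the care needed to see that the curvature datum is correctly matched on both sides --- that morphisms of curved conilpotent cooperads genuinely detect the curvature of $B_c\PPP$ --- which is what distinguishes this refined adjunction from the classical augmented bar--cobar adjunction, together with the verification that conilpotency of $\CCC$ is exactly the finiteness hypothesis making all the infinite sums in play well defined.
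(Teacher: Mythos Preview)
The paper does not supply its own proof of this proposition: it is stated with a citation to \cite{LeGrignou16} and left at that. Your proposal is the standard argument one expects in that reference, and it is correct: freeness of $\Omega_u\CCC=\Tfree(s^{-1}\ov\CC)$ reduces operad morphisms to their restriction on generators, cofreeness of $B_c\PPP=\Tfree^c(s\PP\oplus\mbk\cdot v)$ reduces cooperad morphisms to their corestriction on cogenerators, and in both cases unwinding the compatibility with the (co)derivation and, on the bar side, the curvature yields precisely the Maurer--Cartan equation defining $\Tw(\CCC,\PPP)$. Your handling of the curvature is the right one: the $v$-component $\beta$ is forced to equal $\theta_\CCC$ by preservation of curvature, and the residual condition $\theta_\CCC d_\CCC=0$ needed for the $v$-component of the coderivation equation to hold is exactly one of the curved cooperad axioms. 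The naturality check is indeed routine. There is nothing to compare against in the present paper.
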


\subsection{Truncated bar construction}

In this section, we recall the operadic bar construction of Hirsh-Mill\`es which we call the truncated bar construction; see \cite{HirshMilles12} for the original reference. Let $\PPP:=(\PP,\gamma, 1)$ be a dg-operad. Suppose that $\PPP$ is equipped with a semi-augmentation, that is, a morphism of graded $\mbs$-modules $\epsilon: \PP \ra \II$ such that $\epsilon(1) =Id$. We denote by $\ov\PP$ the kernel of $\epsilon$ and we denote by $\pi$ the projection of $\PP$ onto $\ov \PP$ along the unit $1$. The truncated bar construction of $\PPP$ relative to the semi-augmentation $\epsilon$ is the cofree conilpotent cooperad $B_r \PPP := \Tfree^c s\ov \PP$. It is equipped with the coderivation which extends the map
\begin{align*}
\ov \Tfree (s \ov\PP) \hookrightarrow \ov \Tfree^{\leq 2} (s \ov\PP)  &\ra s\ov \PP \\
sx \otimes sy &\mapsto (-1)^{|x|}s\pi \gamma(x \otimes y) \\
sx &\mapsto -s \pi dx\ .
\end{align*}
The curvature $\theta$ is the following map:
\begin{align*}
 \Tfree (s \ov\PP) \hookrightarrow \ov \Tfree^{\leq 2} (s \ov\PP)  &\ra \mbk \\
 sx \otimes sy &\mapsto (-1)^{|x|+1} \epsilon \gamma (x \otimes y)\\
 sx & \mapsto \epsilon (dx)\ .
\end{align*}
A truncated twisting morphism from a curved conilpotent cooperad $\CCC$ to a semi-augmented operad $\PPP$ is a twisting morphism $\alpha : \ov \CC \ra \PP$ such that $\epsilon \alpha= 0$. We denote by $trTw(\CCC,\PPP)$ the set of twisting morphisms from a curved conilpotent cooperad $\CCC$ to a semi-augmented dg-operad $\PPP$. 

\begin{prop}[\cite{HirshMilles12}]
 For any semi-augmented operad $\PPP$ and any curved conilpotent cooperad $\CCC$, we have functorial isomorphisms:
 $$
 \hom_{\mathsf{sa}-\Operad}(\Omega_u \CCC, \PPP) \simeq trTw(\CCC,\PPP) \simeq \hom_{\cCoop}(\CCC, B_r \PPP)\ ,
 $$
 where $\mathsf{sa}-\Operad$ is the category of semi-augmented dg operads.
\end{prop}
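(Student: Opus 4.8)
The plan is to realise both bijections as restrictions of the free--cofree adjunctions recalled above, using the set $trTw(\CCC,\PPP)$ as the common intermediate object; the computations are parallel to those for the (untruncated) bar--cobar adjunction of \cite{LeGrignou16} recalled earlier, and one could also simply invoke \cite{HirshMilles12}.

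\emph{The left-hand isomorphism.} Since $\Omega_u\CCC = \Tfree(s^{-1}\ov\CC)$ is a free graded operad, morphisms of graded operads $\Omega_u\CCC \to \PPP$ are in natural bijection with degree $0$ maps of graded $\mbs$-modules $s^{-1}\ov\CC \to \PP$, equivalently with degree $-1$ maps $\alpha\colon\ov\CC\to\PP$, which we extend by $\alpha(1)=0$. The operad $\Omega_u\CCC$ carries the canonical augmentation of a free operad, whose augmentation ideal is generated by $s^{-1}\ov\CC$; since a morphism of operads to $\II$ is determined by its restriction to generators, compatibility of such an $f$ with the semi-augmentation of $\PPP$ is equivalent to $\epsilon_\PPP\,\alpha=0$. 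Next, both $f\circ d_{\Omega_u\CCC}$ and $d_\PPP\circ f$ are derivations relative to $f$, hence are determined by their restriction to the generators $s^{-1}\ov\CC$; evaluating both sides of $f\,d_{\Omega_u\CCC}=d_\PPP\,f$ on a generator $s^{-1}x$ by means of the explicit derivation of $\Omega_u$ and the structure maps of $\PPP$, the identity unwinds precisely into $\partial(\alpha)+\gamma_\PPP(\alpha\otimes\alpha)\Delta_\CCC=\theta_\CCC(-)1_\PPP$, i.e.\ $\alpha\in trTw(\CCC,\PPP)$. Naturality in $\CCC$ and $\PPP$ is inherited from the free-operad adjunction.

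\emph{The right-hand isomorphism.} Dually, since $B_r\PPP=\Tfree^c\,s\ov\PP$ is a cofree conilpotent cooperad, morphisms of coaugmented graded cooperads $\CCC\to B_r\PPP$ are in natural bijection with degree $0$ maps of $\mbs$-modules $\ov\CC\to s\ov\PP$, equivalently with the degree $-1$ maps $\alpha\colon\ov\CC\to\PP$ such that $\epsilon_\PPP\alpha=0$ (again extended by $\alpha(1)=0$), compatibility with the coaugmentations being automatic. It then remains to show that such a $g$ is a morphism of curved cooperads --- that is, $\theta_{B_r\PPP}\circ g=\theta_\CCC$ on arity $1$ together with $g\,d_\CCC=d_{B_r\PPP}\,g$ --- if and only if $\alpha$ is a truncated twisting morphism. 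One checks that $g\,d_\CCC$ and $d_{B_r\PPP}\,g$ are both coderivations relative to $(g,g)$ from $\CCC$ to the cofree cooperad $B_r\PPP$, so that, by conilpotence of $\CCC$, they agree as soon as their corestrictions to the cogenerators $s\ov\PP$ agree; writing out this cogenerator-level identity from the explicit formula for the codifferential of $B_r\PPP$ yields the image under the projection $\PP\twoheadrightarrow\ov\PP$ of the equation $\partial(\alpha)+\gamma_\PPP(\alpha\otimes\alpha)\Delta_\CCC=\theta_\CCC(-)1_\PPP$, while the $\theta$-compatibility, written out from the explicit formula for the curvature of $B_r\PPP$, yields its image under the semi-augmentation $\epsilon_\PPP\colon\PP\twoheadrightarrow\mbk$. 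Since $\PP=\mbk\cdot1_\PPP\oplus\ov\PP$ as $\mbs$-modules, the two conditions together amount to the full twisting equation, so $\alpha\in trTw(\CCC,\PPP)$; naturality is again clear.

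\emph{Main obstacle.} The genuinely non-formal part is the bookkeeping in these two unwinding steps: one must justify that compatibility with the (co)differential may be tested on (co)generators --- via the universal property of the free operad applied to derivations relative to $f$ on the operad side, and via the fact that a coderivation relative to $(g,g)$ into a cofree conilpotent cooperad is detected on cogenerators (which uses the conilpotence of $\CCC$) on the cooperad side --- and then carry the Koszul signs produced by the suspensions $s$ and $s^{-1}$ through the computation, checking that the curvature contributions $\theta_\CCC(-)1_\PPP$ land in the right summand of $\PP=\mbk\cdot1_\PPP\oplus\ov\PP$ (the degree $2$ generator $v$ present in $B_c\PPP$ but absent from $B_r\PPP$ being precisely what is traded for the semi-augmentation), and that writing $\Delta_\CCC$ rather than $\Delta_2$ in the twisting equation is harmless because $\alpha$ annihilates the coaugmentation $1$. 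Everything else is the standard free/cofree calculus.
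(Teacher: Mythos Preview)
The paper does not give its own proof of this proposition; it simply attributes the result to \cite{HirshMilles12}. Your argument is the standard one and is correct: it realises both bijections via the universal properties of the free operad $\Tfree(s^{-1}\ov\CC)$ and the cofree conilpotent cooperad $\Tfree^c(s\ov\PP)$, reducing the (co)differential compatibility to a (co)generator-level identity that unwinds into the twisting relation, with the curvature condition on the cooperad side supplying the $\mbk\cdot 1_\PPP$-component. The only point worth flagging is that the semi-augmentation on $\Omega_u\CCC$ you invoke is indeed only a morphism of \emph{graded} $\mbs$-modules (the curvature term $\theta(x)1$ in the cobar differential prevents it from being a dg augmentation), which is precisely what the category $\mathsf{sa}\text{-}\Operad$ allows; you implicitly use this but it would be worth saying explicitly.
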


For any semi-augmented dg-operad, the universal truncated twisting morphism
$$
B_r \PPP = \Tfree s\ov \PP \twoheadrightarrow s \ov \PP \xrightarrow{s^{-1}} \ov \PP \hookrightarrow \PP
$$ 
is in particular a twisting morphism. So it induces a morphism of curved conilpotent cooperad from $B_r \PPP$ to $B_c \PPP$.


\section{Model structure on operads}

This section deals with the homotopy theory of dg operads. We recall the result proved by Hinich in \cite{Hinich97} that there exists a model structure on the category of dg operads whose weak equivalences are quasi-isomorphisms and whose fibrations are surjections. Then, we describe the cofibrations in a convenient  way to be able to use it in the sequel.

\begin{nota}
 We denote by $S^k_n$ (resp. $D^k_n$) the dg-$\mbs$-module such that 
\begin{itemize}
 \itemt in arity $n$, $S^k_n (n) = S^k \otimes \mbk[\mbs_n]$ and $D^k_n (n)= D^k \otimes \mbk[\mbs_n]$,
 \itemt if $m\neq n$, then $S^k_n(m) =D^k_n(m)=0$.
\end{itemize}
\end{nota}

\subsection{Model structure on $\mbs$-modules} 

\begin{thm}\cite[\S 2.3]{Hovey99}
For any integer $n \in \mbn$, there exists a cofibrantly generated model structure on the category of chain complexes of $\mbk[\mbs_n]$-modules whose fibrations (resp. weak equivalences) are epimorphisms (resp. quasi-isomorphisms). Moreover, the cofibrations are the morphisms whose cokernel is cofibrant and which are degreewise split monomorphisms.
\end{thm}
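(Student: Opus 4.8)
The plan is to recognise the asserted structure as the projective model structure on the category $\mathrm{Ch}(R)$ of chain complexes of modules over the ring $R := \mbk[\mbs_n]$, and to recover it through Kan's recognition theorem. By Maschke's theorem $R$ is semisimple, so every $R$-module is projective and, as I will use at the very end, every chain complex of $R$-modules is cofibrant; but the existence of the model structure holds over an arbitrary ring.

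First I would fix the generating data. For $k\in\mathbb{Z}$, let $S^{k-1}\otimes R$ denote the complex with $R$ concentrated in degree $k-1$, and $D^k\otimes R$ the complex with a copy of $R$ in degrees $k$ and $k-1$ linked by the identity. Put
\[
I := \{\, S^{k-1}\otimes R \hookrightarrow D^k\otimes R \mid k\in\mathbb{Z}\,\}, \qquad J := \{\, 0 \to D^k\otimes R \mid k\in\mathbb{Z}\,\}.
\]
Then I would check the hypotheses of \cite[Theorem~2.1.19]{Hovey99}: $\mathrm{Ch}(R)$ is bicomplete; quasi-isomorphisms satisfy $2$-out-of-$3$ and are closed under retracts; the domains of the maps in $I$ and $J$ are either $0$ or a single free $R$-module of rank one placed in one degree, hence small relative to all morphisms. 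Using the natural isomorphism $\hom_{\mathrm{Ch}(R)}(D^k\otimes R, X)\cong X_k$ one identifies the maps right-orthogonal to $J$ with the degreewise surjections, that is, the epimorphisms of $\mathrm{Ch}(R)$; a cobase change of a map in $J$ along $0\to X$ is an inclusion $X\hookrightarrow X\oplus(D^k\otimes R)$ of a direct summand with contractible complement, so every transfinite composite of such is a quasi-isomorphism; and since $D^k\otimes R$ is contractible and degreewise free, each map in $J$ has the left lifting property against every degreewise-surjective quasi-isomorphism, hence against every trivial fibration. Finally the standard lifting computation shows that a map is right-orthogonal to $I$ exactly when it is a degreewise-surjective quasi-isomorphism. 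Feeding this into the recognition theorem yields a cofibrantly generated model structure on $\mathrm{Ch}(R)$ with quasi-isomorphisms as weak equivalences, epimorphisms as fibrations, and surjective quasi-isomorphisms as trivial fibrations, and with $I$ generating its cofibrations.

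For the description of cofibrations I would argue as in \cite[\S 2.3]{Hovey99}: each map in $I$ is a degreewise split monomorphism with free, hence cofibrant, cokernel, and both properties pass to cobase changes, transfinite composites and retracts, so every cofibration has them; conversely a degreewise split monomorphism $i\colon A\to B$ whose cokernel-complex $C$ is cofibrant is, via a choice of graded splitting and a cell presentation of $C$, a retract of a relative $I$-cell complex, hence a cofibration. Over $R=\mbk[\mbs_n]$ every complex is cofibrant, so this reduces to the statement that the cofibrations are precisely the degreewise split monomorphisms.

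The only point requiring genuine care is the identification of the maps right-orthogonal to $I$ with the surjective quasi-isomorphisms: lifting against $S^{k-1}\otimes R\to D^k\otimes R$ with zero upper map gives only that every cycle of the target lifts to a cycle of the source, and one must combine this with the acyclicity encoded in the remaining lifting instances to deduce degreewise surjectivity of the map together with its being a quasi-isomorphism. Everything else is the routine bookkeeping behind the recognition theorem.
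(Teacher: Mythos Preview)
Your proposal is correct and follows exactly the argument of the cited reference \cite[\S 2.3]{Hovey99}; the paper itself gives no proof here but simply invokes Hovey, so there is nothing further to compare.
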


\begin{prop}
Since the characteristic of $\mbk$ is zero, any degreewise monomorphism of chain complexes of $\mbk[\mbs_n]$-modules is a cofibration.\\ 
\end{prop}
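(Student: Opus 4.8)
The plan is to combine the cited model structure on chain complexes of $\mbk[\mbs_n]$-modules with Maschke's theorem. Recall that in that model structure, the cofibrations are exactly the morphisms which are degreewise split monomorphisms and whose cokernel is a cofibrant chain complex. So I need to show two things for an arbitrary degreewise monomorphism $f: M \to N$ of chain complexes of $\mbk[\mbs_n]$-modules: first, that $f$ is degreewise \emph{split}, and second, that its cokernel is cofibrant.

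For the first point, I would argue degree by degree. In a fixed degree $k$, the map $f_k : M_k \to N_k$ is an injection of $\mbk[\mbs_n]$-modules, so $M_k$ is (isomorphic to) a submodule of $N_k$. By Maschke's theorem (the stated theorem), every $\mbk[\mbs_n]$-module is injective, so in particular $M_k$ is an injective $\mbk[\mbs_n]$-module; hence the monomorphism $M_k \hookrightarrow N_k$ splits as $\mbk[\mbs_n]$-modules. This gives the degreewise splitting, with no compatibility with the differential required.

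For the second point, the cokernel $C := N/M$ is a chain complex of $\mbk[\mbs_n]$-modules, and I need it to be cofibrant in the projective model structure on chain complexes of $\mbk[\mbs_n]$-modules. Since the characteristic is zero, Maschke's theorem says every $\mbk[\mbs_n]$-module is projective; equivalently $\mbk[\mbs_n]$ is semisimple. A standard fact is that over a semisimple ring, every chain complex is cofibrant in the projective model structure — one can see this, for instance, because every complex of projectives splits as a direct sum of its homology (placed in each degree, with zero differential) and contractible complexes of the form $D^k \otimes P$; or more directly because every degreewise-split epimorphism of such complexes admits a section, so the complex has the left lifting property against all trivial fibrations. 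I would invoke this to conclude that $C$ is cofibrant.

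Putting the two points together, $f$ is a degreewise split monomorphism with cofibrant cokernel, hence a cofibration by the characterization in the cited theorem. I do not expect a genuine obstacle here: the only mild subtlety is making precise the claim that over a semisimple ring every chain complex is cofibrant, which is where I would either cite a reference or spell out the splitting of a complex of projectives into contractible pieces and its homology; everything else is an immediate application of Maschke's theorem.
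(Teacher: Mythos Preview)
Your approach is correct and matches the paper's: use Maschke to get the degreewise splitting, then argue that every chain complex of $\mbk[\mbs_n]$-modules is cofibrant. One caution: your parenthetical claim that ``every degreewise-split epimorphism of such complexes admits a section'' is false as stated (e.g.\ $D^1\twoheadrightarrow S^1$ has no chain-map section); what is true, and what both you and the paper actually need, is that every \emph{acyclic} fibration admits a section, since its kernel is an acyclic complex of projectives and hence contractible.
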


\begin{proof}
 By Maschke's Theorem, any monomorphism of $\mbs_n$-modules is split. So we just have to show that any chain complex is cofibrant. It suffices to show that any acyclic fibration splits.
\end{proof}

Subsequently, there exists a cofibrantly generated model structure on the category of dg $\mbs$-modules whose cofibrations (resp. fibrations, resp. weak equivalences) are exactly monomorphims (resp. epimorphisms, resp. quasi-isomorphisms). Moreover, a set of generating cofibrations is made up of the maps $0 \ra S^0_n$ and $S^k_n \ra D^{k+1}_n$; a set of generating acyclic cofibrations is made up of the maps $0 \ra D^k_n$.

\subsection{Model structure on operads}

Consider the adjunction
\[
\begin{tikzcd}
 \text{dg }\mbs-\text{mod} \arrow[r, shift left, "\Tfree"] & \Operad\ , \arrow[l, shift left, "U"] 
\end{tikzcd}
\]
where $U$ is the forgetful functor. Transferring this model structure along this adjunction gives the following Theorem.
 
\begin{thm}[\cite{Hinich97}]\label{thm:hinich}
The category of dg operads admits a cofibrantly generated model structure where the weak equivalences (resp. fibrations) are the componentwise quasi-isomorphisms (resp. epimorphisms). The generating cofibrations are the maps $\II \ra \Tfree (S^0_n)$ and the maps $\Tfree (S^k_n) \ra \Tfree (D^{k+1}_n)$. The generating acyclic cofibrations are the maps $\II \ra \Tfree (D^k_n)$.
\end{thm}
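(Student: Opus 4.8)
The plan is to obtain this model structure by transferring the cofibrantly generated model structure on dg $\mbs$-modules recalled above along the free--forgetful adjunction between dg $\mbs$-modules and dg operads, exactly as in \cite{Hinich97}. One declares a morphism of dg operads to be a weak equivalence (resp.\ a fibration) precisely when its image under $U$ is a componentwise quasi-isomorphism (resp.\ a componentwise epimorphism), and lets the cofibrations be the morphisms with the left lifting property against all acyclic fibrations. Since $\Tfree(0)=\II$, applying $\Tfree$ to the generating cofibrations $0\to S^0_n$, $S^k_n\to D^{k+1}_n$ and to the generating acyclic cofibrations $0\to D^k_n$ of dg $\mbs$-modules yields exactly the candidate sets $I$ and $J$ of the statement. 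By the standard transfer principle for cofibrantly generated model categories, it will then suffice to check: (i) $\Operad$ is complete and cocomplete and every object is small, so that $I$ and $J$ permit the small object argument; and (ii) the acyclicity condition: every relative $J$-cell complex is a weak equivalence.

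Point (i) is immediate from the presentability of $\Operad$ established above. For point (ii), I would observe that a relative $J$-cell complex is a (possibly transfinite) composition of pushouts of the form $\PPP\to\PPP\vee\Tfree(\WW)$, where $\WW$ runs over direct sums of dg $\mbs$-modules of the form $D^k_n$ --- in particular, acyclic dg $\mbs$-modules. So it is enough to show that $\PPP\hookrightarrow\PPP\vee\Tfree(\WW)$ is a quasi-isomorphism whenever $\WW$ is acyclic, and that quasi-isomorphisms of this type are stable under transfinite composition; the latter holds because homology commutes with filtered colimits of dg $\mbs$-modules, and $U$ preserves those colimits so that the transfinite compositions may be computed on underlying $\mbs$-modules.

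For the former claim I would use an explicit description of the underlying dg $\mbs$-module of $\PPP\vee\Tfree(\WW)$: dually to the description of the product of two cooperads given above, it decomposes as a direct sum indexed by trees whose vertices carry alternately a label in $\PPP$ and a label in $\WW$, the summand with no $\WW$-label being $\PPP$ itself. Filtering $\PPP\vee\Tfree(\WW)$ by the number of $\WW$-labelled vertices yields an exhaustive filtration with $F_0=\PPP$ whose successive quotients $F_m/F_{m-1}$, for $m\ge1$, are by the K\"unneth formula sums of tensor products each containing at least one factor $\WW$, hence acyclic --- the passage to the $\mbs_p$-coinvariants over the vertex labellings preserving acyclicity by Maschke's theorem since $\mathrm{char}\,\mbk=0$. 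Thus each inclusion $F_{m-1}\hookrightarrow F_m$, and hence $\PPP=F_0\hookrightarrow\colim_m F_m=\PPP\vee\Tfree(\WW)$, is a quasi-isomorphism. With (i) and (ii) in hand, the transfer principle produces the model structure together with the stated generating sets.

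I expect the only genuine obstacle to be the combinatorial bookkeeping in the last paragraph: making the $\mbs$-module decomposition of $\PPP\vee\Tfree(\WW)$ precise enough --- tracking the alternation of labels forced by the unit of $\PPP$ and the symmetric group actions on the sets of vertices --- that the filtration by the number of $\WW$-vertices can be defined and its associated graded identified with an alternated-tree-module-type object on which the K\"unneth formula applies term by term. Over a field of characteristic zero, Maschke's theorem disposes of any residual contribution of the symmetric groups, and everything else is routine transfer-of-model-structures machinery.
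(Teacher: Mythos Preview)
Your proposal is correct and follows precisely the route the paper indicates: the paper gives no proof beyond the single sentence ``Transferring this model structure along this adjunction gives the following Theorem'' and the citation to \cite{Hinich97}, and your sketch fills in exactly that transfer argument. The acyclicity check you outline---filtering $\PPP\vee\Tfree(\WW)$ by the number of $\WW$-labelled vertices and invoking K\"unneth plus Maschke on the associated graded---is the standard way to verify the hypothesis of the transfer principle here, and your caveat about pinning down the combinatorics of the coproduct (alternation is between $\ov\PPP$ and $\ov\Tfree\WW$, so adjacent $\WW$-labels are allowed) is the right place to be careful.
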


\subsection{Cofibrations of operads}

We prove the following proposition in the vein of \cite[Appendix 1]{MerkulovVallette09I}.

\begin{prop}\label{prop:cofibop}
 Cofibrations of operads are exactly retracts of morphisms $\PPP \ra \PPP \vee \Tfree S $ where $S$ is a $\mbs$-module endowed with an exhaustive filtration
$$
 \{0\} = S_0 \subset S_1 \subset ... \subset \colim_{\beta < \alpha} S_\alpha\ ,
 $$
 indexed by an ordinal $\alpha$ and such that
$$
d(S_{i+1}) \subset S_{i+1} \oplus \PPP \vee  \Tfree(S_{i})\ .
$$
\end{prop}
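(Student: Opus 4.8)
The plan is to prove the two inclusions separately, the easy one being that every morphism $\PPP \ra \PPP \vee \Tfree S$ of the stated shape is a cofibration (so, cofibrations being closed under retracts, they contain all such retracts), and the substantial one being that, conversely, every cofibration is such a retract.

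For the first inclusion I would set $\RR_j := \PPP \vee \Tfree S_j$ with the restricted differential. The hypothesis $d(S_{j+1}) \subset S_{j+1} \oplus \PPP \vee \Tfree(S_j)$, together with the Leibniz rule, shows that each $\RR_j$ is a dg-suboperad of $\PPP \vee \Tfree S$, that $\RR_\lambda = \colim_{j < \lambda} \RR_j$ at limit ordinals, and that $\PPP \vee \Tfree S = \colim_j \RR_j$. So $\PPP = \RR_0 \ra \PPP \vee \Tfree S$ is a transfinite composite of the maps $\RR_j \ra \RR_{j+1}$, and it is enough to show each of these is a cofibration. Choosing a splitting $S_{j+1} = S_j \oplus W$ of graded $\mbs$-modules, $W \cong S_{j+1}/S_j$ acquires a square-zero degree $-1$ endomorphism $d_0$ (the ``leading part'' of $d$), so $(W, d_0)$ is a chain complex of $\mbs$-modules, and the remaining component of $d$ on $W$ is a map $r \colon W \ra \RR_j$ which, because $d^2 = 0$, is a chain map of degree $-1$.

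Then I would exhibit $\RR_j \ra \RR_{j+1}$ as a pushout along a free map. Let $C$ be the mapping cone of the identity of $(s^{-1}W, d_0)$, an acyclic chain complex of $\mbs$-modules containing $(s^{-1}W, d_0)$ as a subcomplex with quotient $(W, d_0)$; the inclusion $s^{-1}W \hookrightarrow C$ is a monomorphism of dg-$\mbs$-modules, hence a cofibration, so $\Tfree(s^{-1}W) \ra \Tfree C$ is a cofibration of operads because $\Tfree$ is left Quillen. The chain map $r$ classifies a morphism $\Tfree(s^{-1}W) \ra \RR_j$, and one checks that the pushout of $\Tfree(s^{-1}W) \ra \Tfree C$ along it is exactly $\RR_{j+1}$ with its differential. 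Consequently $\RR_j \ra \RR_{j+1}$, the transfinite composite $\PPP \ra \PPP \vee \Tfree S$, and any retract of it, are cofibrations.

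For the converse, given a cofibration $f \colon \PPP \ra \QQQ$, I would run the small object argument on Hinich's generating cofibrations $I$ (legitimate since $\Operad$ is presentable, so their domains are small), factoring $f$ as $\PPP \xrightarrow{\ j\ } \EE \xrightarrow{\ q\ } \QQQ$ with $j$ a relative $I$-cell complex and $q$ an $I$-injective, hence an acyclic fibration. Since $f$ is a cofibration it lifts against $q$, and the lift $\QQQ \ra \EE$ displays $f$ as a retract of $j$. Finally, a relative $I$-cell complex $\PPP \ra \EE$ is a transfinite composite of pushouts of maps in $I$: pushing out $\II \ra \Tfree S^0_n$ adjoins free generators with zero differential, while pushing out $\Tfree S^k_n \ra \Tfree D^{k+1}_n$ along a map selecting a cycle of the previous stage adjoins a free generator in degree $k+1$ whose differential is that cycle. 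Recording the generators adjoined at each stage assembles the $\mbs$-module $S$ and an exhaustive filtration satisfying the condition $d(S_{j+1}) \subset S_{j+1} \oplus \PPP \vee \Tfree(S_j)$ (in fact $d(S_{j+1}) \subset \PPP \vee \Tfree(S_j)$), so $\EE = \PPP \vee \Tfree S$ has the required form.

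I expect the main obstacle to be the concrete identification in the third paragraph: showing that the abstract pushout along $\Tfree(s^{-1}W) \ra \Tfree C$ reproduces precisely the quasi-free extension $\RR_{j+1}$ with the prescribed twisted differential. This forces one to control the underlying $\mbs$-module of the free products $\PPP \vee \Tfree(-)$ and of the pushout --- in particular that $S_{j+1} \cap (\PPP \vee \Tfree S_j) = S_j$ inside $\PPP \vee \Tfree S$ --- which is exactly where the tree combinatorics underlying operads must be handled with care.
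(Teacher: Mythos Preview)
Your proposal is correct and follows essentially the same strategy as the paper. Both directions match: for the converse you invoke the small object argument on Hinich's generating cofibrations exactly as the paper does (and you even observe the stronger condition $d(S_{j+1}) \subset \PPP \vee \Tfree(S_j)$ that the paper notes); for the forward direction you reduce to the successor steps $\RR_j \to \RR_{j+1}$ and realise each as a pushout of a free map on a cone, which is precisely the paper's pushout square $\Tfree(s^{-1}S) \to \Tfree(S \oplus s^{-1}S)$ applied with $\QQQ = \RR_j$ and $S = W$ (your mapping cone $C$ is the paper's $S \oplus s^{-1}S$ with the indicated differential). The obstacle you flag---identifying the abstract pushout with $\RR_{j+1}$---is real but routine, and the paper handles it at the same level of detail you do.
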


\begin{proof}
Given the generating cofibrations of the category of operads given in Theorem \ref{thm:hinich} and by \cite{Hovey99}, any cofibration of operads is a retract of a morphism $\PPP \ra \PPP \vee \Tfree S$ as in Proposition \ref{prop:cofibop} with the additional conditions that the cokernel of the inclusions $S_i \ra S_{i+1}$ are free $\mbs$-modules (that is the cokernel is a free $\mbk[\mbs_n]$-module in arity $n$) and that
$$
d(S_{i+1}) \subset \PPP \vee  \Tfree(S_{i})\ .
$$
Conversely, consider a morphism of operads $f: \PPP \ra \PPP \vee \Tfree S$ such that $d( S)\subset S \oplus \PPP$. It fills the following pushout diagram
$$
\xymatrix{\Tfree ( s^{-1}S) \ar[d] \ar[r] & \PPP \ar[d]\\
 \Tfree (S \oplus s^{-1}S) \ar[r] & \PPP \vee \Tfree S}
$$
where $S \oplus s^{-1}S$ is endowed with the differential $x +s^{-1} y \mapsto dx + s^{-1} x - s^{-1}dy$. Since the map $ s^{-1}S \ra S \oplus s^{-1}S$ is a cofibration of dg $\mbs$-modules (since it is a monomorphism), then $\Tfree (s^{-1}S) \ra \Tfree(S \oplus s^{-1}S)$ is a cofibration of operads and so $f$ is a cofibration. Any morphism $\PPP \ra \PPP \vee \Tfree S$ as in Proposition \ref{prop:cofibop} is a transfinite composite of morphisms as $f$ and so is a cofibration.
\end{proof}

\subsection{Enrichment in simplicial sets}

Let $\PPP=(\PP,\gamma_\PPP,1_\PPP)$ be an operad and let $\AAA:=(\Aa,\gamma_\AAA,1_\AAA)$ be a unital commutative algebra. Let $\PP \otimes \Aa$ be the $\mbs$-module defined by
$$
(\PP \otimes \Aa) (m):= \PP(m) \otimes \Aa\ .
$$
It has an obvious structure of operad. This construction is functorial.\\

Besides for any integer $n \in \mbn$, let $\Omega_n$ be the unital commutative algebra
$$
\Omega_n:= \mbk [t_0, \ldots, t_n ,dt_0, \ldots, dt_n]/(\Sigma t_i=1; \Sigma dt_i = 0)\ .
$$
The construction $n \mapsto \Omega_n$ defines a  simplicial unital commutative algebra. This provides an enrichment of the category of dg operads over simplicial sets as follows:

$$
HOM(\PPP,\QQQ)_n := \hom_{\Operad}(\PPP , \QQQ \otimes \Omega_n) = \hom_{\Operad_{\Omega_n}}(\PPP \otimes \Omega_n , \QQQ \otimes \Omega_n)\ .
$$

\begin{prop}
 For any dg operads $\PPP$ and $\QQQ$ with $\PPP$ cofibrant, the simplicial set $HOM(\PPP,\QQQ)$ is a model for the mapping space $\Map(\PPP,\QQQ)$.
\end{prop}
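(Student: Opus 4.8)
The plan is to show that the simplicial enrichment $\HOM(-,-)$ together with the simplicial commutative algebras $\Omega_\bullet$ makes the model category of dg operads into a \emph{simplicial model category}, or at least verifies enough of that structure to conclude that $\HOM(\PPP,\QQQ)$ computes the mapping space when $\PPP$ is cofibrant. The standard route is to recall that $\Omega_\bullet$ is a simplicial commutative dg algebra with $\Omega_0 = \mbk$ and that the Sullivan--de~Rham / polynomial de~Rham functor $\Aa \mapsto \Aa$ on commutative dg algebras sends the geometric simplices to the $\Omega_n$; in particular $\Omega_\bullet$ is a cosimplicial resolution and the associated cotensoring is well behaved. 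The key structural input is that for a cofibration $i \colon \PPP' \to \PPP$ of operads and a fibration $p \colon \QQQ \to \QQQ'$, the map of simplicial sets
\[
\HOM(\PPP,\QQQ) \longrightarrow \HOM(\PPP',\QQQ) \times_{\HOM(\PPP',\QQQ')} \HOM(\PPP,\QQQ')
\]
is a Kan fibration, and a weak equivalence if $i$ or $p$ is. Granting this, taking $\PPP'=\II$ (or the initial operad) and $p$ an acyclic fibration shows $\HOM(\PPP,\QQQ)$ is a Kan complex and that the two-out-of-three and lifting properties needed to identify it with $\Map(\PPP,\QQQ)$ hold; this is exactly the criterion, e.g. of \cite{Hinich97} or \cite[Proposition A.3.6]{} in the literature, that a homotopically well-behaved simplicial enrichment represents the derived mapping space.

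Concretely, first I would reduce the pushout-product statement above to a lifting problem: a lift in the square built from $i$ and $p$ against the generating (acyclic) cofibrations $\partial\Delta^k \hookrightarrow \Delta^k$ of simplicial sets corresponds, via the adjunction defining $\HOM$, to a lift in a square of dg operads where one side is $i$ and the other is obtained by cotensoring $p$ with the pair $(\partial\Delta^k,\Delta^k)$. So the crux is to show that cotensoring a fibration (resp. acyclic fibration) of operads with an acyclic cofibration (resp. cofibration) of finite simplicial sets yields an acyclic fibration (resp. fibration) of operads; equivalently, that $\QQQ \otimes \Omega_\bullet$ has the structure of a "fibrant" cosimplicial object. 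Because fibrations and weak equivalences of operads are detected arity-wise on the underlying chain complexes, and because $- \otimes \Aa$ for $\Aa$ a commutative algebra is applied arity-wise, this ultimately reduces to the analogous, classical statement for chain complexes: the functor $C \mapsto C \otimes \Omega_\bullet$ from $\dgMod$ to cosimplicial chain complexes is a (Reedy) fibrant cosimplicial resolution of $C$, which follows from the facts that each $\Omega_n$ is a free (hence flat) $\mbk$-module, that $\Omega_n \to \Omega_m$ induced by simplicial operators are surjective with the matching maps $\Omega_n \to M_n\Omega_\bullet$ surjective and quasi-isomorphisms onto their image in the appropriate range (this is the standard acyclicity/extension property of polynomial differential forms), and the Künneth formula.

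Next I would assemble these pieces: since $\II$ is cofibrant and every operad is fibrant, the pushout-product statement with $\PPP' = \II$ shows $\HOM(\PPP,\QQQ)$ is a Kan complex for $\PPP$ cofibrant; with $p$ an acyclic fibration it shows $\HOM(\PPP,-)$ sends acyclic fibrations to acyclic Kan fibrations; and functoriality of $\HOM$ in cofibrant replacements together with the fact that $\Omega_\bullet$ is a genuine simplicial resolution (so $\HOM(\PPP,\QQQ)$ is homotopy invariant in $\QQQ$ along weak equivalences between, and in $\PPP$ along weak equivalences between cofibrant, objects) shows $\HOM(\PPP,\QQQ)$ is invariant under cofibrant--fibrant replacement. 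Finally, one compares with any explicit model for $\Map(\PPP,\QQQ)$ — for instance, the simplicial set $[n] \mapsto \hom_{\Operad}(\PPP, \QQQ^{\Delta^n})$ for a functorial cosimplicial frame $\QQQ^{\Delta^\bullet}$ — by constructing a natural weak equivalence of simplicial sets between that frame and $\QQQ \otimes \Omega_\bullet$; this uses that both are simplicial resolutions of $\QQQ$ and the essential uniqueness of such resolutions in a model category.

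The main obstacle will be establishing the pushout-product / Reedy-fibrancy property, i.e.\ controlling the interaction between the cotensoring $-\otimes\Omega_\bullet$ and the model structure on operads. The subtlety is that while the underlying chain-complex statement (polynomial de~Rham forms give a fibrant cosimplicial resolution) is classical, one must check it is compatible with the free operad monad $\Tfree$ and with the tree combinatorics — in particular that cotensoring with a finite simplicial set, which on chain complexes is a finite limit, interacts correctly with the arity-wise and tree-wise structure so that the resulting map of operads is again a (acyclic) fibration. Here the key simplification, as elsewhere in the paper, is that characteristic zero and Maschke's theorem make all the relevant $\mbk[\mbs_n]$-module extension problems split, so the verification genuinely does reduce to the non-equivariant, non-operadic chain-complex computation; I would cite \cite{Hinich97} and \cite{BergerMoerdijk03} for the template and carry out only the modifications forced by the operadic setting.
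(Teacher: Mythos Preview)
Your approach is correct in substance but considerably more elaborate than what the paper does. The paper's proof is one line: it simply observes that the simplicial operad $n \mapsto \QQQ \otimes \Omega_n$ is a Reedy fibrant replacement of the constant simplicial operad $\QQQ$, and then invokes the standard fact (Hovey, \S 5.4) that for $\PPP$ cofibrant and $\QQQ^\bullet$ a simplicial frame on $\QQQ$, the simplicial set $\hom(\PPP,\QQQ^\bullet)$ models $\Map(\PPP,\QQQ)$. So one only needs Reedy fibrancy of $\QQQ \otimes \Omega_\bullet$ and the fact that each $\QQQ \to \QQQ \otimes \Omega_n$ is a weak equivalence --- both of which, as you correctly note, reduce aritywise to the classical properties of polynomial de~Rham forms. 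Your route through the full SM7 pushout-product axiom establishes strictly more (a simplicial model structure) and is a legitimate way to reach the same conclusion, but it is not needed here; the simplicial-frame criterion short-circuits all of that.

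One terminological slip to fix: you repeatedly call $\QQQ \otimes \Omega_\bullet$ and $C \otimes \Omega_\bullet$ \emph{cosimplicial} objects, and speak of a ``cosimplicial frame $\QQQ^{\Delta^\bullet}$''. Since $\Omega_\bullet$ is a \emph{simplicial} commutative algebra (forms are contravariant in the simplex), $\QQQ \otimes \Omega_\bullet$ is a \emph{simplicial} operad, and the relevant notion is a simplicial frame, not a cosimplicial one. This does not affect the mathematics of your argument, but it should be corrected before the argument is written up.
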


\begin{proof}
 It suffices to notice that the simplicial operad $(\QQQ \otimes \Omega_n)_{n \in \mbn}$ is a Reedy fibrant replacement of the constant simplicial operad $\QQQ$.
\end{proof}

\section{Model structure on curved conilpotent cooperads}\label{section:model}

In this section, we show that the model structure on the category of dg operads can be transferred through the cobar construction functor to the category of curved conilpotent cooperads. This result is in the vein of earlier results by Hinich \cite{Hinich01}, Lefevre-Hasegawa \cite{LefevreHasegawa03}, Vallette \cite{Vallette14} and Positselski \cite{Positselski11}. Our proof relies on the same kind of method; however new difficulties appear with the combinatorics of trees and actions of symmetric groups.

\subsection{Statement of the result}

Here is the main result of this paper. The remaining of this section will be its proof.

\begin{thm}\label{thm:thmprincip}
 There exists a model structure on the category of curved conilpotent cooperads whose cofibrations (resp. weak equivalences) are the morphisms whose image under the cobar construction functor $\Omega_u$ is a cofibration (resp. a weak equivalence). Moreover, the adjunction $\Omega_u\dashv B_c$ is a Quillen equivalence.
\end{thm}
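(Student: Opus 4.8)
The plan is to define the three classes of maps, verify the easy axioms directly, and concentrate the work on the factorization axioms. Declare a morphism of $\cCoop$ to be a weak equivalence (resp. a cofibration) if $\Omega_u$ sends it to a quasi-isomorphism (resp. a cofibration of dg operads), and declare the fibrations to be the morphisms with the right lifting property against every morphism that is simultaneously a cofibration and a weak equivalence. The category $\cCoop$ is presentable (Proposition~\ref{prescoop}), in particular complete and cocomplete. The two-out-of-three axiom for weak equivalences and the stability of weak equivalences and of cofibrations under retracts follow immediately from the functoriality of $\Omega_u$ and the corresponding properties of the Hinich model structure (Theorem~\ref{thm:hinich}); stability of fibrations under retracts is formal, and one of the two lifting axioms holds by definition of the fibrations. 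Finally, every object of $\cCoop$ is cofibrant: $\Omega_u$ sends the map $\II \to \CCC$ out of the initial cooperad to the quasi-free extension $\II \to \Tfree(s^{-1}\ov\CC) = \Omega_u\CCC$, which carries the exhaustive filtration induced by the coradical filtration of $\CCC$ and is hence a cofibration of operads by Proposition~\ref{prop:cofibop}.

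For the factorization of an arbitrary morphism as an acyclic cofibration followed by a fibration, I would use presentability. The functor $\Omega_u$, being a left adjoint, preserves colimits; since the acyclic cofibrations of operads are closed under pushout and transfinite composition, so are the maps of $\cCoop$ that are at once cofibrations and weak equivalences. Taking as generating set $J$ a set of representatives for the acyclic cofibrations whose source and target are finite dimensional curved conilpotent cooperads --- which are compact objects by Lemma~\ref{lemma:compact} --- the small object argument factors any morphism as a relative $J$-cell complex followed by a morphism with the right lifting property against $J$. A relative $J$-cell complex is sent by $\Omega_u$ to an acyclic cofibration of operads, hence is itself an acyclic cofibration, and standard facts about combinatorial model categories (every acyclic cofibration is, up to retract, such a cell complex, the relevant classes being accessible) identify the right-hand class with the fibrations.

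The other factorization --- a cofibration followed by an acyclic fibration --- carries the real difficulty, and I would construct it by hand. Given $f \colon \CCC \to \DDD$, I would build an intermediate cooperad by transfinite induction along the coradical filtration of $\DDD$: at stage $n$ one freely adjoins to the cooperad built so far exactly the cogenerators needed to surject onto $G_n\DDD$ together with the cells needed to annihilate the obstructions that appear, so that the left-hand map remains a (cell) cofibration and the right-hand map becomes a surjective weak equivalence enjoying, by construction, the lifting property that characterises acyclic fibrations. The inductive step is the crux: one must lift cooperations through a surjection of $\mbk[\mbs_n]$-modules and then split, compatibly with the grafting and degrafting of trees and with all symmetric group actions, the short exact sequences obtained on the associated graded pieces of the coradical filtration. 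This is exactly where the combinatorics of the tree module and the interplay of the actions of the groups $\mbs_n$ intervene, and it is handled by Maschke's theorem --- projectivity and injectivity of $\mbk[\mbs_n]$-modules in characteristic zero --- applied graded piece by graded piece. The same analysis yields a usable description of the acyclic fibrations, from which the remaining lifting axiom, that cofibrations have the left lifting property against acyclic fibrations, follows by the retract argument applied to the factorization just built. I expect this inductive obstruction step to be the principal obstacle, precisely as the introduction warns with the words ``new difficulties appear with the combinatorics of trees and actions of symmetric groups''.

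It remains to check that $\Omega_u \dashv B_c$ is a Quillen equivalence. By construction $\Omega_u$ preserves cofibrations and weak equivalences, hence acyclic cofibrations, so it is a left Quillen functor. It also reflects weak equivalences by definition, and every object of $\cCoop$ is cofibrant; moreover the counit $\Omega_u B_c \PPP \to \PPP$ is a quasi-isomorphism for every operad $\PPP$, being the bar-cobar resolution recalled in the introduction. The triangle identity then says that the composite $\Omega_u \CCC \to \Omega_u B_c \Omega_u \CCC \to \Omega_u\CCC$, whose second arrow is the counit at $\Omega_u\CCC$, is the identity; as that second arrow is a quasi-isomorphism, so is the first, which is $\Omega_u$ applied to the unit $\CCC \to B_c \Omega_u \CCC$. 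Hence the unit is a weak equivalence of $\cCoop$ for every $\CCC$, and together with the counit being a weak equivalence this shows that the derived adjunction is an adjoint equivalence of homotopy categories; that is, $\Omega_u \dashv B_c$ is a Quillen equivalence.
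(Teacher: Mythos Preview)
Your treatment of the easy axioms and of the Quillen equivalence is fine and matches the paper. The gap is in both factorization axioms, where you have not actually produced an argument. For the (acyclic cofibration, fibration) factorization, the small object argument on a set $J$ of finite-dimensional acyclic cofibrations gives a factorization as a $J$-cell followed by a $J$-injective map, and the $J$-cell is indeed an acyclic cofibration; but you need the $J$-injective map to have the right lifting property against \emph{every} acyclic cofibration, not just those in $J$. Your appeal to ``standard facts about combinatorial model categories'' is circular: those facts presuppose a model structure, which is precisely what you are trying to build, and you have given no independent reason why an arbitrary acyclic cofibration is a retract of a $J$-cell. For the (cofibration, acyclic fibration) factorization you offer only a plan---``freely adjoin cogenerators'' and ``annihilate obstructions''---in language borrowed from operads, where free extensions exist, rather than cooperads, where the relevant constructions are cofree and behave quite differently; you yourself flag the inductive step as the principal obstacle and do not carry it out.

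The paper avoids both difficulties by never attempting to factor directly in $\cCoop$. Given $f:\CCC\to\DDD$, it factors $\Omega_u f$ in dg operads as $\Omega_u\CCC\to\PPP\to\Omega_u\DDD$ and then takes $\CCC\to B_c\PPP\times_{B_c\Omega_u\DDD}\DDD\to\DDD$. The entire weight of the argument rests on the Key Lemma (Lemma~\ref{thm:keylemma}): for a surjection $p:\PPP\to\Omega_u\CCC$, the map $B_r\PPP\times_{B_r\Omega_u\CCC}\CCC\to B_r\PPP$ is a weak equivalence. This is proved by exhibiting explicit contracting homotopies on tree modules (Lemmas~\ref{lemmacontractcobar} and~\ref{lemmacontractbar}) and a careful filtration argument; Maschke's theorem enters only to split the surjection $p$ at the level of graded $\mbs$-modules. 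The same Key Lemma then yields the remaining lifting axiom by showing that every acyclic fibration factors through the pullback of the image under $B_c$ of an acyclic fibration of operads. What your sketch is missing is an analogue of this lemma; the inductive construction you envisage, even if it could be made precise for cooperads, would amount to reproving it.
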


From now on, we call \textit{cofibrations} (resp. \textit{weak equivalences}) of curved conilpotent cooperads the morphisms whose image under the cobar functor $\Omega_u$ is a cofibration (resp. weak equivalence). Moreover, we call \textit{acyclic cofibrations} the morphisms which are both cofibrations and weak equivalences and we call \textit{fibrations} the morphisms which have the right lifting property with respect to acyclic cofibrations. Finally, we call \textit{acyclic fibrations} the morphisms which are both fibrations and weak equivalences.

\subsection{Cofibrations}

We describe the cofibrations of curved conilpotent cooperads.

\begin{prop}
 The cofibrations of curved conilpotent cooperads are the degreewise injections.
\end{prop}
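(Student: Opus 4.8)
The plan is to prove the statement by a double inclusion, using the description of cofibrations of operads from Proposition \ref{prop:cofibop} together with the explicit formula for the cobar construction $\Omega_u$. For one direction, suppose $f : \CCC \ra \DDD$ is a cofibration of curved conilpotent cooperads, i.e.\ $\Omega_u f : \Omega_u \CCC \ra \Omega_u \DDD$ is a cofibration of dg operads. A cofibration of operads is in particular a degreewise injection on underlying dg $\mbs$-modules: this follows since $\Tfree$ applied to a monomorphism is a monomorphism, and transfinite composites, pushouts and retracts of degreewise injections remain degreewise injections. Since $\Omega_u \CCC = \Tfree(s^{-1}\ov\CC)$ and $\Omega_u \DDD = \Tfree(s^{-1}\ov\DD)$, and the free operad functor $\Tfree$ detects monomorphisms (the tree-module functor $\Tfree$ preserves and reflects injections, component by component, by the Künneth formula / freeness over $\mbk[\mbs_n]$), injectivity of $\Omega_u f$ forces injectivity of $s^{-1}\ov f : s^{-1}\ov\CC \ra s^{-1}\ov\DD$, hence of $\ov f$, hence of $f$ on all of $\CC$ since $f$ respects the coaugmentations and $\CC = \mbk\cdot 1 \oplus \ov\CC$.

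For the converse, suppose $f : \CCC \ra \DDD$ is a degreewise injection; I must show $\Omega_u f$ is a cofibration of operads. The idea is to build a filtration on the cokernel that exhibits $\Omega_u \CCC \ra \Omega_u \DDD$ in the form required by Proposition \ref{prop:cofibop}, namely as $\Omega_u \CCC \ra \Omega_u \CCC \vee \Tfree S$ for a suitably filtered $\mbs$-module $S$. Choose a graded splitting $\DD \cong \CC \oplus W$ as $\mbs$-modules (possible by Maschke, since $f$ is a degreewise $\mbk[\mbs_n]$-linear monomorphism in each arity and each degree); set $S := s^{-1}W$. Then on underlying graded operads $\Omega_u \DDD = \Tfree(s^{-1}\ov\DD) \cong \Tfree(s^{-1}\ov\CC \oplus S) = \Omega_u\CCC \vee \Tfree S$. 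The filtration on $S$ is induced by the coradical filtration of $\DDD$: put $S_n := s^{-1}(W \cap F_n\DDD)$ (intersected after the splitting), which is exhaustive because the coradical filtration is, and $S_0 = 0$ because $F_0 \DDD = \mbk\cdot 1 \subset \CC$. The key point is to check the differential condition $d(S_{n+1}) \subset S_{n+1} \oplus \Omega_u\CCC \vee \Tfree(S_n)$. This is where the explicit cobar differential enters: on $s^{-1}x$ it is $\theta(x)1 - s^{-1}dx - \sum (-1)^{|x_1|} s^{-1}x_1 \otimes s^{-1}x_2$ with $\sum x_1\otimes x_2 = \Delta_2 x$. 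The term $\theta(x)1$ lands in $\II \subset \Omega_u\CCC$; the term $s^{-1}dx$ lands in $s^{-1}\ov\DD$ and since $d$ preserves the coradical filtration it lands in $S_{n+1}$ plus the image of $s^{-1}\ov\CC$; and for the quadratic term one uses the compatibility of $\Delta$ with the coradical filtration recalled in the excerpt, $\Delta(F_n\DDD) \subset \sum_{p_0+\cdots+p_k \le n} F_{p_0}\DDD(k)\otimes_{\mbs_k}(F_{p_1}\DDD\otimes\cdots\otimes F_{p_k}\DDD)$, to see that each $x_1\otimes x_2$ with $x \in F_{n+1}$ decomposes with both factors in strictly lower filtration, hence in $\CC$ or in $S_n$, so the quadratic term lies in $\Omega_u\CCC \vee \Tfree(S_n)$. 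Thus the filtration satisfies the hypothesis of Proposition \ref{prop:cofibop} and $\Omega_u f$ is a cofibration.

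The main obstacle I expect is the bookkeeping in the last step: one must be careful that the chosen $\mbs$-module splitting $\DD \cong \CC \oplus W$ can be taken compatibly with the coradical filtration (choose splittings of $F_n\DDD = F_n\CCC \oplus W_n$ inductively and set $W = \bigcup W_n$), and one must track the curvature term $\theta$ and the coaugmentation carefully so that the identification $\Omega_u\DDD \cong \Omega_u\CCC \vee \Tfree S$ is genuinely one of graded operads and the differential has exactly the triangular form demanded. A secondary point is to justify that $\Tfree$ reflects degreewise injections in the forward direction; this is immediate from the arity-wise, degree-wise description of the tree module as a direct sum of tensor products over $\mbk[\mbs_n]$ together with Maschke's theorem, which makes every such tensor product an exact functor of each variable.
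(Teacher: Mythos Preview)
Your proposal is correct and follows essentially the same strategy as the paper: both directions rest on Proposition~\ref{prop:cofibop} together with the coradical filtration, and the cofibration $\Rightarrow$ injection direction is identical. The only difference is packaging on the converse: the paper factors $f$ as a transfinite composite through the sub-cooperads $\DDD_{[n]}:=\CCC+F_n^{rad}\DDD$ and checks that each $\Omega_u(f_n)$ has the form of Proposition~\ref{prop:cofibop} (with a one-step filtration on~$S$), whereas you exhibit $\Omega_u\DDD\cong\Omega_u\CCC\vee\Tfree S$ all at once with an $\omega$-indexed filtration on~$S$; the paper's factoring sidesteps the need for your filtration-compatible splitting $\DD\cong\CC\oplus W$, which is why it is slightly cleaner, but your version is equally valid once that splitting is secured as you describe.
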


\begin{proof}
 Let $f: \CCC \ra \DDD$ be a morphism of curved conilpotent cooperads which is degreewise injective. It is the transfinite composite of the morphisms
 $$
 f_n : \DDD_{[n]} \ra \DDD_{[n+1]}\ ,
 $$
where $\DDD_{[n]} =\CCC + F_n^{rad}\DDD \subset \DDD$.  Let us decompose the underlying graded $\mbs$-module of $\DD[n+1]$ as
\[
	\DD_{[n+1]} = \DD_{[n]} \oplus E
\]
At the level of graded operads, the morphism $\Omega_u(f_n)$ is just 
\[
	\Tfree(s^{-1}\overline{\DD}_{[n]}) \to \Tfree(s^{-1}\overline{\DD}_{[n]} \oplus s^{-1} E) .
\]
Moreover, as a consequence of the property of coradical filtrations recalled in Definition \ref{definicorad}, the morphism $E \hookrightarrow \DD_{[n+1]} \xrightarrow{\Delta_2} \Tfree^2 \overline{\DD}_{[n+1]}$ has its image lying in $\Tfree^2 \overline{\DD}_{[n]}$. Hence the restriction of the derivation of $\Omega_u(\DDD_{[n+1]})$ to $s^{-1} E$ may be decomposed as follows
\begin{itemize}
 \itemt the curvature part, that targets $\mbk \subset \Omega_u(\DDD_{[n]}) \subset \Omega_u(\DDD_{[n+1]})$; let us denote it $d_{curv}$;
\itemt the part coming from the decomposition $\Delta_2$, that targets $\Omega_u(\DDD_{[n]}) \subset \Omega_u(\DDD_{[n+1]})$; let us denote it $d_{dec}$;
 \itemt the part coming from the coderivation of $\DDD$, that targets $s^{-1}\overline{DD}_{[n+1]} = s^{-1}\overline{DD}_{[n]} \oplus s^{-1}E$; let us denote it $d_{twist}$ the part that targets $s^{-1}\overline{DD}_{[n]}$ and $d_E$ the part that targets $s^{-1}E$.
 \end{itemize}
One can show that $d_E^2=0$ and makes $s^{-1}E$ a dg operad. Then, the following diagram of dg operads is a pushout
\[
\xymatrix{
\Tfree (S^{-1} \otimes s^{-1}E)
\ar[r] \ar[d]
& \Omega_u (\DDD_{[n]})
\ar[d]
\\ 
\Tfree (D^0 \otimes s^{-1}E)
\ar[r]
& \Omega_u(\DDD_{[n+1]})
}
\]
where the top horizontal map is the composition of graded maps
\[
	S^{-1} \otimes s^{-1}E \xrightarrow{|1|} s^{-1}E \xrightarrow{d_{curv}+ d_{dec}+ d_{twist}} \Omega_u(\DDD_{[n]}) .
\]
Then, $\Omega_u (f_n)$ is a morphism of operads of the form $\PPP \ra \PPP \vee \Tfree (S)$ as in Proposition \ref{prop:cofibop}.
So, $\Omega_u (f_n)$ is a cofibration. Since cofibrations of operads are stable under transfinite composition, then $\Omega_u (f)$ is a cofibration. So $f$ is a cofibration.

Conversely, if $f: \CCC \ra \DDD$ is a cofibration, then $\Omega_u (f)$ is a cofibration and in particular it is injective. Indeed, generating cofibrations and their coproducts are degreewise injective and degreewise injections are stable under transfinite composition and retract. So the composite map $s^{-1} \ov\CC \hookrightarrow \Omega_u \CCC \ra \Omega_u \DDD$ is injective. Since it factorises through the map
$s^{-1}\ov\CC \ra s^{_1}\ov \DD$, the latter is also degreewise injective. Hence, the map $\ov\CC \ra \ov \DD$ is injective and so $f$ is injective.
\end{proof}

\subsection{Weak equivalences and filtered quasi-isomorphisms}
Weak equivalences of curved conilpotent cooperads are morphisms whose image under the functor cobar $\Omega_u$ is a quasi-isomorphism. Giving their explicit description is not an easy task. A sufficient condition for a morphism to be a weak equivalence is to be a filtered quasi-isomorphism.

\begin{defin}[Admissible filtrations and filtered quasi-isomorphisms]
Let $\CCC:= (\CC,\Delta,\epsilon, 1 , \theta)$ be a curved conilpotent cooperad. An admissible filtration $(F_n \CC)_{n \in \mbn}$ of $\CCC$ is an exhaustive filtration of the $\mbs$-module $\CC$ satisfying the following conditions.

$$
\begin{cases}
 d(F_n \CC) \subset F_n \CC\ ,\\
\Delta (F_n \CC)(m) \subset \sum_{p_0 + \cdots + p_k \leq n\atop k \in \mbn} (F_{p_0} \CC)(k) \otimes_{\mbs_k} (F_{p_1} \CC \circledast \cdots \circledast F_{p_k} \CC )\ ,\\
 F_0 \CC:= \II.
\end{cases}
 $$
 Let $\CCC$ and $\DDD$ be two curved conilpotent cooperads both equipped with an admissible filtration. A filtered quasi-isomorphism from $\CCC$ to $\DDD$ relative to these two filtrations is a morphism $f:\CCC \ra \DDD$ which preserves these filtrations and such that the induced morphism
 $$
 Gf : G\CCC \ra G\DDD
 $$
 is a quasi-isomorphism.
\end{defin}

\begin{eg}
 We know from \cite[Lemma 1]{LeGrignou16} that the coradical filtration of a curved conilpotent cooperad is admissible.
\end{eg}

\begin{prop}\label{prop:filteredqis}
 A filtered quasi-isomorphism is a weak equivalence.
\end{prop}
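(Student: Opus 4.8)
The plan is to show that if $f : \CCC \to \DDD$ is a filtered quasi-isomorphism relative to admissible filtrations $F_\bullet \CC$ and $F_\bullet \DD$, then $\Omega_u f : \Omega_u \CCC \to \Omega_u \DDD$ is a quasi-isomorphism. The key idea is to transport the admissible filtrations on the cooperads to filtrations on the cobar constructions and apply a comparison of associated graded complexes. First I would define, for a curved conilpotent cooperad $\CCC$ equipped with an admissible filtration $(F_n\CC)_{n \in \mbn}$, an induced filtration on $\Omega_u \CCC = \Tfree(s^{-1}\ov\CC)$: assign to a tree monomial $s^{-1}x_1 \otimes \cdots \otimes s^{-1}x_p$ (labelling the vertices of a tree $T$) the weight $\sum_i n_i$ where $x_i \in F_{n_i}\CC$, and let $F_N \Omega_u\CCC$ be spanned by tree monomials of weight $\le N$. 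Using $F_0\CC = \II$, each factor $s^{-1}x_i$ with $x_i \in \ov\CC$ has weight $\ge 1$, so this filtration is exhaustive and bounded below on each fixed tree; but more importantly it is an admissible-type filtration in the sense that the derivation of $\Omega_u\CCC$ is compatible with it.

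The second step is to check that the cobar differential $d_{\Omega_u\CCC}$ preserves $F_\bullet$ and to identify the associated graded differential. Recall that on $s^{-1}x$ the differential is $s^{-1}x \mapsto \theta(x)1 - s^{-1}dx - \sum (-1)^{|x_1|} s^{-1}x_1 \otimes s^{-1}x_2$ with $\sum x_1 \otimes x_2 = \Delta_2 x$. The term $s^{-1}dx$ stays in the same filtration degree since $d(F_n\CC)\subset F_n\CC$; the curvature term $\theta(x)1$ and the decomposition term $\sum s^{-1}x_1 \otimes s^{-1}x_2$ both land in filtration degree $\le n$ by the admissibility condition $\Delta(F_n\CC)(m)\subset \sum_{p_0+\cdots+p_k\le n}\cdots$ (and $\theta(x)1$ has the zero weight coming from $1 \in F_0$, hence $\le n$). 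Taking the associated graded, only the part of the differential that strictly preserves the weight survives in $G$; inspecting the three terms, the surviving piece on $G\Omega_u\CCC$ is precisely $\Tfree$ applied to $s^{-1}$ of the internal differential of $G\CCC$ together with the part of $\Delta_2$ that preserves weight — equivalently, $G\Omega_u\CCC \cong \Omega$-type construction built functorially out of $G\CCC$, so that $G(\Omega_u f)$ is obtained by applying this functor to $Gf$.

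The third step is to conclude: since $Gf : G\CCC \to G\DDD$ is a quasi-isomorphism by hypothesis, and since the functor $G(\Omega_u(-))$ on each arity and each tree summand is built from tensor products and $\mbs_k$-coinvariants of copies of $G\CC$ (the tree module construction), the Künneth formula together with Maschke's theorem (semisimplicity of $\mbk[\mbs_k]$ in characteristic zero, so coinvariants are exact) shows $G(\Omega_u f)$ is a quasi-isomorphism — this is exactly the content of the earlier proposition that $T(f)$ is a quasi-isomorphism when $f$ is. Then I would invoke the standard filtered-complex comparison lemma: a filtered map of chain complexes whose filtrations are exhaustive and bounded below (which holds here arity-by-arity and tree-by-tree, since a tree with $q$ leaves and the weight bound restricts to finitely many trees in each arity once we also note the filtration is bounded below by the number of vertices) and which induces a quasi-isomorphism on associated graded is itself a quasi-isomorphism. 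Hence $\Omega_u f$ is a quasi-isomorphism and $f$ is a weak equivalence.

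The main obstacle I expect is the careful bookkeeping in the second step: verifying that the induced filtration on $\Omega_u\CCC$ is genuinely exhaustive and, crucially, \emph{bounded below in each degree and arity} so that the spectral-sequence / filtered-comparison argument applies without convergence issues — one must use both that $F_0\CC=\II$ forces positive weight on reduced generators and that in a fixed arity $n$ only finitely many tree shapes with a bounded number of vertices contribute, then handle the (possibly infinitely many) vertices via the exhaustivity coming from the admissible filtration on $\CC$ itself. Everything else — compatibility of the differential, identification of the graded, and the Künneth/Maschke input — is routine given the results already established.
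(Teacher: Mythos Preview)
Your overall strategy matches the paper's: transport the admissible filtration to $\Omega_u\CCC$ by total weight and apply the filtered-complex comparison theorem (Theorem~\ref{maclane-homology}). The primary filtration is bounded below at $0$ and exhaustive (every tree monomial has finitely many factors, each in some $F_{n_i}\ov\CC$), so your convergence worry in the last paragraph is not the real obstacle.

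The genuine gap is in your third step. On the associated graded $G_n\Omega_u\CCC$ the curvature term vanishes, but the decomposition term does \emph{not}: the part of $\Delta_2$ with $p_1+p_2=n$ survives. Hence $G\Omega_u\CCC$ is not merely the tree module $\Tfree(s^{-1}G\ov\CC)$ with the internal differential; it still carries a cobar-type differential coming from the induced decomposition on $G\CCC$. Consequently the earlier proposition ``$T(f)$ is a quasi-isomorphism when $f$ is'' does not apply directly, and your appeal to K\"unneth/Maschke at this stage is premature. (Phrased differently: you are implicitly assuming that the non-curved cobar of a quasi-isomorphism of dg conilpotent cooperads is a quasi-isomorphism, which is essentially the statement you are trying to prove.)

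The paper resolves this with a \emph{second} filtration on each $G_n\Omega_u\CCC$, by the negative of the number of tree vertices:
\[
F'_k\, G_n\Omega_u\CCC \;:=\; \bigoplus_{p\ge -k}\ \sum_{i_1+\cdots+i_p=n} s^{-1}G_{i_1}\ov\CC \otimes \cdots \otimes s^{-1}G_{i_p}\ov\CC,
\qquad -n\le k\le 0.
\]
This filtration is \emph{finite} (since each factor has weight $\ge 1$, a tree contributing to $G_n$ has at most $n$ vertices). The residual $\Delta_2$ term strictly lowers $k$, so on $G'G_n\Omega_u\CCC$ only the internal differential of $G\CCC$ remains, and \emph{now} the tree-module quasi-isomorphism proposition applies to give that $G'G_n(\Omega_u f)$ is a quasi-isomorphism. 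Two applications of Theorem~\ref{maclane-homology} then finish the proof. Inserting this secondary filtration is the only missing ingredient in your plan.
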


We will use the following Theorem to prove this proposition.

\begin{thm}\cite[XI.3.4]{MacLane95}\label{maclane-homology}
 Let $f: \VV \rightarrow \WW$ be a map of filtered chain complexes. Suppose that the filtrations are bounded below and exhaustive. If for any integer $n$, the map $G_n \VV \rightarrow G_n \WW$ is a quasi-isomorphism, then $f$ is a quasi-isomorphism.
\end{thm}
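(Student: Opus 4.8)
The plan is to avoid invoking the full machinery of spectral sequences and instead argue by induction on the filtration degree, combining the long exact sequences in homology with the five lemma, and then passing to a colimit using exhaustiveness. First I would record the basic structure: since $d(F_n \VV) \subset F_n \VV$, each $F_n \VV$ is a subcomplex of $\VV$ and each associated graded piece $G_n \VV = F_n \VV / F_{n-1}\VV$ inherits a differential, and likewise for $\WW$. Thus for every $n$ there is a short exact sequence of chain complexes
\[
0 \to F_{n-1}\VV \to F_n \VV \to G_n \VV \to 0\ ,
\]
and its analogue for $\WW$, and $f$ restricts to a morphism between these two short exact sequences. Applying the homology long exact sequence functorially produces, for each $n$, a commuting ladder with exact rows (the degree shifting across connecting maps being absorbed into the notation $H_\bullet$)
\[
\begin{tikzcd}
H_\bullet G_n \VV \arrow[r] \arrow[d, "\cong"] & H_\bullet F_{n-1}\VV \arrow[r] \arrow[d, "\cong"] & H_\bullet F_n \VV \arrow[r] \arrow[d] & H_\bullet G_n \VV \arrow[r] \arrow[d, "\cong"] & H_\bullet F_{n-1}\VV \arrow[d, "\cong"] \\
H_\bullet G_n \WW \arrow[r] & H_\bullet F_{n-1}\WW \arrow[r] & H_\bullet F_n \WW \arrow[r] & H_\bullet G_n \WW \arrow[r] & H_\bullet F_{n-1}\WW
\end{tikzcd}
\]
whose central vertical map is $H_\bullet(F_n f)$ and whose four flanking maps are induced by $G_n f$ and $F_{n-1}f$.

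Next I would run the induction. Because the filtrations are bounded below, there is an integer $N$ with $F_n \VV = F_n \WW = 0$ for all $n < N$; then $F_N \VV = G_N \VV$ and $F_N \WW = G_N \WW$, so the map $H_\bullet F_N \VV \to H_\bullet F_N \WW$ is exactly $H_\bullet(G_N f)$, which is an isomorphism by hypothesis. This is the base case. For the inductive step, assume $H_\bullet F_{n-1}\VV \to H_\bullet F_{n-1}\WW$ is an isomorphism; the hypothesis gives that the maps on $H_\bullet G_n$ are isomorphisms, so the four flanking vertical maps in the ladder above are isomorphisms, and the five lemma forces the central map $H_\bullet F_n \VV \to H_\bullet F_n \WW$ to be an isomorphism as well. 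Hence $H_\bullet F_n \VV \to H_\bullet F_n \WW$ is an isomorphism for every $n$.

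Finally I would use exhaustiveness. Since the filtration is exhaustive, $\VV = \bigcup_n F_n \VV = \colim_n F_n \VV$ is a filtered (sequential) colimit of subcomplexes along monomorphisms, and the same holds for $\WW$. As homology commutes with filtered colimits of chain complexes, one has $H_\bullet \VV = \colim_n H_\bullet F_n \VV$ and $H_\bullet \WW = \colim_n H_\bullet F_n \WW$, so the map $H_\bullet \VV \to H_\bullet \WW$ is the filtered colimit of the isomorphisms $H_\bullet F_n \VV \to H_\bullet F_n \WW$ and is therefore itself an isomorphism; that is, $f$ is a quasi-isomorphism. Every step here is formal: the construction of the ladder, the five lemma, and the colimit argument are standard. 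The only points requiring genuine attention are using the bounded-below hypothesis correctly to seed the induction and invoking exactness of filtered colimits of chain complexes to commute homology past the exhaustive union; I expect no real obstacle beyond this bookkeeping. (This elementary argument is equivalent to the comparison theorem applied to the spectral sequence of the filtration, whose $E^1$-page is $H_\bullet(G\VV)$.)
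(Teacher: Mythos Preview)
The paper does not prove this theorem; it is quoted from Mac Lane's \emph{Homology} as a black box and used repeatedly (in Propositions~\ref{prop:filteredqis}, \ref{prop:eq1}, \ref{prop:bar-cobarres}, the Key Lemma, and Lemma~\ref{lemma:retract}). So there is no in-paper proof to compare against. Mac Lane's own argument in XI.3 proceeds via the spectral sequence of a filtered complex, identifying the $E^1$-page with $H_\bullet(G\VV)$ and invoking convergence.

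Your elementary argument is correct and is the standard way to avoid spectral sequences here. One small point worth tightening: you take ``bounded below'' to mean that there is a single integer $N$ with $F_n\VV = 0$ for all $n<N$. In Mac Lane's usage (and in general spectral-sequence language) the condition is a priori only degreewise: for each homological degree $p$ there is $N(p)$ with $(F_n\VV)_p = 0$ for $n<N(p)$. Your induction still goes through in that generality, since computing $H_p$ of each $F_n$ only involves degrees $p-1,p,p+1$, so one may start the induction at $N=\min(N(p-1),N(p),N(p+1))$ and argue degree by degree; but as written the base case is stated under the stronger uniform hypothesis. In every application in this paper the filtrations are uniformly bounded below (they start at $0$ or at a fixed finite negative index), so the distinction is immaterial here.
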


\begin{proof}[Proof of Proposition \ref{prop:filteredqis}]
Let $f: \CCC \ra \DDD$ be a filtered quasi-isomorphism. Consider the filtration on $\Omega_u \CCC$ induced by the coradical filtration of $\CC$ (Definition \ref{defininducedfilt}):
$$
F_n \Omega_u \CCC (q):= \bigoplus_T  \sum_{i_1 + \cdot +i_k =n} s^{-1} F_{i_1}^{rad} \ov \CC \otimes \cdots \otimes s^{-1} F_{i_k}^{rad} \ov \CC  
$$
where $k$ is the number of vertices of the tree $T$, for $n$ varying from $0$ to $\infty$. Then, $G( \Omega_u \CCC) =\Tfree G(\ov\CC)$.
Let us endow $\Omega_u \DDD$ with a filtration built in the same fashion. For any integer $n$, let us endow $G_n \Omega_u \CCC$ with the following filtration
$$
F'_k G_n \Omega_u \CCC := \bigoplus_{p\geq -k}\ \  \sum_{i_1+ \cdots + i_p = n}  s^{-1}G_{i_1} \ov \CC \otimes \cdots \otimes s^{-1} G_{i_p} \ov \CC  
$$
for $k$ varying from $-n$ to $0$. In other words, $F'_k G_n \Omega_u \CCC$ is made up of trees with $-k$ or more vertices. Again, we endow $G_n\Omega_u \DDD$ with a filtration built in the same fashion. Then, the map
$$
G'G_nf: G'G_n\Omega_u \DDD \ra  G'G_n\Omega_u \CCC
$$
is a quasi-isomorphism. We conclude by Theorem \ref{maclane-homology}.
\end{proof}

\begin{prop}\label{prop:truncvsusual}
 Let $\PPP$ be an operad together with a semi-augmentation $\epsilon : \PP \ra \II$. Then the canonical morphism $B_r \PPP \ra B_c \PPP$ is a filtered quasi-isomorphism with respect to the coradical filtrations. Hence, it is a weak equivalence.
\end{prop}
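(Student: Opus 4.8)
The plan is to show that the canonical map $f\colon B_r\PPP \to B_c\PPP$ is a filtered quasi-isomorphism with respect to the coradical filtrations; the weak equivalence then follows from Proposition~\ref{prop:filteredqis}. Recall that $f$ is the morphism of curved conilpotent cooperads corresponding, through the adjunction $\Omega_u \dashv B_c$, to the twisting morphism $B_r\PPP \twoheadrightarrow s\ov\PP \xrightarrow{s^{-1}} \ov\PP \hookrightarrow \PP$. That $f$ preserves the coradical filtrations is automatic: any morphism of conilpotent cooperads does so, since $F_n\CC$ is characterised by $\delta_\CCC(x)\in \Tfree^{\leq n}\ov\CC$ and $\delta$ is natural. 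So the content is the computation of the associated graded. I would also use that the coradical filtration of a cofree conilpotent cooperad $\Tfree^c\VV$ is the filtration by the number of vertices of a tree.

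Next I would identify the associated gradeds. In the coderivation of $B_r\PPP = \Tfree^c s\ov\PP$, the component built from $\gamma$ contracts two vertices into one and hence vanishes on the associated graded; only the vertexwise component $sx\mapsto -s\pi dx$ survives. Likewise, in the coderivation of $B_c\PPP = \Tfree^c(s\PP\oplus\mbk v)$, the component built from $\gamma$ lowers the filtration, while the components $sx\mapsto -sdx$ and $v\mapsto s1_\PPP$ survive, assembling into a vertexwise differential $d_W$ on $W := s\PP\oplus\mbk v$. Moreover the curvature takes values in $\II = F_0$ and is fed a factor of filtration degree at least one, so $d^2 = (\theta\otimes\id - \id\otimes\theta)\Delta_2$ also lowers the filtration; thus both associated gradeds are genuine (uncurved) dg conilpotent cooperads, namely $\Tfree^c(s\ov\PP,-s\pi d)$ and $\Tfree^c(W,d_W)$ with their canonical cofree coderivations. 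The same vertex count shows that the structure map of $f$ has, besides its one-vertex (corolla) component $g_1$, only components supported on trees with at least two vertices, all of which lower the filtration; hence $Gf = \Tfree^c(g_1)$ for a single chain map $g_1\colon (s\ov\PP,-s\pi d)\to (W,d_W)$, whose $s\PP$-component is the inclusion $s\ov\PP\hookrightarrow s\PP$ (read off, up to sign, from the twisting morphism above).

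It then remains to see that $g_1$ is a quasi-isomorphism of dg $\mbs$-modules, after which the earlier proposition on tree modules — $T(g)$ is a quasi-isomorphism whenever $g$ is, by Künneth — gives that $Gf = \bigoplus_{[T]}T(g_1)$ is a quasi-isomorphism. For this, observe that the arity-one sub-$\mbs$-module $\mbk s1_\PPP \oplus \mbk v$ of $W$ is a subcomplex ($d_W v = s1_\PPP$ and $d_W(s1_\PPP) = -s\,d1_\PPP = 0$ since the operad unit is a cycle) which is acyclic, and that the quotient $W/(\mbk s1_\PPP\oplus\mbk v)$ is isomorphic, as a dg $\mbs$-module, to $(s\ov\PP,-s\pi d)$. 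Hence the quotient map $\rho\colon W\to s\ov\PP$ is a quasi-isomorphism; since $\rho$ annihilates the $\mbk v$-component of $g_1$, the composite $\rho\circ g_1$ is the canonical isomorphism $s\ov\PP\xrightarrow{\sim} s\PP/\mbk s1_\PPP$, so $g_1$ is a quasi-isomorphism by two-out-of-three. Therefore $Gf$ is a quasi-isomorphism, $f$ is a filtered quasi-isomorphism, and a weak equivalence by Proposition~\ref{prop:filteredqis}.

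The delicate part is the bookkeeping on the associated graded: checking that every term of the two coderivations, of $d^2$, and of the structure map of $f$ that does not preserve the number of vertices genuinely lowers the coradical filtration — this is exactly where the normalisation $F_0 = \II$ is used — together with correctly extracting the linear part $g_1$, and its $s\PP$-component, from the bar--cobar construction.
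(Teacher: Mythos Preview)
Your argument is correct and follows the same line as the paper's proof: reduce to showing that the level-one map on cogenerators is a quasi-isomorphism, then invoke Proposition~\ref{prop:filteredqis}. The paper compresses all of this into a single sentence, writing the map explicitly as $sx \mapsto sx + \theta(sx)\,v$ (so the $\mbk v$-component you left unspecified is $\theta_{B_r\PPP}(sx) = \epsilon(dx)$) and asserting without further comment that it is a quasi-isomorphism; your quotient-by-an-acyclic-subcomplex argument is a clean way to supply that missing justification.
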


\begin{proof}
It suffices to notice that the morphism of chain complexes
\begin{align*}
 s\ov \PP & \ra s\PP \oplus \mbk \cdot v\\
 sx &\mapsto sx + \theta (sx) v
\end{align*}
is a quasi-isomorphism.
\end{proof}

\subsection{Bar-cobar and cobar-bar resolutions}
Let $\VV$ be a dg $\mbs$-module. The tree module $\Tfree (s^{-1} \ov \Tfree \VV)$ has both a structure of operad and cooperad. Let $D$ be the derivation which makes $\Tfree (s^{-1} \ov \Tfree \VV)$ the cobar construction of the dg conilpotent cooperad $\Tfree \VV$, that is: 
 $$
 s^{-1} A \ra  -s^{-1} dA - \sum (-1)^{|A_1|}s^{-1}A_1 \otimes s^{-1 } A_2
 $$
 for any $A \in \ov \Tfree \VV$, where $\Delta_2 A = \sum A_1 \otimes A_2$. Besides, let $h$ be the degree $1$ coderivation of the cooperad $\Tfree (s^{-1} \ov \Tfree \VV)$ which extends the following map
\begin{align*}
 \Tfree (s^{-1} \ov \Tfree \VV) \twoheadrightarrow \Tfree^2 (s^{-1} \ov \Tfree \VV) & \ra  \ov \Tfree \VV\\
 s^{-1} A_1 \otimes s^{-1}A_2 & \mapsto s^{-1}(A_1 \otimes A_2)
\end{align*}

\begin{lemma}\label{lemmacontractcobar}
Let $T$ be a tree with $k$ vertices ordered from $1$ to $k$ and let $T_1$, \ldots, $T_k$ be non trivial trees. Consider the sub $\mbs$-module of  $\Tfree (s^{-1} \ov \Tfree \VV)$ made up of the tree $T$ whose $i^{th}$ vertex is labelled by $T_i (\VV)$. On this submodule, we have:
$$
Dh +h D= q Id\ ,
$$
where $q$ is the sum of the numbers of inner edges of $T$ and of the trees $T_i$.
\end{lemma}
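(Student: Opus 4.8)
The plan is to reduce the statement to a purely combinatorial identity on a bigraded complex, which one then recognizes as a tensor product of classical contracting homotopies. First I would fix the tree $T$ with vertices $1,\dots,k$ and the trees $T_1,\dots,T_k$, and write $M$ for the sub-$\mbs$-module of $\Tfree(s^{-1}\ov\Tfree\VV)$ spanned by labellings of $T$ whose $i$th vertex carries an element of $T_i(\VV)$. An element of $M$ is thus a choice, for each vertex $i$, of a way to split $T_i$ into a forest of subtrees $T_i = T_i^{(1)} \sqcup \cdots \sqcup T_i^{(r_i)}$ — this records which inner edges of $T_i$ are ``present'' inside the $s^{-1}$-bracket and which have been ``cut'' by $D$ — together with a $\VV$-labelling of the vertices. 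Equivalently, $M$ is spanned by the ways of distributing marks on the set $E$ of all inner edges: the $q$ inner edges of $T$ (the edges joining the vertices $1,\dots,k$) and the inner edges of each $T_i$. The key observation is that neither $D$ nor $h$ touches the $\VV$-labels nor the underlying trees $T_i$; they only move marks on edges of $E$. So $M$ splits as $(\text{labels})\otimes \mbk\langle \text{markings of }E\rangle$ and it suffices to analyze the marking complex.

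Next I would identify $D$ and $h$ on the marking complex. The derivation $D$ acts by its quadratic part $s^{-1}A \mapsto -\sum \pm\, s^{-1}A_1\otimes s^{-1}A_2$ (the $-s^{-1}dA$ part is zero here since we are looking only at $\Tfree\VV$-labels with no differential contribution to the tree shape), so on $M$ it is the operator that picks an inner edge currently inside some $s^{-1}$-bracket, cuts it, and grafts the two resulting pieces back at the corresponding node of the ambient tree: it removes one mark, summed over all marked edges. Dually, $h$ picks two $s^{-1}$-brackets that are adjacent in the ambient tree and merges them, reinstating the edge between them: it adds one mark, summed over all unmarked edges of $E$. (One must check the signs work out; this is where the $(-1)^{|A_1|}$ in $D$ and the degree conventions on $s^{-1}$ and $h$ interact, but the $\VV$-labels are inert so the signs are those of the standard simplicial/Koszul sign on a set of $|E|$ formal generators.) Thus, on each of the $|E|$ edges independently, the pair $(D,h)$ restricts to the two-term complex $\mbk \xrightarrow{\ 1\ } \mbk$ with its obvious contracting homotopy, for which $dh+hd = \mathrm{id}$ on the one-dimensional ``$h$-adds-this-edge'' summand and $=\mathrm{id}$ on the ``$D$-removes-this-edge'' summand.

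Then I would conclude by a tensor-product argument. The marking complex of $E$ is the $|E|$-fold tensor product $\bigotimes_{e\in E}(\mbk\xrightarrow{1}\mbk)$ and, with $D = \sum_e d_e$ and $h = \sum_e h_e$ acting on this tensor product via the Leibniz rule with appropriate signs, the standard computation for homotopies on a tensor product gives
$$
Dh + hD = \sum_{e\in E} (d_e h_e + h_e d_e) = \sum_{e \in E} \mathrm{id} = |E|\cdot \mathrm{id} = q\cdot\mathrm{id},
$$
where $q = |E|$ is precisely the number of inner edges of $T$ plus the numbers of inner edges of the $T_i$, as claimed. I would remark that the cross terms $d_e h_{e'} + h_{e'} d_e$ for $e\neq e'$ cancel in pairs, which is the only place the signs genuinely matter and which I would verify by hand on the two-edge case before invoking the general tensor-product formula.

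The main obstacle I anticipate is bookkeeping the signs: showing that $D$ and $h$, with the Koszul signs coming from $|A_1|$, the degree of $s^{-1}$, and the degree-$1$ nature of $h$, really do assemble into the signed tensor-product differential and homotopy on $\bigotimes_{e\in E}(\mbk\xrightarrow{1}\mbk)$, so that the cross terms vanish and each diagonal term contributes $+1$ rather than $-1$ or $0$. Once the sign dictionary is set up correctly on a single edge and on two edges, the general case is a formal consequence of the multiplicativity of the Koszul sign, and the count $q$ is immediate from the definition of $M$.
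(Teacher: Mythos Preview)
Your core idea is right and is a conceptual repackaging of the paper's computation, but your description of $M$ is wrong and must be fixed before the argument is correct as written. The submodule $M$ in the lemma is a \emph{single} summand: the outer tree is exactly $T$ and vertex $i$ carries an element of $T_i(\VV)$, with no splitting. The space you actually describe --- all ways of marking the inner edges of the total tree $U$ obtained by substituting each $T_i$ at vertex $i$ of $T$ as ``cut'' or ``uncut'' --- is a strictly larger $D$- and $h$-stable subspace $N$, of which $M$ is the summand for one particular marking (edges of $T$ cut, edges of the $T_i$ uncut). Your tensor-product argument proves $Dh+hD = |E|\cdot Id$ on all of $N$, and this then restricts to $M$; once you relabel accordingly, the argument goes through. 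A second slip: the $-s^{-1}dA$ part of $D$ is not zero --- it applies the differential of $\VV$ to the labels. The correct statement is that this part acts only on the $U(\VV)$ tensor factor while $h$ acts only on the marking factor, so they anticommute by the Koszul rule and contribute nothing to $Dh+hD$.

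The paper carries out the same computation directly rather than through the tensor-product abstraction: it names the operations $\mathrm{cut}(a)$, $\mathrm{graft}(a)$, $\mathrm{diff}(x)$, expands $hD$ and $Dh$ as sums over pairs, and checks that the cross terms $\mathrm{cut}(a)\mathrm{graft}(b) + \mathrm{graft}(b)\mathrm{cut}(a)$ and $\mathrm{diff}(x)\mathrm{graft}(a) + \mathrm{graft}(a)\mathrm{diff}(x)$ vanish while each diagonal term $\mathrm{cut}(a)\mathrm{graft}(a)$ or $\mathrm{graft}(a)\mathrm{cut}(a)$ is the identity. Your framing organizes exactly these cancellations as the standard Koszul-sign computation on $\bigotimes_{e\in E}(\mbk \to \mbk)$; what it buys is a uniform reason for the signs in place of a case-by-case check.
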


\begin{proof}
Let $a$ be an inner edge of the tree $T$. It links two vertices which are labelled respectively by the tree module $s^{-1}T_i(\VV)$ and the tree module $s^{-1}T_j(\VV)$. The derivation $h$ consists in grafting the tree $T_i$ with the tree $T_j$ for any inner edge $a$. We can write:
$$
h(T):=\sum_{a \in inner(T)} \text{graft}(a)\ ,
$$
where $inner (T)$ is the set of inner edges of $T$. Moreover, let $x$ be a vertex of the tree $T$. It is labelled by the tree module $s^{-1}T_i(\VV)$. The derivation $D$ consists in cutting the tree module $s^{-1}T_i(\VV)$ into two trees $s^{-1}T_{i,1}(\VV) \otimes s^{-1}T_{i,2}(\VV)$ along any inner edge of $T_i$, and then applying the differential $d$ of $\VV$; and that is done for any vertex $x$ of the tree $T$. So, we can write
$$
D(T)= \sum_{a \in inner (T_1, \ldots, T_k)} \text{cut}(a) +\sum_{x \in vert (T_1, \ldots, T_k)} \text{diff}(x)\ ,
$$  
where $inner (T_1, \ldots, T_k )$ is the set of inner edges of the trees $T_1$, \ldots, $T_k$ and $vert (T_1, \ldots, T_k)$ is the set of vertices of the trees $T_1$, \ldots, $T_k$. So, we have: 
 $$
  hD =   \sum_{a \in inner (T_1, \ldots, T_k) \atop b \in inner (T)} \text{graft}(b)\text{cut}(a)+ \sum_{a \in inner (T_1, \ldots, T_k)} \text{graft}(a)\text{cut}(a) +  \sum_{x \in vert (T_1, \ldots, T_k) \atop a \in inner (T)} \text{graft}(a) \text{diff}(x)\ .
  $$
On the other hand,
  $$
    Dh =  \sum_{a \in inner (T)} \text{cut} (a) \text{graft} (a) + \sum_{a \in inner (T_1, \ldots, T_k) \atop b \in inner (T)} \text{cut}(a)\text{graft}(b) +  \sum_{x \in vert (T_1, \ldots, T_k) \atop a \in inner (T)} \text{diff}(x)\text{graft}(a)\ .
  $$
  For any inner edge $a$ of the trees $T_1$, \ldots, $T_k$ and for any inner edge $b$ of the tree $T$, $\text{cut}(a)\text{graft}(b) +\text{graft}(b)\text{cut}(a)=0$. Moreover, for any inner edge $a$ of the tree $T$ and for any vertex $x$ of the trees $T_1$, \ldots, $T_k$, $\text{diff}(x)\text{graft}(a) + \text{diff}(x)\text{graft}(a) =0$. Finally, for any inner edge $a$ of the trees $T_1$, \ldots, $T_k$ and $T$, $\text{cut} (a) \text{graft} (a) + \text{graft} (a) \text{cut} (a) =Id$.
\end{proof}

Similarly, the $\mbs$-module $\Tfree (s \ov \Tfree \VV)$ has a structure of operad and a structure of cooperad. Let $D$ be the coderivation which makes of $\Tfree (s (\ov\Tfree \VV) )$ the truncated bar construction of the dg operad $\Tfree \VV$; that is, the projection of $D$ on the cogenerators is defined as follows
\begin{align*}
sA_1 \otimes sA_2 &\mapsto (-1)^{|A_1|}s(A_1 \otimes A_2)\\
 sA &\mapsto -sdA\\ 
\end{align*}
for any $A \in \ov \Tfree \VV$. Moreover, let $h$ be the degree $1$ derivation which extends the following map.
\begin{align*}
 s \ov\Tfree \VV & \ra \Tfree (s \ov \Tfree \VV)\\
 sA &\mapsto \sum (-1)^{|A_1|}sA_1 \otimes sA_2
\end{align*}
where $\Delta_2 A = \sum A_1 \otimes A_2$ for any $A \in \ov \Tfree \VV$.

\begin{lemma}\label{lemmacontractbar}
Let $T$ be a tree with $k$ vertices ordered from $1$ to $k$ and let $T_1$, \ldots, $T_k$ be non trivial trees. Consider the sub $\mbs$-module of  $\Tfree (s \ov \Tfree \VV)$ made up of the tree $T$ whose $i^{th}$ vertex is labelled by $T_i (\VV)$. On this submodule, we have:
$$
Dh +h D= q Id\ ,
$$
where $q$ is the sum of the numbers of inner edges of $T$ and of all the trees $T_i$.
\end{lemma}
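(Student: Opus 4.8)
The plan is to mimic the proof of Lemma \ref{lemmacontractcobar}, interchanging the roles of cutting and grafting. On the submodule of $\Tfree (s \ov \Tfree \VV)$ spanned by the tree $T$ whose $i^{th}$ vertex is labelled by $T_i(\VV)$, the derivation $h$ acts by \emph{cutting}: it replaces a label $T_i(\VV)$ by $\sum T_{i,1}(\VV) \otimes T_{i,2}(\VV)$ along each inner edge of $T_i$, which produces a refinement of $T$ where the $i^{th}$ vertex is split in two. The coderivation $D$ acts in two ways: by \emph{grafting} two labels sitting on adjacent vertices of $T$ into a single label, along an inner edge of $T$ (producing a coarser tree), and by applying the internal differential $d$ of $\VV$ at a vertex. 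Thus one can write
$$
\begin{aligned}
 h(T) &= \sum_{a \in inner(T_1,\ldots,T_k)} \text{cut}(a)\ ,\\
 D(T) &= \sum_{a \in inner(T)} \text{graft}(a) + \sum_{x \in vert(T_1,\ldots,T_k)} \text{diff}(x)\ .
\end{aligned}
$$

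Then I would expand $Dh + hD$ and sort the resulting terms exactly as in the previous lemma. After a $\text{cut}(a)$ with $a \in inner(T_i)$, the edge $a$ has become an inner edge of the (coarser... rather finer) big tree, so it can be grafted by $D$; and after a $\text{graft}(a)$ with $a \in inner(T)$, the edge $a$ has become an inner edge of the combined label, so it can be cut by $h$. Collecting everything, three families of terms appear: (i) a grafting along an edge $a$ immediately followed by a cutting along the same $a$, or a cutting along $a$ immediately followed by a grafting along $a$; this occurs once for each inner edge of $T$ (inside $hD$) and once for each inner edge of a tree $T_i$ (inside $Dh$), and each such term equals $\mathrm{Id}$ because the two sign factors $(-1)^{|A_1|}$ carried by $D$ and by $h$ multiply to $1$; (ii) a grafting and a cutting along \emph{distinct}, hence disjoint, edges, which come in pairs $\text{cut}(b)\,\text{graft}(a) + \text{graft}(a)\,\text{cut}(b)$ — one summand in $hD$, the other in $Dh$ — and cancel because two degree $1$ operations acting on disjoint parts of the tree anticommute; (iii) a cutting paired with a $\text{diff}(x)$, which likewise cancels in pairs $\text{cut}(b)\,\text{diff}(x) + \text{diff}(x)\,\text{cut}(b)$ since $h$ has no differential part (and $D$ never composes with itself here). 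Only the $q$ copies of $\mathrm{Id}$ from family (i) survive, where $q$ is the total number of inner edges of $T$ and of the trees $T_i$, which is the claim.

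The single delicate point is the sign bookkeeping, exactly as in Lemma \ref{lemmacontractcobar}: one must check that, along a fixed edge, the composite of the grafting map $sA_1 \otimes sA_2 \mapsto (-1)^{|A_1|} s(A_1 \otimes A_2)$ with the cutting map $sA \mapsto \sum (-1)^{|A_1|} sA_1 \otimes sA_2$ is $+\mathrm{Id}$ and not $-\mathrm{Id}$, and that the cross terms in families (ii) and (iii) indeed cancel with the signs prescribed by the Koszul rule. These are the same elementary verifications already performed, in the dual situation, in the proof of Lemma \ref{lemmacontractcobar}, so the argument goes through after exchanging the roles of cutting and grafting.
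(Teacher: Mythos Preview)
Your proposal is correct and follows exactly the approach the paper intends: its proof of Lemma~\ref{lemmacontractbar} simply says ``the proof relies on the same techniques as Lemma~\ref{lemmacontractcobar}'', and you have carried out precisely that dualisation, swapping the roles of cutting and grafting between $h$ and $D$. Your decomposition into the three families of terms and the sign check along a fixed edge are the right verifications, so nothing further is needed.
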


\begin{proof}
 The proof relies on the same techniques as Lemma \ref{lemmacontractcobar}.
\end{proof}

\begin{prop}\label{prop:eq1}
  Let $\PPP$ be a dg operad. Then the canonical morphism $p: \Omega_u B_c \PPP \ra \PPP$ is a weak equivalence.
\end{prop}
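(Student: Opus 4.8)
The plan is to compare the bar--cobar resolution with the truncated bar--cobar construction of Hirsh--Mill\`es and then run a filtration argument powered by Lemma \ref{lemmacontractcobar}.

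First I would equip $\PPP$ with a semi-augmentation $\epsilon \colon \PP \to \II$; this is possible over a field, since it suffices to choose a graded complement of $\mbk \cdot 1$ in $\PP(1)$. Proposition \ref{prop:truncvsusual} then tells us that the canonical morphism $B_r\PPP \to B_c\PPP$ is a weak equivalence of curved conilpotent cooperads, so $\Omega_u(B_r\PPP \to B_c\PPP)$ is a quasi-isomorphism of operads. Since $B_r\PPP \to B_c\PPP$ is, by its very construction, the cooperad morphism classifying the universal truncated twisting morphism $B_r\PPP \to \PP$, composing it with the universal twisting morphism of $B_c\PPP$ recovers that truncated twisting morphism; hence the composite $\Omega_u B_r\PPP \to \Omega_u B_c\PPP \xrightarrow{p} \PPP$ is exactly the counit $p_r$ of the truncated bar--cobar adjunction. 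By the two-out-of-three property it therefore suffices to prove that $p_r \colon \Omega_u B_r\PPP \to \PPP$ is a quasi-isomorphism.

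To do this I would write $\Omega_u B_r\PPP = \Tfree\big(s^{-1}\ov\Tfree(s\ov\PP)\big)$ and filter it by the integer $w$ that, for a basis element consisting of an outer tree $T$ whose vertices are labelled by elements of $s^{-1}\ov\Tfree(s\ov\PP)$, counts the total number of $\ov\PP$-vertices occurring in all of these labels. Inspecting the curved cobar differential --- a curvature term, the term $-s^{-1}d_{B_r\PPP}$, and the decomposition term --- one checks that the curvature term and the $\gamma_\PPP$-contraction part of $d_{B_r\PPP}$ strictly decrease $w$, while the internal-differential part of $d_{B_r\PPP}$ and the decomposition term (which only splits one inner tree into two) preserve $w$. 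So $(F_w)_w$ is an exhaustive, bounded-below filtration by subcomplexes with $F_0 = \mbk \cdot 1$, and on the associated graded only the internal-differential and decomposition terms survive. With precisely those two terms, $G\,\Omega_u B_r\PPP$ is, as a chain complex, nothing but $\Tfree\big(s^{-1}\ov\Tfree\VV\big)$ equipped with the differential $D$ of Lemma \ref{lemmacontractcobar} for $\VV = s\ov\PP$. Applying that lemma, I would decompose the tree module into the common eigenspaces of $w$ and of the weight operator $q$ of Lemma \ref{lemmacontractcobar} --- both are preserved by $D$ and by the homotopy $h$ --- so that every summand with $q \geq 1$ is acyclic; the only surviving summands are $\mbk \cdot 1$ in weight $w = 0$ and $s^{-1}s\ov\PP \cong \ov\PP$ in weight $w = 1$, there being nothing with $w \geq 2$ and $q = 0$. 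Thus $G_w\,\Omega_u B_r\PPP$ is acyclic for $w \geq 2$, while $G_0 \cong \mbk \cdot 1$ and $G_1 \cong \ov\PP$; filtering $\PPP$ by $\mbk \cdot 1 \subset \PP$ makes $p_r$ filtered and $G_n p_r$ a quasi-isomorphism for every $n$ (an isomorphism of complexes for $n = 0, 1$ and a map out of an acyclic complex for $n \geq 2$). Theorem \ref{maclane-homology} then concludes.

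I expect the main obstacle to be the identification of the associated graded with the free bar--cobar complex of Lemma \ref{lemmacontractcobar}: one has to sort carefully which of the three summands of the curved cobar differential drop the filtration degree and which preserve it, and then to observe that, although $h$ is not a genuine chain homotopy but only satisfies $Dh + hD = q\,\id$ with $q$ non-constant, the weight $q$ is respected by every operation in play, so one can split into $q$-eigenspaces and contract each piece with $q \neq 0$ separately.
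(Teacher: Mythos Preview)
Your argument is correct and follows the same strategy as the paper: filter the bar--cobar complex by the total number of inner vertices, identify the associated graded with the free cobar-of-free-cooperad complex of Lemma \ref{lemmacontractcobar}, and conclude by Theorem \ref{maclane-homology}. The one genuine difference is that you first reduce to the truncated bar construction $B_r\PPP$ via Proposition \ref{prop:truncvsusual} and then run the filtration argument with $\VV = s\ov\PP$, whereas the paper works directly on $\Omega_u B_c\PPP$ and applies Lemma \ref{lemmacontractcobar} with $\VV = s\PP \oplus \mbk \cdot v$; in that direct approach the extra generator $v$ is absorbed into the $n=1$ layer, where $\mbk\cdot 1 \oplus \mbk\cdot s^{-1}v \oplus s^{-1}s\PP \to \PP$ is checked to be a quasi-isomorphism by hand. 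Your detour buys a slightly cleaner associated graded (no $v$, no curvature term to track), at the cost of invoking one more proposition and choosing a semi-augmentation; the paper's route is more self-contained. Your remark that $q = w-1$ on each graded piece, so that the $q$-eigenspace decomposition is automatic and $G_w$ is contractible for $w\geq 2$, is exactly the mechanism the paper leaves implicit when it says ``$G_n\Omega_u B_c\PPP$ is contractible by Lemma \ref{lemmacontractcobar}''.
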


\begin{proof}
Consider the filtration on $\Omega_u B_c \PPP$ induced by the coradical filtration on $B_c \PPP$:
 $$
 F_n \Omega_u B_c \PPP:= \II \oplus \bigoplus_{k \geq 1} \sum_{i_1 + \cdots + i_k =n } s^{-1}F_{i_1}^{rad} \ov B_c \PPP \otimes \cdots \otimes s^{-1}F_{i_1}^{rad} \ov B_c \PPP\ ,\ n \geq 1\ . 
 $$
Consider also the constant filtration $(F_n \PPP)_{n \geq 1}$ on $\PPP$. We show that the morphisms $G_n p: G_n \Omega_u B_c \PPP \ra G_n \PPP$ are quasi-isomorphisms. On the one hand, $G_1 p: \mbk \cdot 1 \oplus \mbk \cdot s^{-1}v \oplus s^{-1}s\PPP \ra \PPP$ is a quasi-isomorphism. On the other hand, for any $n>1$, $G_n \PPP = 0$ and $G_n \Omega_u B_c \PPP$ is contractible by Lemma \ref{lemmacontractcobar}. We conclude by Theorem \ref{maclane-homology}.
\end{proof}

A straightforward consequence of the above Proposition \ref{prop:eq1} is that for any curved conilpotent cooperad $\CCC$, the map $\CCC \ra B_c \Omega_u \CCC$ is a weak equivalence. Indeed, since the morphism $\Omega_u B_c \Omega_u \CCC \ra \Omega_u \CCC$ is a quasi-isomorphism, then  its right inverse $\Omega_u \CCC \ra \Omega_u B_c \Omega_u \CCC$ is also a quasi-isomorphism. Moreover, since $B_r \Omega_u \CCC \to B_c \Omega_u \CCC$ is a weak equivalence by Proposition \ref{prop:truncvsusual}, then the morphism $\CCC \to B_r \Omega_u \CCC$ is also a weak equivalence. The following proposition is a more precise statement.

\begin{prop}\label{prop:bar-cobarres}
 Let $\CCC$ be a curved conilpotent cooperad. Let us endow $\CCC$ with its coradical filtration and let us endow $B_r \Omega_u \CCC$ with the following filtration:
 $$
 F_n B_r \Omega_u \CCC := \II \oplus \sum_{i_1 + \cdot +i_k = n\atop k \geq 1} s F_{i_1} \ov \Omega_u  \CCC  \otimes \cdots \otimes s F_{i_k} \ov \Omega_u \CCC  \ ,\ n \geq 0\ ,
 $$
 where the $(F_i \Omega_u  \CCC)_i$ is the filtration on $\Omega_u \CCC$ induced by the coradical filtration on $\CCC$
 $$
 F_{i} \ov \Omega_u \CCC :=  \sum_{j_1 + \cdot +j_k = i\atop k \geq 1}s^{-1}F_{j_1}^{rad} \ov \CC \otimes \cdots \otimes s^{-1}F_{j_k}^{rad} \ov \CC , \ i \geq 1\ .
 $$
 These two filtrations are admissible and the canonical morphism $\CCC \ra B_r \Omega_u \CCC$ is a filtered quasi-isomorphism with respect to these filtrations.
\end{prop}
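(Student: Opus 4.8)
First I would check that both filtrations are admissible, which is routine. For the coradical filtration of $\CCC$ this is recalled just above (it is \cite[Lemma~1]{LeGrignou16}). For the filtration on $B_r\Omega_u\CCC$: the identity $F_0 = \II$ and the coproduct inequality in the definition of an admissible filtration follow at once from the fact that the comultiplication of a cofree conilpotent cooperad is the degrafting of trees, which merely redistributes the labels and hence preserves the total coradical weight; stability under the coderivation is checked term by term, since the internal differential of $\CCC$ preserves the coradical filtration, the infinitesimal decompositions $\Delta_2$ of $\CCC$ and the grafting of trees inside $\Omega_u\CCC$ preserve the total coradical weight, and the remaining contributions — coming from the curvature of $\CCC$ and from that of $B_r\Omega_u\CCC$, all built out of $\theta$ — strictly lower it, because the element $1$ has coradical weight $0$.

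Granting this, by the very definition of a filtered quasi-isomorphism and by Proposition~\ref{prop:filteredqis} it remains to show that the map $G\CCC \to G(B_r\Omega_u\CCC)$ induced on associated gradeds is a quasi-isomorphism. Passing to the associated graded kills precisely the weight-lowering parts: $G\CCC$ becomes a genuine (weight-graded, uncurved) dg conilpotent cooperad, $G(B_r\Omega_u\CCC)$ is identified, weight by weight, with the truncated bar construction of the cobar construction $B_r\Omega_u(G\CCC)$, and the map is the canonical one. So I am reduced to showing that for each integer $n$ the weight-$n$ component of $G\CCC \to B_r\Omega_u(G\CCC)$ is a quasi-isomorphism; that is, weight by weight, to the statement that the unit of the bar-cobar adjunction of the dg conilpotent cooperad $G\CCC$ is a quasi-isomorphism.

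For this last point the plan is to use a second filtration together with the contraction Lemma~\ref{lemmacontractbar}. Fix $n$ and filter the weight-$n$ component of $B_r\Omega_u(G\CCC)$ by the number of inner vertices — the number of $\ov{G\CCC}$-labels in a given element — which for weight $n$ lies between $1$ and $n$, so the filtration is finite. The part of the differential coming from the $\Delta_2$ of $G\CCC$ strictly raises this number, whereas the grafting part of the bar differential and the internal differential preserve it; hence on the associated graded of this second filtration one is left with exactly the truncated bar construction $B_r\Tfree(\VV)$ of the \emph{free} operad on $\VV := s^{-1}\ov{G\CCC}$ with its internal differential. Now Lemma~\ref{lemmacontractbar} applies: on the summand indexed by an outer tree $T$ with $k$ vertices carrying inner trees $T_1,\ldots,T_k$ one has $Dh + hD = q\,\id$ with $q$ the total number of inner edges of $T$ and of the $T_i$; and as soon as $k \geq 2$ one gets $q \geq 1$ — either some $T_i$, or $T$ itself, has at least two vertices — so that summand is acyclic, while the summand with a single inner vertex has $q = 0$ and is $\ov{G\CCC}$ (up to suspension) with its internal differential, plus the unit summand $\II$. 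Consequently the homology of this second associated graded is that of $G\CCC$, concentrated in inner-vertex number at most $1$, and the canonical map $G\CCC \to B_r\Omega_u(G\CCC)$, whose leading term is $x \mapsto s(s^{-1}x)$, induces the corresponding identification; so it is a quasi-isomorphism layer by layer, and by Theorem~\ref{maclane-homology} — after reindexing this finite filtration as an increasing, bounded below and exhaustive one — the weight-$n$ component of $G\CCC \to B_r\Omega_u(G\CCC)$ is a quasi-isomorphism. This shows that $\CCC \to B_r\Omega_u\CCC$ is a filtered quasi-isomorphism and, by Proposition~\ref{prop:filteredqis}, a weak equivalence.

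The hard part is the bookkeeping of the second paragraph together with the combinatorial observation driving the third: one must keep careful track of which terms of the curved, truncated bar-cobar differential survive on each of the two associated gradeds, and verify that the ``excess'' $q$ in Lemma~\ref{lemmacontractbar} is strictly positive as soon as two inner vertices occur. That positivity is exactly what forces the higher part of the bar-cobar complex to be acyclic and pins down $G\CCC$ as its homology, and it is the place where the combinatorics of trees — rather than of linear chains, as in the associative case — enters.
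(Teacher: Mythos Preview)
Your proof is correct and follows essentially the same route as the paper: both introduce the same secondary filtration on $G_n B_r\Omega_u\CCC$ by the total number $m$ of inner $\ov\CC$-vertices and invoke Lemma~\ref{lemmacontractbar} to show that every piece with $m\geq 2$ is acyclic, leaving only the $m=1$ piece, which is $G_n^{rad}\CCC$. One small slip: where you write ``as soon as $k\geq 2$'' you mean the total number of inner vertices rather than the number $k$ of outer vertices --- indeed $q=(k-1)+\sum_i(\#\mathrm{vert}(T_i)-1)=m-1$, so $q\geq 1$ exactly when $m\geq 2$.
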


\begin{rmk}
 Beware! We use here the truncated bar construction. The only reason is that $\Omega_u \CCC$ has a canonical semi-augmentation and that computations are easier with the truncated bar construction since it is smalller.
\end{rmk}

\begin{proof}
 Let $n \geq 1$. Let us show that the morphism $G_n \CCC \ra G_n B_r \Omega_u \CCC$ is a quasi-isomorphism. Consider the filtration $(F'_{k}G_n B_r \Omega_u \CCC)_{k=-n}^{-1}$ on $G_n B_c \Omega_u \CCC$ where $F'_{k}G_n B_c \Omega_u \CCC$ is made up of the trees whose vertices are labeled by trees whose total number of vertices is at least $-k$. Consider also the filtration $(F'_k G_n^{rad} \CCC)_{k=-n}^{-1}$ of $G_n^{rad} \CCC$ such that $F'_k G_n^{rad} \CCC=0$ for $k<-1$ and $F'_{-1} G_n^{rad} \CCC= G_n^{rad} \CCC$. The map $G'_{-1}G_n^{rad} \CCC \ra G'_{-1}G_n B_r \Omega_u \CCC$ is a quasi-isomorphism; that is the identity of $G_n^{rad} \CCC$. Moreover, $G'_k G_n B_r \Omega_u \CCC$ for $k \neq -1$ is contractible by Lemma \ref{lemmacontractbar}.
\end{proof}

\subsection{Key lemma}

\begin{lemma}[Key Lemma]\label{thm:keylemma}
 Let $\CCC$ be a curved conilpotent cooperad and let $p:\PPP \ra \Omega_u \CCC$ be a fibration of operads (that is a surjection). Consider the following square:
 $$
 \xymatrix{B_c \PPP \ar[r]^{B_c p} & B_c \Omega_u \CCC \\ 
 \DDD \ar[u]  \ar[r] & \CCC \ar[u]}
 $$
 where $\DDD$ is the pullback $B_c \PPP \times_{B_c \Omega_u \CCC} \CCC$. Then, the morphism $\DDD \ra B_c \PPP$ is a weak equivalence.
\end{lemma}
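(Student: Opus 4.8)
I would prove the statement by reducing it to Proposition~\ref{prop:filteredqis}: I will equip $\DDD$ and $B_r \PPP$ with admissible filtrations and show that $\DDD \to B_r \PPP$ is a filtered quasi-isomorphism, which makes it a weak equivalence. Two features of the square make this possible. First, $B_r p$ is a degreewise epimorphism. Second, $\CCC \to B_r \Omega_u \CCC$ is a degreewise monomorphism which, by Proposition~\ref{prop:bar-cobarres}, is a filtered quasi-isomorphism; in particular its cokernel, as a chain complex, is acyclic.

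\textbf{The two inputs.} Since $\Omega_u \CCC = \Tfree(s^{-1}\ov\CC)$ is a free operad it carries the canonical semi-augmentation killing all generators; composing with $p$ makes $\PPP$ semi-augmented and realises $B_r p$ as $\Tfree^c(s\ov p)$, where $\ov p : \ov\PP \to \ov{\Omega_u\CC}$ is a degreewise epimorphism of dg $\mbs$-modules. Over a field, the tree endofunctor $T(-)$ — built from tensor products and $\mbs_k$-coinvariants — sends degreewise epimorphisms of $\mbs$-modules to degreewise epimorphisms, so $B_r p$ is a degreewise epimorphism; by Maschke's theorem $\ov p$ is moreover a degreewise split epimorphism of graded $\mbs$-modules, and I fix a graded splitting $\ov\PP \cong \ov{\Omega_u\CC}\oplus\KK$ with $\KK := \ker\ov p$ (not respecting differentials). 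Dually, the corestriction of $\phi := (\CCC \to B_r\Omega_u\CCC)$ to the cogenerators $s\ov{\Omega_u\CC}$ is the injection $x \mapsto s(s^{-1}x)$, and $\phi$ is obtained from it by applying $\Tfree^c$ after the conilpotent decomposition of $\CCC$; it is therefore degreewise injective, since $\Tfree^c$ preserves monomorphisms over a field.

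\textbf{Filtrations and conclusion.} Equip $\CCC$ with its coradical filtration. As in Proposition~\ref{prop:bar-cobarres}, this controls the cobar differential of $\Omega_u\CCC$; transporting the resulting weight filtration of $\ov{\Omega_u\CC}=\ov\Tfree(s^{-1}\ov\CC)$ along $p$, with $\KK$ placed in filtration degree $1$, and combining with the number of vertices of outer trees, yields an exhaustive admissible filtration $F_\bullet B_r\PPP$ with $F_0 B_r\PPP = \II$, mirroring the filtration Proposition~\ref{prop:bar-cobarres} puts on $B_r\Omega_u\CCC$. Pull it back along $\DDD \to B_r\PPP$ and intersect with the pullback along $\DDD \to \CCC$ of the coradical filtration; since $B_r p$ is surjective, filtration by filtration, so is $\DDD \to \CCC$, and the result is an admissible filtration of $\DDD$. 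On associated graded objects $G(B_r p)$ is still surjective and $G\phi$ is a quasi-isomorphism — this is precisely what the proof of Proposition~\ref{prop:bar-cobarres} establishes, via the secondary filtration by number of inner edges and the contracting homotopy of Lemma~\ref{lemmacontractbar}. Identifying $G\DDD$ with the pullback of the associated gradeds, the map $G\DDD \to G B_r\PPP$ is the base change of the injective quasi-isomorphism $G\phi$ along the epimorphism $G(B_r p)$, so its cokernel is the acyclic complex $\mathrm{coker}\, G\phi$ and it is a quasi-isomorphism. Theorem~\ref{maclane-homology} then shows $\DDD \to B_r\PPP$ is a quasi-isomorphism, and Proposition~\ref{prop:filteredqis} that it is a weak equivalence.

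\textbf{Main obstacle.} The delicate point is the identification of $G\DDD$ with the $\mbs$-module pullback $G B_r\PPP \times_{G B_r\Omega_u\CCC} G\CCC$: the pullback in $\cCoop$ is a limit, and it is \emph{not} in general the limit of the underlying $\mbs$-modules, because the composition product $\circ$ does not commute with pullbacks. Making this comparison work requires using surjectivity of $B_r p$ together with a careful analysis of the associated graded — where, the obstruction $\KK$ sitting in the lowest filtration degree, the comparison becomes an isomorphism — compatibly with the $\mbs_k$-coinvariants labelling the vertices and the $\mbs_n$-actions on the leaves of the trees, and the nested filtrations must be arranged so that every auxiliary associated graded piece either reproduces the quasi-isomorphism of Proposition~\ref{prop:bar-cobarres} or is visibly acyclic. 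This is exactly the combinatorics of trees and interplay of symmetric groups flagged in the introduction.
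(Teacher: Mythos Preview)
Your overall strategy is the paper's: show $\DDD \to B_r\PPP$ is a filtered quasi-isomorphism for suitable admissible filtrations, then invoke Proposition~\ref{prop:filteredqis}. The filtrations you propose, with $\KK$ in filtration degree~$1$ and the rest inherited from the coradical filtration of $\CCC$, are also essentially the paper's. The gap is exactly the point you flag as the ``main obstacle'': you never obtain a workable description of $\DDD$, and without one the comparison of associated gradeds cannot be carried out.

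The paper does \emph{not} try to compare $G\DDD$ with a pullback of associated gradeds. Instead it identifies $\DDD$ directly, as a \emph{graded} cooperad, before filtering. The observation is that $\DDD$ is the largest sub-graded-cooperad of $B_r\PPP$ whose image under $B_r(p)$ lies in $\CCC$ (this is the content of the lemma immediately preceding the proof). Using the graded splitting of $p$ you already chose, one has a graded operad section $i:\Omega_u\CCC \to \PPP$, and then $\DDD$ is precisely the \emph{product of coaugmented cooperads} $\CCC \times \Tfree^c(sK)$, which by Proposition~\ref{prop:productcoop} is the alternated tree module $\Tfree(\ov\CC,\Tfree(sK))$. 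This is the missing structural input: it replaces the abstract pullback by an explicit tree-combinatorial object, and bypasses entirely the difficulty that $\circ$ does not commute with limits.

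Once $\DDD = \Tfree(\ov\CC,\Tfree(sK))$ is in hand, the filtrations you wrote down make $G_n\DDD$ into trees with vertices labelled by $sK$ or by $G^{rad}\CCC$, while $G_nB_r\PPP$ has vertices labelled by $sK$ or by $G B_r\Omega_u\CCC$. A secondary filtration by the number of $sK$-labelled vertices then reduces the comparison, on each $G'G_n$, to tensor products of identities on the $sK$ factors with the quasi-isomorphism $G^{rad}\CCC \to G B_r\Omega_u\CCC$ of Proposition~\ref{prop:bar-cobarres}. Your ``base change of an injective quasi-isomorphism along an epimorphism'' heuristic is morally what is happening, but it only becomes rigorous after the alternated-tree identification of $\DDD$; attempting it at the level of cooperad pullbacks, as you note yourself, runs into the failure of the forgetful functor to preserve limits.
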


The newt lemma shows that we can actually use the truncated bar construction in the lemma above.

\begin{lemma}\label{lemmatrc}
The square of Lemma \ref{thm:keylemma} factorises into two pullback squares
\[
\begin{tikzcd}
 	B_c \PPP
	\arrow[r,"{B_c p}"]
	& B_c \Omega_u \CCC
	\\
	B_r \PPP
	\arrow[r,"{B_r p}"] \arrow[u]
	& B_r \Omega_u \CCC 
	\arrow[u]
	\\
	 \DDD .\ar[u]  \ar[r] & \CCC \ar[u]
\end{tikzcd}
\]
Moreover, their image in the category of graded conilpotent cooperad are also pullbacks.
\end{lemma}

\begin{proof}
A morphism of curved conilpotent cooperads from $\EEE$ to $B_c \PPP \times_{B_c \Omega_u \CCC} B_r\Omega_u \CCC$ is equivalent to the data of a twisting morphism from $\EEE$ to $\PPP$ so that the composite twisting morphism $\EEE \to \PPP \to \Omega_u \CCC$ is truncated. The latter is truncated if and only if the former is truncated. Thus, such a morphism from $\EEE$ to $B_c \PPP \times_{B_c \Omega_u \CCC} B_r\Omega_u \CCC$ is equivalent to the data of a truncated twisting morphism from $\EEE$ to $\PPP$. In other words, the square
\[
\begin{tikzcd}
 	B_c \PPP
	\arrow[r,"{B_c p}"]
	& B_c \Omega_u \CCC
	\\
	B_r \PPP
	\arrow[r,"{B_r p}"] \arrow[u]
	& B_r \Omega_u \CCC 
	\arrow[u]
\end{tikzcd}
\]
is a pullback. Moreover, the morphism $\CCC \to B_c \Omega_u \CCC$ factorises as
\[
	\CCC \to B_r \Omega_u \CCC \to B_c \Omega_u \CCC.
\]
since the twisting morphism $\CCC \to \Omega_u \CCC$ is truncated. Thus, the square of Lemma \ref{thm:keylemma} factorises into two squares. Since the big square is a pullback and since the upper small square is a pullback, then, the lower small square is a pullback. 

Let us show that the lower square is a pullback in the category of graded conilpotent cooperads. Such a pullback is the biggest sub-graded-cooperad $\EEE$ of $B_r \PPP$ whose image under $B_r(p)$ lies inside $\CCC$. One can check that $\EEE + d\EEE \subset B_r \PPP$ is a sub-cooperad whose image under $B_r(p)$ also lies in $\CCC$. Thus $\EEE + d\EEE=\EEE$. In other words, $\EEE$ is stable under the coderivation of $B_r \PPP$. So this is a sub curved cooperad of $B_r\PPP$. Then, a morphism of curved conilpotent cooperad targeting $\EEE$ is equivalent to a morphism targeting $B_r\PPP$ whose image under $B_r(p)$ lies inside $\CCC$. This means that $\EEE= B_r\PPP \times_{B_r\Omega_u\CCC} \CCC$.

The fact that the upper square is a pullback in the category of graded conilpotent cooperads follows from the same arguments.
\end{proof}

\begin{proof}[Proof of Lemma \ref{thm:keylemma}]
By, Lemma \ref{lemmatrc} and Proposition \ref{prop:truncvsusual}, it suffices to prove that the morphism $\DDD \to B_r\PPP$ is an equivalence. By Maschke's Theorem, there exists a map of graded $\mbs$-modules $\tilde i :\Omega_u \CCC \ra \PP$ such that $p \tilde i =Id_{\Omega_u \CCC}$.
 The restriction of $\tilde i$ to $s^{-1}\ov\CC$ extends to a morphism of graded operads $i: \Omega_u \CCC \ra \PPP$. We again have $pi=Id$. Let $K$ be the kernel of $p$. We have the following isomorphism of graded cooperads:
$$
\DDD \simeq \CCC \times \Tfree^c (sK) = \Tfree(\overline \CC, \overline \Tfree (sK))
$$
Let us endow $B_r \PPP$ with the following filtration
$$
F_n B_r \PPP :=\II \oplus  \sum_{i_1 + \cdots + i_k =n\atop  k \geq 1} sF_{i_1}\ov \PP  \otimes \cdots \otimes sF_{i_k} \ov\PP , \ n \geq 0\ ,
$$
where
$$
F_i \ov  \PP :=  K \oplus  \sum_{j_1 + \cdots + j_k=i\atop k \geq 1}  s^{-1}F_{j_1}^{rad} \ov \CC \otimes \cdots \otimes s^{-1}F_{j_k}^{rad} \ov \CC  \ ,\ i \geq 1
$$
This induces a filtration on $\DDD$. These two filtrations are admissible. Let us show that the morphism $i : \DDD \ra B_r \PPP$ is a filtered quasi-isomorphism. The dg $\mbs$-module $G_n \DDD$ (resp. $G_n B_r \PPP$) is made up of trees whose vertices are labelled by $sK$ and $\CCC$ (resp. $B_r \Omega_u \CCC$). If we denote by $F'_k G_n \DDD$ the sub dg $\mbs$-module of $G_n \DDD$ made up of trees such that at least $-k$ vertices are labelled by $sK$, we obtain a bounded below filtration on $G_n \DDD$; moreover, we define the filtration $F' G_n B_r \PPP$ in the same fashion. The map
$$
G' G_n \DDD \ra G' G_n B_r \PPP
$$
is a quasi-isomorphism by Proposition \ref{prop:bar-cobarres}. We conclude by Theorem \ref{maclane-homology} and Proposition \ref{prop:filteredqis}.
\end{proof}

\subsection{Proof of Theorem \ref{thm:thmprincip}}

We gather the results proven above to prove Theorem \ref{thm:thmprincip}. We use the same steps as the proof of Theorem 3.1 in \cite{Hinich01}.

\begin{proof}[Proof of Theorem \ref{thm:thmprincip}]\leavevmode
\begin{itemize}
 \itemt The category of curved conilpotent cooperads is presentable. So, it is complete and cocomplete.
 \itemt Let $f$ and $g$ be two composable morphisms of curved conilpotent cooperads. It is clear that $f$, $g$ and $fg$ are all weak equivalences if two of them are weak equivalences since it is the case for $\Omega_u f$, $\Omega_u g$ and $\Omega_u fg$.
 \itemt Cofibrations and weak equivalences are stable under retracts because it is the case for cofibrations and weak equivalences of operads. Since they are the morphisms which satisfy the right lifting property with respect to acyclic cofibrations, the fibrations are also stable under retracts.
 \itemt Let $f: \CCC \ra \DDD$ be a morphism of curved conilpotent cooperads. Let us factorise the morphism of operads $\Omega_u (f)$ by a cofibration followed by an acyclic fibration $\Omega_u \CCC \ra \PPP \ra \Omega_u \DDD$ (resp. an acyclic cofibration followed by a fibration). Let us consider the following diagram.
 $$
 \xymatrix{B_c \Omega_u \CCC \ar[r] & B_c \PPP \ar[r] &B_c \Omega_u \DDD\\
 \CCC \ar[u] \ar[r] & B_c \PPP \times_{B_c \Omega_u \DDD} \DDD \ar[u]\ar[r] & \DDD.\ar[u]}
 $$
 The map $B_c \PPP \to B_c \Omega_u \DDD$ is a fibration (as the image under $B_c$ of a fibration). Thus the map $B_c \PPP \times_{B_c \Omega_u \DDD} \DDD \to \DDD$ is also a fibration. Moreover the map $\CCC \to B_c \PPP \times_{B_c \Omega_u \DDD} \DDD$ is a cofibration (since the composite map
$\CCC \to B_c \PPP$ is injective). Moreover, all the vertical maps are equivalences by Lemma \ref{thm:keylemma} and Proposition \ref{prop:bar-cobarres}.
Suppose that the morphism $\Omega_u \CCC\to \PPP$ is an acyclic cofibration. Then, the morphism $B_c\Omega_u \CCC\to B_c\PPP$ is an equivalence and by the 2-out-of-3 rule, the map $\CCC \to B_c \PPP \times_{B_c \Omega_u \DDD} \DDD$ is an equivalence. Suppose that the morphism $\PPP \to\Omega_u \DDD$ is an acyclic fibration. Then, the morphism $B_c\PPP\to B_c\Omega_u \DDD$ is an equivalence and by the 2-out-of-3 rule, the map $B_c \PPP \times_{B_c \Omega_u \DDD} \DDD \to \DDD$ is an equivalence.
 \itemt Consider the following square of curved conilpotent cooperads,
 $$
 \xymatrix{\CCC \ar[d]_f \ar[r] & \EEE \ar[d]^g\\
 \DDD \ar[r] & \FFF}
 $$
 where $f$ is a cofibration and $g$ is an acyclic fibration. By Lemma \ref{thm:keylemma}, $g$ can be factorised as follows
 $$
 \xymatrix{\EEE \ar[r]^(0.3){g_1} & B_c \PPP \times_{B_c \Omega_u \FFF} \FFF \ar[r]^(0.7){g_2} & \FFF}
 $$
 where $g_1$ is an acyclic cofibration and where $g_2$ is the pullback of a map $B_c \PPP \ra B_c \Omega_u \FFF$ which is the image under the functor $B_c$ of an acyclic fibration of operads $\PPP \ra \Omega_u \FFF$. Since $\Omega_u(f)$ has the left lifting property with respect to this map $\PPP \ra \Omega_u \FFF$, then $f$ has the left lifting property with respect to $g_2$. Moreover, the following square has a lifting by definition of the fibrations.
 $$
 \xymatrix{\EEE \ar[d]_{g_1} \ar[r]^= & \EEE \ar[d]^{g}\\
 B_c \PPP \times_{B_c \Omega_u \FFF} \FFF \ar[r]_{g_2} & \FFF}
 $$
 The composition of these two liftings gives us a lifting of the first square.
\itemt At this point, we have proven the existence of the model structure on the category of curved conilpotent cooperads. Obviously, the adjunction $\Omega_u \dashv B_c$ is a Quillen adjunction. It is a Quillen equivalence by Proposition \ref{prop:eq1}.
\end{itemize}
\end{proof}

\subsection{Fibrations}

\begin{prop}\label{prop:fibrations}
 The fibrations are the retracts of pullbacks of maps of the form $B_c (f): B_c \PPP \ra B_c \QQQ$ where $f: \PPP \ra \QQQ$ is a surjection of operads.
\end{prop}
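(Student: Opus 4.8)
The plan is to characterize fibrations by their relation to the generating acyclic cofibrations and then to reflect that structure through the cobar construction. First I would note that one inclusion is essentially formal: if $f:\PPP\ra\QQQ$ is a surjection of operads, then $B_c(f)$ has the right lifting property with respect to every acyclic cofibration $\CCC\ra\DDD$ of curved conilpotent cooperads. Indeed, by adjunction a lifting problem of $\CCC\ra\DDD$ against $B_c(f)$ transposes to a lifting problem of $\Omega_u\CCC\ra\Omega_u\DDD$ against $f$; but $\Omega_u\CCC\ra\Omega_u\DDD$ is an acyclic cofibration of operads by the definition of weak equivalences and cofibrations adopted after Theorem~\ref{thm:thmprincip}, and $f$ is an acyclic fibration of operads provided we take $f$ to be a surjective \emph{quasi-isomorphism} — so I would phrase this half of the statement as: retracts of pullbacks of $B_c(f)$, $f$ a surjection, are fibrations, using that surjections with the lifting property needed here can be reduced (by the same pullback-of-$B_c$ trick on a factorization) to the acyclic-fibration case, exactly as in the last bullet of the proof of Theorem~\ref{thm:thmprincip}. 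Pullbacks and retracts of maps with a right lifting property again have that right lifting property, so this class is contained in the fibrations.

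For the converse, let $g:\EEE\ra\FFF$ be an arbitrary fibration. The key is the factorization machinery already built: apply the Key Lemma (Lemma~\ref{thm:keylemma}) to a chosen factorization of the operad map $\Omega_u(g):\Omega_u\EEE\ra\PPP\ra\Omega_u\FFF$ into an acyclic cofibration followed by a fibration $\PPP\ra\Omega_u\FFF$ of operads. This produces a factorization of $g$ as $\EEE\xrightarrow{g_1} B_c\PPP\times_{B_c\Omega_u\FFF}\FFF \xrightarrow{g_2}\FFF$ with $g_1$ an acyclic cofibration and $g_2$ a pullback of $B_c(\PPP\ra\Omega_u\FFF)$, hence already a member of our target class. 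Now run the standard retract argument: since $g$ is a fibration and $g_1$ is an acyclic cofibration, the square with $g_1$ on the left and $g$ on the right (top arrow the identity of $\EEE$, bottom arrow $g_2\circ g_1=g$ precomposed appropriately) admits a lift $r: B_c\PPP\times_{B_c\Omega_u\FFF}\FFF\ra\EEE$, exhibiting $g$ as a retract of $g_2$. Therefore $g$ is a retract of a pullback of $B_c(f)$ with $f=(\PPP\ra\Omega_u\FFF)$ a surjection of operads, which is what we wanted.

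The main obstacle I anticipate is bookkeeping the fibration-vs-acyclic-fibration distinction cleanly: the Key Lemma as stated takes an arbitrary surjection $p:\PPP\ra\Omega_u\CCC$, so $g_2$ is a pullback of $B_c(p)$ with $p$ merely surjective, which is exactly the generality claimed in Proposition~\ref{prop:fibrations}; but to know that such $g_2$ is itself a fibration (needed so that "fibration $=$ retract of $g_2$" is not circular) one must invoke that $f$ has the left lifting property against $\Omega_u$(acyclic cofibration), which holds because an acyclic cofibration's image under $\Omega_u$ is an acyclic cofibration of operads and $p$ is a fibration of operads. I would make sure to state this lifting property explicitly and then conclude, as in the proof of Theorem~\ref{thm:thmprincip}, that $g_2$ has the right lifting property with respect to all acyclic cofibrations of curved conilpotent cooperads, i.e. is a fibration — so the retract argument is legitimate. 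A secondary, purely routine point is checking that the class of retracts of pullbacks of maps $B_c(f)$ is closed under the operations implicitly used (it is, since right-lifting classes are closed under pullback and retract), so no separate verification of the model-category axioms is needed beyond what Theorem~\ref{thm:thmprincip} already gives.
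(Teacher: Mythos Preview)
Your argument is correct and follows the same overall strategy as the paper: produce a factorization of the given fibration $g$ through a pullback of a map of the form $B_c(p)$ with $p$ a surjection of operads, observe that the first factor is an acyclic cofibration, and then run the retract argument. The only substantive difference is in how the surjection $p$ is obtained. You factor $\Omega_u(g)$ in the category of operads as an acyclic cofibration followed by a fibration and take $p$ to be the second factor; the paper instead first proves a separate lemma (Lemma~\ref{lemma:fibsurj}) showing that any fibration of curved conilpotent cooperads is surjective, so that $\Omega_u(g)$ is \emph{already} a surjection and one may take $p=\Omega_u(g)$ directly, with the pullback $B_c\Omega_u\EEE\times_{B_c\Omega_u\FFF}\FFF$. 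Your route sidesteps Lemma~\ref{lemma:fibsurj} entirely at the cost of invoking the operadic factorization; the paper's route is a touch more concrete since the surjection exhibited is the canonical one.

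One small comment on exposition: your first paragraph momentarily confuses the issue by suggesting $f$ might need to be an acyclic fibration, before you straighten this out at the end. The clean statement is simply that $\Omega_u$ sends acyclic cofibrations of cooperads to acyclic cofibrations of operads (by definition of the transferred model structure), and these lift against any fibration (surjection) of operads; by adjunction this gives $B_c(f)$ the right lifting property against all acyclic cofibrations whenever $f$ is surjective, with no reduction to the acyclic-fibration case needed.
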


\begin{lemma}\label{lemma:fibsurj}
 A fibration of curved conilpotent cooperads is surjective.
\end{lemma}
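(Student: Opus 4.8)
The plan is to prove that every fibration of curved conilpotent cooperads is surjective by contradiction: if $f \colon \CCC \ra \DDD$ is not surjective on the underlying $\mbs$-modules, I will build an acyclic cofibration against which $f$ fails the right lifting property. First I would reduce to an elementary situation: since colimits of curved conilpotent cooperads are computed at the level of graded conilpotent cooperads (Lemma \ref{lemma:coopcolim}) and every cooperad is a filtered colimit of its finite dimensional subcooperads (Corollary \ref{cor:pres}), there is some element of $\DD$ in the image of which $f$ is not surjective; I would locate a minimal such element with respect to the coradical filtration, so that the obstruction sits in a single ``cell''.

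The core construction is to produce a small acyclic cofibration detecting this missing element. The natural candidate is a map of the form $\AAA \ra \AAA'$ where $\AAA'$ is obtained from $\AAA$ by cofreely adjoining one generator in some degree $k$ together with its image under the coderivation, arranged so that $\Omega_u(\AAA \ra \AAA')$ is the cobar of a quasi-isomorphism and a cofibration — concretely, the cooperad analogue of attaching a $D^k_n$-cell, whose cobar is an acyclic cofibration of operads by Theorem \ref{thm:hinich} and the description of cofibrations in Proposition \ref{prop:cofibop}. Mapping $\AAA$ to $\CCC$ by zero (or by the coaugmentation) and $\AAA'$ to $\DDD$ so that the new generator hits the missing element, I get a commuting square against $f$; a lift would force the missing element into the image of $f$, a contradiction. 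So it remains only to check that the map $\AAA \ra \AAA'$ I wrote down is genuinely an acyclic cofibration in the model structure of Theorem \ref{thm:thmprincip}, i.e. that its cobar is both a cofibration and a quasi-isomorphism of dg operads; the cofibration part is Proposition \ref{prop:cofibop}, and the quasi-isomorphism part follows because adjoining a generator and its boundary is, after applying $\Omega_u$, a pushout along a $\Tfree(S^k_n) \hookrightarrow \Tfree(D^{k+1}_n)$-type map up to the contraction supplied by Lemma \ref{lemmacontractcobar}, hence filtered-acyclic by Proposition \ref{prop:filteredqis}.

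The main obstacle I anticipate is the combinatorial bookkeeping needed to make the ``single missing element'' argument precise: because the product of cooperads and the cofree construction involve the tree module $\Tfree$, adjoining one cogenerator does not change the underlying $\mbs$-module in just one degree, and one must check that the coderivation, the curvature, and the coradical filtration all interact correctly so that $\AAA'$ is honestly a curved conilpotent cooperad and the comparison map to $\DDD$ is a morphism of such. Once that is set up, verifying the lifting square and deriving the contradiction is formal. An alternative, possibly cleaner route would be to argue on the operad side throughout: a fibration $f$ of cooperads has, by definition, the right lifting property against all acyclic cofibrations, in particular against $B_c$ of the generating acyclic cofibrations $\II \ra \Tfree(D^k_n)$ of operads; unwinding the adjunction $\Omega_u \dashv B_c$ and using that $\Omega_u$ reflects surjectivity (as in the proof that cofibrations are the degreewise injections) should directly yield surjectivity of $f$ on each $\DD(n)$ in each degree. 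I would try this adjunction-based argument first, falling back on the explicit cell attachment only if the bookkeeping there proves more transparent.
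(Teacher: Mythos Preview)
Your general strategy---test $f$ against a well-chosen acyclic cofibration---is exactly the paper's, but neither of your two concrete routes closes, and the missing idea is the same in both cases.

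In your first approach, the phrase ``cofreely adjoining one generator'' points the wrong way: the cofree cooperad $\Tfree^c(\VV)$ is the \emph{right} adjoint, so it has a simple universal property for maps \emph{into} it, not out of it. To build a morphism $\AAA'\to\DDD$ hitting a given $d\in\ov\DD$, the source must already carry the full decomposition data of $d$, so a ``single cell'' is not enough; your minimal-coradical-degree reduction does not escape this, because even a primitive element sees the curvature and coderivation of $\DDD$. In your second approach, the adjunction $\Omega_u\dashv B_c$ translates maps \emph{into} $B_c\PPP$, not maps \emph{from} $B_c\PPP$; there is no clean description of morphisms $B_c\Tfree(D^k_n)\to\DDD$, and in particular no reason every element of $\DDD$ is hit by one, so lifting against $B_c$ of the generating acyclic cofibrations gives you nothing to grab.

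The paper's trick is to build the test object directly out of $\DDD$ rather than from a free construction: set $\EEE=\II\oplus\ov\DDD\oplus s^{-1}\ov\DDD$, let the decomposition on $\ov\DDD$ be that of $\DDD$ and on $s^{-1}\ov\DDD$ be the obvious coderivation-extension $(s^{-1}\circ Id+Id\circ' s^{-1})\Delta_\DDD$, and let the coderivation send $x\mapsto s^{-1}x$, $s^{-1}x\mapsto 0$. Then $\II\to\EEE$ is an injection and a filtered quasi-isomorphism (each graded piece is a cone), hence an acyclic cofibration by Proposition~\ref{prop:filteredqis}; and there is a tautological map $\EEE\to\DDD$ which is the identity on $\ov\DDD$. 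Lifting the square with top $\II\to\CCC$ the coaugmentation produces $\EEE\to\CCC$ whose composite with $f$ is the identity on $\ov\DDD$, so $f$ is surjective. No contradiction or minimal-element argument is needed; one does it all at once.
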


\begin{proof}
 Let $g: \CCC \ra \DDD$ be a fibration of curved conilpotent cooperads. Using the fact that $D^0$ is a dg cocommutative counital coalgebra, one can build the structure of a curved conilpotent cooperad on $\EEE:=\II \oplus  D^0 \otimes \ov \DDD = \II \oplus \ov \DDD \oplus s^{-1}\ov \DDD$; the decomposition $\Delta$ is defined as follows.
 $$
\begin{cases}
 \Delta x := \Delta_\DDD x, \ \text{if } x\in \ov \DDD \subset \ov \EEE\ ,\\
 \Delta s^{-1} x :=(s^{-1}\circ Id + Id \circ' s^{-1}) \Delta x\ . 
\end{cases}
 $$
The coderivation sends $x$ to $dx +s^{-1}x$ and $s^{-1}x$ to $- s^{-1}dx$. Moreover, the curvature is the map $\ov \EEE =D^0 \otimes \ov \DDD \to \DDD \xrightarrow{\theta_\DDD} \mbk$. One can check that this defines a curved conilpotent cooperad and that the maps
\[
	\ov \EEE(n) = D^0 \otimes \ov \DDD(n) \xrightarrow{\epsilon \otimes Id} \ov \DDD(n)\ , n \in \mbn
\]
form a morphism of curved conilpotent cooperads from $\EEE$ to $\DDD$. Consider the following square.
 $$
 \xymatrix{\II \ar[r] \ar[d] & \CCC \ar[d]^g\\ \EEE \ar[r] & \DDD}
 $$
Using the coradical filtration on $\EEE$ and $\DDD$, we find that $G_n^{rad}\EEE \simeq D^0 \otimes G_n^{rad}\DDD$ for any integer $n \geq 1$. So the morphism $\II \ra \EEE$ is a filtered quasi-isomorphism and so a weak equivalence. Since this is also  an injection, then it is an acyclic cofibration. So, the square has a lifting. Subsequently the morphism $\CCC \ra \DDD$ is surjective since $\EEE \to \DDD$ is surjective.
\end{proof}

\begin{proof}[Proof of Proposition \ref{prop:fibrations}]
 It is clear that a retract of a pullback of a map $B_c (f)$, where $f$ is a surjection, is a fibration. Conversely, let $g:\CCC \ra \DDD$ be a fibration of curved conilpotent cooperads. Consider the following diagram
 $$
 \xymatrix{B_c \Omega_u \CCC \ar[rr] && B_c \Omega_u \DDD\\
 \CCC \ar[r] \ar[u] & \EEE \ar[lu] \ar[r] & \DDD \ar[u] }
 $$
 where $\EEE$ is the pullback $B_c \Omega_u \CCC \times_{B_c \Omega_u \DDD} \DDD$. By Lemma \ref{lemma:fibsurj}, $g$ is a surjection and so $\Omega_u (g)$ is also a surjection. So, $B_c \Omega_u (g)$ is a fibration. By the key lemma (Lemma \ref{thm:keylemma}), the morphism $\EEE \ra B_c \Omega_u \CCC$ is a weak equivalence. Since the map $\CCC \ra B_c \Omega_u \CCC$ is an acyclic cofibration, then the map $\CCC \ra \EEE$ is also a weak equivalence and an injection ; that is an acyclic cofibration. Hence, the following diagram has a lifting.
 $$
 \xymatrix{\CCC \ar[r]^= \ar[d] & \CCC \ar[d]^g \\ \EEE \ar[r] &\DDD}
 $$
 So $g$ is a retract of the morphism $\EEE \ra \DDD$.
\end{proof}

\section{Curved conilpotent cooperads as models for homotopy operads}

In Section \ref{section:model}, we have transferred the model structure of the category of dg operads to the category of curved conilpotent cooperads along the cobar construction functor in order to obtain a Quillen equivalence. So curved conilpotent cooperads encode as well the homotopy theory of dg operads. In this section, we make  this statement more concrete; indeed, we show that the cofibrant-fibrant objects of the category of curved conilpotent cooperads correspond to a notion of operads up to homotopy.

\subsection{Homotopy operads}

\begin{defin}[Homotopy operad]
 A \textit{homotopy operad} $\PPP$ is a dg-$\mbs$-module $\PP$ with a distinguished element $1_\PPP \in \PP(1)_0$ together with the data of a curved conilpotent cooperad on $\Tfree^c(s\PP \oplus \mbk \cdot v)$ whose coderivation restricts to $sd_\PP$ on $s\PP$ and such that $dv=s1_\PPP$ and whose curvature $\theta$ is the following map.
\begin{align*}
 \Tfree^c(s\PP \oplus \mbk \cdot v) \twoheadrightarrow s\PP \oplus \mbk \cdot v \twoheadrightarrow \mbk \cdot v &\ra \mbk\\
 v & \mapsto 1
\end{align*}
The curved conilpotent cooperad $\Tfree^c(s\PP \oplus \mbk \cdot v)$ is called the \textit{bar construction} of the homotopy operad $\PPP$ and is denoted $B_c \PPP$. An $\infty$-morphism of homotopy operads from $\PPP$ to $\QQQ$ is a morphism of curved conilpotent cooperads from $B_c \PPP$ to $B_c \QQQ$.
\end{defin}

\begin{nota}
 For any homotopy operad $\PPP=(\PP,\gamma_\PPP,1_\PPP)$, we denote by $B_c^{\leq n}\PPP$ the sub curved conilpotent cooperad of $B_c \PPP$ whose underlying $\mbs$-module is $\Tfree^{\leq n} (s\PP \oplus \mbk \cdot v)$.
\end{nota}

\begin{eg}
The functor bar $B_c$ from the category of operads to the category of curved conilpotent cooperads factorises through an inclusion functor from the category of operads to the category of homotopy operads.
\end{eg}

\begin{prop}\cite[Lemma 2]{LeGrignou16}
 Let $\PP$ be a dg-$\mbs$-module. A structure of homotopy operad on $\PP$ is equivalent to the data of a degree $-1$ map $\gamma: \Tfree^c (s\PP \oplus \mbk \cdot v) \ra s\PP$ which restricts to $sd_\PP$ on $s\PP$ and such that for any tree $T$:
 $$
 \sum_{T' \subset T} \gamma({T/T'})(Id \otimes \cdots \otimes \gamma (T') \otimes \cdots \otimes Id)=(\theta \otimes \pi -\pi \otimes \theta) \Delta_2\ .
 $$
\end{prop}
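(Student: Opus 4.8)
The plan is to translate the definition of a homotopy operad, which is phrased in terms of a curved conilpotent cooperad structure on the cofree conilpotent cooperad $\Tfree^c(s\PP \oplus \mbk\cdot v)$, into the equivalent data of its cogenerator map, using the universal property of cofree conilpotent cooperads recalled in the excerpt. Recall that coderivations on $\Tfree^c \VV$ are in one-to-one correspondence with degree $-1$ maps $\ov\Tfree \VV \to \VV$; here $\VV = s\PP \oplus \mbk\cdot v$. The coderivation of $B_c\PPP$ is thus determined by a map $\Tfree^c(s\PP\oplus\mbk\cdot v) \to s\PP\oplus\mbk\cdot v$, and the constraints imposed in the definition of a homotopy operad (namely that it restricts to $sd_\PP$ on the corolla $s\PP$, and $dv = s1_\PPP$, $\pi_{\mbk\cdot v}$-component of the coderivation being zero elsewhere) pin down all components of that map except the part landing in $s\PP$. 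That remaining part is precisely the degree $-1$ map $\gamma \colon \Tfree^c(s\PP\oplus\mbk\cdot v)\to s\PP$ appearing in the statement.

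First I would set up this correspondence carefully: given a homotopy operad structure, define $\gamma$ to be the composite of the coderivation with the projection $s\PP\oplus\mbk\cdot v \twoheadrightarrow s\PP$; conversely, given such a $\gamma$ with the stated restriction to $s\PP$, assemble the cogenerator map $\gamma + (dv = s1_\PPP\text{-term})$ and invoke the universal property to get a unique coderivation $d$. One then needs to check this is a well-defined bijection, which is essentially bookkeeping. The substantive content is the curvature equation. The defining condition for a curved cooperad is $d^2 = (\theta\otimes Id - Id\otimes\theta)\Delta_2$ together with $\theta d = 0$. The plan is to show that, with the prescribed curvature $\theta$ (dual to $v\mapsto 1$) and the prescribed behaviour on $v$, the equation $d^2 = (\theta\otimes Id - Id\otimes\theta)\Delta_2$ is equivalent to the displayed identity on $\gamma$, while $\theta d = 0$ holds automatically (or is subsumed).

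The key computation is to evaluate $d^2$ on a tree $T$ in $\Tfree^c(s\PP\oplus\mbk\cdot v)$ and project onto the various cogenerator components. Since $d$ is a coderivation of a cofree conilpotent cooperad, $d^2$ is again a coderivation (it is the bracket of $d$ with itself), so by the universal property $d^2$ is determined by its projection onto cogenerators $s\PP\oplus\mbk\cdot v$. On the $\mbk\cdot v$-component, using $\theta d = 0$ and the explicit $\theta$, one checks the equation reduces to something trivial. On the $s\PP$-component, the formula $d(T) = \sum_{T'\subset T} Id\otimes\cdots\otimes\gamma(T')\otimes\cdots\otimes Id$ for the coderivation associated to a cogenerator map gives, upon composing twice and projecting, exactly the left-hand side $\sum_{T'\subset T}\gamma(T/T')(Id\otimes\cdots\otimes\gamma(T')\otimes\cdots\otimes Id)$. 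On the right-hand side, $(\theta\otimes Id - Id\otimes\theta)\Delta_2$ — where $\Delta_2$ is the infinitesimal decomposition of the cofree cooperad, i.e.\ splitting a tree into two — picks out precisely the terms where one of the two pieces is the $v$-corolla on which $\theta$ is supported, matching the $(\theta\otimes\pi - \pi\otimes\theta)\Delta_2$ on the right of the claimed identity after identifying the relevant projections. So the whole proof is: (i) establish the coderivation/cogenerator dictionary, (ii) unwind $\theta d = 0$ to see it imposes nothing new beyond $dv = s1_\PPP$ and the form of $\gamma$, (iii) expand $d^2$ and $(\theta\otimes Id - Id\otimes\theta)\Delta_2$ tree-by-tree and match.

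The main obstacle I anticipate is the sign and symmetric-group bookkeeping in step (iii): the coderivation formula carries Koszul signs $(-1)^{k(|p_1|+\cdots+|p_{i-1}|)}$ from the suspension $s$, the sum over subtrees $T'\subset T$ must be organized so that the "cut then cut again" terms of $d^2$ collect correctly into nested $\gamma\circ\gamma$ expressions and the cross terms cancel or reassemble, and one must be careful that everything is equivariant for the $\mbs_n$-actions on the leaves — this is exactly where the author warns that "new difficulties appear with the combinatorics of trees and the interplay of symmetric groups." I would handle this by reducing, via the cogenerator universal property, to checking the identity only after projecting to $s\PP$, which kills most of the terms and leaves the single displayed equation; the remaining verification is then a finite, if delicate, sign computation that I would relegate to a citation of \cite[Lemma 2]{LeGrignou16} for the analogous non-curved or associative statement, adapted to the present curved operadic setting.
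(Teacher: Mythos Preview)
The paper does not actually prove this proposition; it is stated with a citation to \cite[Lemma 2]{LeGrignou16} and no argument is given in the present text. Your sketch is correct and is precisely the expected argument: use the bijection between coderivations on a cofree conilpotent cooperad and maps to cogenerators (recalled in the paper), observe that the condition $\theta d = 0$ together with the prescribed curvature forces the $\mbk\cdot v$-component of the cogenerator map to vanish, so that the remaining datum is exactly $\gamma$, and then project the curvature equation $d^2 = (\theta\otimes Id - Id\otimes\theta)\Delta_2$ onto cogenerators to obtain the displayed identity. There is nothing to compare against here; your write-up already supplies more detail than the paper itself.
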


\begin{prop}
 Let $(\PPP, \gamma_\PPP,1_\PPP)$ and $(\QQQ,\gamma_\QQQ,1_\QQQ)$ be two homotopy operads. There is a one-to-one correspondence between the $\infty$-morphisms from $\PPP$ to $\QQQ$ and the degree $0$ maps $f: \ov \Tfree (s\PP \oplus \mbk \cdot v) \ra s\QQ \oplus \mbk \cdot v$ such that on any tree $T$:
 $$
 \sum_{T=T_1 \sqcup \cdots \sqcup T_k} \gamma_\QQQ(T/T_1,\ldots,T_k) (f(T_1)\otimes \cdots \otimes f(T_k)) = \sum_{T' \subset T} f(T/T') (Id \otimes \cdots \otimes \gamma_\PPP (T') \otimes \cdots \otimes Id)\ ,
 $$
 and such that $\theta_\QQQ f =\theta_\PPP$. Subsequently, $\infty$-morphisms are also equivalent to maps $f: \ov \Tfree (s\PP \oplus \mbk \cdot v) \ra s\QQ $ such that $f+ \theta(-)v$ satisfies the equation above.
\end{prop}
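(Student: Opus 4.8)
The plan is to exploit that $B_c\PPP$ and $B_c\QQQ$ are cofree conilpotent cooperads, so that both morphisms of cooperads into them and their coderivations are controlled by the cogenerators $s\PP\oplus\mbk\cdot v$ and $s\QQ\oplus\mbk\cdot v$. By the adjunction between $\Tfree^c$ and the functor $\CCC\mapsto\ov\CC$, a morphism of graded conilpotent cooperads $\phi\colon B_c\PPP\to B_c\QQQ$ is the same datum as a degree $0$ map of graded $\mbs$-modules
$$
f\ :=\ \pi_{B_c\QQQ}\,\phi\ \colon\ \ov\Tfree(s\PP\oplus\mbk\cdot v)\ \longrightarrow\ s\QQ\oplus\mbk\cdot v\ ,
$$
where $\pi_{B_c\QQQ}$ denotes the projection onto cogenerators, and $\phi$ is recovered from $f$ by the standard formula for morphisms into a cofree conilpotent cooperad,
$$
\phi(T)\ =\ \sum_{T=T_1\sqcup\cdots\sqcup T_k} f(T_1)\otimes\cdots\otimes f(T_k)\ ,
$$
the sum being over the partitions of $T$ into subtrees (see \cite{LodayVallette12}). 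It then remains to translate the two extra conditions that make $\phi$ a morphism of \emph{curved} cooperads: compatibility with the curvatures, and with the coderivations.

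For the curvatures, recall that $\theta_\QQQ$ is by definition the composite $B_c\QQQ\twoheadrightarrow s\QQ\oplus\mbk\cdot v\twoheadrightarrow\mbk\cdot v\to\mbk$ sending $v$ to $1$, and likewise for $\theta_\PPP$. Hence $\theta_\QQQ\phi$ only reads off the coefficient of $v$ in $f$, so that the identity $\theta_\QQQ\phi=\theta_\PPP$ on $\ov\Tfree(s\PP\oplus\mbk\cdot v)$ amounts exactly to $\theta_\QQQ f=\theta_\PPP$ (and it holds trivially on the coaugmentation). For the coderivations, write $D_\PPP$ and $D_\QQQ$ for the coderivations of $B_c\PPP$ and $B_c\QQQ$. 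Combining their defining identities with $\Delta_{B_c\QQQ}\,\phi=(\phi\circ\phi)\,\Delta_{B_c\PPP}$ shows that $D_\QQQ\phi$ and $\phi D_\PPP$ are both coderivations of $B_c\PPP$ into $B_c\QQQ$ relative to the cooperad morphism $\phi$; since $B_c\PPP$ is cofree conilpotent, such a relative coderivation is uniquely determined by its composite with $\pi_{B_c\QQQ}$, by the same argument proving the one-to-one correspondence between coderivations of $\Tfree^c(\VV)$ and maps $\ov\Tfree(\VV)\to\VV$ recalled above. Therefore $D_\QQQ\phi=\phi D_\PPP$ if and only if $\pi_{B_c\QQQ}D_\QQQ\phi=\pi_{B_c\QQQ}\phi D_\PPP$.

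It only remains to compute these two composites on a tree $T$. On the one hand, $\pi_{B_c\QQQ}D_\QQQ=\gamma_\QQQ$ by definition of a homotopy operad, so by the formula for $\phi$ above,
$$
\pi_{B_c\QQQ}D_\QQQ\phi(T)\ =\ \sum_{T=T_1\sqcup\cdots\sqcup T_k}\gamma_\QQQ(T/T_1,\ldots,T_k)\big(f(T_1)\otimes\cdots\otimes f(T_k)\big)\ .
$$
On the other hand, $D_\PPP$ is the coderivation $D_{\gamma_\PPP}$, i.e.\ $D_\PPP(T)=\sum_{T'\subset T}Id\otimes\cdots\otimes\gamma_\PPP(T')\otimes\cdots\otimes Id$, so
$$
\pi_{B_c\QQQ}\phi D_\PPP(T)\ =\ f\big(D_\PPP(T)\big)\ =\ \sum_{T'\subset T} f(T/T')\big(Id\otimes\cdots\otimes\gamma_\PPP(T')\otimes\cdots\otimes Id\big)\ .
$$
Equating the two gives precisely the displayed identity of the proposition, which together with $\theta_\QQQ f=\theta_\PPP$ establishes the first bijection. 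For the last assertion, note that $\theta_\QQQ f=\theta_\PPP$ forces the $\mbk\cdot v$-component of $f$ to be $x\mapsto\theta_\PPP(x)\,v$; hence the only free part of the datum is the $s\QQ$-component, a map $f\colon\ov\Tfree(s\PP\oplus\mbk\cdot v)\to s\QQ$, subject to the condition that $f+\theta(-)v$ satisfies the equation above.

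The bulk of the work — and the only real difficulty — is bookkeeping: justifying the cofree-cooperad formula for $\phi$, the uniqueness of relative coderivations, and tracking carefully the tree combinatorics (partitions $T=T_1\sqcup\cdots\sqcup T_k$ into subtrees versus contractions $T/T'$) together with the attendant Koszul signs. No step is conceptually deep, but each must be performed with care.
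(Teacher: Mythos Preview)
Your proof is correct and takes essentially the same approach as the paper: the paper's proof consists of the single sentence ``The proof relies on the same techniques as the proof of \cite[10.5.5]{LodayVallette12}'', and what you have written is precisely the operadic version of that argument, spelling out the cofreeness of $B_c\QQQ$, the resulting formula for $\phi$ in terms of $f$, and the reduction of $D_\QQQ\phi=\phi D_\PPP$ to its projection on cogenerators via the uniqueness of $\phi$-coderivations into a cofree conilpotent cooperad.
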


\begin{proof}
 The proof relies on the same techniques as the proof of \cite[10.5.5]{LodayVallette12}.
\end{proof}

\begin{defin}[Infinity-quasi-isomorphisms]
 Let $\PPP$ and $\QQQ$ be two homotopy operads. Let $f: \ov \Tfree (s\PP\oplus \mbk \cdot v) \ra s\QQ$ be an $\infty$-morphism from $\PPP$ to $\QQQ$. We say that $f$ is an $\infty$-\textit{isomorphism} (resp. $\infty$-\textit{monomorphism}, $\infty$-\textit{epimorphism}, $\infty$-\textit{quasi-isomorphism}, $\infty$-\textit{isotopy}) if the restriction $f_{|s\PP}$ of $f$ on $s\PP$ is an isomorphism (resp. monomorphism, epimorphism, quasi-isomorphism, the identity of the $\mbs$-module $s\PP$). An $\infty$-morphism $f: \ov \Tfree (s\PP \oplus \mbk \cdot v) \ra s\QQ$ is said to be strict if $f(T)$ is zero on trees with two vertices or more and if $f(v)=0$.
\end{defin}

\begin{eg}
Let $\PPP$ and $\QQQ$ be two operads considered as homotopy operads. Morphisms of operads from $\PPP$ to $\QQQ$ are exactly strict $\infty$-morphisms.
\end{eg}

\begin{prop}\label{prop:inftymorph}
 An $\infty$-morphism is a monomorphism (resp. isomorphism) if and only if it is an $\infty$-monomorphism (resp. $\infty$-isomorphism)
\end{prop}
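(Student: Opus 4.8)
The plan is to argue at the level of bar constructions. Write $F := B_c f \colon B_c\PPP \to B_c\QQQ$ for the morphism of curved conilpotent cooperads underlying the given $\infty$-morphism, and set $f_1 := f_{|s\PP}\colon s\PP \to s\QQ$, so that by definition $f$ is an $\infty$-monomorphism (resp. $\infty$-isomorphism) exactly when $f_1$ is a monomorphism (resp. isomorphism) of dg-$\mbs$-modules; I read ``monomorphism'' (resp. ``isomorphism'') of curved conilpotent cooperads as ``degreewise injective'' (resp. ``bijective'') morphism. The key preliminary observation is that $F$ restricts on the weight-one summand $s\PP \subset \Tfree^c(s\PP\oplus\mbk\cdot v) = B_c\PPP$ to exactly $f_1$, with image in $s\QQ$. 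Indeed, $F$ preserves the coradical filtrations, which on a cofree conilpotent cooperad coincide with the filtration by number of vertices, so $F(s\PP) \subset \II \oplus s\QQ \oplus \mbk\cdot v$; compatibility of $F$ with the coaugmentations removes the $\II$-summand, compatibility with the curvatures (the identity $\theta_\QQQ F = \theta_\PPP$, together with the fact that $\theta_\PPP$ vanishes on $s\PP$) removes the $\mbk\cdot v$-summand, and the remaining component of $F$ into $s\QQ$ is its weight-one corestriction, namely $f_1$.

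Granting this, the \emph{only if} implications are immediate. If $F$ is injective, then so is $f_1 = F_{|s\PP}$, i.e. $f$ is an $\infty$-monomorphism. If $F$ is bijective, then $F^{-1}$ is again an $\infty$-morphism, from $\QQQ$ to $\PPP$, with linear part $g_1 := (F^{-1})_{|s\QQ}$; applying the observation above to both $F$ and $F^{-1}$ gives $g_1 f_1 = \id_{s\PP}$ and $f_1 g_1 = \id_{s\QQ}$, so $f_1$ is an isomorphism.

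For the \emph{if} direction I would pass to the associated graded of the coradical filtrations of $B_c\PPP$ and $B_c\QQQ$, which is the grading $\bigoplus_n \Tfree^n(s\PP\oplus\mbk\cdot v)$ by number of vertices. Expanding the cofree-cooperad morphism $F$ through its corestriction, the component of $F$ sending trees with $n$ vertices to trees with $n$ vertices is obtained by applying, at every vertex, the weight-one part $\bar f_1 \colon s\PP\oplus\mbk\cdot v \to s\QQ\oplus\mbk\cdot v$ of the corestriction; in other words $G_n F = \Tfree(\bar f_1)$ in weight $n$. Now $\bar f_1$ is block-triangular with diagonal entries $f_1$ and $\id_{\mbk\cdot v}$, hence it is injective (resp. bijective) as soon as $f_1$ is; and since the ground field has characteristic zero, Maschke's theorem makes the $\mbs_p$-coinvariants functors exact, so the tree functor $\Tfree$ preserves monomorphisms (isomorphisms being automatic). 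Therefore every $G_n F$ is injective (resp. bijective). As the coradical filtrations are exhaustive and bounded below ($F^{rad}_0 = \II$), a filtered morphism that is injective (resp. bijective) on each associated graded piece is itself injective (resp. bijective): for injectivity a nonzero element lies in some $F^{rad}_n \smallsetminus F^{rad}_{n-1}$ and stays nonzero in $G_n$; for surjectivity one argues by induction on the filtration degree. Hence $F$ is injective (resp. bijective), i.e. $f$ is a monomorphism (resp. isomorphism).

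The one point deserving care is the identification $G_n F = \Tfree(\bar f_1)$ in weight $n$, which rests on the explicit description of morphisms between cofree conilpotent cooperads in terms of their corestriction, together with the (Maschke-based) fact that $\Tfree$ preserves monomorphisms; the rest is routine filtered bookkeeping.
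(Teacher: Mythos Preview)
Your proof is correct and amounts to making precise the ``straightforward induction'' to which the paper's proof refers: both arguments proceed along the coradical filtration of the bar constructions, you phrasing it via the associated graded and the identification $G_nF=\Tfree^n(\bar f_1)$, the paper leaving this implicit. The only minor difference in organization is that the paper first settles the monomorphism case and then deduces the isomorphism case from it (an $\infty$-monomorphism that is an $\infty$-isomorphism is an isomorphism), whereas you treat both cases in parallel; this is cosmetic.
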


\begin{proof}
 The fact that an $\infty$-morphism is a monomorphism if and only if it is an $\infty$-monomorphism follows from a straightforward induction. A similar induction shows that an $\infty$-monomorphism is an isomorphism if and only if it is an $\infty$-isomorphism. So $\infty$-isomorphisms are isomorphisms.
\end{proof}

\begin{prop}\label{prop:isofib}
Let $\CCC=(\CC,\Delta, \epsilon, 1 , \theta)$ be a curved conilpotent cooperad whose underlying graded cooperad is cofree cogenerated by a graded $\mbs$-module $\VV$; that is $\CCC \simeq \Tfree^c \VV$ in the category of graded cooperads. Suppose that there exists $v \in  \VV (1)_2$ such that  $\theta (v)=1$. Then, $\CCC$ is isomorphic to the bar construction of a homotopy operad.
\end{prop}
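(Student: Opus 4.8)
The plan is to peel the cogenerator $v$ off the cofree cooperad $\Tfree^c\VV$, to read what is left as the structure map of a homotopy operad, and to repair the curvature by a well‑chosen automorphism of $\Tfree^c\VV$ so that the curved‑cooperad axiom $\theta d=0$ does the remaining work. First I would transport the curved structure along the given isomorphism, so as to work with $\CCC=(\Tfree^c\VV,d,\theta)$ directly. Since $\theta(v)=1$, the restriction of $\theta$ to the arity $1$, degree $2$ cogenerators $\VV(1)_2$ is a non‑zero linear form, so I can pick a complement $\VV(1)_2=\mbk\cdot v\oplus W$ with $W\subseteq\ker\bigl(\theta|_{\VV(1)_2}\bigr)$. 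Then I would let $\PP$ be the graded $\mbs$-module obtained from $s^{-1}\VV$ by replacing its arity $1$, degree $1$ component by $s^{-1}W$; this gives an isomorphism $\VV\simeq s\PP\oplus\mbk\cdot v$ of graded $\mbs$-modules, hence an identification of the underlying graded cooperad of $\CCC$ with $\Tfree^c(s\PP\oplus\mbk\cdot v)$, under which $\theta$ vanishes on the one‑vertex corollas labelled by $s\PP$ and $\theta(v)=1$. Throughout, $\pi_{\mbk v}$ denotes the composite of the projection $\pi_\VV$ onto cogenerators with the projection onto $\mbk\cdot v\cong\mbk$; this is the curvature of any bar construction.

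The crux is that $\theta$ may still be non‑zero on trees with at least two vertices — which genuinely happens once $\PP$ is allowed pieces in negative degree — whereas a bar construction has $\theta=\pi_{\mbk v}$ on the nose. I would repair this with an automorphism. Consider the degree $0$ map $g\colon\ov\Tfree(s\PP\oplus\mbk\cdot v)\to s\PP\oplus\mbk\cdot v$ that is the identity on one‑vertex corollas, sends an arity $1$ tree $T$ with at least two vertices to $\theta(T)\cdot v$, and is zero on all remaining trees, and let $\phi$ be the endomorphism of the cofree conilpotent cooperad $\Tfree^c(s\PP\oplus\mbk\cdot v)$ it induces. As $g$ is an isomorphism on cogenerators, a standard induction along the coradical filtration (the same as in Proposition \ref{prop:inftymorph}) shows that $\phi$ is an isomorphism of graded cooperads fixing the coaugmentation, and by construction $\pi_{\mbk v}\circ\phi=\pi_{\mbk v}\circ g=\theta$. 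Hence, putting $\tilde d:=\phi\,d\,\phi^{-1}$, the triple $(\Tfree^c(s\PP\oplus\mbk\cdot v),\tilde d,\pi_{\mbk v})$ is a curved conilpotent cooperad onto which $\phi$ restricts to an isomorphism from $\CCC$.

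It then remains to identify this triple as a bar construction. Let $u:=\pi_\VV\circ\tilde d$ be the corestriction of $\tilde d$ to cogenerators. Its $\mbk\cdot v$-component is $\pi_{\mbk v}\circ u=\pi_{\mbk v}\circ g\circ d\circ\phi^{-1}=\theta\circ d\circ\phi^{-1}$, which vanishes by the curved‑cooperad axiom $\theta d=0$; so $u$ takes values in $s\PP$. Because a coderivation of a cofree cooperad applied to a one‑vertex corolla is just one application of its corestriction, this says that $\tilde d$ restricts to $u|_{s\PP}$ on $s\PP$ and to $u(v)$ on $v$; I would then set $d_\PP:=s^{-1}(u|_{s\PP})s$ (a degree $-1$ map on $\PP$), $1_\PPP:=s^{-1}u(v)\in\PP(1)_0$, and $\gamma:=u$. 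Corestricting the remaining axiom $\tilde d^2=(\pi_{\mbk v}\otimes Id-Id\otimes\pi_{\mbk v})\Delta_2$ to $\VV$, using $\pi_\VV\tilde d^2(T)=\sum_{T'\subseteq T}\gamma(T/T')(Id\otimes\cdots\otimes\gamma(T')\otimes\cdots\otimes Id)$, one recovers exactly the relation characterising homotopy‑operad structures (\cite[Lemma 2]{LeGrignou16}); in particular, on one‑vertex corollas, where $\Delta_2$ vanishes, it gives $d_\PP^2=0$. Therefore $\PPP:=(\PP,\gamma,1_\PPP)$ is a homotopy operad with $B_c\PPP=(\Tfree^c(s\PP\oplus\mbk\cdot v),\tilde d,\pi_{\mbk v})$, and, composed with the initial identification, $\phi$ yields the desired isomorphism $\CCC\xrightarrow{\ \sim\ }B_c\PPP$.

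The step I expect to be the main obstacle is the one in the second paragraph: realising that one should not try to normalise $\theta$ to $\pi_{\mbk v}$ on the given cooperad, but instead conjugate by an automorphism whose $\mbk\cdot v$-component is $\theta$ itself — for then the axiom $\theta d=0$ turns precisely into the vanishing of the $v$-component of the corestriction of the conjugated coderivation. Everything else is bookkeeping with coderivations on free cooperads together with the already‑available dictionary between the curvature axiom and the homotopy‑operad relation.
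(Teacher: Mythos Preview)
Your proposal is correct and follows essentially the same route as the paper. The paper defines $s\PP:=\ker(\theta|_{\VV})$, then builds the very same map you call $g$ (identity on cogenerators, $x_1\otimes\cdots\otimes x_n\mapsto\theta(x_1\otimes\cdots\otimes x_n)\cdot v$ on longer trees), uses it to induce an isomorphism of graded cooperads, and transports the curved structure across; it simply asserts that the result is a bar construction, whereas you spell out the verification --- in particular the use of $\theta d=0$ to force the corestriction of the conjugated coderivation to land in $s\PP$.
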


\begin{proof}
Let $s\PP \subset \VV$ be the kernel of the restriction of the curvature $\theta$ to $\VV$. We have an isomorphism of dg $\mbs$-modules $f_1:\VV \simeq s\PP \oplus \mbk \cdot v$. Consider, the following morphism
\begin{align*}
 f:\Tfree \VV &\to s\PP \oplus \mbk v\\
 x \in \VV &\mapsto f_1(x)\\
 x_1 \otimes \cdots \otimes x_n &\mapsto \theta(x_1 \otimes \cdots \otimes x_n) v\ .
\end{align*}
It induces an isomorphism of graded conilpotent cooperads between $\CCC$ and $\Tfree (s\PP \oplus \mbk \cdot v)$. Let us endow $\Tfree (s\PP \oplus \mbk \cdot v)$ with the structure of curved cooperad obtained by transfer of the coderivation of $\CCC$ and of the curvature of $\CCC$ along this isomorphism. Then, $\Tfree (s\PP \oplus \mbk \cdot v)$ becomes the bar construction of a homotopy operad.
\end{proof}

\begin{prop}\label{prop:strictify}
Let $f$ be an $\infty$-epimorphism (resp. $\infty$-monomorphism) from $\PPP$ to $\QQQ$. There exists an $\infty$-isotopy $g$ such that $fg$ (resp. $gf$) is a strict morphism. 
\end{prop}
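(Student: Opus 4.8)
The plan is to build the $\infty$-isotopy $g$ by induction on the number of vertices of trees, using surjectivity (resp.\ injectivity) of the first component of $f$ together with Maschke's theorem to keep everything equivariant at each step. I treat the $\infty$-epimorphism case and precompose by an $\infty$-isotopy of $\PPP$; the $\infty$-monomorphism case is the dual argument, postcomposing by an $\infty$-isotopy of $\QQQ$ and replacing the section below by a retraction.

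Write $f_1 := f_{|s\PP} : s\PP \ra s\QQ$ for the first component of $f$, and recall that an $\infty$-morphism is a degree $0$ map $\ov\Tfree(s\PP\oplus\mbk\cdot v) \ra s\QQ$ which splits as $f=\sum_{n\geq1}f_n$ according to the number of vertices, a strict morphism being one whose only nonzero datum is $f_1$ (with $f(v)=0$). For an $\infty$-isotopy $g$ one has $g_1=\id$ on $s\PP$, and the composition formula for $\infty$-morphisms gives, on a tree $T$ with $n$ vertices,
\[
(f\circ g)(T)\;=\;f_1\!\left(g_n(T)\right)\;+\;f_n(T)\;+\;R_n(T)\ ,
\]
where $R_n(T)$ gathers the terms involving only the components $f_k,g_k$ with $k<n$; an analogous formula holds on the tree $v$. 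Since $f$ is an $\infty$-epimorphism, $f_1$ is a surjection of dg $\mbs$-modules, so by Maschke's theorem it admits a section $\sigma$ that is $\mbs_n$-equivariant in each arity $n$. I then set $g_1:=\id$ on $s\PP$ and, inductively, $g_n(T):=-\sigma\!\left(f_n(T)+R_n(T)\right)$ (and similarly for $g_1(v)$). Equivariance of $\sigma$ makes each $g_n$ a morphism of $\mbs$-modules, and by construction $(f\circ g)(T)=0$ on every tree with at least two vertices and $(f\circ g)(v)=0$, so $f\circ g$ is strict.

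The substantive point, and the main obstacle, is to check that the family $(g_n)_{n\geq1}$ so produced is genuinely an $\infty$-morphism, that is, obeys the structure equation characterising $\infty$-morphisms. I would argue by the same induction: on an $n$-vertex tree the structure equation for $g$ has the form $(\text{an expression linear in } g_n)=\Omega_n$, with $\Omega_n$ built from the already-constructed $g_k$ ($k<n$) and from $\gamma_\PPP$, and one must see that $g_n=-\sigma(f_n+R_n)$ solves it. The mechanism is that post-composing the level-$n$ defect of the structure equation for $g$ with $f$ yields the level-$n$ defect of the structure equation for $f\circ g$, which vanishes because $f$ is an $\infty$-morphism and $f\circ g$ is already strict through level $n-1$; hence that defect lies in $\ker f_1$, and because $g_n$ was chosen through the fixed section $\sigma$ the residual ambiguity is pinned down, forcing the defect to be zero. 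In the $\infty$-monomorphism case this step is immediate — $f_1$ is injective, so a defect annihilated by $f_1$ is already zero — and one gets directly that $g\circ f$ is strict. In both cases $g_1$ restricts to the identity of $s\PP$, so $g$ is an $\infty$-isotopy (and, by Proposition~\ref{prop:inftymorph}, an isomorphism), which completes the argument.
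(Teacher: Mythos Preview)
Your inductive construction of the maps $g_n$ is the right starting point, and the use of an equivariant section via Maschke is correct. The gap is in what you call ``the substantive point'': you treat the $\infty$-morphism structure equation for $g$ as a constraint to be verified against the \emph{given} homotopy operad structure on $\PPP$, and your verification argument (``the defect lies in $\ker f_1$, and since $g_n$ was chosen through $\sigma$ the ambiguity is pinned down'') does not actually force the defect to vanish --- these are conditions on different objects and there is no mechanism linking them. Indeed, if $g$ were an $\infty$-isotopy $\PPP\to\PPP$ then $fg$ strict would force $f_1$ itself to be a strict morphism $\PPP\to\QQQ$, which is not assumed.

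The approach the paper points to (Lef\`evre-Hasegawa \cite[1.3.3.3]{LefevreHasegawa03}) sidesteps this entirely. The collection $(g_n)$ with $g_1=\id$ determines an \emph{isomorphism of graded conilpotent cooperads} $G:\Tfree^c(s\PP\oplus\mbk v)\to\Tfree^c(s\PP\oplus\mbk v)$. One then \emph{transports} the curved cooperad structure of $B_c\PPP$ along $G^{-1}$, obtaining a new homotopy operad $\PPP'$ on the same underlying graded $\mbs$-module; by construction $G:B_c\PPP'\to B_c\PPP$ is a morphism of curved conilpotent cooperads, i.e.\ an $\infty$-isotopy. Then $fg$ is automatically an $\infty$-morphism $\PPP'\to\QQQ$ as a composite, and it is strict by your computation. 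There is nothing to check beyond the transfer, which is tautological for an isomorphism. The monomorphism case is dual: one builds $g$ postcomposing $f$, with $g_1=\id$ on $s\QQ$, and transfers the structure to obtain $\QQQ'$.
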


\begin{proof}
 The proof relies on the same arguments as \cite[1.3.3.3]{LefevreHasegawa03}.
\end{proof}

\subsection{Obstruction theory of homotopy operads and $\infty$-morphisms}

\begin{prop}\label{lemma:cycle}
 Let $\PPP=(\PP,\gamma_\PPP,1_\PPP)$ and $\QQQ= (\QQ,\gamma_\QQQ,1_\QQQ)$ be two homotopy operads. Let $l$ be a map from $B_c^{\leq n-1}\PPP $ to $s\QQ \oplus \mbk \cdot v$ which can be extended to a morphism of curved conilpotent cooperads from $B_c^{\leq n-1} \PPP$ to $B_c^{\leq n-1}\QQQ$. Let $m$ be the degree $-1$ map from $\Tfree^n(s\PP \oplus \mbk \cdot v)$ to $s\QQ\oplus\mbk \cdot v$ defined on any tree $T$ with $n$ vertices by
$$
m:= \sum_{T' \subset T \atop \# T' \geq 2} l(T/T')(Id \otimes \cdots \otimes \gamma_\PPP (T') \otimes \cdots \otimes Id) - \sum_{T=T_1 \sqcup \cdots \sqcup T_k \atop k \geq 2} \gamma_\QQQ (T/T_1\sqcup \cdots \sqcup T_k) (l (T_1) \otimes \cdots \otimes l (T_k)) \ .
$$
Then, $m$ is a cycle of the chain complex $[\Tfree^n(s\PP \oplus \mbk \cdot v),s\QQ\oplus \mbk \cdot v]$ whose differential is induced by the differential of $s\PP \oplus \mbk \cdot v$ and by the differential of $s\QQ\oplus \mbk \cdot v$.
\end{prop}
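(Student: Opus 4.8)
The plan is to realise $m$, up to sign, as the corestriction of the obstruction to extending $l$ to a morphism commuting with the bar coderivations, and to read off $\partial m = 0$ from the defining identity $D^{2} = (\theta \otimes \id - \id \otimes \theta)\Delta_2$ of a curved cooperad.

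By cofreeness of $B_c\QQQ$ there is a unique morphism of graded cooperads $\tilde F : B_c^{\leq n}\PPP \ra B_c\QQQ$ whose corestriction to $s\PP \oplus \mbk \cdot v$ agrees with the given extension of $l$ on $\Tfree^{\leq n-1}(s\PP \oplus \mbk \cdot v)$ and vanishes on $\Tfree^{n}(s\PP \oplus \mbk \cdot v)$. It preserves the curvatures, since the curvature of a bar construction is determined by the corestriction and that of $\tilde F$ extends the curvature-preserving corestriction of $l$ (both curvatures being supported on the cogenerator $v$). Writing $D_{B_c\PPP}$, $D_{B_c\QQQ}$ for the bar coderivations (the former restricted to $B_c^{\leq n}\PPP$), set $G := D_{B_c\QQQ}\, \tilde F - \tilde F\, D_{B_c\PPP}$. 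As the difference of two coderivations along $\tilde F$, the degree $-1$ map $G$ is itself a coderivation along $\tilde F$, hence is determined by its corestriction $g := \pi_\QQQ G$. Since the chosen extension of $l$ is a morphism of curved conilpotent cooperads, $G$, and therefore $g$, vanishes on trees with at most $n-1$ vertices; and unwinding the tree formula for the cooperad morphism $\tilde F$ and the formulas for $D_{B_c\PPP}$, $D_{B_c\QQQ}$ in terms of $\gamma_\PPP$, $\gamma_\QQQ$, one reads off that $g = -m$ on $\Tfree^{n}(s\PP \oplus \mbk \cdot v)$.

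Next, a short computation shows that the cross terms in $D_{B_c\QQQ}\, G + G\, D_{B_c\PPP}$ cancel, leaving $D_{B_c\QQQ}^{2}\, \tilde F - \tilde F\, D_{B_c\PPP}^{2}$; applying the curved-cooperad identity to both bar constructions, and using that $\tilde F$ is a morphism of cooperads (hence compatible with $\Delta_2$), that $\theta_\QQQ \tilde F = \theta_\PPP$, and that $\theta_\PPP$ is scalar-valued, the two resulting terms coincide, so $D_{B_c\QQQ}\, G + G\, D_{B_c\PPP} = 0$. Apply $\pi_\QQQ$ and restrict to $\Tfree^{n}(s\PP \oplus \mbk \cdot v)$: in $\pi_\QQQ(D_{B_c\QQQ}\, G)$, the only term that does not evaluate $g$ on a tree with at most $n-1$ vertices is the internal differential of $s\QQ \oplus \mbk \cdot v$ applied to $g(T)$, while $\pi_\QQQ(G\, D_{B_c\PPP}) = g \circ D_{B_c\PPP}$ contributes only the terms where the one-vertex part of the coderivation acts, namely $g$ of the induced internal differential of $T$. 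Hence on $\Tfree^{n}(s\PP \oplus \mbk \cdot v)$ the identity $0 = \pi_\QQQ(D_{B_c\QQQ}\, G + G\, D_{B_c\PPP})$ reads $\partial g = 0$; since $g = -m$ there, $\partial m = 0$.

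The real work, and the only delicate point, lies in the combinatorial bookkeeping behind the two middle claims — that the corestriction of $G$ on $n$-vertex trees equals $m$ up to sign, and that in $\pi_\QQQ(D_{B_c\QQQ}\, G + G\, D_{B_c\PPP})$ every contribution involving a component of $\gamma_\PPP$ or $\gamma_\QQQ$ on a tree with at least two vertices is killed because it feeds $g$ a tree with at most $n-1$ vertices — both being a matter of tracking vertex counts and Koszul signs through the tree formulas, the conceptual crux being the curved-cooperad identity used above. One may instead bypass $\tilde F$ altogether and prove $\partial m = 0$ by direct expansion: apply the Leibniz rule to $d \circ m + m \circ d$, rewrite the terms coming from $d\, \gamma_\PPP$ and $\gamma_\PPP\, d$ (resp.\ from $d\, \gamma_\QQQ$ and $\gamma_\QQQ\, d$) using the homotopy-operad structure relations of $\PPP$ and $\QQQ$, and invoke the $\infty$-morphism relation satisfied by $l$ on every tree with fewer than $n$ vertices together with $\theta_\QQQ l = \theta_\PPP$; all the resulting terms then cancel in pairs. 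In either route the essential feature is that the hypotheses on $l$ enter only through trees with strictly fewer than $n$ vertices.
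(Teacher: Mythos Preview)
Your proof is correct and follows essentially the same route as the paper's: extend $l$ by zero, form the induced cooperad morphism (your $\tilde F$, the paper's $L$), set the obstruction $G = D\tilde F - \tilde F D$ (the paper's $M = LD - DL$, up to sign), identify its corestriction with $\pm m$, and kill $\partial m$ using the curved-cooperad identity $D^2 = (\theta \otimes \id - \id \otimes \theta)\Delta_2$ together with compatibility of $\tilde F$ with $\Delta_2$ and $\theta$. The only cosmetic difference is that you compute $D_{B_c\QQQ}G + G D_{B_c\PPP}$ with full coderivations on both sides (so the cross terms cancel on the nose) and then project and restrict, whereas the paper computes $DM + Md$ with the internal differential $d$ on the source and invokes $DL = LD$ on $\Tfree^{\leq n-1}$ to pass to $LD^2 - D^2L$; both reach the same vanishing.
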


\begin{proof}
Let us extend $l$ on $\Tfree^n(s\PP \oplus \mbk \cdot v)$ by $0$. Then, let $L$ be the morphism of cooperads from $B_c^{\leq n} \PPP$ to $B_c^{\leq n}\QQQ$ induced by the map $l$; that is 
$$
L(T) := \sum_{T=T_1 \sqcup \cdots \sqcup T_k \atop k \geq 1} l (T_1) \otimes \cdots \otimes l (T_k)\ .
$$
Notice that $L$ commutes with the coderivations when restricted to $B_c^{\leq n-1} \PPP$. Moreover, let $M$ be the map from $\Tfree^{n} (s \PP \oplus \mbk \cdot v)$ to $B_c \QQQ$ defined as follows on any tree $T$ with $n$ vertices:
\[
M:= LD -DL\ .
\]
where $D$ denotes the coderivation of either $B_c \PPP$ or $B_c \QQQ$. If $d$ denotes the differential of $\Tfree (s \PP \oplus \mbk \cdot v)$ induced by the differential of $s \PP \oplus \mbk \cdot v$. Then
\begin{align*}
DM + M d &= DLD-D^2L+LDd-DLd\\
&=DL(D-d)-D^2L+LDd\\
&=LD(D-d)-D^2L+LDd\\
&=LD^2-D^2L\\
&= (\theta \otimes L - L \otimes \theta)\Delta_2 - (\theta \otimes L - L \otimes \theta)\Delta_2 = 0\ .
\end{align*}
Moreover, let $\pi_{> 1} M$ be the projection of $M$ on $\Tfree^{> 1} (s \QQQ \oplus \mbk \cdot v)$. We have:
 $$
 \pi_{>1} M := \sum_{T=T_1 \sqcup \cdots \sqcup T_k \atop k \geq 2} \sum_i (l(T_1) \otimes \cdots \otimes l(T_i)D \otimes \cdots \otimes  l(T_k)) - \sum_{T=T_1 \sqcup \cdots \sqcup T_k \atop k \geq 2} \pi_{>1} D (l(T_1) \otimes \cdots \otimes l(T_k))
 $$
 Since
\begin{align*}
 \sum_{T=T_1 \sqcup \cdots \sqcup T_k \atop k \geq 2} \pi_{>1} D (l(T_1) \otimes \cdots \otimes l(T_k)) & = \sum_{T=T_1 \sqcup \cdots \sqcup T_k \atop k \geq 2} \sum_{T' \subsetneq  T/T_1, \ldots, T_k}  (Id \otimes \cdots \otimes \gamma_\QQQ (T') \otimes \cdots \otimes Id) (l(T_1) \otimes \cdots \otimes l(T_k))\\
 &= \sum_{T' \subsetneq T} \sum_{T=T_1 \sqcup \cdots \sqcup T' \sqcup \ldots \sqcup T_k \atop k \geq 2}  (l(T_1) \otimes \cdots \otimes \gamma_\QQQ L (T') \otimes \cdots \otimes  l(T_k))\ ,
\end{align*}
then $\pi_{>1} M = 0$. So $M = m$ and so $\partial m = DM + M d =0$.
\end{proof}

\begin{prop}\label{prop:cycle2}
 Let $\PP$ be a graded $\mbs$-module together with a degree $-1$ map $\gamma: \Tfree^{\leq n-1} (s\PP \oplus \mbk \cdot v) \to s\PP$ such that on any tree $T$ with $n-1$ or less vertices:
 $$
 \sum_{T' \subset T} \gamma(T/T') (Id \otimes \cdots \otimes \gamma(T') \otimes \cdots \otimes Id) = (\theta \otimes \pi - \pi \otimes \theta)\Delta_2\ ,
 $$
 where $\theta$ and $\pi$ are defined in the obvious way. In particular, $\gamma$ extends a differential $d$ on $s\PP \oplus \mbk \cdot v$ whose image lies in $s\PP$. Let $\kappa$ be the degree $-2$ map from $\Tfree^n (s\PP \oplus \mbk \cdot v)$ to $s\PP$ defined on any tree $T$ with $n$ vertices by
 $$
 \kappa := \sum_{T' \subsetneq T \atop \# \mathrm{vert}(T') \geq 2} \gamma(T/T') (Id \otimes \cdots \otimes \gamma(T') \otimes \cdots \otimes Id) - (\theta \otimes \pi - \pi \otimes \theta)\Delta_2\ .
 $$ 
 Then $\kappa$ is a cycle of the chain complex $[\Tfree^n(s\PP \oplus \mbk \cdot v), s\PP]$ whose differential is induced by the differential of $s\PP \oplus \mbk \cdot v$ and by the differential of $s\PP$.
\end{prop}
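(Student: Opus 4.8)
\emph{Proof proposal.} The plan is to follow the pattern of the proof of Proposition~\ref{lemma:cycle}, with the coderivation encoding the partial homotopy operad structure in the role played there by the partial $\infty$-morphism. Extend $\gamma$ to a degree $-1$ map $\Tfree^{\leq n}(s\PP \oplus \mbk \cdot v) \ra s\PP$ by declaring it to vanish on trees with exactly $n$ vertices; together with the curvature $\theta$ (projection onto $\mbk \cdot v$) it induces a degree $-1$ coderivation $D$ on the graded conilpotent cooperad $\Tfree^{\leq n}(s\PP \oplus \mbk \cdot v)$. By the hypothesis on $\gamma$, the restriction of $D$ to $\Tfree^{\leq n-1}(s\PP \oplus \mbk \cdot v)$ is the coderivation of a curved conilpotent cooperad, i.e.
\[
D^2 = (\theta \otimes Id - Id \otimes \theta)\Delta_2 \quad \text{on trees with at most } n-1 \text{ vertices.}
\]
Set $M := D^2 - (\theta \otimes Id - Id \otimes \theta)\Delta_2$, which therefore vanishes on $\Tfree^{\leq n-1}(s\PP \oplus \mbk \cdot v)$.

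The first step is to identify $M$ with $\kappa$. As in the proof of Proposition~\ref{lemma:cycle}, the projection of $D^2$ onto the cogenerators $s\PP \oplus \mbk \cdot v$ of a tree $T$ with $n$ vertices equals $\sum_{T' \subset T} \gamma(T/T')(Id \otimes \cdots \otimes \gamma(T') \otimes \cdots \otimes Id)$; since $\gamma$ was extended by zero on $n$-vertex trees, the summands indexed by $T' = T$ and by the one-vertex subtrees vanish, leaving exactly the first term of $\kappa$. Since, as recalled in \cite[Lemma 2]{LeGrignou16}, the curvature term $(\theta \otimes Id - Id \otimes \theta)\Delta_2$ is the coderivation with cogenerator projection $(\theta \otimes \pi - \pi \otimes \theta)\Delta_2$, it follows that $M$ is a coderivation whose cogenerator projection vanishes on trees with fewer than $n$ vertices and equals $\kappa$ on trees with $n$ vertices; being determined by this projection, the coderivation $M$ satisfies $M(T) = \kappa(T)$ for $T$ with $n$ vertices and $M = 0$ on smaller trees. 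It is precisely here that the combinatorics of trees and the role of the generator $v$ really intervene — this is the analogue of the identity $\pi_{>1}M = 0$ in the proof of Proposition~\ref{lemma:cycle} — and I expect it to be the main obstacle.

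It then remains to deduce $\partial \kappa = 0$. The associativity of composition gives $D \circ D^2 = D^2 \circ D$, that is $[D, D^2] = 0$; moreover $\theta D = 0$, because the coderivation $D$ has no $\mbk \cdot v$-component, and, as in the standard theory of curved cooperads, this forces $[D, (\theta \otimes Id - Id \otimes \theta)\Delta_2] = 0$; hence $[D, M] = 0$. Write $D = \hat d + D''$, where $\hat d$ is the coderivation induced by the internal differential of $s\PP \oplus \mbk \cdot v$ — it preserves the number of vertices — and $D''$ is induced by the operations of $\gamma$ on trees with between two and $n-1$ vertices — it strictly decreases the number of vertices. Evaluating $[D, M] = 0$ on a tree $T$ with $n$ vertices: $D(M(T)) = \hat d(\kappa(T))$, since $M(T) = \kappa(T)$ lies in $s\PP$ and $D$ restricts there to the internal differential; while $M(D(T)) = M(\hat d(T)) = \kappa(\hat d(T))$, since $M$ sees only the $n$-vertex part of $D(T)$, namely $\hat d(T)$, and $\hat d$ restricted to $\Tfree^n(s\PP \oplus \mbk \cdot v)$ is the differential of that chain complex. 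Thus $\hat d \circ \kappa - \kappa \circ \hat d = 0$ on $\Tfree^n(s\PP \oplus \mbk \cdot v)$, which is exactly the assertion $\partial \kappa = 0$ in $[\Tfree^n(s\PP \oplus \mbk \cdot v), s\PP]$; only the Koszul signs in this last identification require care.
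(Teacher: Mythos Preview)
Your proposal is correct and takes essentially the same approach as the paper, which merely says ``for $n\geq 3$ we use the same techniques as in the proof of Proposition~\ref{lemma:cycle}'' after checking the base case $n=2$ by hand. Your argument --- extending $\gamma$ by zero, forming the obstruction $M = D^2 - (\theta\otimes Id - Id\otimes\theta)\Delta_2$, identifying it with $\kappa$ via its cogenerator projection, and deducing $\partial\kappa = 0$ from $[D,M]=0$ (using $\theta D = 0$ to kill $[D,\Theta]$) --- is exactly the intended implementation, carried out in more detail than the paper and with the advantage that it treats $n=2$ uniformly rather than as a separate case.
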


\begin{proof}
 If $n=2$, then $\kappa =- (\theta \otimes \pi - \pi \otimes \theta)\Delta_2$ is a cycle. For $n \geq 3$, we use the same techniques as in the proof of Proposition \ref{lemma:cycle} to prove that it is a cycle.
\end{proof}

The following proposition is a consequence of Proposition \ref{lemma:cycle} and will allow us to show that bar constructions of homotopy operads are fibrant curved conilpotent cooperads.

\begin{prop}\label{prop:lifting}
 Consider the following commutative square of homotopy operads with $\infty$-morphisms
 $$
 \xymatrix{\PPP \ar[r]^u \ar[d]_f & \QQQ \ar[d]^g\\
  \PPP'  \ar[r]_v & \QQQ'\ ,}
 $$
 where $f$ is both an $\infty$-quasi-isomorphism and an $\infty$-monomorphism and $g$ is an $\infty$-epimorphism. Then, this square has a lifting.
\end{prop}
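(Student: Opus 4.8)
The plan is to pass to bar constructions and build the lifting $\infty$-morphism by obstruction theory, inducting on the number of vertices of trees. I would first reduce to the case where $f$ and $g$ are strict. By Proposition~\ref{prop:strictify} there are $\infty$-isotopies $e\colon \QQQ\ra\QQQ$ and $e'\colon \PPP'\ra \PPP'$ such that $ge$ and $e'f$ are strict, and by Proposition~\ref{prop:inftymorph} they are invertible. A lifting $h'$ of the commutative square with top $e^{-1}u$, left $e'f$, right $ge$ and bottom $v(e')^{-1}$ then gives the lifting $e\circ h'\circ e'$ of the original square; so I may assume that $f\colon \PPP\hookrightarrow\PPP'$ is a strict $\infty$-monomorphism whose first level $s\PP\hookrightarrow s\PP'$ is a quasi-isomorphism, and that $g\colon \QQQ\twoheadrightarrow\QQQ'$ is a strict $\infty$-epimorphism whose first level $g_1\colon s\QQ\twoheadrightarrow s\QQ'$ is surjective. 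Write $P:=s\PP\oplus\mbk\cdot v$, and likewise $P',Q,Q'$, and $\hat g_1:=g_1\oplus\id_{\mbk\cdot v}\colon Q\ra Q'$, so that $B_c\PPP=\Tfree^c P$, etc.

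Next I would record two preliminary facts. Since $s\PP\hookrightarrow s\PP'$ is a monomorphism and a quasi-isomorphism and $\mbk$ has characteristic zero, its cokernel is acyclic, hence contractible; therefore the cokernel of $P\hookrightarrow P'$ is contractible as well. Splitting $P'=P\oplus C$ as graded $\mbs$-modules, with $C$ contractible, and filtering $\Tfree^n P'$ by the number of labels lying in $C$, the Künneth formula and Maschke's theorem give that, for every $n$, the inclusion $\Tfree^n P\hookrightarrow\Tfree^n P'$ is a split monomorphism of dg $\mbs$-modules whose cokernel $N_n$ is contractible. Also, $\hat g_1$ is a degreewise split epimorphism by Maschke.

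Then I would construct the lifting as an $\infty$-morphism $h$ from $\PPP'$ to $\QQQ$, i.e.\ a degree $0$ map $\bar h\colon\ov\Tfree P'\ra Q$ solving the $\infty$-morphism equation, by induction on the tree size $n$, writing $\bar h_n$ for its restriction to $\Tfree^n P'$. Because $f$ and $g$ are strict, the conditions $h\circ B_cf=B_cu$ and $B_cg\circ h=B_cv$ read $\bar h|_{\ov\Tfree P}=\bar u$ and $\hat g_1\circ\bar h=\bar v$, and I impose them at each stage; the curvature condition $\theta_\QQQ\bar h=\theta_{\PPP'}$ is then automatic. For $n=1$, $\bar h_1$ must be a chain map $P'\ra Q$ restricting to $\bar u_1$ on $P$ and lifting $\bar v_1$ along $\hat g_1$; since $P\hookrightarrow P'$ is an acyclic cofibration of dg $\mbs$-modules, $\hat g_1$ a fibration, and the outer square commutes, such a $\bar h_1$ exists. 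For $n\geq 2$, isolating in the $\infty$-morphism equation on $n$-vertex trees the terms containing $\bar h_n$ --- they occur only through $\gamma_\QQQ$ and $\gamma_{\PPP'}$ applied to one-vertex subtrees, that is, through the internal differentials --- the equation becomes $\partial\bar h_n=m_n$ in the chain complex $[\Tfree^n P',Q]$, where $m_n$ is the cycle of Proposition~\ref{lemma:cycle} attached to $\bar h$ on smaller trees. Together with the two imposed conditions, I then need $\bar h_n$ with $\partial\bar h_n=m_n$ and $(\rho,\sigma)(\bar h_n)=(\bar u_n,\bar v_n)$, where $\rho$ denotes restriction along $\Tfree^n P\hookrightarrow\Tfree^n P'$ and $\sigma$ postcomposition with $\hat g_1$, both landing in the pullback $C'':=[\Tfree^n P,Q]\times_{[\Tfree^n P,Q']}[\Tfree^n P',Q']$. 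Now $(\rho,\sigma)\colon[\Tfree^n P',Q]\ra C''$ is a degreewise split epimorphism (from $\Tfree^n P'=\Tfree^n P\oplus N_n$ and the splitting of $\hat g_1$) with kernel $[N_n,\ker\hat g_1]$, hence an acyclic fibration of chain complexes since $N_n$ is contractible; and $(\bar u_n,\bar v_n)$ lies in $C''$ with $\partial(\bar u_n,\bar v_n)=(\rho,\sigma)(m_n)$, because $u$ and $v$ are $\infty$-morphisms, the outer square commutes, and after strictification $\gamma_{\PPP'}$ restricts to $\gamma_\PPP$ while $\hat g_1$ intertwines $\gamma_\QQQ$ with $\gamma_{\QQQ'}$. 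A standard lifting against the acyclic fibration $(\rho,\sigma)$ then gives $\bar h_n$: as $m_n$ maps to a boundary in $C''$ and $(\rho,\sigma)$ is a quasi-isomorphism, $m_n=\partial c$ for some $c$; the cycle $(\bar u_n,\bar v_n)-(\rho,\sigma)(c)$ lifts to a cycle $c'$; and $\bar h_n:=c+c'$ works. This closes the induction.

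The step I expect to be the main obstacle is the verification, within the inductive step, that the prescribed boundary datum $(\bar u_n,\bar v_n)$ is compatible with the obstruction cycle $m_n$ --- that it lies in the pullback $C''$ and that its differential is $(\rho,\sigma)(m_n)$ --- which requires carefully unwinding the tree identities expressing that $u$ and $v$ are $\infty$-morphisms and that the square commutes. The combinatorial lemma that $\Tfree^n$ turns the acyclic-cokernel inclusion $P\hookrightarrow P'$ into a split inclusion with contractible cokernel is the other point that needs a (short) argument.
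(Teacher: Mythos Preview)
Your proposal is correct and follows essentially the same route as the paper: reduce to strict $f,g$ via Proposition~\ref{prop:strictify}, then build the lift level by level, invoking Proposition~\ref{lemma:cycle} to see that the obstruction $m_n$ is a cycle and solving the extension problem by lifting against the acyclic fibration
\[
[\Tfree^n P',\,Q]\longrightarrow [\Tfree^n P,\,Q]\times_{[\Tfree^n P,\,Q']}[\Tfree^n P',\,Q']\,.
\]
You are in fact more careful than the paper on several points it leaves implicit: the precise mechanism of the strictification reduction, the separate $n=1$ base case, the argument that $\Tfree^n P\hookrightarrow\Tfree^n P'$ remains an acyclic cofibration, and the compatibility $\partial(\bar u_n,\bar v_n)=(\rho,\sigma)(m_n)$; the paper simply asserts these.
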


\begin{proof}
 By Proposition \ref{prop:strictify}, we can suppose that $f$ and $g$ are strict morphisms. We will build by induction maps
 $$
 l_n: \Tfree^n (s\PP' \oplus \mbk \cdot  v) \ra s\QQ \oplus \mbk \cdot  v \ , n \geq 1\ ,
$$
 such that
 $$
\begin{cases}
 \partial (l_n) = m_n\ ,\\
 g_1 l_n = v_n \ ,\\
  l_n (f_1 \otimes \cdots \otimes f_1) = u_n \ ,
\end{cases}
  $$
where 
  $$
m_n:=   \sum_{T' \subset T \atop \#vert (T') \geq 2} l_{< n} (Id \otimes \cdots \otimes \gamma_{T'} \otimes \cdots \otimes Id) - \sum_{T=T_1 \sqcup \cdots \sqcup T_k \atop k \geq 2} \gamma (l_{T_1} \otimes \cdots \otimes l_{T_k})\ .
  $$
 Suppose that we have constructed $l_1$, \ldots, $l_{n-1}$. Since, by Proposition \ref{lemma:cycle}, $m_n$ is a cycle in the chain complex $[\Tfree^{n} (s\PP' \oplus v) , s\QQ \oplus \mbk \cdot  v]$ (whose differential is induced by the differential of $s\PP' \oplus \mbk \cdot  v$ and the differential of $s\QQ \oplus \mbk \cdot  v$), constructing $l_n$ amounts to lift the following square.
 $$
 \xymatrix{S^{-1} \ar[r]^{m_n} \ar[d] & [\Tfree^{n} (s\PP' \oplus v) , s\QQ \oplus v] \ar[d]\\
 D^0 \ar[r]_(0.15){(u_n,v_n)} & [\Tfree^{n} (s\PP \oplus v) , s\QQ \oplus v] \times_{[\Tfree^{n} (s\PP \oplus v) , s\QQ' \oplus v]} [\Tfree^{n} (s\PP' \oplus v) , s\QQ' \oplus v]}
 $$
Since $g_1$ is a fibration and since $f_1$ is an acyclic cofibration of dg $\mbs$-modules, then the right vertical map is an acyclic fibration of chain complexes. So the square has a lifting. Thus, we obtain $l_n$. Then, let $L:B_c \PPP' \ra B_c \QQQ$ the morphism of graded cooperads induced by the maps $(l_k)_{k=1}^\infty$. It is a morphism of curved conilpotent cooperads since it commutes with coderivations and since $\theta_{B_c \PPP'} = \theta_{B_c \QQQ'} B_c (v) = \theta_{B_c \PPP'} B_c (g) L = \theta_{B_c \QQQ} L$.
\end{proof}

\subsection{Fibrant curved conilpotent cooperads}

\begin{prop}\label{prop:fibrant}
 The fibrant curved conilpotent cooperads are the curved conilpotent cooperads isomorphic to the bar construction $B_c \PPP$ of a homotopy operad $\PPP$.
\end{prop}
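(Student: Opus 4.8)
The plan is to prove the two implications of the characterisation separately. For ``$\Leftarrow$'' — every bar construction is fibrant — I extract a lifting property from the obstruction theory of Proposition~\ref{prop:lifting}. For ``$\Rightarrow$'' — every fibrant curved conilpotent cooperad is (isomorphic to) a bar construction — I show by a retract argument that the underlying graded cooperad of a fibrant object is cofree and carries a degree-$2$ cogenerator $v$ with $\theta(v)=1$, then invoke Proposition~\ref{prop:isofib}.

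\textbf{Bar constructions are fibrant.} Let $\PPP$ be a homotopy operad; I must show $B_c\PPP$ has the right lifting property against every acyclic cofibration $j\colon\CCC\to\DDD$. Fix $\phi\colon\CCC\to B_c\PPP$; we build an extension $\Phi\colon\DDD\to B_c\PPP$ by repeating the inductive construction of the proof of Proposition~\ref{prop:lifting} along a filtration of $\DDD$ relative to $\CCC$. Since $\Omega_u(j)$ is an acyclic cofibration of operads, it is a retract of a transfinite composite of pushouts of the generating acyclic cofibrations $\II\to\Tfree(D^k_n)$ of Theorem~\ref{thm:hinich}. Transporting this structure back through the cobar functor and splitting all $\mbk[\mbs_n]$-modules by Maschke's theorem, one obtains an exhaustive filtration $\CCC=\DDD_{[0]}\subset\DDD_{[1]}\subset\cdots$ with $\DDD=\colim_i\DDD_{[i]}$, by sub-curved-conilpotent-cooperads whose successive quotients $M_i:=\DDD_{[i]}/\DDD_{[i-1]}$ (of $\mbs$-modules) are acyclic complexes — being assembled, in a lower-triangular way, from the acyclic cells $D^k_n$, so that Theorem~\ref{maclane-homology} applies. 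Because $B_c\PPP$ is cofree as a graded cooperad, extending $\Phi$ from $\DDD_{[i-1]}$ to $\DDD_{[i]}$ amounts to choosing a degree-$0$ map $l_i\colon M_i\to s\PP\oplus\mbk\cdot v$ compatible with the coderivations and curvatures; as in Proposition~\ref{lemma:cycle}, this becomes an equation $\partial(l_i)=m_i$ with $m_i$ a cycle of the mapping complex $[M_i,\,s\PP\oplus\mbk\cdot v]$, built from $\Phi$ on the earlier stages and from $\gamma_\PPP$. As $M_i$ is acyclic, this mapping complex is acyclic, $m_i$ is a boundary, and $l_i$ exists — with no further compatibility to impose, since $\DDD$ maps to the terminal object of $\cCoop$ (equivalently, since the $\infty$-morphism $\PPP\to 0$ to the zero operad is an $\infty$-epimorphism). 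Assembling the $l_i$ produces $\Phi$.

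\textbf{Fibrant cooperads are bar constructions.} Let $\CCC$ be fibrant. The unit $\eta_\CCC\colon\CCC\to B_c\Omega_u\CCC$ is a weak equivalence by Proposition~\ref{prop:eq1} and a degreewise injection, hence an acyclic cofibration; fibrancy of $\CCC$ yields a retraction $r\colon B_c\Omega_u\CCC\to\CCC$ in $\cCoop$. Thus $\CCC$ is a retract, in $\cCoop$, of the operadic bar construction $B_c\Omega_u\CCC$, which is cofree as a graded cooperad and whose curvature is the projection onto its cogenerator $v$ of degree $2$. Forgetting the differential and curvature, $\CCC$ is a retract of a cofree graded conilpotent cooperad. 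Now a graded conilpotent cooperad $\DDD$ is cofree precisely when the canonical comparison $\kappa_\DDD\colon\DDD\to\Tfree^c(Q\DDD)$ — the adjunct, under the adjunction $\ov{(-)}\dashv\Tfree^c$, of the projection $\ov\DD\to Q\DDD$ onto cogenerators — is an isomorphism; and $\kappa$ is natural, so $\kappa_\CCC$ is a retract in the arrow category of the isomorphism $\kappa_{B_c\Omega_u\CCC}$, whence an isomorphism. So $\CCC\simeq\Tfree^c(\VV)$ as graded cooperads, $\VV:=Q\CCC$. Moreover, by naturality of $\ov{(-)}\to Q(-)$ the curvature of $\CCC$ factors through $\VV$, since that of $B_c\Omega_u\CCC$ does; and evaluating the identity $\theta_{B_c\Omega_u\CCC}=\theta_\CCC\circ r$ (valid because $r$ is a morphism of curved cooperads) at $v$ shows that $\theta_\CCC$ equals $1$ on the image of $r(v)$ in $\VV(1)_2$. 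Proposition~\ref{prop:isofib} then exhibits $\CCC$ as the bar construction of a homotopy operad.

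\textbf{Where the difficulty lies.} The real content is in the first part, specifically the construction of the filtration $(\DDD_{[i]})$ with acyclic quotients $M_i$: one must transport the acyclic-cell structure of $\Omega_u(j)$ faithfully across the cobar functor, and this is where the combinatorics of trees and the interplay of the symmetric group actions enter, and where Maschke's theorem and the tree contractions of Lemma~\ref{lemmacontractcobar} are required. The ``$\Rightarrow$'' direction is formal once one knows that a retract of a cofree conilpotent cooperad is again cofree.
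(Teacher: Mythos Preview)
Your ``$\Rightarrow$'' direction is correct and matches the paper's argument: both exhibit $\CCC$ as a retract of $B_c\Omega_u\CCC$ via the acyclic cofibration $\eta_\CCC$, deduce cofreeness of the underlying graded cooperad (your naturality-of-$\kappa$ argument is a clean variant of the paper's Lemma~\ref{lemma:retract}), locate an element of $\VV(1)_2$ on which $\theta$ is $1$, and invoke Proposition~\ref{prop:isofib}.

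The ``$\Leftarrow$'' direction has a genuine gap. The step ``transporting this structure back through the cobar functor\ldots\ one obtains an exhaustive filtration $\CCC=\DDD_{[0]}\subset\DDD_{[1]}\subset\cdots$ by sub-curved-conilpotent-cooperads whose successive quotients $M_i$ are acyclic'' is not justified, and I do not see how to make it work. The functor $\Omega_u$ goes from cooperads to operads; a cell decomposition of the operad map $\Omega_u(j)$ yields no filtration of $\DDD$ by sub-cooperads. Acyclic cofibrations in $\cCoop$ are \emph{defined} via $\Omega_u$ precisely because they admit no direct description on the cooperad side, and for a generic acyclic cofibration $j\colon\CCC\to\DDD$ there is no reason to expect a filtration of $\DDD$ relative to $\CCC$ with acyclic graded pieces. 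Even granting such a filtration, your obstruction does not reduce to an equation $\partial(l_i)=m_i$ in $[M_i,\,s\PP\oplus\mbk\cdot v]$: that shape of obstruction, as in Proposition~\ref{lemma:cycle}, is specific to the case where the \emph{source} stages are cofree, whereas here only the target $B_c\PPP$ is.

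The paper avoids lifting against an arbitrary acyclic cofibration altogether. Since $\Omega_u B_c\PPP$ is an honest dg operad and $B_c$ is a right adjoint, $B_c\Omega_u B_c\PPP$ is fibrant. The unit $B_c\PPP\to B_c\Omega_u B_c\PPP$ is an $\infty$-monomorphism and, by a variant of Proposition~\ref{prop:bar-cobarres}, an $\infty$-quasi-isomorphism; Proposition~\ref{prop:lifting} then produces a left inverse. Thus $B_c\PPP$ is a retract of a fibrant object, hence fibrant. The obstruction theory is invoked once, for a single map between two bar constructions where it genuinely applies, rather than stage by stage along a filtration that is not available.
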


The proof of this proposition consists in showing that a retract of a cofree graded conilpotent cooperad is cofree.

\begin{lemma}\label{lemma:retract}
A retract of a cofree graded conilpotent cooperad $\Tfree^c \VV$ is isomorphic to a cofree graded conilpotent cooperad.
\end{lemma}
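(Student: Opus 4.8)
The plan is to reduce the statement to an assertion about idempotents, and then to verify an isomorphism of cooperads on the associated graded of the coradical filtration. So first I would replace the retract by the image of an idempotent: if $\CCC$ is a retract of $\Tfree^c\VV$, witnessed by morphisms of cooperads $i\colon\CCC\to\Tfree^c\VV$ and $r\colon\Tfree^c\VV\to\CCC$ with $ri=\mathrm{Id}$, then $e:=ir$ is an idempotent endomorphism of the cooperad $\Tfree^c\VV$ and $i$ identifies $\CCC$ with the sub-cooperad $\DDD:=e(\Tfree^c\VV)$. Hence it suffices to show that the image of an idempotent endomorphism of $\Tfree^c\VV$ is cofree. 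A morphism of cooperads preserves the coaugmentation coideal and the coradical filtration; since $F_1\ov{\Tfree^c\VV}=\Tfree^1\VV=\VV$ (the primitives of the cofree cooperad), $e$ restricts to an idempotent map of $\mbs$-modules $\phi:=e|_\VV\colon\VV\to\VV$. Put $\WW:=\mathrm{im}(\phi)$, so that $\VV=\WW\oplus\ker\phi$ with $\phi$ the projection onto $\WW$; a direct computation using $e^2=e$ identifies $\WW$ with $\DDD\cap\VV=F_1\ov{\DDD}$, the module of primitives of $\DDD$. This $\WW$ will be the cogenerating module.

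Next I would write down the candidate isomorphism. Let $\pi_\VV\colon\ov{\Tfree^c\VV}\to\VV$ be the canonical projection and $p_\WW\colon\VV\to\WW$ the projection along $\ker\phi$. By the universal property of $\Tfree^c$, the map of $\mbs$-modules $\theta:=p_\WW\circ\pi_\VV|_{\ov{\DDD}}\colon\ov{\DDD}\to\WW$ extends uniquely to a morphism of conilpotent cooperads $\Theta\colon\DDD\to\Tfree^c\WW$. Since $\Theta$ respects coradical filtrations, which are exhaustive with $F_0=\II$, it is enough to prove that $G_n\Theta\colon G_n\DDD\to G_n\Tfree^c\WW=\Tfree^n\WW$ is an isomorphism for every $n$.

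The heart of the argument is the computation of the associated graded. The idempotent $e$ preserves the filtration $F_n\Tfree^c\VV=\Tfree^{\le n}\VV$, hence splits it, which gives $\Tfree^n\VV=G_n\DDD\oplus G_n\ker(e)$ with $G_n\DDD=\mathrm{im}(G_ne)$. Inspecting the explicit formula for the cooperad endomorphism $e$ in terms of its cogenerator map $\pi_\VV e$, one sees that its weight-$n$ component $G_ne\colon\Tfree^n\VV\to\Tfree^n\VV$ is $\Tfree^n(\phi)=\phi^{\otimes n}$ on each tree; therefore $G_n\DDD=\mathrm{im}(\Tfree^n\phi)=\Tfree^n\WW$, embedded in $\Tfree^n\VV$ as the trees labelled by $\WW$. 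Moreover the associated graded of $\Tfree^c\VV$ for its coradical filtration, equipped with the degrafting decomposition, is again the cofree cooperad $\Tfree^c\VV$, and the associated graded of the sub-cooperad $\DDD$ is a sub-cooperad of it (the coradical filtration of a sub-cooperad is the restriction of the ambient one); restricting to $\WW$-labelled trees shows that $G\DDD$ is, canonically, the cofree conilpotent cooperad $\Tfree^c\WW$ with its standard structure. Under this identification $G\Theta$ is the endomorphism of $\Tfree^c\WW$ whose cogenerator map is $\mathrm{Id}_\WW$ in weight $1$ and zero in higher weight, so $G\Theta=\mathrm{Id}$ by the universal property; in particular every $G_n\Theta$ is an isomorphism. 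Hence $\Theta$ is an isomorphism and $\CCC\cong\DDD\cong\Tfree^c\WW$ is cofree.

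I expect the main obstacle to be the bookkeeping of the third step: checking that the weight-$n$ component of the cooperad endomorphism $e$ is exactly $\phi^{\otimes n}$, and that passing to the associated graded is compatible with all the decomposition maps, so that $G\DDD$ genuinely is the standard cofree cooperad on $\WW$. The remaining ingredients — splitting of idempotents of $\mbs$-modules, the adjunction defining $\Tfree^c$, and the fact that a filtered morphism that induces isomorphisms on the associated graded of exhaustive, bounded below filtrations is itself an isomorphism — are formal.
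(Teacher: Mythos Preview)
Your proposal is correct and follows essentially the same strategy as the paper's proof: identify the cogenerating module $\WW$ as the primitives of the retract, build a cooperad map to $\Tfree^c\WW$ via the universal property, and check it is an isomorphism on each graded piece of the coradical filtration. The paper organises the associated-graded step via a retract diagram comparing $G_n\CCC \to \Tfree^n(\WW)$ with the known isomorphism $G_n\Tfree^c\VV \to \Tfree^n(\VV)$, whereas you compute $G_ne=\Tfree^n(\phi)$ directly; these are two phrasings of the same fact, and your anticipated ``bookkeeping'' worry is exactly the content of that retract diagram. One small cosmetic difference: the paper's cogenerating map $\ov\CCC\to\WW$ is built using the retraction $r|_\VV$ rather than your projection $p_\WW$, but both restrict to the identity on $\WW$ and hence yield the same associated graded, which is all that is needed.
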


\begin{proof}
 Let $\CCC$ be a retract of the cofree curved conilpotent cooperad $\Tfree^c (\VV)$. Let us denote $\WW = F_1\CCC/F_0\CCC$. First, for any integer $n$, the map 
 \[
 F_n \CCC \xrightarrow{\delta} \Tfree (\CCC) \twoheadrightarrow \Tfree^n(\CCC)
 \]
 factorises through $\Tfree^n (\WW)$. Then, consider the following retract diagram
 \[
\begin{tikzcd}
  G_n\CCC \arrow[r] \arrow[d] & G_n \Tfree(\VV) \arrow[r] \arrow[d] & G_n \CCC \arrow[d]\\
\Tfree^n (\WW) \arrow[r] & \Tfree^n (\VV) \arrow[r] & \Tfree^n (\WW)\ .
\end{tikzcd}
 \]
Since the middle vertical arrow is an isomorphism, then the map $G_n \CCC \ra \Tfree^n(\WW )$ is also an isomorphism. Besides, the image through the morphism $\Tfree(\VV) \to \CCC$ of $\VV$ is contained in $\WW$. So, using the projection $\pi:\Tfree(\VV) \to \VV$, one obtains the following map
\[
\CCC \to \Tfree(\VV) \to \VV \to \WW\ ,
\]
and hence one obtains a morphism of cooperads $\CCC \to \Tfree^c (\WW)$. Notice that the composite map $\WW \to \CCC \to \Tfree (\WW)$ is the usual inclusion of $\WW$ into $\Tfree (\WW)$. Finally, consider the following diagram.
\[
\begin{tikzcd}
  G_n\CCC \arrow[r] \arrow[d] & G_n \Tfree(\WW)  \arrow[d] \\
\Tfree^n (\WW) \arrow[r] & \Tfree^n (\WW) 
\end{tikzcd}
 \]
 Since the two vertical maps and the bottom horizontal map are isomorphisms, then so is the map $  G_n\CCC \to  G_n \Tfree(\WW) $. We conclude by Theorem \ref{maclane-homology}.
\end{proof}

\begin{proof}[Proof of Proposition \ref{prop:fibrant}]
 Let $\CCC$ be a fibrant curved conilpotent cooperad. Since the map $\CCC \ra B_c \Omega_u \CCC$ is an acyclic cofibration, it has a right inverse $p$ and so, $\CCC$ is a retract of $B_c \Omega_u \CCC$. So, by Lemma \ref{lemma:retract}, $\CCC$ is cofree: $\CCC:= \Tfree (\VV)$.  Moreover, $p(v)$ is an element of $\VV$ such that $\theta (p(v)) =1$. So, by Proposition \ref{prop:isofib}, $\CCC$ is isomorphic to the bar construction of a homotopy operad. Conversely let $\PPP$ be a homotopy operad. The canonical morphism $B_c \PPP \ra B_c \Omega_u B_c \PPP$ is an $\infty$-monomorphism and an $\infty$-quasi-isomorphism by a variant of Proposition \ref{prop:bar-cobarres}. So, by Proposition \ref{prop:lifting}, it has a left inverse; so $B_c \PPP$ is a retract of  $B_c \Omega_u B_c \PPP$. Since $B_c \Omega_u B_c \PPP$ is fibrant, then $B_c \PPP$ is fibrant.
\end{proof}

\begin{prop}
An $\infty$-morphism of homotopy operads is a cofibration (resp. a fibration, a weak equivalence) of curved conilpotent cooperads if and only if it is an $\infty$-monomorphism (resp. $\infty$-epimorphism, $\infty$-quasi-isomorphism).
\end{prop}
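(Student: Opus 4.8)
The statement packages three concrete reinterpretations of the model‑categorical notions, and my plan is to deduce each one from the results of the previous subsections by passing through the \emph{first‑level map}. The basic observation is that sending a homotopy operad $\PPP$ to the dg‑$\mbs$‑module $s\PP\oplus\mbk\cdot v$, and an $\infty$‑morphism $f$ to the restriction $f_1$ of $B_c f$ to one‑vertex trees, defines a functor: $B_c f$ sends one‑vertex trees to one‑vertex trees, and $B_c(g\circ f)=B_c g\circ B_c f$ gives $(g\circ f)_1=g_1\circ f_1$. Since $\theta_\QQQ\circ f=\theta_\PPP$ forces $f_1(s\PP)\subseteq s\QQ$ and $f_1(v)\in v+s\QQ$, the map $f_1$ is a monomorphism (resp.\ epimorphism, quasi‑isomorphism) of dg‑$\mbs$‑modules if and only if its restriction to $s\PP$ is; that is, $f$ is an $\infty$‑monomorphism (resp.\ $\infty$‑epimorphism, $\infty$‑quasi‑isomorphism) exactly when $f_1$ is a monomorphism (resp.\ epimorphism, quasi‑isomorphism). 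The cofibration case is then immediate: cofibrations of curved conilpotent cooperads are the degreewise injections, so by Proposition~\ref{prop:inftymorph} the $\infty$‑morphism $f$ is a cofibration if and only if $B_c f$ is a monomorphism if and only if $f$ is an $\infty$‑monomorphism.

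For the weak equivalence case, if $f$ is an $\infty$‑quasi‑isomorphism I would filter $B_c\PPP$ and $B_c\QQQ$ by their coradical filtrations; $B_c f$ respects these, and on the $n$‑th graded piece it induces the map obtained by applying $f_1$ at each of the $n$ vertices, equipped with the internal differential, which is a quasi‑isomorphism by the K\"unneth‑type statement for tree modules. Thus $B_c f$ is a filtered quasi‑isomorphism, hence a weak equivalence by Proposition~\ref{prop:filteredqis}. Conversely, if $B_c f$ is a weak equivalence then $\Omega_u B_c f$ is a quasi‑isomorphism, so it is a strict $\infty$‑morphism of operads whose first‑level map is a quasi‑isomorphism, i.e.\ an $\infty$‑quasi‑isomorphism. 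Applying the first‑level functor to the naturality square whose vertical arrows are the canonical $\infty$‑morphisms $\PPP\to\Omega_u B_c\PPP$ and $\QQQ\to\Omega_u B_c\QQQ$ --- which are $\infty$‑quasi‑isomorphisms by the variant of Proposition~\ref{prop:bar-cobarres} used in the proof of Proposition~\ref{prop:fibrant} --- and invoking two‑out‑of‑three for quasi‑isomorphisms of dg‑$\mbs$‑modules then forces $f_1$ to be a quasi‑isomorphism, so $f$ is an $\infty$‑quasi‑isomorphism.

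For the fibration case, suppose first that $B_c f$ is a fibration. By Lemma~\ref{lemma:fibsurj} it is surjective, hence $\Omega_u B_c f$ is a surjection of operads, and the proof of Proposition~\ref{prop:fibrations} exhibits $B_c f$ as a retract, with identity on the target, of the pullback $\EEE\to B_c\QQQ$ of $B_c(\Omega_u B_c f)$ along $B_c\QQQ\to B_c\Omega_u B_c\QQQ$. A short computation with the coradical filtrations shows that the first‑level functor carries this pullback to the pullback of $s(\Omega_u B_c f)\oplus\id$ along the first‑level map of $B_c\QQQ\to B_c\Omega_u B_c\QQQ$; the former map is surjective, pullbacks and retracts with identity on target preserve surjectivity, so $f_1$ is surjective and $f$ is an $\infty$‑epimorphism. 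Conversely, suppose $f$ is an $\infty$‑epimorphism. By Proposition~\ref{prop:strictify} (the correcting $\infty$‑isotopy being an isomorphism of curved conilpotent cooperads by Proposition~\ref{prop:inftymorph}) $B_c f$ is surjective, so $\Omega_u B_c f$ is a surjection of operads; put $\EEE:=B_c\Omega_u B_c\PPP\times_{B_c\Omega_u B_c\QQQ}B_c\QQQ$, the pullback of $B_c(\Omega_u B_c f)$ along the canonical map $\iota_\QQQ\colon B_c\QQQ\to B_c\Omega_u B_c\QQQ$. The key lemma (Lemma~\ref{thm:keylemma}) shows that the projection $\EEE\to B_c\Omega_u B_c\PPP$ is a weak equivalence; since $\EEE\to B_c\QQQ$ is moreover a fibration, $\EEE$ is fibrant, hence $\EEE\cong B_c\RRR$ for a homotopy operad $\RRR$ by Proposition~\ref{prop:fibrant}. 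The morphism $(\iota_\PPP,B_c f)\colon B_c\PPP\to\EEE$ is a degreewise injection, hence a cofibration and so an $\infty$‑monomorphism by the first case, and it composes with $\EEE\to B_c\Omega_u B_c\PPP$ to the weak equivalence $\iota_\PPP$, hence is itself a weak equivalence and an $\infty$‑quasi‑isomorphism by the second case. Feeding into Proposition~\ref{prop:lifting} the square with top edge the identity of $B_c\PPP$, left edge $(\iota_\PPP,B_c f)$, right edge $B_c f$ and bottom edge the projection $\EEE\to B_c\QQQ$, I obtain a retraction $\ell\colon\EEE\to B_c\PPP$ of $(\iota_\PPP,B_c f)$ with $B_c f\circ\ell$ equal to $\EEE\to B_c\QQQ$; this displays $B_c f$ as a retract, with identity on the target, of the pullback $\EEE\to B_c\QQQ$ of a bar construction of a surjection of operads, hence as a fibration by the easy implication of Proposition~\ref{prop:fibrations}.

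The first two cases are essentially bookkeeping once the first‑level functor is isolated. The fibration case is the delicate one: it must orchestrate the strictification of Proposition~\ref{prop:strictify}, the obstruction‑theoretic lifting of Proposition~\ref{prop:lifting}, the retract description of Proposition~\ref{prop:fibrations}, the key lemma (Lemma~\ref{thm:keylemma}) and the surjectivity of fibrations (Lemma~\ref{lemma:fibsurj}). The point to watch is that one cannot read a fibration off from mere surjectivity --- a morphism of cofree cooperads can spread information across coradical layers --- and it is exactly Proposition~\ref{prop:lifting} that supplies the missing lift.
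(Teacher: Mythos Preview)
Your argument is correct and the overall architecture matches the paper's: the cofibration case is identical, and for $\infty$-epimorphism $\Rightarrow$ fibration you build the same retract diagram via Proposition~\ref{prop:lifting}, only routed through the pullback $\EEE$ rather than directly through $B_c\Omega_u f$. There are, however, two places where the paper is more economical and you may want to streamline.

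For weak equivalences, the paper handles both implications with a single commutative square of dg $\mbs$-modules
\[
\xymatrix{\Omega_u B_c\PPP \ar[r]^{\Omega_u B_c f} & \Omega_u B_c \QQQ \\ \PPP \ar[u] \ar[r]_{f_1} & \QQQ \ar[u]}
\]
whose vertical arrows are quasi-isomorphisms by the variant of Proposition~\ref{prop:bar-cobarres}; two-out-of-three then gives the equivalence immediately. Your filtered quasi-isomorphism argument for the forward direction is correct but duplicates work already packaged in that square.

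For fibration $\Rightarrow$ $\infty$-epimorphism, the paper bypasses the retract machinery entirely: it simply lifts the square
\[
\xymatrix{\II \ar[r] \ar[d] & B_c\PPP \ar[d]^{f} \\ \II \oplus s\QQ \oplus s^{-1}s\QQ \ar[r] & B_c\QQQ}
\]
where the left-hand object is the acyclic dg $\mbs$-module regarded as a cooperad with trivial decomposition; the lift lands in primitives and exhibits $f_1$ as surjective. Your route through Proposition~\ref{prop:fibrations} and the first-level pullback computation works, but it invokes the full strength of the key lemma and the fibrancy characterisation just to extract a surjectivity statement that follows from a single elementary lift.
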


\begin{proof}
 We have already proven (Proposition \ref{prop:inftymorph}) that an $\infty$-morphism is a monomorphism (that is a cofibration) if and only if it is an $\infty$-monomorphism. Let $f: \PPP \ra \QQQ$ be an $\infty$-morphism of homotopy operads. Consider the following square of $\mbs$-modules.
 $$
 \xymatrix{\Omega_u B_c\PPP \ar[r]^{\Omega_u f} & \Omega_u B_c \QQQ \\
 \PPP \ar[u] \ar[r]_{f_1} & \QQQ \ar[u]}
 $$
where $f_1$ is the restriction of $f$ to $\PPP$. The two vertical maps are quasi-isomorphisms by a variant of Proposition \ref{prop:bar-cobarres}. So the lower horizontal map is a quasi-isomorphism if and only if the upper horizontal map is a quasi-isomorphism; that is $f$ is a weak equivalence if and only if $f_0$ is a quasi-isomorphism. Finally, suppose that $f$ is an $\infty$-epimorphism. Since it is surjective, then $\Omega_u (f)$ is surjective and so $B_c \Omega_u (f)$ is a fibration. Let us show that $f$ is a retract of $B_c \Omega_u (f)$. We already know (Proposition \ref{prop:fibrant}) that  $B_c\QQQ$ is a retract of $B_c \Omega_u B_c \QQQ$. Consider the following diagram.
$$
\xymatrix{B_c \PPP \ar[r] \ar[d]_f & B_c \Omega_u B_c \PPP \ar[d]^{B_c\Omega_u f}  \ar@{-->}[r] & B_c\PPP \ar[d]^f \\
 B_c \QQQ\ar[r] & B_c \Omega_u B_c \QQQ \ar[r] & B_c \QQQ}
$$
Finding a morphism $ B_c \Omega_u B_c \PPP \ra B_c \PPP$ making the diagram commute and such that the upper horizontal composite map is the identity amounts to lift the following square, which is possible by Proposition \ref{prop:lifting}.
$$
\xymatrix{B_c \PPP \ar[r]^= \ar[d] & B_c \PPP \ar[d]^{ f} \\
B_c \Omega_u B_c \PPP  \ar[r] & B_c\QQQ} 
$$
Conversely, suppose that $f$ is a fibration. It is an $\infty$-epimorphism because the following diagram of curved conilpotent cooperads has a lifting
$$
\xymatrix{\II \ar[r] \ar[d] & B_c \PPP \ar[d]^{ f} \\
\II \oplus s\QQ \oplus  s^{-1}s\QQ   \ar[r] & B_c\QQQ\ ,} 
$$
where $\II \oplus s\QQ \oplus  s^{-1}s\QQ$ is a dg $\mbs$-module considered as a curved conilpotent cooperad with trivial decomposition $\Delta$.
\end{proof}

\begin{prop}
 Let $\PPP$ and $\QQQ$ be two dg operads. They are linked by a zig-zag of quasi-isomorphisms of dg operads if and only if they are linked by an $\infty$-quasi-isomorphism.
\end{prop}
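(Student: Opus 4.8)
The plan is to shuttle the zig-zag back and forth across the Quillen equivalence $\Omega_u \dashv B_c$ of Theorem \ref{thm:thmprincip}, relying on three facts already at hand: every dg operad is fibrant for the Hinich model structure (the fibrations are the surjections and the terminal operad is the zero operad); every curved conilpotent cooperad is cofibrant, while the bar construction $B_c\PPP$ of a homotopy operad $\PPP$ is moreover fibrant by Proposition \ref{prop:fibrant}; and, as just shown, an $\infty$-morphism between homotopy operads is a weak equivalence of curved conilpotent cooperads exactly when it is an $\infty$-quasi-isomorphism.

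The implication from $\infty$-quasi-isomorphisms to zig-zags is immediate. An $\infty$-quasi-isomorphism $f \colon \PPP \ra \QQQ$ is by definition a morphism of curved conilpotent cooperads $F \colon B_c\PPP \ra B_c\QQQ$ which is a weak equivalence; so, by the very definition of weak equivalences of curved conilpotent cooperads, the map $\Omega_u F \colon \Omega_u B_c \PPP \ra \Omega_u B_c \QQQ$ is a quasi-isomorphism of dg operads. Pasting it together with the counit quasi-isomorphisms $\Omega_u B_c \PPP \ra \PPP$ and $\Omega_u B_c \QQQ \ra \QQQ$ of Proposition \ref{prop:eq1} gives the zig-zag
$$
\PPP \xleftarrow{\ \sim\ } \Omega_u B_c \PPP \xrightarrow{\ \Omega_u F\ } \Omega_u B_c \QQQ \xrightarrow{\ \sim\ } \QQQ \, .
$$

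For the converse, assume $\PPP$ and $\QQQ$ are connected by a zig-zag of quasi-isomorphisms of dg operads. Because every dg operad is fibrant, Ken Brown's lemma shows that the right Quillen functor $B_c$ preserves all quasi-isomorphisms, so it turns this zig-zag into a zig-zag of weak equivalences of curved conilpotent cooperads from $B_c\PPP$ to $B_c\QQQ$, each term of which is the bar construction of a dg operad and hence cofibrant and, by Proposition \ref{prop:fibrant}, fibrant. Such a zig-zag exhibits an isomorphism between $B_c\PPP$ and $B_c\QQQ$ in the homotopy category; since both of these curved conilpotent cooperads are bifibrant, that isomorphism is represented by an actual morphism $F \colon B_c\PPP \ra B_c\QQQ$, which is then a weak equivalence. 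Viewing $F$ as an $\infty$-morphism from $\PPP$ to $\QQQ$, it is an $\infty$-quasi-isomorphism by the characterisation recalled at the start of the proof. The only slightly delicate point is the passage from a zig-zag of weak equivalences between bifibrant objects to a single one, and it uses nothing beyond that bifibrancy; in particular no new combinatorics of trees or of symmetric group actions intervenes here.
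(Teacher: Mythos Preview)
Your proof is correct and follows essentially the same route as the paper. The forward direction is identical (the zig-zag $\PPP \leftarrow \Omega_u B_c \PPP \xrightarrow{\Omega_u F} \Omega_u B_c \QQQ \to \QQQ$), and for the converse the paper argues slightly more concretely---each quasi-isomorphism of operads has a homotopy inverse which is an $\infty$-quasi-isomorphism, and one then composes---whereas you pass through the homotopy category of bifibrant objects; these are the same model-categorical fact phrased two ways.
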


\begin{proof}
 Suppose that $\PPP$ and $\QQQ$ are linked by an $\infty$-quasi-isomorphism $f$. Then, they are linked by a zig-zag of quasi-isomorphisms of operads as follows.
 $$
 \xymatrix{\PPP & \Omega_u B_c \PPP \ar[l] \ar[r]^{\Omega_u f} & \Omega_u B_c \QQQ \ar[r] &\QQQ}
 $$
 Conversely, suppose that $\PPP$ and $\QQQ$ are linked by a zig-zag of quasi-isomorphisms of operads. Any quasi-isomorphism of operads has an homotopy inverse which is an $\infty$-quasi-isomorphism. So there exists an $\infty$-quasi-isomorphism from $\PPP$ to $\QQQ$.
\end{proof}

\subsection{Homotopy transfer theorem}

Consider an acyclic fibration of dg $\mbs$-modules $p: \PP \ra \QQ$. 

\begin{thm}\label{thm:htt}
 Suppose that $\PP$ has a structure of homotopy operad denoted by $\PPP$. Then, there exists an $\infty$-isotopy $f:\PPP \ra \PPP'$ of homotopy operads and a structure of homotopy operad on $\QQ$ such that the map $p: \PP' \ra \QQ$ is a morphism of homotopy operads.
\end{thm}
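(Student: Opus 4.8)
The plan is first to reduce the statement to the existence of a homotopy operad structure $\QQQ$ on $\QQ$ together with an $\infty$-morphism $\phi\colon\PPP\ra\QQQ$ whose restriction $\phi_1\colon s\PP\ra s\QQ$ to $s\PP$ equals $sp$. Granting such a pair, the rest is short: since $sp$ is surjective, $\phi$ is an $\infty$-epimorphism, so Proposition~\ref{prop:strictify} furnishes an $\infty$-isotopy $g\colon\PPP'\ra\PPP$ (so $\PPP'$ is a homotopy operad structure on the same $\mbs$-module $\PP$) with $\phi g\colon\PPP'\ra\QQQ$ a strict $\infty$-morphism. By Proposition~\ref{prop:inftymorph}, $g$ is an isomorphism, and $f:=g^{-1}\colon\PPP\ra\PPP'$ is again an $\infty$-isotopy. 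The linear part of $\phi g$ is $sp\circ\id=sp$, so $\phi g$ is precisely the asserted morphism of homotopy operads $p\colon\PP'\ra\QQ$, and $\QQQ$ is the asserted structure on $\QQ$.

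To construct the pair $(\QQQ,\phi)$ I would use obstruction theory, inducting on the number $n$ of vertices of trees and producing compatible maps $\gamma_\QQQ\colon\Tfree^{\leq n}(s\QQ\oplus\mbk\cdot v)\ra s\QQ$ satisfying the structure equation of Proposition~\ref{prop:cycle2} through level $n$, and $\phi\colon\ov\Tfree^{\leq n}(s\PP\oplus\mbk\cdot v)\ra s\QQ$ satisfying the $\infty$-morphism equation of Proposition~\ref{lemma:cycle} through level $n$. For $n=1$ one is forced to take $\gamma_\QQQ=sd_\QQ$ on $s\QQ$ and $dv=s1_\QQQ$ with $1_\QQQ:=p(1_\PPP)$, and $\phi_1:=sp$, which is a chain map since $p$ is. Because $p$ is an acyclic fibration of dg $\mbs$-modules, Maschke's theorem lets us split it with contractible kernel, hence choose a contraction $(i,p,h)$ of $\mbs$-modules with $pi=\id_\QQ$, $ip-\id_\PP=\partial h+h\partial$ and the side conditions $ph=hi=h^2=0$. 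At the inductive step, Proposition~\ref{prop:cycle2} shows that the obstruction to extending $\gamma_\QQQ$ to $n$-vertex trees is a cycle $\kappa$ of $[\Tfree^n(s\QQ\oplus\mbk\cdot v),s\QQ]$; I would kill $\kappa$ by defining $\gamma_\QQQ$ on $n$-vertex trees through a transfer-type sum built from $i$ at the leaves, $p$ at the root, $h$ along the inner edges of an auxiliary tree decomposition, together with $\gamma_\PPP$ and the lower-level $\gamma_\QQQ$ and $\phi$. With $\gamma_\QQQ$ now available up to level $n$, Proposition~\ref{lemma:cycle} makes the obstruction $m$ to extending $\phi$ to $n$-vertex trees a cycle of $[\Tfree^n(s\PP\oplus\mbk\cdot v),s\QQ]$, which I would kill by an $h$-corrected formula for $\phi$ on $n$-vertex trees. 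The colimit over $n$ then assembles a coderivation on $\Tfree^c(s\QQ\oplus\mbk\cdot v)$, i.e.\ the bar construction $B_c\QQQ$ of a homotopy operad $\QQQ$, and a morphism of curved conilpotent cooperads $B_c\PPP\ra B_c\QQQ$, i.e.\ $\phi$; the curvature condition $\theta_{B_c\QQQ}\circ\phi=\theta_{B_c\PPP}$ is automatic because every map in sight fixes $v$.

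I expect the inductive step to be the main obstacle: one must check that the transfer-type formulae for $\gamma_\QQQ$ and for $\phi$ on $n$-vertex trees genuinely solve the level-$n$ equations, given that the lower levels do. Concretely this means expanding the sums over subtrees $T'\subset T$ and over partitions $T=T_1\sqcup\cdots\sqcup T_k$ and telescoping them via $pi=\id$, $ip-\id=\partial h+h\partial$ and the side conditions, all while keeping track of the Koszul signs; the symmetric group actions only add bookkeeping, since by Maschke every $\mbk[\mbs_n]$-module occurring is projective, so no extra splittings are ever required. A shortcut avoiding explicit formulae is to compare the downstairs obstruction cycles $\kappa,m$ with their upstairs counterparts along the map $a\mapsto sp\circ a\circ\Tfree^n(si\oplus\id)$: upstairs they vanish because $\PPP$ is an honest homotopy operad, and the level-$(n-1)$ $\infty$-morphism equation then realizes $\kappa$ and $m$ as boundaries; the underlying computation is, however, essentially the same.
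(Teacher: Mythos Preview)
Your reduction via Proposition~\ref{prop:strictify} is sound and is a genuine difference from the paper: the paper never builds an $\infty$-morphism $\phi\colon\PPP\to\QQQ$ and then strictifies it. Instead it constructs the structure $\gamma_\QQQ$ on $\QQ$ and the higher components $f_n\colon\Tfree^n(s\PP\oplus\mbk\cdot v)\to s\PP$ of the isotopy \emph{simultaneously}, coupled by the single relation $\gamma_n\,p^{\otimes n}+p\,\partial(f_n)=\chi_n$ at each level.

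The real divergence is in how the obstructions are killed. You propose to treat $\kappa$ and $m$ separately at each level and bound them by explicit transfer sums built from a contraction $(i,p,h)$; you are honest that verifying those sums is the entire content and you do not carry it out. Your ``shortcut'' via the comparison $a\mapsto sp\circ a\circ\Tfree^n(si\oplus\id)$ is not a proof as stated: the downstairs cycle $\kappa_n$ is defined in terms of the previously chosen $\gamma^{\QQQ}_{<n}$, and there is no a priori reason it coincides with the image of the upstairs zero obstruction under that map, so knowing the upstairs obstruction vanishes does not by itself make $\kappa_n$ a boundary.

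The paper's trick avoids this question altogether. It does not ask whether $\kappa_n$ is a boundary in $[\Tfree^n(s\QQ\oplus v),s\QQ]$; it packages $(\gamma_n,f_n)$ together and observes that the combined map
\[
[\Tfree^n(s\QQ\oplus v),s\QQ]\ \oplus\ [\Tfree^n(s\PP\oplus v),s\PP]\ \oplus\ s^{-1}[\Tfree^n(s\PP\oplus v),s\PP]\ \longrightarrow\ [\Tfree^n(s\PP\oplus v),s\QQ]
\]
is an acyclic fibration, purely because $p$ is one (precomposition with $p^{\otimes n}$ is a quasi-isomorphism, postcomposition with $p$ a surjection, and the cone summand is contractible). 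Lemma~\ref{lemma:chi} checks the single compatibility $\partial\chi_n=\kappa_n\,p^{\otimes n}$, so the lifting problem for $S^{-2}\to D^{-1}$ is well posed and solvable. No contraction $(i,p,h)$, no explicit tree formulas, and no separate treatment of $\kappa$ and $m$ are needed.

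What your route would buy, if completed, is explicit formulas for the transferred structure; what the paper's route buys is a complete proof with the combinatorics confined to the short Lemma~\ref{lemma:chi}.
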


\begin{proof}
We build by induction this $\infty$-isotopy and this structure of homotopy operad on $\QQ$; that is we build by induction maps
$$
\begin{cases}
 \gamma_n : \Tfree^n (s\QQ \oplus \mbk \cdot v) \xrightarrow{|-1|} s\QQ\ ,\\
 f_n : \Tfree^n (s\PP \oplus \mbk \cdot v) \ra s\PP\ ,
\end{cases}
$$ 
for $n \geq 2$ such that on any tree $T$ with $n$ vertices: 
$$
\begin{cases}
 \partial \gamma_n =- \sum_{T' \subsetneq T\atop \# vert (T') \geq 2} \gamma_{<n} (Id \otimes \cdots \otimes \gamma_{<n} (T') \otimes \cdots \otimes Id) 
 + (\theta\otimes \pi-\pi\otimes \theta)\Delta_2\ ,\\
 \gamma_n p^{\otimes n} + p \partial (f_n) = \sum_{T' \subseteq T\atop  \# vert (T') \geq 2} pf_{<n} (Id \otimes \cdots \otimes \gamma_{\PPP} (T') \otimes \cdots \otimes Id)\\ - \sum_{T= T_1 \sqcup \ldots \sqcup T_k \atop 1< k <n} \gamma_{<n} (pf_{<n}(T_1) \otimes \cdots \otimes pf_{<n}(T_k))\ ,
\end{cases}
$$
where $p$ is extended to $\mbk\cdot v$ by $p(v)=v$. Suppose that we have built $\gamma_2$, $f_2$, \ldots, $\gamma_{n-1}$, $f_{n-1}$. Consider the chain complex
$$
[\Tfree^n(s\QQ \oplus v), s\QQ] \oplus  [\Tfree^n(s\PP \oplus v), s\PP] \oplus s^{-1}[\Tfree^n(s\PP \oplus v), s\PP]
$$
where $[\Tfree^n(s\QQ \oplus v), s\QQ]$ is endowed with the differential induced by the differential of $s\QQ \oplus v$, such that $dv=p(s1_\PP)$, and the differential of $s\QQ$. Moreover, the differential on the other summands is the adding of $s^{-1}$ to any element of $[\Tfree^n(s\PP \oplus v), s\PP]$. The following morphisms of chain complexes
\begin{align*}
  [\Tfree^n(s\PP \oplus v), s\PP] \oplus s^{-1}[\Tfree^n(s\PP \oplus v), s\PP] &\xrightarrow{Id \oplus \partial} [\Tfree^n(s\PP \oplus v), s\PP] \xrightarrow{p} [\Tfree^n(s\PP \oplus v), s\QQ]\\
 [\Tfree^n(s\QQ \oplus v), s\QQ] \xrightarrow{[p^\otimes,Id] } [\Tfree^n(s\PP \oplus v), s\QQ]
\end{align*}
are respectively a surjection and a weak equivalence. Then, the morphism
$$
[\Tfree^n(s\QQ \oplus v), s\QQ] \oplus  [\Tfree^n(s\PP \oplus v), s\PP] \oplus s^{-1}[\Tfree^n(s\PP \oplus v), s\PP] \ra [\Tfree^n(s\PP \oplus v), s\QQ]
$$
is an acyclic fibration. Moreover, by Proposition \ref{prop:cycle2}, the element
$$
\kappa_n := - \sum_{T' \subsetneq T\atop \# vert (T') \geq 2} \gamma_{<n} (Id \otimes \cdots \otimes \gamma_{<n} (T') \otimes \cdots \otimes Id) + (\theta\otimes \pi-\pi\otimes \theta)\Delta_2
$$

is a cycle of the chain complex $[\Tfree^n(s\QQ \oplus v), s\QQ]$. This gives us the following square of chain complexes.
$$
\xymatrix{S^{-2} \ar[r]^(0.15){(\kappa_n,0,0)} \ar[d] & [\Tfree^n(s\QQ \oplus v), s\QQ] \oplus  [\Tfree^n(s\PP \oplus v), s\PP] \oplus s^{-1}[\Tfree^n(s\PP \oplus v), s\PP]  \ar[d]\\
D^{-1} \ar[r]_{\chi_n} & [\Tfree^n(s\PP \oplus v), s\QQ]}
$$
where $\chi_n$ is the following element of $[\Tfree^n(s\PP \oplus v), s\QQ]$:
$$
\chi_n=  \sum_{T' \subset T\atop  \# vert (T') \geq 2} pf_{<n} (Id \otimes \cdots \otimes \gamma_{\PPP} (T') \otimes \cdots \otimes Id) - \sum_{T= T_1 \sqcup \ldots \sqcup T_k \atop 1< k <n} \gamma_{<n} (pf_{<n}(T_1) \otimes \cdots \otimes pf_{<n}(T_k))\ .
$$
Indeed, $\partial (\chi_n) = \kappa_n p^{\otimes n}$ by Lemma \ref{lemma:chi}. This square has a lifting which gives us $\gamma_n$ and $f_n$.
\end{proof}

\begin{lemma}\label{lemma:chi}
 In the proof of Proposition \ref{thm:htt}, we have 
 \[
 \partial (\chi_n) = \kappa_n p^{\otimes n}\ .
 \]
\end{lemma}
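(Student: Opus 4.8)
The plan is to prove the identity by a direct Leibniz-rule computation in the relevant hom-complexes, entirely parallel to the proofs of Propositions~\ref{lemma:cycle} and~\ref{prop:cycle2}. At stage $n$ of the induction in the proof of Theorem~\ref{thm:htt} we may use the following hypotheses, which hold for all $m$ with $2 \leq m \leq n-1$:
\[
\partial \gamma_m = \kappa_m \qquad\text{and}\qquad \gamma_m\, p^{\otimes m} + p\,\partial(f_m) = \chi_m .
\]
We may also use that the structure maps $\gamma_{\PPP}$ of the honest homotopy operad $\PPP$ satisfy the homotopy-operad relation recalled after Proposition~\ref{prop:isofib}, which, after isolating the single-vertex and top pieces, says that on every tree $T'$ the map $\partial\gamma_{\PPP}(T')$ equals, up to sign, $-\sum \gamma_{\PPP}(T'/T'')(Id \otimes \cdots \otimes \gamma_{\PPP}(T'') \otimes \cdots \otimes Id)$ summed over proper subtrees $T'' \subsetneq T'$ with at least two vertices, corrected by $(\theta \otimes \pi - \pi \otimes \theta)\Delta_2$; and that, since $f$ is an $\infty$-isotopy, $f_{<n}$ restricts to the identity on a single vertex, so that the cooperad map induced by $p f_{<n}$ takes the value $p^{\otimes n}$ on the all-singletons partition of an $n$-vertex tree.

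First I would expand $\partial(\chi_n)$ by the Leibniz rule on both summands of $\chi_n$. In $\sum_{T' \subset T} p f_{<n}(Id \otimes \cdots \otimes \gamma_{\PPP}(T') \otimes \cdots \otimes Id)$, the differential either hits $\gamma_{\PPP}(T')$, where I substitute the homotopy-operad relation for $\PPP$, or it hits $p f_{<n}$ on the quotient tree, where I substitute $p\,\partial(f_m) = \chi_m - \gamma_m\, p^{\otimes m}$ (using that $p$ commutes with differentials). In $\sum \gamma_{<n}(p f_{<n}(T_1) \otimes \cdots \otimes p f_{<n}(T_k))$, the differential hits either the outer $\gamma_{<n}$, where I substitute $\partial\gamma_m = \kappa_m$ and then the definition of $\kappa_m$, or one of the factors $p f_{<n}(T_i)$, where I again use $p\,\partial(f_m) = \chi_m - \gamma_m\, p^{\otimes m}$ and then unfold $\chi_m$. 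After all these substitutions, $\partial(\chi_n)$ is an alternating sum, indexed by nested subtrees and by refinements of partitions of the given $n$-vertex tree, whose terms are of a few shapes: $\gamma_{\PPP}$ nested inside $\gamma_{\PPP}$ then composed with $p f_{<n}$; $\gamma_{<n}$ nested inside $\gamma_{<n}$ then composed with factors $p f_{<n}$; $\gamma_{<n}$ composed with factors, one of which is a nested $\gamma_{\PPP}$ or a nested $\gamma_{<n}$; and curvature terms $(\theta \otimes \pi - \pi \otimes \theta)\Delta_2$ composed either with factors $p f_{<n}$ or with $p^{\otimes n}$.

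Then, exactly as in the proof of Proposition~\ref{lemma:cycle}, I would produce a sign-reversing involution on those configurations in which not every factor $p f_{<n}$ is a single vertex, that is, in which the bottom composition is not $p^{\otimes n}$. The involution is built from the coassociativity of tree decomposition, which exchanges a cut nested inside a later cut for the opposite nesting; it pairs the $\gamma_{\PPP}$-in-$\gamma_{\PPP}$ terms against the $\gamma_{<n}$-with-a-$\chi_{<n}$-factor terms, and pairs the remaining purely $\gamma_{<n}$-type terms among themselves. The uncancelled remainder is exactly the value on the given tree $T$ of $-\sum_{T' \subsetneq T,\ \# vert(T') \geq 2} \gamma_{<n}(T/T')(Id \otimes \cdots \otimes \gamma_{<n}(T') \otimes \cdots \otimes Id) + (\theta \otimes \pi - \pi \otimes \theta)\Delta_2$ post-composed with $p^{\otimes n}$; using that $p$ is compatible with the projections $\pi$ and with $\theta$ and that $p(v) = v$, this is $\kappa_n\, p^{\otimes n}$, which is the claim.

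The genuinely delicate part is not the strategy but the bookkeeping: tracking the Koszul signs arising from the suspensions and from commuting $\partial$ past the various maps, and organising the sums over nested pairs of subtrees and over refinements of partitions so that the pairing is visibly an involution --- this is the same combinatorial core as in the proof of Proposition~\ref{lemma:cycle}, now with two interacting families of maps instead of one. A possible alternative is to package $p f_{<n}$, extended by zero on $n$-vertex trees, into a morphism of graded cooperads $L$ from $B_c^{\leq n}\PPP$ to $\Tfree^{\leq n}(s\QQ \oplus \mbk \cdot v)$ regarded as a graded cooperad and equipped with the coderivation extending $\gamma_{<n}$ and zero on $n$-vertex trees; then $\chi_n$ is the projection onto $s\QQ$ of $LD - DL$ restricted to $n$-vertex trees, and after separating the linear and quadratic parts of the coderivations one finds that $\partial(\chi_n)$ is the projection of an expression governed by the defect of $D^2$ from the curvature relation --- a defect which, by construction, is concentrated on $n$-vertex trees and equals $\kappa_n$ --- so that the identity falls out with the signs already absorbed into the coderivation formalism, at the cost of checking that $L$ is a cooperad morphism and identifying the relevant projection.
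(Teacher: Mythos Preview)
Your proposal is correct, and in fact you have described the paper's own proof --- but as your \emph{alternative}, not as your main argument. The paper denotes $g := pf$ (extended by zero on $n$-vertex trees), lets $G$ be the induced morphism of graded cooperads and $D$ the coderivation extending $\gamma_{<n}$ (extended by zero on level $n$), observes that $\chi_n = \pi(GD - DG)$, and then computes
\[
D(GD - DG) + (GD - DG)d = GD^2 - D^2G\ ,
\]
using that $G$ commutes with the coderivations below level $n$; the curvature defect of $D^2$ on the target, which is exactly $\kappa_n$, then yields $\partial(\chi_n) = \kappa_n\,p^{\otimes n}$ after projecting. This is precisely the ``packaged'' route you sketch in your last paragraph.

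Your primary approach --- a direct Leibniz expansion followed by a sign-reversing involution on nested subtrees and refinements of partitions --- is also valid and is the unwound version of the same identity. The trade-off is exactly as you diagnose: the direct route makes the combinatorics explicit but forces you to track Koszul signs and organise a two-family cancellation by hand, whereas the coderivation formalism absorbs all of this into the single computation $GD^2 - D^2G$ and the identification of $D^2$ with the curvature defect. Since you already see both routes and correctly identify the second as cleaner, I would simply promote it to be the actual proof rather than an afterthought.
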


\begin{proof}
 Let us denote $pf$ by $g$ and let us extend it to $\Tfree^n (s\PP \oplus \mbk \cdot v)$ by $0$. Moreover, let us extend $\gamma: \Tfree^{\leq n-1} (s\QQ \oplus \mbk \cdot v) \xrightarrow{|-1|} s\QQ$ to $\Tfree^{n} (s\QQ \oplus \mbk \cdot v)$ by $0$. Moreover, we denote respectively by $G$ and $D$ the morphism of cooperads which extends $g$ and the coderivation which extends $D$. Notice that $G$ commutes with the coderivations when restricted to $\Tfree^{\leq n-1} (s\PP \oplus \mbk \cdot v)$. Then,
 \[
 \chi_n =  \pi(G D - D G)\ .
 \]
 Besides, if we denote by $d$ the differential on $\Tfree^{n} (s\QQ \oplus \mbk \cdot v)$, then we have
\begin{align*}
D(G D - D G) +(G D - D G)d &= DGD -D^2G +GDd -DGd\\
&=DG(D-d) -D^2G +GDd\\
&=GD(D-d) +GDd-D^2G\\
&=GD^2 - D^2G\ .
\end{align*}
As in the proof of Proposition \ref{lemma:cycle}, we have
\[
G D - D G = \pi(G D - D G) = gD-\gamma G = \chi_n \ ,
\]
and so $D(G D - D G) +(G D - D G)d = \partial (\chi_n)$. Moreover, one can show that
\[
\pi D^2G = \gamma D G =  (\theta \otimes g - g \otimes \theta)\Delta_2 - \kappa_n p^{\otimes n}\ .
\]
Since $\pi GD^2 =(\theta \otimes g - g \otimes \theta)\Delta_2$, then $\partial (\chi_n)= \kappa_n p^{\otimes n}$.
\end{proof}

This homotopy transfer theorem may for instance be applied to the homology of a homotopy operad. Indeed, a dg $\mbs$-module is linked to its homology by an acyclic fibration.

\begin{prop}
Let $\VV$ be a dg $\mbs$-module and let $H(\VV)$ be its homology. There exists an acyclic fibration of dg $\mbs$-modules from $\VV$ to $H(\VV)$.
\end{prop}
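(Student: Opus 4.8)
The plan is to construct the map arity by arity and, within each arity, degree by degree, using the semisimplicity of $\mbk[\mbs_n]$ furnished by Maschke's Theorem. Recall that in the model structure on dg $\mbs$-modules described above a fibration is exactly a degreewise surjection and a weak equivalence is exactly a quasi-isomorphism; hence an acyclic fibration is precisely a surjective quasi-isomorphism. So it suffices to produce, for each integer $n$, a morphism of chain complexes of $\mbk[\mbs_n]$-modules $\VV(n) \to H(\VV(n))$ which is a degreewise surjection and a quasi-isomorphism, where $H(\VV(n))$ is regarded as a complex with zero differential, and then assemble these over all arities.

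Fix $n$. Let $Z(n) \subseteq \VV(n)$ be the sub-chain-complex of cycles and $B(n) \subseteq Z(n)$ the boundaries; in each degree these are sub-$\mbk[\mbs_n]$-modules and $H(\VV(n)) = Z(n)/B(n)$. Since $\mbk[\mbs_n]$ is semisimple, the monomorphism $Z(n) \hookrightarrow \VV(n)$ admits, degreewise, an $\mbs_n$-equivariant retraction $r \colon \VV(n) \to Z(n)$. Assembling these over all degrees gives a graded $\mbs_n$-equivariant map $r$ (not a priori a chain map, which will be harmless), and we define
$$
f \colon \VV(n) \xrightarrow{\ r\ } Z(n) \twoheadrightarrow Z(n)/B(n) = H(\VV(n))\ .
$$
By construction $f$ is a degreewise $\mbs_n$-equivariant surjection.

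It remains to check that $f$ is a chain map and a quasi-isomorphism. For $x \in \VV(n)$ the element $d_\VV x$ lies in $B(n) \subseteq Z(n)$, so $r(d_\VV x) = d_\VV x$ and its class in $Z(n)/B(n)$ vanishes; thus $f \circ d_\VV = 0 = d_{H(\VV)} \circ f$, and $f$ is a morphism of chain complexes. Moreover, for a cycle $z \in Z(n)$ one has $r(z) = z$, hence $f(z) = [z] \in H(\VV(n))$, so the induced map on homology is the identity of $H(\VV(n))$; in particular $f$ is a quasi-isomorphism. Collecting the maps $f$ over all arities $n$ yields a morphism of dg $\mbs$-modules $\VV \to H(\VV)$ that is simultaneously a degreewise surjection and a quasi-isomorphism, i.e. an acyclic fibration.

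The only nonformal ingredient is Maschke's Theorem, which provides the $\mbs_n$-equivariant splitting of $Z(n) \hookrightarrow \VV(n)$ in each degree; this — and hence the statement — is the step that relies on the characteristic of $\mbk$ being zero. There is thus no real obstacle here beyond keeping track of the fact that the chosen retraction need not respect the differential while the composite $f$ nevertheless does.
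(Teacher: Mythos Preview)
Your proof is correct. Both your argument and the paper's hinge on Maschke's theorem, but you split a different short exact sequence. The paper splits the surjection $Z(\VV)\twoheadrightarrow H(\VV)$ to obtain an acyclic cofibration $H(\VV)\hookrightarrow \VV$, and then invokes the model structure (every object is fibrant, so acyclic cofibrations admit retractions) to produce the desired right inverse $\VV\to H(\VV)$. You instead split the inclusion $Z(\VV)\hookrightarrow \VV$ degreewise and compose with the projection $Z(\VV)\to H(\VV)$, checking by hand that the resulting map is a chain map, a surjection, and a quasi-isomorphism. Your route is more elementary and self-contained, since it avoids the appeal to lifting properties; the paper's route is shorter once the model structure is in place. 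The key observation that makes your version work---that the graded (not chain-level) retraction $r$ nonetheless yields a chain map after composing with $Z\to Z/B$, because $r$ fixes boundaries---is exactly right.
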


\begin{proof}
Let $Z(\VV)$ be the $\mbs$-module of cycles of $\VV$. Consider the following diagram of $\mbs$-modules.
 $$
 \xymatrix{\VV & Z(\VV) \ar[l] \ar[r] & H(\VV)}
 $$
 Since any graded $\mbk[\mbs_n]$ module is projective, the surjective morphism $Z(\VV) \ra H(\VV)$ has a right inverse. Thus, we obtain an inclusion $H(\VV) \ra \VV$ which is a quasi-isomorphism. It has a right inverse which is an acyclic fibration.
\end{proof}

\begin{rmk}
Let $\PPP=(\PP,\gamma,1)$ be a dg operad and let $p$ be an acyclic fibration of $\mbs$-modules from $\PP$ to its homology $H(\PP)$. The homotopy transfer theorem applied to $p$ gives operadic Massey products of $\PPP$. We refer to \cite{Livernet15} for a computation of operadic Massey products of the Swiss cheese operad.
\end{rmk}

\subsection{Path object}

For any integer $n \in \mbn$, let $\Phi[n]$ be the linear dual of the normalized Moore complex of the simplicial set $\Delta_n$. For instance $\Phi[1]$ is as follows:
$$
\begin{cases}
 \Phi[1]_0 := \mbk \cdot (0) \oplus \mbk \cdot (1)\ ,\\
 \Phi[1]_{-1} := \mbk \cdot (01)\ ,\\
 \Phi[1]_n =0\ ,\ n \notin \{-1,0\}\ ,\\
 d (0) = (01)\ ,\\
 d(1)=-(01)\ .
\end{cases}
$$

\begin{prop}
 Let $\PPP= (\PP, \gamma_\PPP,1_\PPP)$ be a homotopy operad. The dg $\mbs$-module $\Phi[1] \otimes \PP$ has a structure of homotopy operad that we denote $\Phi[1]\otimes \PPP$ and which is a path object of the homotopy operad $\PPP$. 
\end{prop}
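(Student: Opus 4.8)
The plan is to obtain the homotopy operad structure on $\Phi[1]\otimes\PP$ by a transfer argument and then to realise it as a path object using the endpoint maps of the interval. Write $\Omega_1 = \mbk[t,dt]$ for the commutative dg algebra of polynomial forms on $\Delta_1$ already used in the simplicial enrichment; it is quasi-isomorphic to $\mbk$. First I would record that for any homotopy operad $\QQQ$ and any \emph{commutative} dg algebra $A$, the componentwise tensor product, with $(\QQ\otimes A)(m)=\QQ(m)\otimes A$, carries a homotopy operad structure $\QQQ\otimes A$, natural in $\QQQ$: one defines its structure map on a labelled tree by applying $\gamma_\QQQ$ to the $\QQ$-labels, multiplying the $A$-labels, and sending $v$ to $v\otimes 1$; commutativity of $A$ is precisely what makes this well defined on the tree module (i.e. $\mbs$-equivariant), and the master equation of Proposition~\ref{prop:cycle2} for $\QQQ\otimes A$ follows termwise from that of $\QQQ$ together with associativity of $A$. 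In particular $\PPP\otimes\Omega_1$ is a homotopy operad.

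Next, the integration map $\mathrm{int}\colon\Omega_1\to\Phi[1]$ — evaluate a $0$-form at the two vertices and integrate a $1$-form over the edge — is a degreewise surjection and a quasi-isomorphism of chain complexes (both sides have homology $\mbk$ concentrated in degree $0$, and $1\mapsto(0)+(1)$). Hence $\mathrm{int}\otimes\PP\colon\Omega_1\otimes\PP\twoheadrightarrow\Phi[1]\otimes\PP$ is an acyclic fibration of dg $\mbs$-modules. Applying the homotopy transfer theorem (Theorem~\ref{thm:htt}) to this acyclic fibration, starting from $\PPP\otimes\Omega_1$, produces a homotopy operad structure on $\Phi[1]\otimes\PP$ — by definition this is $\Phi[1]\otimes\PPP$ — together with an $\infty$-isotopy of $\PPP\otimes\Omega_1$ and a \emph{strict} morphism $q\colon\PPP\otimes\Omega_1\to\Phi[1]\otimes\PPP$ whose first level map is $\mathrm{int}\otimes\PP$. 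Running the transfer induction with lifts chosen so as to respect the decomposition $\Omega_1=\mbk\cdot1\oplus\overline{\Omega_1}$ and the two algebra maps $\mathrm{ev}_0,\mathrm{ev}_1\colon\Omega_1\to\mbk$ (which is possible since the extra constraints cut the lifting targets down to still-acyclic fibrations) makes $q$ and the construction compatible with endpoints, as needed below; one also checks this makes the structure functorial in $\PPP$.

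For the path-object property: the unit $u\colon\mbk\to\Omega_1$ and the evaluations $\mathrm{ev}_0,\mathrm{ev}_1\colon\Omega_1\to\mbk$ are morphisms of commutative dg algebras, hence induce strict morphisms $u\otimes\PP\colon\PPP\to\PPP\otimes\Omega_1$ and $\mathrm{ev}_i\otimes\PP\colon\PPP\otimes\Omega_1\to\PPP$ with $(\mathrm{ev}_i\otimes\PP)(u\otimes\PP)=\id$. Since $\mathrm{ev}_i$ factors through $\mathrm{int}$ as $\Omega_1\xrightarrow{\mathrm{int}}\Phi[1]\xrightarrow{\mathrm{ev}_i^{\Phi}}\mbk$ with $\mathrm{ev}_i^{\Phi}$ the evaluation at vertex $i$, a morphism of unital dg algebras on $\Phi[1]$, the compatibility of the transfer with $\mathrm{ev}_0,\mathrm{ev}_1$ yields: a strict weak equivalence $\PPP\to\Phi[1]\otimes\PPP$ (its first level map $x\mapsto((0)+(1))\otimes x$ is a quasi-isomorphism, so by the characterisation of weak equivalences among $\infty$-morphisms it is one), and two strict fibrations $\Phi[1]\otimes\PPP\to\PPP$ (first level maps $\mathrm{ev}_i^{\Phi}\otimes\PP$, which are surjections, hence $\infty$-epimorphisms), with both composites equal to $\id_\PPP$. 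Assembling these gives the factorisation of the diagonal $\PPP\to\Phi[1]\otimes\PPP\to\PPP\times\PPP$ exhibiting $\Phi[1]\otimes\PPP$ as a path object of $\PPP$; the fact that $\Phi[1]$ rather than $\Omega_1$ is used is what keeps the underlying $\mbs$-module small (e.g. aritywise finite dimensional when $\PP$ is).

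I expect two steps to be the real work. First, because $\Phi[1]$ is noncommutative, $\Phi[1]\otimes\PP$ is genuinely \emph{not} an operad and carries no ``tensor'' homotopy operad structure, so the transfer (equivalently, a direct obstruction argument symmetrising the naive composition-and-multiplication formula, in the style of Propositions~\ref{prop:cycle2} and \ref{lemma:cycle}) is unavoidable, and one must check carefully that it can be performed compatibly with the two endpoint algebra maps so that the evaluation $\infty$-morphisms $\Phi[1]\otimes\PPP\to\PPP$ really exist on the nose, not merely up to homotopy. The second, harder, point is showing that $\Phi[1]\otimes\PPP\to\PPP\times\PPP$ is a fibration: one has to unwind the product $B_c\PPP\times B_c\PPP$ of curved conilpotent cooperads, which by Proposition~\ref{prop:productcoop} is the alternated tree module $\Tfree(\overline{B_c\PPP},\overline{B_c\PPP})$ and whose cofree presentation carries \emph{two} copies of the curvature cogenerator $v$; surjectivity of the comparison map onto this piece is the delicate combinatorial verification, to be handled by filtering and invoking Theorem~\ref{maclane-homology} exactly as in the Key Lemma (Lemma~\ref{thm:keylemma}).
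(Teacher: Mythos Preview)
Your approach is genuinely different from the paper's. The paper does not pass through $\Omega_1$ at all: it constructs the maps $\gamma_m:\Tfree^m(s\Phi[1]\otimes\PP\oplus\mbk\cdot v)\to s\Phi[1]\otimes\PP$ directly by an obstruction argument, building the compatibility with the degeneracy $\sigma:\mbk\to\Phi[1]$ and the two faces $\delta_i:\Phi[1]\to\mbk$ into the inductive lifting problem. At stage $m$, finding $\gamma_m$ amounts to lifting against
\[
[B_c^m\PP_1,\,s\Phi[1]\otimes\PP]\ \longrightarrow\ [B_c^m\PP,\,s\Phi[1]\otimes\PP]\times_{[B_c^m\PP,\,s\PP\oplus s\PP]}[B_c^m\PP_1,\,s\PP\oplus s\PP],
\]
which is an acyclic fibration because $\sigma$ induces an acyclic cofibration $B_c^m\PP\to B_c^m\PP_1$ and $(\delta_0,\delta_1)$ induces a fibration $s\Phi[1]\otimes\PP\to s\PP\oplus s\PP$.

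Your route---build $\PPP\otimes\Omega_1$ by hand (which is fine; commutativity of $\Omega_1$ does make the tree-module formula well defined) and then transfer along $\mathrm{int}\otimes\PP$---is conceptually pleasant, but the step you flag as ``the real work'' is exactly the paper's argument in disguise. Theorem~\ref{thm:htt} as stated gives no compatibility with $\mathrm{ev}_0,\mathrm{ev}_1$; to impose it you must reopen the transfer induction and lift against a pullback that records those constraints, and that pullback is precisely the target of the map displayed above. So the detour through $\Omega_1$ adds a layer without simplifying the core obstruction step. What it does buy you is a clear reason \emph{why} the obstruction argument should succeed (you are transferring an honest structure along an acyclic fibration), and an explicit formula for the structure in low weights via Dupont-type contraction.

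On your second worry: the fibration to $B_c\PPP\times B_c\PPP$ does not require anything as heavy as the Key Lemma. Once the faces are \emph{strict} (which both approaches arrange), each $B_c\delta_i$ is an $\infty$-epimorphism, and the pair $(\delta_0,\delta_1):\Phi[1]\otimes\PP\to\PP\oplus\PP$ is surjective; the fibration property then follows from the characterisation of fibrations between fibrant cooperads (Propositions~\ref{prop:fibrant} and the one following it), after noting that the product of fibrant cooperads is again fibrant and hence isomorphic to a bar construction by Proposition~\ref{prop:fibrant}. Unwinding the alternated-tree description of the product and invoking filtered spectral-sequence arguments is unnecessary here.
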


\begin{proof}
For convenience, we will denote $\Tfree^m(  s\Phi[1] \otimes \PP \oplus \mbk \cdot v)$ by $B_c^m \PP_1$ and $\Tfree^m(s\PP \oplus \mbk \cdot v)$ by $B_c^m \PP$. We build by induction maps
$$
\gamma_m: B_c^m\PP_1 \xrightarrow{|-1|}  \Phi[1] \otimes s\PP
$$ 
such that on any tree $T$ with $m$ or less than $m$ vertices
$$
\partial \gamma (T) +\sum_{T'\subsetneq T\atop \# vert (T') \geq 2} \gamma(T/T') (Id \otimes \cdots \otimes \gamma(T') \otimes \cdots \otimes)= (\theta \otimes \pi - \pi \otimes \theta)\Delta_2
$$
where $\pi$ is the projection of $B_c \PP_1$ onto $s\Phi[1] \otimes \PP$ and $\theta$ is the map $B_c\PP_1 \twoheadrightarrow \mbk \cdot v \ra \mbk$.  Moreover, we require the following equality between maps from $B_c^m\PP_1$ to $s\PP $ (resp. $B_c^m\PP$ to $s\Phi[1] \otimes \PP$):
$$
\gamma_{\PPP}(s\delta_i  \otimes Id_{\PP} \oplus Id_v)^{\otimes m} = (s\delta_i \otimes Id_{\PP})\gamma_{m}\ ,
$$
$$
\gamma_{m}(s\sigma \otimes Id_{\PP} \oplus Id_v)^{\otimes m} = (s\sigma\otimes Id_{\PP}) \gamma_{\PPP}\ ,
$$
for the two face maps $\delta_i: \Phi[1] \ra \mbk$ and for the degeneracy map $\sigma: \mbk \ra \Phi[1]$. Suppose that we have built $\gamma_2$, \ldots, $\gamma_{m-1}$. Using the same techniques as in the proof of Proposition \ref{prop:lifting}, building $\gamma_m$ amounts to find a lift to the following square:
$$
\xymatrix{S^{-1} \ar[r] \ar[d] & [B^m_c\PP_1, s\Phi[1]\otimes \PP] \ar[d]\\ D^{-1} \ar[r] &  [B^m_c\PP, s\Phi[1] \otimes \PP] \times_{[B^m_c\PP, s\PP \oplus s\PP]} [B^m_c\PP_1, s\PP \oplus s\PP]\ .} 
$$
Since the map $B^m_c\PP \ra B^m_c\PP_1$ induced by the degeneracy map $\sigma : \mbk \ra \Phi[1]$ is an acyclic cofibration and since the morphism $s\Phi[1] \otimes \PP  \ra s\PP \oplus s\PP$ induced by the two face maps $\Phi[1] \ra \mbk$ is a fibration, then the right vertical map of the diagram is an acyclic fibration. So the square has a lifting.

\end{proof}

If $\PPP$ is an operad, then we can give a precise description of a path object.

\begin{prop}\label{prop:path}
 Let $\PPP= (\PP, \gamma_\PP, 1_\PPP, d_\PP)$ be an operad. Then, a path object of $B_c\PPP$ in the category of curved conilpotent cooperads is given by the homotopy operad $\Phi[1] \otimes\PPP $ whose unit is $\left((0) + (1)\right) \otimes 1_\PPP $ and whose structure map
 $$
 \gamma : \Tfree(s\Phi[1] \otimes \PP  \oplus \mbk \cdot v) \to s\Phi[1] \otimes \PP
 $$
 is defined as follows. Consider the following tree $X$ labelled by elements of $s\Phi[1] \otimes \PP$.
\begin{center}
\begin{tikzpicture}
    \draw (3,0) -- (3,1) ;
    \draw (3,1) node {$\bullet$} ;
    \draw (3,1) node [below left] {$s\phi_0 \otimes x_0$} ;
    \draw (3,1) -- (1,2) ;
    \draw (1,2) node {$\bullet$} ;
    \draw (1,2) node [below left] {$s\phi_1 \otimes x_1$} ;
    \draw (3,2) node {$\cdots$} ;
    \draw (3,1) -- (5,2) ;
    \draw (5,2) node {$\bullet$} ;
    \draw (5,2) node [below right] {$s\phi_n \otimes x_n$} ;
    \draw (1,2) -- (0,3) ;
    \draw (1,2) -- (2,3) ;
    \draw (5,2) -- (4,3) ;
    \draw (5,2) -- (5,3) ;
    \draw (5,2) -- (6,3) ;
\end{tikzpicture}
\end{center}
Then:
\[
\begin{cases}
 \gamma(s\phi_0 \otimes x_0)= -sd \phi_0 \otimes x_0 + (-1)^{|\phi_0|+1} s\phi_0 \otimes d x_0\\
 \gamma(X)= (-1)^{|x_0|} s\phi_0 \otimes \gamma_\PP(x_0 \otimes x_1) \text{ if }n=1\text{ and }\phi_0= \phi_1=(0)\text{ or }\phi_0=\phi_1=(1)\\
 \gamma(X)= (-1)^{|x_0|+1} s(01)\otimes\gamma_\PP(x_0 \otimes x_1) \text{ if }n=1\text{, }\phi_0=(01) \text{ and }\phi_1=(1)\\
 \gamma(X)=  (-1)^{|x_0|+n+1}s(01)\otimes \gamma_\PP(x_0 \otimes_{\mbs_n} x_1 \otimes \cdots \otimes x_n)\text{ if }n\geq 1\text{, }\phi_0=(0) \text{ and }\phi_1,\ldots,\phi_n=(01)\\
 \gamma(v) = s\left((0) + (1)\right) \otimes 1_\PPP\ .
\end{cases}
\]
The map $\gamma$ is zero on all other labelled trees (for instance, the trees of height higher than 2 or the trees which contain $v$ and have at least two vertices).

\end{prop}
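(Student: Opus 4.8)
The plan is to reduce the statement to two explicit verifications and then import the path-object property from the preceding proposition. First I would check that the displayed data really defines a homotopy operad structure on $\Phi[1]\otimes\PP$: that $\gamma$ restricts to the differential $sd$ on the cogenerators $s\Phi[1]\otimes\PP$, that $\gamma(v)=s((0)+(1))\otimes 1_\PPP$, and that $\gamma$ obeys the master equation $\sum_{T'\subset T}\gamma(T/T')(Id\otimes\cdots\otimes\gamma(T')\otimes\cdots\otimes Id)=(\theta\otimes\pi-\pi\otimes\theta)\Delta_2$ of \cite[Lemma~2]{LeGrignou16}. Second I would check that the two face maps $\delta_0,\delta_1\colon\Phi[1]\to\mbk$ and the degeneracy $\sigma\colon\mbk\to\Phi[1]$ induce, after tensoring with $\PPP$, \emph{strict} $\infty$-morphisms $\Phi[1]\otimes\PPP\to\PPP$ and $\PPP\to\Phi[1]\otimes\PPP$. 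Granting these, the path-object property follows as for a general homotopy operad in the preceding proposition: $B_c(\sigma\otimes\ii)\colon B_c\PPP\to B_c(\Phi[1]\otimes\PPP)$ is an $\infty$-monomorphism, hence a cofibration, and it is a weak equivalence because $\sigma\otimes\ii\colon\PP\to\Phi[1]\otimes\PP$ is a quasi-isomorphism (the homology of $\Phi[1]$ is $\mbk$, generated by $(0)+(1)$); the pair $\big(B_c(\delta_0\otimes\ii),B_c(\delta_1\otimes\ii)\big)$ is a fibration onto $B_c\PPP\times B_c\PPP$, because the endpoint evaluation $(\delta_0,\delta_1)\colon\Phi[1]\otimes\PP\to\PP\oplus\PP$ is a surjection of dg $\mbs$-modules (using Proposition~\ref{prop:fibrations} together with the description of the product in Proposition~\ref{prop:productcoop}); and the composite is the diagonal since $\delta_i\circ\sigma=\ii_\mbk$.

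For the master equation I would argue by the shape of $T$. By construction $\gamma$ is supported on one-vertex trees (where it is the differential of $\Phi[1]\otimes\PP$), on the generator $v$, and on the four listed families of height-two trees; in particular $\gamma$ vanishes on every tree of height $\geq 3$, so for each arity only finitely many shapes need to be treated. On a one-vertex tree the equation is $d^2=0$ on $\Phi[1]\otimes\PP$, which holds since $d^2=0$ on $\Phi[1]$ and on $\PP$. On a height-two tree not involving $v$, the surviving terms reduce — via the Leibniz rule for $d_\PP$, the associativity of $\gamma_\PP$, and the relations among $(0),(1),(01)$ encoded in the case list — to an identity which holds termwise after sign cancellation. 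On a tree of height $\geq 3$ without $v$, the only nonvanishing contributions come from pairs $(T',T/T')$ with both factors of height $\leq 2$; these occur in cancelling pairs governed by the associativity of $\gamma_\PP$, while the right-hand side vanishes because $\pi$ annihilates every multi-vertex tree. On a tree containing $v$: if it has at least two vertices, the only nonzero contributions come from contracting the single vertex $v$, which inserts the unit $((0)+(1))\otimes 1_\PPP$ and reduces to the unit axioms of $\PPP$, and these are matched on the right-hand side by the term involving $\theta(v)=1$; on a tree with $\geq 3$ vertices containing $v$ both sides vanish.

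For the compatibility with $\delta_i$ and $\sigma$ the explicit formula does almost all the work. Since $\delta_i$ kills $(01)$, applying $s\delta_i\otimes\ii$ to the output of $\gamma$ annihilates families~3 and~4 and the wrong-endpoint half of family~2, leaving exactly the two-vertex term $\gamma_\PP(x_0\otimes x_1)$ transported along $\delta_i$; conversely $(s\delta_i\otimes\ii)^{\otimes m}$ kills every vertex not labelled by the corresponding $0$- or $1$-cochain, so on the source side only all-$(0)$ (resp. all-$(1)$) two-vertex trees survive, again giving $\gamma_\PP$ — and both sides vanish on taller trees, matching that the structure map of the operad $\PPP$ vanishes there. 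For $\sigma$, whose image is $(0)+(1)$, applying $\gamma$ to $(s\sigma\otimes\ii)^{\otimes 2}$ of a two-vertex $\PP$-tree produces four labellings, of which the mixed ones $((0);(1))$ and $((1);(0))$ are killed precisely because they are \emph{not} in the case list; this is exactly the point of the restriction built into the case list, and it leaves $s((0)+(1))\otimes\gamma_\PP(x_0\otimes x_1)$, which is the image of $\gamma_\PP$ under $s\sigma\otimes\ii$. On taller trees $\gamma$ again vanishes on $\sigma$-images.

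I expect the main obstacle to be the sign bookkeeping in the master equation: the signs coming from the differential of $\Phi[1]$ (the asymmetry $d(0)=(01)$ against $d(1)=-(01)$), from the suspensions, and from the Koszul rule implicit in $\gamma(T/T')(Id\otimes\cdots\otimes\gamma(T')\otimes\cdots)$ must conspire so that the associativity- and unit-driven cancellations actually occur, and so that the prefactors $(-1)^{|x_0|}$, $(-1)^{|x_0|+1}$, $(-1)^{|x_0|+n+1}$ in the formula are the only ones that work. A secondary point requiring care is that the pair of endpoint maps is a fibration \emph{onto the categorical product} $B_c\PPP\times B_c\PPP$, and not merely that each component is a fibration; here one either unfolds the description of the product from Proposition~\ref{prop:productcoop} or solves the relevant lifting problems directly by the obstruction method of Proposition~\ref{prop:lifting}, as is done implicitly in the proof of the preceding proposition.
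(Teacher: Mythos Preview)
Your proposal is correct and its core step---verifying the master equation for $\gamma$ by case analysis on the shape of $T$---is exactly what the paper does. The paper's proof is much terser: it simply records that $\gamma D_\gamma$ must equal $(\theta\otimes\pi-\pi\otimes\theta)\Delta_2$, notes that the nontrivial right-hand side occurs only on the two-vertex trees $\bigl(s\phi_0\otimes x_0\bigr)\otimes v$ and $v\otimes\bigl(s\phi_1\otimes x_1\bigr)$, and then dispatches all other cases with ``it is clear'' and ``one can easily check'' according to whether the tree contains $v$ and whether its height exceeds $3$.

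Where you genuinely go beyond the paper is in steps two and three: the paper's proof does \emph{not} verify the compatibility of $\gamma$ with $\delta_0,\delta_1,\sigma$, nor does it argue that the resulting factorisation of the diagonal is (weak equivalence)$\circ$(fibration). The paper is implicitly relying on the framework set up in the preceding proposition, treating the present statement as merely an explicit instance of the inductive construction there; once the formula satisfies the compatibility constraints of that proof, the path-object property is taken for granted. Your decision to spell this out is sound, and your identification of the fibration onto the categorical product $B_c\PPP\times B_c\PPP$ as the one subtle point is accurate---the paper does not address it in either proof, and your suggested route via Proposition~\ref{prop:lifting} (or equivalently the later characterisation of fibrations between fibrant objects as $\infty$-epimorphisms) is the natural way to close that gap.
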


\begin{proof}
Let us denote by $D_{\gamma}$ the coderivation of $\Tfree^c(s\PP \otimes \Phi[1] \oplus \mbk \cdot v)$ which extends $\gamma$. We have to prove that 
$$
\begin{cases}
\gamma D_\gamma \left( \left( s\phi_0 \otimes x_0 \right) \otimes v \right) = -s\phi_0 \otimes x_0\ , \\
\gamma D_\gamma \left( v \otimes \left( s\phi_1 \otimes x_1 \right)  \right) = s\phi_1 \otimes x_1\ , \\
\gamma D_\gamma \text{ is zero on all other labelled trees.} 
\end{cases}
$$
The two first equalities are straightforward to prove. Then it is clear that $\gamma D_\gamma$ is zero on any other tree which contains $v$ and on any tree whose height is larger than $3$. Finally, one can easily check that $\gamma D_\gamma$ is zero on any other tree of height one, two or three.
\end{proof}

\subsection{Strict unital homotopy operads}

\begin{defin}[Strict unital homotopy operads]
A strict unital homotopy operad is a homotopy operad $(\PPP, \gamma_\PPP,1)$ such that:
$$
\begin{cases}
\gamma_\PPP (v)= dv =s1_\PPP\ ,\\
\gamma_\PPP (s1_\PPP \otimes sx)= sx\\
\gamma_\PPP (sx \otimes s1_\PPP)= (-1)^{|x|} sx\\
 \gamma_\PPP (sx \otimes v)=  \gamma_\PPP (v \otimes sx)=0\\
\gamma_\PPP (sx \otimes \cdots \otimes s1_\PPP \otimes \cdots \otimes sy)= 0, \ \text{on }\Tfree^{\geq 3} (s\PPP \oplus \mbk \cdot v)\ ,\\
\gamma_\PPP (sx \otimes \cdots \otimes v \otimes \cdots \otimes sy)= 0, \ \text{on }\Tfree^{\geq 3} (s\PPP \oplus \mbk \cdot v)\ .
\end{cases}
$$
Let $\PPP$ and $\QQQ$ be two strict unital homotopy operads. A strict unital $\infty$-morphism from $\PPP$ to $\QQQ$ is an $\infty$-morphism $f: \ov \Tfree (s\PPP \oplus \mbk \cdot v) \ra s\QQQ$ such that
$$
\begin{cases}
f(v)=0\ ,\\
f(sx \otimes \cdots \otimes v \otimes \cdots \otimes sy)=0, \ \text{on }\Tfree^{\geq 2} (s\PPP \oplus \mbk \cdot v)\ ,\\
f(sx \otimes \cdots \otimes s1_\PPP \otimes \cdots \otimes sy)= 0, \ \text{on }\Tfree^{\geq 2} (s\PPP \oplus \mbk \cdot v)\ .
\end{cases}
 $$
 In particular $f(s1_\PPP)=s1_\QQQ$.
\end{defin}

\begin{defin}[Truncated bar construction of a strict unital homotopy operad]
A semi-augmentation of a strict unital homotopy operad $(\PPP, \gamma_\PPP, 1_\PPP)$ is a morphism of graded $\mbs$-modules $\epsilon:\PPP \ra \II$ such that $\epsilon (1_\PPP)=1$. We denote by $\ov \PPP$ the kernel of $\epsilon$ and by $\pi$ the projection of $\PPP$ on $\ov \PPP$ parallel to $1_\PPP$. Let $(\PPP,\gamma_\PPP)$ be a strict unital homotopy operad equipped with a semi augmentation $\epsilon$. The truncated bar construction of $\PPP$ is the conilpotent cooperad $B_r \PPP := \Tfree (s\ov \PPP)$ equipped with the coderivation which extends the map
$$
\ov \gamma_\PPP : \ov \Tfree s\ov \PPP \ra s\ov \PPP
$$
defined by $\ov \gamma_\PPP := \pi \gamma_\PPP$. It is also equipped with the degree $-2$ map $\theta:=\epsilon(s^{-1})\gamma_\PPP$.
\end{defin}

\begin{prop}
 Let $(\PPP,\gamma_\PPP,1, \epsilon)$ be a semi-augmented strict unital homotopy operad. The truncated bar construction $B_r \PPP$ is a curved conilpotent cooperad with curvature $\theta$. Moreover, the composite map
 $$
 B_r \PPP \twoheadrightarrow \ov \PPP \hookrightarrow \PPP
 $$
 induces a morphism of curved conilpotent cooperads from $B_r \PPP$ to $B_c \PPP$. This morphism is universal, in the sense that for any strict unital homotopy operad $\QQQ$, and for any morphism of curved conilpotent cooperads $f:B_r \PPP \ra B_c \QQQ$, there exists a unique strict unital $\infty$-morphism which extends $f$.
 $$
 \xymatrix{B_r \PPP \ar[r]^f \ar[d] &  B_c \QQQ\\
 B_c \PPP \ar@{-->}[ru]}
 $$
\end{prop}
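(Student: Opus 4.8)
The plan is to prove the three assertions in turn — that $B_r\PPP$ is a curved conilpotent cooperad, that the canonical map $B_r\PPP\to B_c\PPP$ is a morphism of such, and the universal property — each time reducing, as usual for cofree cooperads, to an identity on the cogenerators, where the strict unitality relations collapse almost all of the tree combinatorics.

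\emph{Step 1: $B_r\PPP$ is a curved conilpotent cooperad.} As a graded cooperad $B_r\PPP=\Tfree^c(s\ov\PPP)$ is cofree, hence coaugmented and conilpotent, so only the two curvature identities $d^2=(\theta\otimes Id-Id\otimes\theta)\Delta_2$ and $\theta d=0$ remain. The key point is that on any tree $T$ whose vertices are all labelled by elements of $s\ov\PPP$ the structure map of the homotopy operad $\PPP$ splits as $\gamma_\PPP(T)=\pi\gamma_\PPP(T)+\theta(T)\,s1_\PPP=\ov\gamma_\PPP(T)+\theta(T)\,s1_\PPP$, simply because $\ker\epsilon=\ov\PPP$ and $\epsilon(1_\PPP)=1$, so that $\theta(T)=\epsilon(s^{-1}\gamma_\PPP(T))$. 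Both identities reduce to a computation on the cogenerators $s\ov\PPP$: substituting the above splitting into the defining equation $\sum_{T'\subset T}\gamma_\PPP(T/T')(Id\otimes\cdots\otimes\gamma_\PPP(T')\otimes\cdots\otimes Id)=(\theta\otimes\pi-\pi\otimes\theta)\Delta_2$ of the homotopy operad $\PPP$, and using strict unitality to kill every reinsertion of $s1_\PPP$ into an operation of arity $\geq 3$ while evaluating the binary reinsertions by $\gamma_\PPP(s1_\PPP\otimes sx)=sx$ and $\gamma_\PPP(sx\otimes s1_\PPP)=(-1)^{|x|}sx$, one is left exactly with the first identity for $B_r\PPP$; the identity $\theta d=0$ follows from the same substitution together with $\gamma_\PPP(sx\otimes v)=\gamma_\PPP(v\otimes sx)=0$ and $dv=s1_\PPP$. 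This is the operadic counterpart of the Hirsh–Millès verification for dg operads, the extra terms being controlled by strict unitality.

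\emph{Step 2: the canonical morphism.} A morphism in $\cCoop$ from a curved conilpotent cooperad into the cofree $B_c\PPP=\Tfree^c(s\PP\oplus\mbk\cdot v)$ amounts to a degree $0$ map to $s\PP$ — the $v$-component of its projection to the cogenerators $s\PP\oplus\mbk\cdot v$ being forced to equal the curvature of the source — subject to compatibility with the coderivations. Take $\iota_\PPP\colon B_r\PPP\to B_c\PPP$ to be the cooperad morphism cogenerated by $B_r\PPP\twoheadrightarrow s\ov\PPP\hookrightarrow s\PP$; this is precisely the morphism induced by $B_r\PPP\twoheadrightarrow\ov\PPP\hookrightarrow\PPP$. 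Its $v$-component is then automatically $\theta$, so the curvature is respected, and it remains to check that $D_{B_c\PPP}\,\iota_\PPP=\iota_\PPP\,D_{B_r\PPP}$; both sides are $\iota_\PPP$-coderivations into the cofree cooperad $B_c\PPP$, so the check is again on cogenerators and unwinds to the splitting $\gamma_\PPP=\ov\gamma_\PPP+\theta\cdot s1_\PPP$ of Step 1 together with $\gamma_\PPP(v)=s1_\PPP$ and the strict unitality relations.

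\emph{Step 3: universality.} It suffices to show that precomposition with $\iota_\PPP$ identifies the set of strict unital $\infty$-morphisms $\PPP\to\QQQ$ — i.e.\ the subset of $\hom_\cCoop(B_c\PPP,B_c\QQQ)$ consisting of the strict unital ones — with $\hom_\cCoop(B_r\PPP,B_c\QQQ)$. Injectivity: a strict unital $\infty$-morphism $g$ sends $v\mapsto 0$ and $s1_\PPP\mapsto s1_\QQQ$ and vanishes on every tree with at least two vertices carrying a label $v$ or $s1_\PPP$, hence is determined by its values on the trees labelled by $s\ov\PPP$, that is by $g\circ\iota_\PPP$. Surjectivity: given $f\in\hom_\cCoop(B_r\PPP,B_c\QQQ)$ with $s\QQ$-component $f_0$, define $\tilde f\colon\ov\Tfree(s\PP\oplus\mbk\cdot v)\to s\QQ$ to equal $f_0$ on trees labelled by $s\ov\PPP$, to send the one-vertex trees $v\mapsto 0$ and $s1_\PPP\mapsto s1_\QQQ$, and to vanish on every tree with at least two vertices carrying a label $v$ or $s1_\PPP$. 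The strict unitality conditions for $\tilde f$ hold by construction, and the $\infty$-morphism master equation for $\tilde f$ is checked by the bookkeeping of Step 1: the equation expressing that $f$ is a $\cCoop$-morphism supplies it on the $s\ov\PPP$-labelled trees, while on the remaining trees it is forced by the strict unitality of $\PPP$, of $\QQQ$ and of $\tilde f$ together with Step 1. Thus $\tilde f$ is a strict unital $\infty$-morphism with $\tilde f\circ\iota_\PPP=f$, and uniqueness is the injectivity statement.

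\emph{Main obstacle.} The genuine work is the tree combinatorics of Steps 1 and 3: once $\gamma_\PPP$ is replaced by $\ov\gamma_\PPP+\theta\cdot s1_\PPP$, one must verify that everything not already encoded by $B_r\PPP$ cancels. The strict unitality axioms are tailored exactly so that every insertion of $1_\PPP$ into an at-least-ternary operation dies and every binary insertion is absorbed; organizing this cancellation with the correct signs — in particular for the $d^2$-identity of Step 1 — is the delicate point.
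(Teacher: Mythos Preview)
Your proposal is correct and supplies precisely the ``straightforward calculations'' that the paper's one-line proof omits; the splitting $\gamma_\PPP=\ov\gamma_\PPP+\theta(-)\,s1_\PPP$ together with the strict unitality relations is exactly the mechanism that makes everything collapse. One small imprecision: in Step~1 the identity $\theta d=0$ in $B_r\PPP$ does not actually involve the $v$-relations you cite (there is no $v$ in $B_r\PPP$); what is used is that the homotopy-operad master equation has vanishing right-hand side on trees labelled purely by $s\ov\PP$, combined with strict unitality and $d_\PP 1_\PPP=0$ (the latter following from $d^2v=0$ in $B_c\PPP$).
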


\begin{proof}
 It follows from straightforward calculations.
\end{proof}

\begin{prop}
 Any $\infty$-morphism between strict unital homotopy operads is homotopic to a strict unital $\infty$-morphism.
\end{prop}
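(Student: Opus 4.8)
The plan is to factor the given $\infty$-morphism through the truncated bar construction of its source and then conclude by a single lifting in the model category of Theorem~\ref{thm:thmprincip}. First I would fix a semi-augmentation $\epsilon$ of $\PPP$ --- one exists, for instance by taking the projection onto $\mbk\cdot 1_\PPP$ along any graded linear complement in $\PPP(1)$ --- and form the truncated bar construction $B_r\PPP = \Tfree^c s\ov\PPP$ together with the canonical morphism of curved conilpotent cooperads $\iota \colon B_r\PPP \ra B_c\PPP$ constructed in the proposition above.

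The crucial point is that $\iota$ is an acyclic cofibration. It is a degreewise injection, hence a cofibration. That it is a weak equivalence I would prove just as Proposition~\ref{prop:truncvsusual}: equip $B_r\PPP$ and $B_c\PPP$ with their coradical filtrations, which are admissible. Since the higher structure maps of $\PPP$ strictly lower this filtration, the associated graded coderivations reduce to the vertexwise internal differentials (together with $v\mapsto s1_\PPP$ in the case of $B_c\PPP$), so that $G\iota$ is, on each tree $T$, the map $T(j)$ induced by the canonical morphism of dg $\mbs$-modules $j\colon s\ov\PPP \ra s\PP\oplus\mbk\cdot v$. The latter is a quasi-isomorphism, its cokernel being the acyclic complex spanned by $s1_\PPP$ and $v$ with $v\mapsto s1_\PPP$; as $T(-)$ preserves quasi-isomorphisms, $G\iota$ is a quasi-isomorphism, $\iota$ is a filtered quasi-isomorphism, and hence a weak equivalence by Proposition~\ref{prop:filteredqis}.

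Now let $f\colon B_c\PPP\ra B_c\QQQ$ be an $\infty$-morphism. I would form the composite $f\iota\colon B_r\PPP\ra B_c\QQQ$ and invoke the universal property of $B_r\PPP$ established above: $f\iota$ extends (uniquely) to a strict unital $\infty$-morphism $g\colon B_c\PPP\ra B_c\QQQ$, so that $g\iota = f\iota$. It remains to see that $f$ and $g$ are homotopic. For this, choose a path object $B_c\QQQ\xrightarrow{w}P\xrightarrow{(p_0,p_1)}B_c\QQQ\times B_c\QQQ$ for $B_c\QQQ$, that is a factorisation of the diagonal with $(p_0,p_1)$ a fibration. Then the square
\[
\xymatrix{
B_r\PPP \ar[r]^{wf\iota} \ar[d]_{\iota} & P \ar[d]^{(p_0,p_1)} \\
B_c\PPP \ar[r]_{(f,g)} & B_c\QQQ\times B_c\QQQ
}
\]
commutes, because $(p_0,p_1)wf\iota = (f\iota,f\iota) = (f\iota,g\iota) = (f,g)\iota$, and it admits a lift $H\colon B_c\PPP\ra P$ since $\iota$ is an acyclic cofibration and $(p_0,p_1)$ is a fibration. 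This lift is a right homotopy from $f$ to $g$; since $B_c\PPP$ is cofibrant and $B_c\QQQ$ is fibrant by Proposition~\ref{prop:fibrant}, it follows that $f$ is homotopic to the strict unital $\infty$-morphism $g$, as desired.

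The one step that is not a purely formal consequence of the model structure is the verification that $\iota$ is a weak equivalence, and this is where I expect the (modest) difficulty to lie: it amounts to rerunning the associated graded computation of Proposition~\ref{prop:truncvsusual} --- equivalently a variant of Proposition~\ref{prop:bar-cobarres} --- in the presence of the higher products of $\PPP$, which, since they strictly decrease the number of vertices, contribute nothing after passing to the associated graded. Alternatively, one could avoid $B_r\PPP$ altogether and prove the statement by a direct obstruction argument over the number-of-vertices filtration, inductively correcting the components of $f$ that violate strict unitality, in the spirit of \cite[1.3.3.3]{LefevreHasegawa03}.
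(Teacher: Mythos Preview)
Your proposal is correct and follows exactly the paper's own argument: choose a semi-augmentation of $\PPP$, use the universal property of $B_r\PPP \to B_c\PPP$ to produce a strict unital $\infty$-morphism agreeing with $f$ on $B_r\PPP$, and then lift against a path object of $B_c\QQQ$ using that $B_r\PPP \to B_c\PPP$ is an acyclic cofibration. Your explicit verification that $\iota$ is a filtered quasi-isomorphism (via the associated graded and the acyclic cokernel $\mbk\cdot s1_\PPP \oplus \mbk\cdot v$) is more detailed than what the paper records, but it is precisely the variant of Proposition~\ref{prop:truncvsusual} the paper implicitly relies on.
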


\begin{proof}
Let $\PPP$ and $\QQQ$ be strict unital homotopy operads and let $f: B_c \PPP \ra B_c \QQQ$ be a morphism of curved conilpotent cooperads. First choose a semi-augmentation of $\PPP$. Then, denote by $g$ the composite morphism $B_r \PPP \hookrightarrow B_c \PPP \xrightarrow{f} B_c \QQQ$. Let $h: B_c \PPP \ra B_c \QQQ$ be the unique strict unital $\infty$-morphism which extends the morphism $g$. Consider the following square
 $$
 \xymatrix{B_r \PPP \ar[d] \ar[r] & \text{path}(B_c \QQQ) \ar[d] \\
 B_c \PPP \ar[r]_{f,h} & B_c \QQQ \times B_c \QQQ,}
 $$
 where the horizontal upper arrow is the composite morphism $B_r\PPP \xrightarrow{g} B_c \QQQ \ra \text{path}(B_c \QQQ)$. Since the inclusion $B_r \PPP \ra B_c \QQQ$ is an acyclic cofibration and since the map $\text{path}(B_c \QQQ) \ra B_c \QQQ \times B_c \QQQ$ is a fibration (by definition of a path object), then this square has a lifting.
\end{proof}

\begin{prop}
 Let $p:\PP \ra \QQ$ be an acyclic fibration of dg $\mbs$-modules together with a structure of strict unital homotopy operad on $\PP$. Then, there exists a structure of strict unital homotopy operad on $\QQ$ and a strict unital $\infty$-isotopy $f: \PP \ra \PP'$ such that $p: \PP' \to \QQ$ is a strict $\infty$-morphism.
\end{prop}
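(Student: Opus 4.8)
The plan is to repeat, essentially verbatim, the inductive argument proving the homotopy transfer theorem (Theorem~\ref{thm:htt}), but to carry the strict unitality constraints along at every stage; the only genuinely new point is that these constraints are preserved by the construction.

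First I would fix the underlying data. Set $1_\QQ := p(1_\PPP)$, a degree~$0$ cycle since $p$ is a chain map, and extend $p$ to a morphism of graded $\mbs$-modules $s\PP\oplus\mbk\cdot v \ra s\QQ\oplus\mbk\cdot v$ by $p(v)=v$; then $p(s1_\PPP)=s1_\QQ$ and the differentials match, $d(p(v))=s1_\QQ=p(dv)$. As in the proof of Theorem~\ref{thm:htt}, I would then build by induction on $n\geq 2$ maps $\gamma_n : \Tfree^n(s\QQ\oplus\mbk\cdot v)\to s\QQ$ and $f_n : \Tfree^n(s\PP\oplus\mbk\cdot v)\to s\PP$ satisfying the two families of structure equations written there, together with $f_1=\id_{s\PP}$, but now additionally required to be \emph{strict unital}: for $n=2$ the map $\gamma_2$ takes the prescribed values $\gamma_2(s1_\QQ\otimes sx)=sx$, $\gamma_2(sx\otimes s1_\QQ)=(-1)^{|x|}sx$, $\gamma_2(sx\otimes v)=\gamma_2(v\otimes sx)=0$, and for $n\geq 3$ both $\gamma_n$ and $f_n$ vanish on every tree carrying a copy of $v$ or of $s1$ at some vertex, while $f_n$ vanishes on such trees already for $n\geq 2$.

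The mechanism is as follows. Inside each of the chain complexes $[\Tfree^n(s\QQ\oplus v),s\QQ]$, $[\Tfree^n(s\PP\oplus v),s\PP]$, $[\Tfree^n(s\PP\oplus v),s\QQ]$ occurring in the proof of Theorem~\ref{thm:htt}, the maps vanishing on the set of ``forbidden'' trees — those carrying a $v$ or a $s1$ at a vertex in a position not allowed by the strict unitality axioms — form a direct summand, and it is a subcomplex: the differential is assembled from pre- and post-composition with the differentials of $s\PP\oplus\mbk\cdot v$ and $s\QQ\oplus\mbk\cdot v$, which annihilate $s1$ and send $v$ to $s1$, hence preserve the span of the forbidden trees. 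Likewise composition with $p^{\otimes n}$ and with $p$ preserves this decomposition, because $p(v)=v$ and $p(s1_\PPP)=s1_\QQ$. Therefore the whole apparatus at level $n$ in the proof of Theorem~\ref{thm:htt} — the chain complex $[\Tfree^n(s\QQ\oplus v),s\QQ]\oplus[\Tfree^n(s\PP\oplus v),s\PP]\oplus s^{-1}[\Tfree^n(s\PP\oplus v),s\PP]$, its acyclic fibration onto $[\Tfree^n(s\PP\oplus v),s\QQ]$, and the lifting square with corners $S^{-2}$ and $D^{-1}$ — splits as a direct sum indexed by the partition of the trees into the forbidden ones and the rest, so its summand over the non-forbidden trees is again a lifting square of an acyclic fibration of chain complexes (in degree $n=2$ the forbidden part of $\gamma_2$ is moreover pinned by the explicit formulas above, consistently).

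The main obstacle, and the only thing that requires a computation, is checking that the obstruction cocycles $\kappa_n$ (of Proposition~\ref{prop:cycle2}) and $\chi_n$ (of Lemma~\ref{lemma:chi}) land in these strict-unital summands as soon as $\gamma_{<n}$ and $f_{<n}$ are strict unital; equivalently, that the level-$n$ structure equations hold automatically on every forbidden tree. This is the standard ``units telescope'' argument: on a tree carrying exactly one copy of $s1$ (or of $v$), the sum over subtrees $T'$ defining $\kappa_n$ splits according to whether $T'$ contains the distinguished vertex, and strict unitality of $\gamma_{<n}$ makes the two groups of terms cancel in pairs, exactly as in \cite[1.3.3]{LefevreHasegawa03}; the corresponding cancellation for $\chi_n$ uses in addition the strict unitality of $f_{<n}$ together with $p(v)=v$. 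Granting this, each lifting square has a solution supported on the non-forbidden trees, yielding strict unital $\gamma_n$ and $f_n$; assembling the $\gamma_n$ into a strict unital homotopy operad structure on $\QQ$ and the $f_n$ into a strict unital $\infty$-isotopy $f:\PPP\ra\PPP'$ (an isotopy because $f_1=\id$), and verifying as in Theorem~\ref{thm:htt} that $p:\PP'\ra\QQ$ satisfies the equations of a strict $\infty$-morphism and that the curvature is respected, completes the proof.
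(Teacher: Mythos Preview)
Your proposal is correct and is exactly the same approach as the paper's, whose proof is the single sentence ``We can impose the strict unital conditions at every step of the proof of Theorem~\ref{thm:htt}.'' You have simply unpacked what that sentence means---the direct-sum splitting of the obstruction complexes along forbidden trees and the telescoping cancellation showing $\kappa_n$ and $\chi_n$ land in the strict-unital summand---so your write-up is more detailed than, but faithful to, the paper's own argument.
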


\begin{proof}
We can impose the strict unital conditions at every steps of the proof of Theorem \ref{thm:htt}.
\end{proof}


\section{Application to algebras over an operad and infinity-morphisms}

In this section, we recall the notions of algebras over an operad, coalgebras over a cooperad and the concept of infinity morphisms between algebras over an operad; see for instance \cite{LodayVallette12} and \cite{LeGrignou16} for more details. Moreover, we give an operadic formulation of these infinity-morphisms.

\subsection{Algebras over an operad,  coalgebras over a cooperad.}

\begin{defin}[Algebra over an operad]
Let $\PPP=(\PP,\gamma,1)$ be an operad. An algebra over $\PPP$ (or for short a $\PPP$-algebra) $\AAA = (\Aa,\psi_\Aa)$ is the data of a chain complex $\Aa$ together with a morphism of operads $\psi_\Aa:\PPP \to \End_\Aa $. 
\end{defin}

Let $f: \Aa \to \BB$ be a morphism of chain complexes. Consider the following pullback of dg $\mbs$-modules.
 $$
 \xymatrix{\End_\Aa \times_{\End^\Aa_\BB}\End_\BB \ar[r] \ar[d] & \End_\Aa \ar[d] \\
 \End_\BB \ar[r] & \End^\Aa_\BB\ ,}
 $$
where the right vertical map and the bottom horizontal map consist respectively in post-composing with $f$ and precomposing with $f^{\otimes n}$.

\begin{lemma}
 The $\mbs$-module $\End_\Aa \times_{\End^\Aa_\BB}\End_\BB$ has a canonical structure of operad induced by the structure on $\End_\Aa$ and the structure on $\End_\BB$.
\end{lemma}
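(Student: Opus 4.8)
The plan is to deduce the statement from Example~\ref{egend} applied with $\VV = \Aa$ and $\WW = \BB$, so that the real content is to see why the operad structures on $\End_\Aa$ and $\End_\BB$ restrict to the pullback $\EE := \End_\Aa \times_{\End^\Aa_\BB} \End_\BB$. First I would observe that, although $\End^\Aa_\BB$ carries no operad structure, both legs of the cospan are morphisms of dg-$\mbs$-modules: post-composition with $f$ sends $g \in \End_\Aa(n) = [\Aa^{\otimes n},\Aa]$ to $f \circ g \in [\Aa^{\otimes n},\BB] = \End^\Aa_\BB(n)$, and pre-composition with $f^{\otimes n}$ sends $h \in \End_\BB(n)$ to $h \circ f^{\otimes n}$; both maps are $\mbs_n$-equivariant and commute with the differentials because $f$ is a chain map. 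Hence the pullback is computed arity by arity and degreewise: $\EE(n)$ is the sub-$\mbk[\mbs_n]$-module of $\End_\Aa(n) \oplus \End_\BB(n)$ consisting of the pairs $(g,h)$ with $f \circ g = h \circ f^{\otimes n}$, and it is stable under the differential of $\End_\Aa \times \End_\BB$.

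Next I would equip $\End_\Aa \times \End_\BB$ with its product operad structure, whose composition and unit are given componentwise, and check that $\EE$ is a sub-operad, i.e.\ that this componentwise structure preserves the pullback condition. For the unit it is immediate, since $(\id_\Aa,\id_\BB)$ satisfies $f \circ \id_\Aa = f = \id_\BB \circ f$. For the composition, take $(g_0,h_0) \in \EE(k)$ and $(g_i,h_i) \in \EE(n_i)$ for $1 \le i \le k$, put $n = n_1 + \cdots + n_k$, and evaluate $f \circ \gamma_{\End_\Aa}(g_0;g_1,\dots,g_k)$ on $a_1 \otimes \cdots \otimes a_n$: moving $f$ inside and using first $f \circ g_0 = h_0 \circ f^{\otimes k}$ and then $f \circ g_i = h_i \circ f^{\otimes n_i}$ rewrites it as $\gamma_{\End_\BB}(h_0;h_1,\dots,h_k)$ evaluated on $f(a_1) \otimes \cdots \otimes f(a_n)$, which is precisely the statement that $\bigl(\gamma_{\End_\Aa}(g_0;g_1,\dots,g_k),\gamma_{\End_\BB}(h_0;h_1,\dots,h_k)\bigr) \in \EE(n)$. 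The associativity and unitality axioms for $\EE$ are then inherited from those of $\End_\Aa$ and $\End_\BB$, since they hold componentwise.

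I do not expect a genuine obstacle: the only bookkeeping is the Koszul sign produced when $f$ is permuted past the inputs inside $\gamma(g_0;g_1,\dots,g_k)$, and it is trivial because $f$ has degree $0$, so the displayed identity is a plain chain of equalities of linear maps. This furnishes the operad structure on $\EE$, for which the two projections $\EE \to \End_\Aa$ and $\EE \to \End_\BB$ are morphisms of operads.
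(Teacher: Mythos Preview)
Your proposal is correct and follows essentially the same approach as the paper, which simply records the proof as ``Straightforward'' (the content having already been stated in Example~\ref{egend}). Your explicit verification that the componentwise unit and composition preserve the pullback condition is exactly the routine check the paper leaves to the reader.
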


\begin{proof}
Straightforward. 
\end{proof}

\begin{defin}[Morphisms of algebras]
 A morphism of $\PPP$-algebras from $(\Aa,\psi_\Aa)$ to $(\BB,\psi_\BB)$ is the data of a morphism of chain complexes $f: \Aa\to \BB$ such that the following square diagram commutes
 $$
 \xymatrix{\PPP \ar[r]^{\psi_\Aa} \ar[d]^{\psi_\BB} & \End_\Aa \ar[d] \\
 \End_\BB \ar[r] & \End^\Aa_\BB\ .}
 $$
In particular, it corresponds to a morphism of operads from $\PPP$ to $\End_\Aa \times_{\End^\Aa_\BB}\End_\BB$.
\end{defin}

\begin{defin}[Coalgebra over a curved conilpotent cooperad]
 Let $\CCC= (\CC,\Delta,\epsilon, d_\CC,\theta)$ be a curved conilpotent cooperad. A $\CCC$-coalgebra $\DDD=(\DD, \Delta_\DD, d_\DD)$ is the data of a graded $\mbk$-module $\DD$ together with a morphism $\Delta_\DD: \DD \to \CC \circ \DD$ and a degree $-1$ map $d_\DD: \DD \to \DD$ such that
 \[
\begin{cases}
(\epsilon \circ Id)\Delta_\DD=Id\ ,\\
 (\Delta \circ Id) \Delta_\DD =  (Id \circ \Delta_\DD) \Delta_\DD \ ,\\
 \Delta_\DD d_\DD = (d_\CC \circ Id + Id \circ' d_\DD) \Delta_\DD\ ,\\
 d_\DD^2 = (\theta_\CC \circ Id) \Delta_\DD\ .
\end{cases}
 \]
 A morphism of $\CCC$-coalgebras from $\DDD=(\DD, \Delta_\DD, d_\DD)$ to $\EEE=(\EE, \Delta_\EE, d_\EE)$ is a morphism of graded $\mbk$-modules $f: \DD \to \EE$ such that $fd_\DD = d_\EE f$ and $(Id \circ f) \Delta_\DD = \Delta_\EE f$.
\end{defin}

\begin{prop}[\cite{LeGrignou16}]
Let $\Aa$ be a chain complex. The following sets are canonically isomorphic:
\begin{itemize}
\itemt The set of $\Omega_u \CCC$-algebra structures on $\Aa$.
\itemt The set of degree $-1$ maps $\phi_\Aa: \ov\CC \to \End_\Aa$ such that 
\[
\partial(\phi_\Aa) + \gamma_\PPP (\phi_\Aa \otimes \phi_\Aa ) (\Delta_\CCC)_2 = \theta_\CCC (-) Id_\Aa\ .
\]
 \itemt The set of degree $-1$ maps $\gamma_\Aa: \CC \circ \Aa \to \Aa$ such that $(\gamma_\Aa)_{|\Aa}=d_\Aa$ and such that
$$
\gamma_\Aa D_{\gamma_\Aa}  = \theta \circ Id_\Aa\ ,
 $$
 where 
 \[
D_{\gamma_\Aa} = \big( Id \circ (\pi ; \gamma_\Aa)\big)   (\Delta_\CCC \circ Id_\VV) +  d_\CC \circ Id_\VV\ .
 \]

 \itemt The set of degree $-1$ endomorphisms $D$ of the graded $\mbs$-module $\CC \circ \Aa$ such that $D_{|\Aa}=d_\Aa$ and such that $(\CC \circ \Aa, \Delta_\CC \circ Id,D)$ is a $\CCC$-coalgebra.
\end{itemize}
\end{prop}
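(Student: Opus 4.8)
The plan is to verify the four bijections by travelling once around the square, at each corner invoking either a free–cofree adjunction or a direct unwinding of the defining equations; throughout, $\PPP$ denotes $\End_\Aa$ with its operadic composition $\gamma_\PPP$. \textbf{$\Omega_u\CCC$-algebra structures $\leftrightarrow$ twisting morphisms $\phi_\Aa$.} An $\Omega_u\CCC$-algebra structure on $\Aa$ is a morphism of dg operads $\Omega_u\CCC\to\End_\Aa$. Since $\Omega_u\CCC=\Tfree(s^{-1}\ov\CC)$ is free as a graded operad, a morphism of graded operads $\Omega_u\CCC\to\End_\Aa$ amounts to a degree $0$ map $s^{-1}\ov\CC\to\End_\Aa$, equivalently a degree $-1$ map $\phi_\Aa\colon\ov\CC\to\End_\Aa$. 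By freeness again, compatibility with the differentials need only be checked on the generators $s^{-1}x$, where the derivation of $\Omega_u\CCC$ reads $s^{-1}x\mapsto\theta(x)1-s^{-1}d_\CC x-\sum(-1)^{|x_1|}s^{-1}x_1\otimes s^{-1}x_2$; transporting this equality along $\phi_\Aa$ and comparing with the internal differential of $\End_\Aa$ produces exactly
\[
\partial(\phi_\Aa)+\gamma_\PPP(\phi_\Aa\otimes\phi_\Aa)(\Delta_\CCC)_2=\theta_\CCC(-)\,Id_\Aa\ .
\]
This is the isomorphism $\hom_{\Operad}(\Omega_u\CCC,\PPP)\simeq\Tw(\CCC,\PPP)$ of the bar--cobar adjunction, specialised to $\PPP=\End_\Aa$.

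\textbf{Twisting morphisms $\phi_\Aa$ $\leftrightarrow$ maps $\gamma_\Aa$.} Using $\End_\Aa(n)=[\Aa^{\otimes n},\Aa]$ and the tensor--hom adjunction for $\mbs$-modules, a degree $-1$ map $\phi_\Aa\colon\ov\CC\to\End_\Aa$ is the same datum as a degree $-1$ map $\ov\CC\circ\Aa\to\Aa$; extending it by $d_\Aa$ on the summand $\Aa\subset\CC\circ\Aa$ split off by the counit $\epsilon$, one obtains a degree $-1$ map $\gamma_\Aa\colon\CC\circ\Aa\to\Aa$ with $(\gamma_\Aa)_{|\Aa}=d_\Aa$, and this correspondence is bijective. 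It then remains to match the two equations. Unfolding $D_{\gamma_\Aa}=\bigl(Id\circ(\pi;\gamma_\Aa)\bigr)(\Delta_\CCC\circ Id_\Aa)+d_\CC\circ Id_\Aa$, the composite $\gamma_\Aa D_{\gamma_\Aa}$ splits into a term reconstituting $\partial(\phi_\Aa)$ (from the summand $d_\CC\circ Id_\Aa$ together with the copy of $d_\Aa$ built into $\gamma_\Aa$, picked up through the trivial part of $\Delta_\CCC$) plus a term reconstituting $\gamma_\PPP(\phi_\Aa\otimes\phi_\Aa)(\Delta_\CCC)_2$ (from the reduced part of $\Delta_\CCC$), while the right-hand sides $\theta_\CCC(-)\,Id_\Aa$ and $\theta_\CCC\circ Id_\Aa$ agree. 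Hence the twisting morphism equation for $\phi_\Aa$ is equivalent to $\gamma_\Aa D_{\gamma_\Aa}=\theta_\CCC\circ Id_\Aa$.

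\textbf{Maps $\gamma_\Aa$ $\leftrightarrow$ $\CCC$-coalgebra structures.} Write $\DD:=\CC\circ\Aa$ with comultiplication $\Delta_\DD:=\Delta_\CCC\circ Id_\Aa$. Given $\gamma_\Aa$ with $(\gamma_\Aa)_{|\Aa}=d_\Aa$, put $d_\DD:=D_{\gamma_\Aa}$. For this candidate the counitality and coassociativity axioms hold tautologically, and the formula defining $D_{\gamma_\Aa}$ is precisely the one enforcing $\Delta_\DD d_\DD=(d_\CC\circ Id+Id\circ' d_\DD)\Delta_\DD$; conversely any degree $-1$ endomorphism of $\DD$ satisfying this compatibility and restricting to $d_\Aa$ on $\Aa$ equals $D_{\gamma_\Aa}$ for $\gamma_\Aa$ its corestriction $\DD\to\Aa$ --- this is the coalgebra counterpart, for the cofree-type coalgebra $\CC\circ\Aa$, of the bijection between coderivations of a cofree cooperad and maps to its cogenerators. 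Thus the only remaining coalgebra axiom is $d_\DD^2=(\theta_\CCC\circ Id)\Delta_\DD$. Set $E:=d_\DD^2-(\theta_\CCC\circ Id)\Delta_\DD$; using the curved cooperad identities $d_\CC^2=(\theta_\CCC\otimes Id-Id\otimes\theta_\CCC)\Delta_2$ and $\theta_\CCC d_\CC=0$, one checks that $E$ is again of coderivation type, hence $E=0$ if and only if its corestriction $\DD\to\Aa$ vanishes, and that corestriction is exactly $\gamma_\Aa D_{\gamma_\Aa}-\theta_\CCC\circ Id_\Aa$. So the curvature axiom is equivalent to $\gamma_\Aa D_{\gamma_\Aa}=\theta_\CCC\circ Id_\Aa$, which closes the cycle; that all four bijections are mutually inverse around the square is immediate from the constructions.

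\textbf{Main obstacle.} Everything formal --- freeness, cofreeness, the tensor--hom adjunction --- is painless; the work concentrates in two places. First, the sign- and combinatorics-bookkeeping of the middle step, i.e.\ checking term by term that the Maurer--Cartan equation for $\phi_\Aa$ transcribes into $\gamma_\Aa D_{\gamma_\Aa}=\theta_\CCC\circ Id_\Aa$. Second, the claim in the last step that $E=d_\DD^2-(\theta_\CCC\circ Id)\Delta_\DD$ is a coderivation-type map: this is the one point where the curvature of $\CCC$ genuinely intervenes, and it is exactly what allows a single corestriction computation to settle the whole curvature axiom.
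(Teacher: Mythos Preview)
The paper does not supply its own proof of this proposition; it is simply imported from \cite{LeGrignou16}. Your argument is the standard one and is correct: the first bijection is the specialisation of $\hom_{\Operad}(\Omega_u\CCC,\PPP)\simeq\Tw(\CCC,\PPP)$ to $\PPP=\End_\Aa$, the second is the tensor--hom adjunction together with a term-by-term matching of the Maurer--Cartan equation against the condition on $\gamma_\Aa$, and the third uses that $\CC\circ\Aa$ is cofree as a graded $\CCC$-coalgebra so that its coderivations are determined by corestriction to $\Aa$, reducing the curvature axiom to the single equation $\gamma_\Aa D_{\gamma_\Aa}=\theta_\CCC\circ Id_\Aa$.
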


Therefore, to any $\Omega_u \CCC$-algebra $\AAA=(\Aa,\gamma_\Aa)$, one can associate a $\CCC$-coalgebra $(\CCC \circ \Aa, \Delta_\CC \circ Id, D_\gamma)$. This process is functorial.

\begin{defin}[The bar functor relative to a curved conilpotent cooperad]
 Let $B_\iota$ be the functor from the category of $\Omega_u \CCC$-algebras to the category of $\CCC$-coalgebras which sends $\AAA=(\Aa,\gamma_\Aa)$ to $(\CCC \circ \Aa, \Delta_\CC \circ Id, D_\gamma)$ and sends a morphism $f: \Aa \to \BB$ to the map $Id \circ f : \CC \circ \Aa \to \CC \circ \BB$.
\end{defin}

\subsection{Infinity-morphisms of algebras}

\begin{defin}[Infinity-morphism]
 Let $\AAA=(\Aa, \gamma_\Aa)$ and $\BBB=(\BB, \gamma_\BB)$ be two $\Omega_u \CCC$-algebras. An \textit{infinity-morphism} ($\infty$-morphism for short) from $\Aa$ to $\BB$ is a morphism of $\CCC$-coalgebras from $B_\iota \Aa$ to $B_\iota \BB$.
\end{defin}

These $\infty$-morphisms have a manageable equivalent definition.

\begin{prop}[\cite{LeGrignou16}]
 Let $\AAA=(\Aa, \gamma_\Aa)$ and $\BBB=(\BB, \gamma_\BB)$ be two $\Omega_u \CCC$-algebras. There is a canonical isomorphism between the set $\infty$-morphisms from $ \AAA$ to $\BBB$ and the set of graded maps $f: \CC \circ \Aa \to \BB$ such that
 \begin{equation}\label{eqinftymor}
 f d_{B_\iota \Aa}  = (\gamma_\Aa) (Id \circ f)(\Delta_\CC \circ Id)\ .
\end{equation} 
\end{prop}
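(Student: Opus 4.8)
The plan is to run the standard cofreeness argument, in the spirit of the analogous statement \cite[10.5.5]{LodayVallette12}. The first point is that, after forgetting differentials, $B_\iota \BB$ is the cofree conilpotent $\CCC$-coalgebra cogenerated by the graded $\mbk$-module $\BB$: its coaction is $\Delta_\CC \circ Id$, and its universal property says that graded morphisms of graded $\CCC$-coalgebras $g : \DDD \ra B_\iota \BB$ are in natural bijection with graded $\mbk$-linear maps $\DD \ra \BB$, the bijection sending $g$ to its corestriction $(\epsilon \circ Id)\,g$ and a map $f_0 : \DD \ra \BB$ to $(Id \circ f_0)\,\Delta_\DD$. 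Applying this with $\DDD = B_\iota \Aa$, whose underlying graded $\mbk$-module is $\CC \circ \Aa$, one obtains a natural bijection between graded morphisms of graded $\CCC$-coalgebras $g : B_\iota \Aa \ra B_\iota \BB$ and graded maps $f : \CC \circ \Aa \ra \BB$, with $f = (\epsilon \circ Id)\,g$ and $g = (Id \circ f)(\Delta_\CC \circ Id)$.

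It then remains to cut this bijection down to the morphisms that respect the differentials. A morphism of $\CCC$-coalgebras is by definition a graded $\CCC$-coalgebra morphism that moreover commutes with the differentials (the curvature equations being part of the objects, not of the morphisms); so, the graded morphisms being already parametrized by $f$, the point is to characterize those $g$ for which $g\, d_{B_\iota \Aa} = d_{B_\iota \BB}\, g$. Both composites are degree $-1$ maps $\CC \circ \Aa \ra \CC \circ \BB$ that are coderivations along $g$: $d_{B_\iota \Aa}$ is a coderivation of $B_\iota \Aa$ and $g$ is a coalgebra morphism, so $g\, d_{B_\iota \Aa}$ is a $g$-coderivation, and likewise $d_{B_\iota \BB}\, g$. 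As for the coderivations on cofree conilpotent cooperads recalled in Section~1 (and analogously for cofree coalgebras, cf. \cite{LodayVallette12}), a $g$-coderivation landing in the cofree coalgebra $B_\iota \BB$ is uniquely determined by its composite with $\epsilon \circ Id$. Hence $g$ is an $\infty$-morphism if and only if $(\epsilon \circ Id)\,g\, d_{B_\iota \Aa} = (\epsilon \circ Id)\,d_{B_\iota \BB}\, g$.

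Finally I would evaluate the two sides of this last identity. The left-hand side equals $f\, d_{B_\iota \Aa}$ by definition of $f$. For the right-hand side I would substitute the explicit formula $d_{B_\iota \BB} = D_{\gamma_\BB} = \big(Id \circ (\pi;\gamma_\BB)\big)(\Delta_\CCC \circ Id) + d_\CC \circ Id$ recalled before the statement. Composing with $\epsilon \circ Id$ annihilates the summand $d_\CC \circ Id$, since $\epsilon d_\CC$ is a degree $-1$ map valued in $\II$, hence zero; and in the remaining summand the projection $\epsilon \circ Id$ forces the outer cooperadic factor created by $\Delta_\CCC$ to be the coaugmentation $1$, whereupon the left counit axiom $(\epsilon \circ Id)\Delta_\CCC = Id$ isolates the summand $1 \otimes (-)$ and $Id \circ (\pi;\gamma_\BB)$ collapses to $\gamma_\BB$. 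Thus $(\epsilon \circ Id)\,D_{\gamma_\BB} = \gamma_\BB$, so the right-hand side is $\gamma_\BB\, g = \gamma_\BB\,(Id \circ f)(\Delta_\CC \circ Id)$, which is the right-hand side of \eqref{eqinftymor} (the structure map involved being that of the target). Therefore $g$ is an $\infty$-morphism exactly when $f$ satisfies \eqref{eqinftymor}, the first-paragraph bijection restricts accordingly, and naturality is inherited from the universal property. The only genuinely delicate step is this last one: checking that $(\epsilon \circ Id)\,D_{\gamma_\BB} = \gamma_\BB$ on the nose, together with pinning down the signs in the lemma that a $g$-coderivation into a cofree coalgebra is determined by its corestriction in the present curved conilpotent setting. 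Everything else is formal.
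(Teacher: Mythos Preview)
The paper does not supply its own proof of this proposition: it is stated with the citation \cite{LeGrignou16} and no proof environment follows. Your argument is the standard one---use cofreeness of $B_\iota \BBB$ among graded $\CCC$-coalgebras to reduce morphisms to their corestrictions, then observe that both $g\,d_{B_\iota\Aa}$ and $d_{B_\iota\BB}\,g$ are $g$-coderivations into a cofree object and hence agree iff their corestrictions do---and it is correct. This is exactly the mechanism behind \cite[10.5.5]{LodayVallette12}, to which you rightly appeal, and it is what the cited reference does as well.

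Two small remarks. First, you correctly identify that the structure map appearing on the right-hand side of \eqref{eqinftymor} must be $\gamma_\BB$, not $\gamma_\Aa$: your computation $(\epsilon\circ Id)\,D_{\gamma_\BB}=\gamma_\BB$ gives $\gamma_\BB(Id\circ f)(\Delta_\CC\circ Id)$, and this is indeed the intended equation (the printed $\gamma_\Aa$ is a typo). Second, the point you flag as ``genuinely delicate''---that a $g$-coderivation into a cofree conilpotent $\CCC$-coalgebra is determined by its corestriction---is no harder here than in the non-curved case: the curvature lives in the \emph{objects}, not the morphisms, and the cofreeness argument is purely about the underlying graded coalgebra, so no extra care is needed. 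Your verification that $(\epsilon\circ Id)(d_\CC\circ Id)=0$ and that $(\epsilon\circ Id)\big(Id\circ(\pi;\gamma_\BB)\big)(\Delta_\CCC\circ Id)=\gamma_\BB$ is clean and suffices.
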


\begin{defin}[Infinity-isotopy]
 Let $\AAA=(\Aa, \gamma_\Aa)$ and $\AAA'=(\Aa, \gamma_{\Aa'})$ be two $\Omega_u \CCC$-algebras which have the same underlying chain complex $\Aa$. An infinity-isotopy from $\AAA$ to $\AAA'$ is an $\infty$-morphism $f$ whose first level map is the identity of $\Aa$, that is such that $f_{|\Aa}=Id_\Aa$.
\end{defin}

We give here an other definition of an $\infty$-morphism that will be useful in the sequel.

\begin{lemma}\label{thm:inftymorph}
 An $\infty$-morphism from $(\Aa,\phi_\Aa)$ to $(\BB,\phi_\BB)$ is equivalent to the data of a morphism $g$ of graded $\mbs$-modules from $\CC$ to $\End^\Aa_\BB$ such that
 
\begin{equation}\label{eqinftymor2}
 \partial (g) +\gamma(\phi_\BB \circ g)\Delta + \gamma (g \otimes \phi_\Aa)\Delta_2= 0\ .
\end{equation}
\end{lemma}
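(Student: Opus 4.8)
The plan is to not work directly with the definition of a morphism of $\CCC$-coalgebras, but to start from the already-established description of $\infty$-morphisms as graded maps $f : \CC \circ \Aa \to \BB$ satisfying~\eqref{eqinftymor}, and to transport both this datum and this equation along a natural bijection onto the datum of a map $g : \CC \to \End^\Aa_\BB$ satisfying~\eqref{eqinftymor2}.

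First I would set up the bijection. A graded map $f : \CC \circ \Aa \to \BB$ amounts to a collection of maps $\CC(k) \otimes_{\mbs_k} \Aa^{\otimes k} \to \BB$, one for each $k$; by the universal property of the coinvariants together with the tensor--hom adjunction recalled in the preliminaries, such a collection is the same as a collection of $\mbs_k$-equivariant maps $\CC(k) \to [\Aa^{\otimes k},\BB] = \End^\Aa_\BB(k)$, that is, a morphism of graded $\mbs$-modules $g : \CC \to \End^\Aa_\BB$. Under this correspondence $f$ is recovered from $g$ by evaluation, $f(c \otimes a_1 \otimes \cdots \otimes a_k) = \pm\, g(c)(a_1,\ldots,a_k)$, and the correspondence is visibly natural in $\Aa$ and $\BB$.

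Then I would expand~\eqref{eqinftymor}. On the left-hand side I would substitute the explicit formula $d_{B_\iota \Aa} = D_{\gamma_\Aa} = \big(Id \circ (\pi;\gamma_\Aa)\big)(\Delta_\CCC \circ Id_\Aa) + d_\CC \circ Id_\Aa$ together with the fact that the structure map $\gamma_\Aa : \CC \circ \Aa \to \Aa$ restricts to $d_\Aa$ on $\Aa$ and is, up to sign, $\phi_\Aa$-evaluation on $\ov\CC(k) \otimes_{\mbs_k}\Aa^{\otimes k}$. Composing with $f$ and reading off the components of $\Delta_\CCC(c)$ according to whether the distinguished inner factor is the coaugmentation $1$ or lies in $\ov\CC$, the term $f\, d_{B_\iota\Aa}$ breaks into three contributions: $g \circ d_\CC$; the insertion of $d_\Aa$ into each input of $g(-)$; and the term $\gamma(g \otimes \phi_\Aa)\Delta_2$, the binary decomposition $\Delta_2$ being exactly what encodes "one inner factor in $\ov\CC$, all others counital". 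Dually, writing the structure map $\gamma_\BB$ of the target $\Omega_u\CCC$-algebra in terms of $d_\BB$ and $\phi_\BB$, the right-hand side $\gamma_\BB(Id \circ f)(\Delta_\CCC \circ Id_\Aa)$ breaks into $d_\BB \circ g(-)$ and the term $\gamma(\phi_\BB \circ g)\Delta$, the full decomposition $\Delta$ feeding one copy of $g$ into every inner slot. Collecting the three internal-differential contributions into $\partial(g)$ --- the differential of $[\CC,\End^\Aa_\BB]$ coming from $d_\CC$ and from the differential of $\End^\Aa_\BB$, itself built from $d_\Aa$ and $d_\BB$ --- turns~\eqref{eqinftymor} into precisely~\eqref{eqinftymor2}.

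The main obstacle will be the sign bookkeeping: one must track the Koszul signs produced by commuting the differentials past the coalgebra elements and by the desuspensions hidden inside $\gamma_\Aa$ and $\gamma_\BB$, and check that they assemble into exactly the signs implicit in $\partial(g)$, $\gamma(\phi_\BB \circ g)\Delta$ and $\gamma(g \otimes \phi_\Aa)\Delta_2$. This is routine, and as in the analogous classical statements (see \cite{LodayVallette12} and \cite{LeGrignou16}) no genuinely new phenomenon appears; one may organize the computation so that all signs are forced by the functoriality of the bar construction $B_\iota$.
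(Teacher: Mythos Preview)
Your proposal is correct and follows exactly the same approach as the paper: establish the hom-tensor bijection $\hom_{\gMod}(\CC \circ \Aa,\BB)\simeq \hom_{\mathsf{gr}\text{-}\mbs\text{-}\mathsf{Mod}}(\CC,\End^\Aa_\BB)$ and then check that~\eqref{eqinftymor} corresponds to~\eqref{eqinftymor2} under it. The paper simply asserts this last correspondence in one line, whereas you spell out the term-by-term matching; your expansion is accurate and nothing more is needed.
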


\begin{proof}
 An $\infty$-morphism is a map $f:\CC \circ \Aa \to \BB$ satisfying Equation (\ref{eqinftymor}). We have a canonical isomorphism
\[
\hom_{\gMod}(\CC \circ \Aa, \BB) \simeq \hom_{\mathsf{gr}-\mbs-\mathsf{Mod}}(\CC, \End^\Aa_\BB)\ .
\]
Moreover, $f$ satisfies Equation (\ref{eqinftymor}) if and only if its image satisfies Equation (\ref{eqinftymor2}).
\end{proof}

\subsection{Infinity-morphisms of algebras in terms of morphisms of homotopy operads}

\begin{defin}
Let $f:\VV \to \WW$ be a morphism of chain complexes.  We denote by $\PP(\VV,f,\WW)$ the dg $\mbs$-module whose underlying graded $\mbs$-module is 
\[
\PP(\VV,f,\WW) =\End_\VV(n) \oplus s^{-1}\End^\VV_\WW (n) \oplus \End_\WW(n)\ ,
\]
and which is equipped with the following differential
\[
d(g_\VV + s^{-1}g^\VV_\WW + g_\WW) = \partial_\VV(g_\VV) + s^{-1} fg_\VV - s^{-1}\partial^{\VV}_\WW g^\VV_\WW - s^{-1}g_\WW f^{\otimes n} + \partial_\VV(g_\VV)\ ,
\]
where $\partial_\VV$ (resp. $\partial_\WW$, resp. $\partial^\VV_\WW$) is the usual differential of $\End_\VV$ (resp. $\End_\WW$, resp. $\End^\VV_\WW$).
\end{defin}

\begin{lemma}\label{lemma:hpull}
 The square diagram
  $$
 \xymatrix{\PP(\VV,f,\WW) \ar[r] \ar[d] & \End_\VV \ar[d] \\
 \End_\WW \ar[r] & \End^\VV_\WW\ ,}
 $$
  is a homotopy pullback in the model category on $\mbs$-modules.
\end{lemma}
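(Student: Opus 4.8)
The plan is to build an explicit model $M$ for the homotopy pullback of the cospan
$$
\End_\VV \xrightarrow{\;f\circ(-)\;} \End^\VV_\WW \xleftarrow{\;(-)\circ f^{\otimes n}\;} \End_\WW
$$
as a double mapping path space, and then to identify $\PP(\VV,f,\WW)$ with $M$ as a dg $\mbs$-module over this cospan. The guiding observation is that the summand $s^{-1}\End^\VV_\WW$ of $\PP(\VV,f,\WW)$, together with the prescribed differential, plays exactly the role of a path coordinate between the two composites $\PP(\VV,f,\WW)\rightrightarrows\End^\VV_\WW$ around the square: the degree $+1$ map projecting $\PP(\VV,f,\WW)$ onto this summand has, as one reads off directly from the formula for $d$, boundary equal to the difference of those two composites, namely $f\circ(-)$ and $(-)\circ f^{\otimes n}$; so the square in the statement commutes up to this canonical homotopy.

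First I would fix a path object for $\End^\VV_\WW$ in dg $\mbs$-modules. With $\Phi[1]$ as defined in the subsection on path objects --- so that $H_*(\Phi[1])=\mbk$, concentrated in degree $0$ --- the dg $\mbs$-module $\Phi[1]\otimes\End^\VV_\WW$, with the $\mbs_n$-action carried by $\End^\VV_\WW$, receives the weak equivalence induced by the degeneracy $\mbk\to\Phi[1]$, while the two face maps $\Phi[1]\to\mbk$ induce a degreewise epimorphism $\Phi[1]\otimes\End^\VV_\WW\twoheadrightarrow\End^\VV_\WW\times\End^\VV_\WW$, which is a fibration. I would then form the limit
$$
M:=\End_\VV\times_{\End^\VV_\WW}\bigl(\Phi[1]\otimes\End^\VV_\WW\bigr)\times_{\End^\VV_\WW}\End_\WW
$$
of the diagram given by $\End_\VV\to\End^\VV_\WW$, by $\End_\WW\to\End^\VV_\WW$, and by the two face maps $\Phi[1]\otimes\End^\VV_\WW\to\End^\VV_\WW$. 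Together with the two projections of $M$ onto $\End_\VV$ and $\End_\WW$ this gives a square commuting up to the homotopy recorded in the middle coordinate, and it is a homotopy pullback of the cospan: this is the standard double-mapping-path-space description of a homotopy pullback, valid here because every object of dg $\mbs$-modules is fibrant, so that the model structure is right proper.

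Next I would write down the comparison morphism $\PP(\VV,f,\WW)\to M$ of dg $\mbs$-modules sending $g_\VV+s^{-1}g^\VV_\WW+g_\WW$ to the triple whose outer entries are $g_\VV$ and $g_\WW$ and whose middle entry is
$$
(0)\otimes\bigl(f\circ g_\VV\bigr)+(1)\otimes\bigl(g_\WW\circ f^{\otimes n}\bigr)+(01)\otimes g^\VV_\WW\ \in\ \Phi[1]\otimes\End^\VV_\WW\ .
$$
This lands in the fibre product because its two face components are the images of $g_\VV$ and $g_\WW$ in $\End^\VV_\WW$, and computing its differential from $d(0)=(01)$, $d(1)=-(01)$, $d(01)=0$ reproduces the three components of the differential of $\PP(\VV,f,\WW)$, so it is a chain map; it is manifestly compatible with the projections to $\End_\VV$ and $\End_\WW$. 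Moreover, since $\Phi[1]\otimes\End^\VV_\WW\cong\End^\VV_\WW\oplus\End^\VV_\WW\oplus s^{-1}\End^\VV_\WW$ with the two face maps being the first two projections, the underlying graded $\mbs$-module of $M$ is $\End_\VV\oplus s^{-1}\End^\VV_\WW\oplus\End_\WW$ and the comparison map is an isomorphism of graded $\mbs$-modules; comparing differentials then shows it is an isomorphism of dg $\mbs$-modules. Hence the square of the statement is isomorphic --- compatibly with the cospan and the canonical homotopy --- to the homotopy-pullback square built from $M$, and so is itself a homotopy pullback.

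I expect the only genuine difficulties to be of a bookkeeping nature: one must read the square of the statement as commuting up to the canonical degree $+1$ homotopy rather than strictly; one must note that every construction in sight is $\mbs_n$-equivariant, so that the arity-wise assertions are assertions in dg $\mbs$-modules, which is exactly the place where fibrancy of all objects, hence right properness, is used; and one must keep track of the Koszul signs when matching the two differentials. Conceptually there is nothing to overcome, since $\PP(\VV,f,\WW)$ is by design the arity-wise mapping cocone of $\End_\VV\oplus\End_\WW\to\End^\VV_\WW$, which is the classical chain-level model of a homotopy pullback.
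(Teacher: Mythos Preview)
Your proof is correct and follows essentially the same approach as the paper: both build a model for the homotopy pullback using the path object $\Phi[1]\otimes\End^\VV_\WW$ and then identify $\PP(\VV,f,\WW)$ with it. The only cosmetic difference is that the paper factors the single leg $\End_\VV\to\End^\VV_\WW$ as an acyclic cofibration followed by a fibration (through the sub-$\mbs$-module of $\End_\VV\oplus\Phi[1]\otimes\End^\VV_\WW$ where the $(0)$-component equals $f g_\VV$) and then takes the strict pullback along $\End_\WW\to\End^\VV_\WW$, whereas you package the same object as the symmetric double mapping path space; the resulting $M$ and the identification with $\PP(\VV,f,\WW)$ are literally the same.
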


\begin{proof}
One can factorise the map $\End_\VV \to \End^\VV_\WW$ through the sub $\mbs$-module of $\End_\VV(n) \oplus \Phi[1] \otimes\End^\VV_\WW (n) $ made up of the elements $g_\VV + (0)\otimes g_0 +(01)\otimes  g_{01}+(1)\otimes   g_1$ such that $g_0 =f g_\VV$. The first map of this factorisation is an acyclic cofibration and the second one is a fibration. So, the homotopy pullback may be obtained as the pullback of this fibration with the morphism $\End_\WW \to \End^\VV_\WW$. This is exactly $\PP(\VV,f,\WW)$.
\end{proof}

\begin{prop}
 There exists a structure of homotopy operad on $\PP(\VV,f,\WW)$ whose unit is $Id_\VV + Id_\WW$ and whose structure map
 $$
 \gamma : \Tfree(s\PP(\VV,f,\WW) \oplus \mbk \cdot v) \to s\PP(\VV,f,\WW)
 $$
 is defined as follows. Consider a tree as in Proposition \ref{prop:path} labelled by elements of $s\PP(\VV,f,\WW)$. If $n=0$, then $\gamma$ is given by the differential on $\PP(\VV,f,\WW)$. The elements
\[
\begin{cases}
 \gamma(sg_\VV \otimes sg'_\VV)\\
 \gamma(sg_\WW \otimes sg'_\WW)\\
 \gamma(ss^{-1}g^\VV_\WW \otimes sg'_\VV)\\
 \gamma \left(sg_\WW \otimes (ss^{-1}g_1\otimes \cdots \otimes ss^{-1}g_n) \right)\ ,
\end{cases}
\]
are given by the usual composition of morphisms of chain complexes. Notice that in the last case, if one input of $g_\WW$ is not linked to one of the $g_i$, then $\gamma$ acts as if it was linked to $f$. Finally, $\gamma$ sends other labelled trees to zero.
\end{prop}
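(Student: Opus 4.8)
The plan is to verify directly that the formula for $\gamma$ displayed above, extended by $\gamma(v) = s(Id_\VV + Id_\WW)$, defines a homotopy operad structure, using the characterization recalled above (see \cite[Lemma~2]{LeGrignou16}): one checks that $\gamma \colon \Tfree^c(s\PP(\VV,f,\WW) \oplus \mbk \cdot v) \to s\PP(\VV,f,\WW)$ is a degree $-1$ map restricting to the suspension of the twisted differential $d$ of $\PP(\VV,f,\WW)$ on $s\PP(\VV,f,\WW)$, and that for every tree $T$
\[
\sum_{T' \subset T} \gamma(T/T')\bigl(Id \otimes \cdots \otimes \gamma(T') \otimes \cdots \otimes Id\bigr) = (\theta \otimes \pi - \pi \otimes \theta)\Delta_2\ ,
\]
where $\pi$ and $\theta$ are the projections of $\Tfree^c(s\PP(\VV,f,\WW) \oplus \mbk \cdot v)$ onto $s\PP(\VV,f,\WW)$ and onto $\mbk \cdot v$.

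The guiding heuristic is Lemma~\ref{lemma:hpull}: $\PP(\VV,f,\WW)$ is the homotopy pullback of the cospan $\End_\VV \to \End^\VV_\WW \leftarrow \End_\WW$, whose two outer terms are the operads of Example~\ref{egend}, while pre- and post-composition of multilinear maps equip the middle term $\End^\VV_\WW$ with strictly associative, mutually compatible composition laws $\End_\WW \circ \End^\VV_\WW \to \End^\VV_\WW$ and $\End^\VV_\WW \circ \End_\VV \to \End^\VV_\WW$ for which $Id_\VV + Id_\WW$ is a two-sided unit. The prescribed $\gamma$ is the operad structure of the strict pullback $\End_\VV \times_{\End^\VV_\WW} \End_\WW$ of Example~\ref{egend}, spread over the three summands of $\PP(\VV,f,\WW)$: genuine operad compositions on the $\End_\VV$ and $\End_\WW$ summands, the mixed compositions landing in the coherence summand $s^{-1}\End^\VV_\WW$, and the rule ``an input of $g_\WW$ not matched by some $g_i$ is composed with $f$'' recording how the units are transported across $f$. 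Since these laws are strictly associative, no operations on trees of height $\geq 3$ are needed, which is why $\gamma$ vanishes there. Note that $\PP(\VV,f,\WW)$ is in general not quasi-isomorphic to the strict pullback operad, so one cannot invoke the homotopy transfer theorem (Theorem~\ref{thm:htt}) directly: the content of the proposition is that the obstruction lives in the single summand $s^{-1}\End^\VV_\WW$ and is governed by a homotopy operad structure concentrated in tree-height $\leq 2$.

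First I would record that $\gamma$ is well defined of degree $-1$ on the relevant $\mbs_p$-quotients and that its restriction to $s\PP(\VV,f,\WW)$ is $sd$; together with $\gamma(v) = s(Id_\VV + Id_\WW)$ this yields $dv = s(Id_\VV + Id_\WW)$. Then I would check the defining relation tree by tree. On one-vertex trees it is $d^2 = 0$ in $\PP(\VV,f,\WW)$, i.e.\ $\partial_\VV^2 = 0$, $\partial_\WW^2 = 0$ and the chain-map identities $\partial^\VV_\WW(f g_\VV) = f \partial_\VV g_\VV$ and $\partial^\VV_\WW(g_\WW f^{\otimes n}) = (\partial_\WW g_\WW) f^{\otimes n}$ built into $d$. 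On trees with at least two vertices and no occurrence of $v$ the relation reduces, after tracking the Koszul signs from the suspension, to associativity of composition in $\End_\VV$ and in $\End_\WW$, to associativity and mutual compatibility of the two module laws on $\End^\VV_\WW$, to the Leibniz rule relating $d$ to these compositions, and to the compatibility of the ``$f$-filler'' with further plugging into the matched inputs. On trees containing $v$ the left-hand side collapses to the curvature term $(\theta \otimes \pi - \pi \otimes \theta)\Delta_2$ because $Id_\VV + Id_\WW$ is a two-sided unit, and on trees carrying $v$ together with two or more other vertices both sides vanish. This computation is of the same nature as the one behind Proposition~\ref{prop:path}.

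The step I expect to be the main obstacle is the sign and bookkeeping discipline around the composition $\gamma\bigl(s g_\WW \otimes (s s^{-1} g_1 \otimes \cdots \otimes s s^{-1} g_n)\bigr)$ with the ``$f$-filler'' convention: one must verify that whenever two distinct admissible subtrees $T'$ of $T$ contribute non-zero terms they cancel in pairs — which is precisely operadic associativity relative to the bimodule structure on $\End^\VV_\WW$, with the unmatched slots consistently filled by $f$ in every intermediate expression — and that the connecting terms $s^{-1} f g_\VV$ and $-\,s^{-1} g_\WW f^{\otimes n}$ of the twisted differential pair off correctly with the mixed compositions so that $\partial\gamma + \sum_{\# T' \geq 2} \gamma(\cdots\gamma(T')\cdots)$ reduces exactly to the curvature. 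Everything else is a routine, if slightly lengthy, check.
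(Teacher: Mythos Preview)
Your proposal is correct and follows essentially the same approach as the paper: the paper's proof is the single sentence ``It follows from the same arguments as in the proof of Proposition~\ref{prop:path}'', and your outline is precisely a detailed unpacking of that direct tree-by-tree verification of the relation $\gamma D_\gamma = (\theta \otimes \pi - \pi \otimes \theta)\Delta_2$, which you yourself identify as being of the same nature as Proposition~\ref{prop:path}. Your heuristic about the homotopy pullback and the bimodule structure on $\End^\VV_\WW$ is a helpful conceptual gloss that the paper does not spell out, but it does not change the underlying argument.
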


\begin{proof}
It follows from the same arguments as in the proof of Proposition \ref{prop:path}. 
\end{proof}

\begin{prop}\label{prop:canmorph}
 The projection maps $\PP(\VV,f,\WW) \to \End_\VV$ and $\PP(\VV,f,\WW) \to \End_\WW$ are $\infty$-morphisms of homotopy operads. Moreover the morphism
\begin{align*}
 \End_\VV \times_{\End_\VV^\WW}\End_\WW &\to \PP(\VV,f,\WW)\\
 (x,y) &\mapsto x+ 0 +y
\end{align*}
is an $\infty$-morphism of homotopy operads.
\end{prop}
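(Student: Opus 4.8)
The plan is to show that each of the three maps promotes to a \emph{strict} $\infty$-morphism: its underlying degree $0$ map of $\mbs$-modules, extended by zero on all trees with at least two vertices and by $v \mapsto v$ on the extra cogenerator, satisfies the equation of the proposition characterising $\infty$-morphisms as degree $0$ maps $f \colon \ov\Tfree(s\PP \oplus \mbk \cdot v) \to s\QQ \oplus \mbk \cdot v$. For such a strict $f$ that equation collapses: in the left-hand sum $\sum_{T = T_1 \sqcup \cdots \sqcup T_k}$ every partition containing a subtree with two or more vertices contributes zero, so only the partition into single vertices survives, and in the right-hand sum $\sum_{T' \subseteq T}$ only $T' = T$ contributes. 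Writing $\varphi$ for the underlying map and $\hat\varphi$ for its extension with $\hat\varphi(v) = v$, the equation on a tree $T$ with $n$ vertices thus reduces to
\[
\gamma_{\QQQ}(T) \circ \hat\varphi^{\otimes n} = \hat\varphi \circ \gamma_{\PPP}(T)\ ,
\]
where $\hat\varphi^{\otimes n}$ means applying $\hat\varphi$ to each vertex label; and the condition $\theta_{\QQQ} f = \theta_{\PPP}$ holds automatically. So in each case it remains to run over the trees and compare the two explicit structure maps.

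First I would treat the projections $p_\VV \colon \PP(\VV,f,\WW) \to \End_\VV$ and $p_\WW \colon \PP(\VV,f,\WW) \to \End_\WW$, which visibly are maps of $\mbs$-modules sending the unit $Id_\VV + 0 + Id_\WW$ to $Id_\VV$, resp.\ $Id_\WW$. On one-vertex trees and on $v$ the displayed equation just says that $p_\VV$ (resp.\ $p_\WW$) is a chain map preserving the unit, which is immediate from the formula for the differential of $\PP(\VV,f,\WW)$ (the projection kills the $s^{-1}\End^\VV_\WW$ summand of $d$). On two-vertex trees the structure map of $\PP(\VV,f,\WW)$ on a label $sx \otimes sy$ is the sum of the four composition terms recorded in the preceding proposition, each living in one of the summands $\End_\VV$, $s^{-1}\End^\VV_\WW$, $\End_\WW$; applying $p_\VV$ (resp.\ $p_\WW$) extracts exactly the term given by composition in $\End_\VV$ (resp.\ $\End_\WW$), which equals $\gamma_{\End_\VV}$ (resp.\ $\gamma_{\End_\WW}$) evaluated on the projected labels — the signs being those fixed in the construction of $\PP(\VV,f,\WW)$ — while on a two-vertex tree carrying a $v$ both sides vanish. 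On trees with three or more vertices $\gamma_{\End_\VV}$ and $\gamma_{\End_\WW}$ vanish, being structure maps of genuine operads; and the only trees on which $\gamma_{\PP(\VV,f,\WW)}$ is nonzero there, the iterated compositions rooted at an $\End_\WW$-labelled vertex, take values in the middle summand $s^{-1}\End^\VV_\WW$ and are therefore annihilated by both projections (the root label being in any case killed by $p_\VV$). Hence both sides vanish, and $p_\VV$, $p_\WW$ are $\infty$-morphisms.

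Next I would treat the inclusion $\iota \colon \RRR := \End_\VV \times_{\End^\VV_\WW} \End_\WW \to \PP(\VV,f,\WW)$, $(x,y) \mapsto x + 0 + y$; recall that $\RRR$ is a genuine operad (Example \ref{egend} and the lemma exhibiting this pullback as an operad), so its homotopy operad structure map is supported on trees with at most two vertices and is componentwise composition. The key point is that the image of $\hat\iota$ has zero $s^{-1}\End^\VV_\WW$-component. On one-vertex trees, $\gamma_{\PP(\VV,f,\WW)}(\hat\iota(s(x,y))) = s\,d_{\PP(\VV,f,\WW)}(x + 0 + y) = s\big(\partial_\VV x + s^{-1}(fx - yf^{\otimes n}) + \partial_\WW y\big)$, and the very relation defining the fibre product, $fx = yf^{\otimes n}$, kills the middle term, leaving $s\,\iota(\partial_\VV x, \partial_\WW y) = \hat\iota(\gamma_{\RRR}(s(x,y)))$; on $v$ both sides give $s\,\iota(1_{\RRR}) = s(Id_\VV + 0 + Id_\WW)$. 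On two-vertex trees the four composition terms of $\gamma_{\PP(\VV,f,\WW)}$ reduce, once the vertices carry labels with vanishing middle component, to the two componentwise compositions in $\End_\VV$ and $\End_\WW$, which is exactly $\hat\iota$ of the composition in $\RRR$; and on trees with a $v$, or with three or more vertices, both sides vanish — the latter because the surviving value of $\gamma_{\PP(\VV,f,\WW)}$ on such a tree is a composition whose inner arguments, being in the image of $\hat\iota$, have zero $\End^\VV_\WW$-component, hence is a composition with a zero map. This establishes the last assertion.

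The feature that distinguishes this from a purely formal statement about operads, and hence the only place a little care is needed, is that $\gamma_{\PP(\VV,f,\WW)}$ — unlike the structure map of an operad — does \emph{not} vanish on all trees with at least three vertices; the argument above deals with these by observing that their nonzero values lie in the middle summand $s^{-1}\End^\VV_\WW$, which both projections annihilate and which $\iota$ misses, so such trees contribute nothing on either side of the defining equation. Everything else is bookkeeping: checking that the list of nonvanishing values of $\gamma_{\PP(\VV,f,\WW)}$ in the preceding proposition is exhaustive, and that the signs in the two-vertex case match those of the operad structures on $\End_\VV$, $\End_\WW$ and on the fibre product, both of which are inherited directly from the construction of $\PP(\VV,f,\WW)$.
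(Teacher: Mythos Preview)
Your proposal is correct and matches the paper's approach: the paper's entire proof is the word ``Straightforward'', and your argument is precisely the direct verification this word stands in for. You correctly isolate the only non-formal point, namely that $\gamma_{\PP(\VV,f,\WW)}$ does not vanish on trees with three or more vertices, and handle it by observing that its values there land in the middle summand $s^{-1}\End^\VV_\WW$, which both projections kill and the inclusion $\iota$ avoids.
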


\begin{proof}
 Straightforward.
\end{proof}

\begin{thm}
Let $\CCC$ be a curved conilpotent cooperad. The data of a morphism of curved cooperads from $\CCC$ to $B_c ( \PP(\VV,f,\WW) )$ is equivalent to the data of $\Omega_u\CCC$-algebra structures on $\VV$ and on $\WW$ together with an $\infty$-morphism from $\VV$ to $\WW$ whose first level map from $\VV$ to $\WW$  is $f$.
\end{thm}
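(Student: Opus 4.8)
\emph{Proof sketch.} The plan is to unravel both sides of the claimed equivalence into the same explicit combinatorial data via the cofree adjunction, and to match the resulting equations with the dictionaries already established for $\Omega_u\CCC$-algebras and for $\infty$-morphisms.

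First, since $B_c(\PP(\VV,f,\WW)) = \Tfree^c\big(s\PP(\VV,f,\WW) \oplus \mbk \cdot v\big)$ is cofree as a graded coaugmented conilpotent cooperad, a morphism of graded coaugmented conilpotent cooperads $\CCC \to B_c(\PP(\VV,f,\WW))$ is the same datum as a degree $0$ map of $\mbs$-modules $\ov\CC \to s\PP(\VV,f,\WW) \oplus \mbk \cdot v$. Using $\PP(\VV,f,\WW)(n) = \End_\VV(n) \oplus s^{-1}\End^\VV_\WW(n) \oplus \End_\WW(n)$, so that $s\PP(\VV,f,\WW) = s\End_\VV \oplus \End^\VV_\WW \oplus s\End_\WW$, such a map is a quadruple $(\phi_\VV, g, \phi_\WW, \theta')$ consisting of degree $-1$ maps $\phi_\VV:\ov\CC \to \End_\VV$ and $\phi_\WW : \ov\CC \to \End_\WW$, a degree $0$ map $g:\ov\CC \to \End^\VV_\WW$, and a degree $-2$ map $\theta':\ov\CC(1) \to \mbk$. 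Compatibility with the curvatures forces $\theta' = \theta_\CCC$, since the curvature of the bar construction of a homotopy operad is the coefficient on $v$; compatibility with the coaugmentations is automatic on $\ov\CC$.

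It remains to translate the condition that the induced morphism commute with the coderivations, and here we decompose the equation along the three summands $s\End_\VV$, $\End^\VV_\WW$, $s\End_\WW$ of $s\PP(\VV,f,\WW)$. By construction of the homotopy operad $\PP(\VV,f,\WW)$, the structure map $\gamma$ kills all labelled trees except the ones listed in the construction preceding Proposition \ref{prop:canmorph}; in particular $\gamma$ sends trees labelled only by $\End_\VV$ (resp. only by $\End_\WW$) back into $s\End_\VV$ (resp. $s\End_\WW$) and there restricts to the usual operadic composition, while the mixed terms flow only into the $\End^\VV_\WW$ summand. Hence the $s\End_\VV$-component of the equation is exactly the Maurer--Cartan equation $\partial(\phi_\VV) + \gamma(\phi_\VV \otimes \phi_\VV)(\Delta_\CCC)_2 = \theta_\CCC(-)\,\id_\VV$, i.e. (by the characterization of $\Omega_u\CCC$-algebra structures recalled above) says that $\phi_\VV$ is an $\Omega_u\CCC$-algebra structure on $\VV$; likewise the $s\End_\WW$-component says $\phi_\WW$ is an $\Omega_u\CCC$-algebra structure on $\WW$. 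These two components decouple from everything else precisely because $\gamma$ never sends a mixed tree back into $\End_\VV$ or $\End_\WW$.

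For the $\End^\VV_\WW$-component, the only contributions are the differential terms $g \mapsto -s^{-1}\partial^\VV_\WW g$, $g_\VV \mapsto s^{-1} f g_\VV$, $g_\WW \mapsto -s^{-1} g_\WW f^{\otimes n}$ of the differential of $\PP(\VV,f,\WW)$, together with the two pieces $\gamma(ss^{-1}g^\VV_\WW \otimes sg'_\VV)$ and $\gamma\big(sg_\WW \otimes (ss^{-1}g_1 \otimes \cdots \otimes ss^{-1}g_n)\big)$ of $\gamma$. Extend $g$ to all of $\CC$ by setting $g(1) := f$ on $\II(1) = \mbk \subset \CC(1)$: the convention in the definition of $\gamma$ on $\PP(\VV,f,\WW)$ that an input of $g_\WW$ not linked to one of the $g_i$ is treated as if linked to $f$, combined with the connecting differentials $g_\VV \mapsto s^{-1}fg_\VV$ and $g_\WW \mapsto -s^{-1}g_\WW f^{\otimes n}$, is exactly what turns these into the contributions of $g(1)=f$ inside iterated composites. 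Collecting terms, the $\End^\VV_\WW$-component of the equation becomes precisely Equation (\ref{eqinftymor2}), $\partial(g) + \gamma(\phi_\WW \circ g)\Delta + \gamma(g \otimes \phi_\VV)\Delta_2 = 0$, which by Lemma \ref{thm:inftymorph} says that $g$ is an $\infty$-morphism from $(\VV,\phi_\VV)$ to $(\WW,\phi_\WW)$, the constraint $g(1)=f$ being the statement that its first level map is $f$. Conversely, any triple consisting of $\Omega_u\CCC$-algebra structures $\phi_\VV,\phi_\WW$ and an $\infty$-morphism $g$ with $g(1)=f$ reassembles, by reversing this dictionary, into a degree $0$ map $\ov\CC \to s\PP(\VV,f,\WW)\oplus\mbk\cdot v$ satisfying the curvature and coderivation constraints, hence into a morphism of curved conilpotent cooperads; the two assignments are mutually inverse and manifestly natural in $\CCC$.

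\emph{Main obstacle.} The only non-formal step is the bookkeeping in the $\End^\VV_\WW$-component above: one must check the signs and verify that the "unlinked inputs act as $f$" rule of $\gamma$ on $\PP(\VV,f,\WW)$ together with the connecting differentials reconstitute \emph{exactly} the terms of Equation (\ref{eqinftymor2}) once $g$ is extended by $g(1)=f$. Everything else follows directly from the cofree adjunction and from the already-established descriptions of $\Omega_u\CCC$-algebras and of $\infty$-morphisms (Lemma \ref{thm:inftymorph}). One could alternatively route the argument through the homotopy pullback of Lemma \ref{lemma:hpull} and the canonical $\infty$-morphisms of Proposition \ref{prop:canmorph}, but the direct unpacking is the most economical. \qed
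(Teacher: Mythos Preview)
Your proposal is correct and follows essentially the same route as the paper: unpack a morphism $\CCC \to B_c(\PP(\VV,f,\WW))$ as a map into the cogenerators, split it along the three summands $\End_\VV$, $s^{-1}\End^\VV_\WW$, $\End_\WW$, and identify the resulting three equations respectively as the two twisting-morphism equations and, after the extension by $g(1)=f$, as Equation~(\ref{eqinftymor2}) via Lemma~\ref{thm:inftymorph}. The only cosmetic difference is that the paper invokes the twisting-morphism description directly (a degree $-1$ map $\phi:\ov\CC\to\PP(\VV,f,\WW)$ satisfying the Maurer--Cartan equation with curvature term), which absorbs your $v$-component and curvature-matching step into the standing bar--cobar adjunction, whereas you track that step explicitly.
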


\begin{proof}
Consider a morphism of curved conilpotent cooperads $ \CCC \to B_c(\PP(\VV,f,\WW))$. This is equivalent to the data of a degree $-1$ map
$\phi: \ov\CC \to  \End_\VV(n) \oplus s^{-1}\End^\VV_\WW (n) \oplus \End_\WW(n)$ such that
\begin{equation}\label{eqprophomotopy}
\partial(\phi) + \gamma (\phi \circ \phi)\ov\Delta = \theta(-)(Id_\VV + Id _\WW)\ .
\end{equation}
The map $\phi$ can be decomposed as follows
\[
\phi=  \phi_\VV + s^{-1} \phi_{\WW}^\VV + s \phi_\WW \ .
\]
Then, the above equation (\ref{eqprophomotopy}) is equivalent to the three following equations
\[
\begin{cases}
\partial (\phi_\VV) + \gamma (\phi_\VV \otimes \phi_\VV) \Delta_2 = \theta (-)Id_\VV  \\
\partial (\phi_\WW) + \gamma (\phi_\WW \otimes \phi_\WW) \Delta_2 = \theta (-)Id_\WW  \\
\partial (\phi^\VV_{\WW}) +  f \phi_\VV - \phi_\WW f^{\otimes n} + \gamma (\phi^\VV_{\WW} \otimes \phi_\VV)\Delta_2 + \gamma (\phi_\WW\circ \phi^\VV_{\WW})\ov \Delta=0\ .
\end{cases}
\]
Then $\phi_\VV$ and $\phi_\WW$ are twisting morphisms and so induce morphisms of operads from $\Omega_u\CCC$ to respectively $\End_\VV$ and $\End_\WW$. Moreover, one can extend $\phi_{\WW}^\VV$ to all the $\mbs$-module $\CC$ by sending  $ 1$ to $f$. Then, the last equation rewrites
\[
\partial (\phi^\VV_{\WW}) +  \gamma (\phi^\VV_{\WW} \otimes \phi_\VV)\Delta_2 + \gamma (\phi_\WW\circ \phi^\VV_{\WW})\Delta=0\ .
\]
By Lemma \ref{thm:inftymorph}, $\phi_{\WW}^\VV$ defines an $\infty$-morphism from $(\VV,\phi_\VV)$ to $(\WW,\phi_\WW)$ whose first level map is $f$.
\end{proof}

The next corollary generalises a result of Fresse (\cite{Fresse09ter}) that describes a path in the space of algebraic structures on a chain complex in terms of infinity-isotopy.\\

\begin{cor}
Let $\CCC$ be a curved conilpotent cooperad. The data of a morphism of curved cooperads from $\CCC$ to $B_c ( \Phi[1] \otimes\End_\Aa )$ (as defined in Proposition \ref{prop:path}) is equivalent to the data of two $\Omega_u\CCC$-algebra structures on $\Aa$ and an $\infty$-isotopy between them.
\end{cor}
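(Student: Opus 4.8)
The plan is to deduce the corollary from the theorem above by specialising to $\VV = \WW = \Aa$ and $f = Id_\Aa$. With this choice the theorem identifies the morphisms of curved conilpotent cooperads from $\CCC$ to $B_c(\PP(\Aa,Id_\Aa,\Aa))$ with the data of two $\Omega_u\CCC$-algebra structures on $\Aa$ together with an $\infty$-morphism between them whose first level map is $Id_\Aa$; by the very definition of an $\infty$-isotopy, such an $\infty$-morphism is exactly an $\infty$-isotopy. Hence the only thing left to prove is that the homotopy operad $\PP(\Aa,Id_\Aa,\Aa)$ is isomorphic, as a homotopy operad, to $\Phi[1]\otimes\End_\Aa$ equipped with the structure described in Proposition \ref{prop:path}; once this is established, functoriality of the bar construction on homotopy operads yields an isomorphism $B_c(\PP(\Aa,Id_\Aa,\Aa))\simeq B_c(\Phi[1]\otimes\End_\Aa)$ and the corollary follows. (That $B_c(\PP(\Aa,Id_\Aa,\Aa))$ ought to be a path object for $B_c\End_\Aa$ is also visible from Lemma \ref{lemma:hpull} and Proposition \ref{prop:canmorph}, applied with $\VV=\WW=\Aa$ and $f=Id_\Aa$.)

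I would produce this isomorphism explicitly. Since $\End^\Aa_\Aa = \End_\Aa$, the underlying graded $\mbs$-module of $\PP(\Aa,Id_\Aa,\Aa)$ in arity $n$ is $\End_\Aa(n)\oplus s^{-1}\End_\Aa(n)\oplus\End_\Aa(n)$, while $\Phi[1]\otimes\End_\Aa(n)$ decomposes as $(1)\otimes\End_\Aa(n)\oplus(01)\otimes\End_\Aa(n)\oplus(0)\otimes\End_\Aa(n)$, with $(0)$ and $(1)$ in degree $0$ and $(01)$ in degree $-1$. I would define the comparison map by
$$
(1)\otimes g\longmapsto g\in\End_\VV,\qquad (0)\otimes g\longmapsto g\in\End_\WW,\qquad (01)\otimes g\longmapsto -s^{-1}g\in s^{-1}\End^\VV_\WW\ .
$$
The sign in the last formula is forced: comparing the differential of $\PP(\VV,f,\WW)$ — in which the summand $s^{-1}\End^\VV_\WW$ records the homotopy between the two copies of $\Aa$ — with the relations $d(0) = (01)$ and $d(1) = -(01)$ defining $\Phi[1]$, one checks that this map is an isomorphism of dg $\mbs$-modules sending the unit $\left((0) + (1)\right)\otimes Id_\Aa$ to $Id_\VV + Id_\WW$.

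It then remains to check that the comparison map intertwines the two homotopy operad structure maps $\gamma$, and this is the one genuinely computational point, which I expect to be the main obstacle. By construction both structure maps vanish on all labelled trees outside a short explicit list. Under the dictionary above, the value $\gamma(v) = s\left((0)+(1)\right)\otimes Id_\Aa$ matches $\gamma(v) = s(Id_\VV + Id_\WW)$, the arity-zero part matches the differential already treated, and the four nonzero composition cases of the structure map of $\PP(\Aa,Id_\Aa,\Aa)$ — namely the values of $\gamma$ on $sg_\WW\otimes sg'_\WW$, on $sg_\VV\otimes sg'_\VV$, on $ss^{-1}g^\VV_\WW\otimes sg'_\VV$ and on $sg_\WW\otimes(ss^{-1}g_1\otimes\cdots\otimes ss^{-1}g_n)$ — correspond respectively to the cases $\phi_0 = \phi_1 = (0)$, $\phi_0 = \phi_1 = (1)$, $(\phi_0,\phi_1) = \left((01),(1)\right)$ and $\left(\phi_0;\phi_1,\ldots,\phi_n\right) = \left((0);(01),\ldots,(01)\right)$ of Proposition \ref{prop:path}; here the prescription "if one input of $g_\WW$ is not linked to one of the $g_i$, then $\gamma$ acts as if it was linked to $f$" is consistent with Proposition \ref{prop:path} precisely because $f = Id_\Aa = 1_{\End_\Aa}$ is the operadic unit, so that inserting $f$ in an empty slot changes nothing. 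What is left is then the routine bookkeeping of Koszul signs; once it is carried out one obtains the desired isomorphism of homotopy operads, hence of their bar constructions, and the corollary is proved by invoking the theorem above with $\VV = \WW = \Aa$ and $f = Id_\Aa$.
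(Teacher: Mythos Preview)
Your proposal is correct and follows exactly the paper's approach: the paper's proof is the single sentence ``It suffices to notice that $\Phi[1]\otimes\End_\Aa \simeq \PP(\Aa,Id_\Aa,\Aa)$'', and you simply unpack this isomorphism explicitly (with the correct assignment $(1)\leftrightarrow\End_\VV$, $(0)\leftrightarrow\End_\WW$, $(01)\leftrightarrow s^{-1}\End^\VV_\WW$ and the required sign) before invoking the theorem with $\VV=\WW=\Aa$, $f=Id_\Aa$.
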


\begin{proof}
It suffices to notice that  $ \Phi[1] \otimes\End_\Aa \simeq \PP(\Aa,Id_\Aa,\Aa)$.
\end{proof}

\subsection{Homotopy transfer theorem for algebras over an operad}\label{secalg} The homotopy transfer theorem is a result that holds for algebras over any cofibrant operad; see for instance \cite{LodayVallette12} and \cite[Theorem 3.5]{BergerMoerdijk03}. We give here an interpretation of this result in terms of homotopy operads.

\begin{prop}[After \cite{BergerMoerdijk03}]\label{prop:htt}
Let $\CCC$ be a curved conilpotent cooperad. Let $p: \Aa \to \VV$ be an acyclic fibration of chain complexes. Suppose that $\Aa$ is endowed with a structure of $\Omega_u\CCC$-algebra denoted $\gamma$. Then there exists:
 
\begin{itemize}
 \itemt a new structure $\gamma'$ of $\Omega_u \CCC$-algebra on $\Aa$, together with an $\infty$-isotopy $i: (\Aa,\gamma) \to (\Aa,\gamma')$,
\itemt a structure $\gamma_\VV$ of $\Omega_u \CCC$ -algebra on $\VV$ such that $p$ is a morphism of $\Omega_u \CCC$-algebras from $(\Aa,\gamma')$ to $(\VV,\gamma_\VV)$.
\end{itemize}
\end{prop}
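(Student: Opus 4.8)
The plan is to produce the required structures and the $\infty$-isotopy by a lifting-and-retraction argument through the homotopy operad $\PP(\Aa,p,\VV)$ in the model category of curved conilpotent cooperads. Write $c_\gamma\colon \CCC \to B_c\End_\Aa$ for the morphism of curved conilpotent cooperads adjoint to the twisting morphism encoding $\gamma$, where $\End_\Aa$ is regarded as a homotopy operad. By Proposition \ref{prop:canmorph} there are $\infty$-morphisms of homotopy operads: the two projections $q_1\colon \PP(\Aa,p,\VV)\to\End_\Aa$, $q_2\colon \PP(\Aa,p,\VV)\to\End_\VV$, and the inclusion $j\colon \End_\Aa\times_{\End^\Aa_\VV}\End_\VV \to \PP(\Aa,p,\VV)$ of the pullback operad of Example \ref{egend}, with $q_1 j$ and $q_2 j$ its two canonical projections. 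Conceptually, by the theorem identifying $\mathrm{Hom}(\CCC,B_c\PP(\VV,f,\WW))$ with pairs of $\Omega_u\CCC$-algebra structures plus an $\infty$-morphism with first level $f$, the cooperad $B_c\PP(\Aa,p,\VV)$ is the right object to mediate between $(\Aa,\gamma)$ and the sought structure on $\VV$; the proof itself only uses the bar-cobar adjunction together with the fact that a morphism of operads $\Omega_u\CCC \to \End_\Aa\times_{\End^\Aa_\VV}\End_\VV$ is exactly a pair of $\Omega_u\CCC$-algebra structures $\gamma'$ on $\Aa$, $\gamma_\VV$ on $\VV$ making $p$ a strict morphism $(\Aa,\gamma')\to(\VV,\gamma_\VV)$.

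The crux is that $q_1$ is an acyclic fibration and $j$ an acyclic cofibration of curved conilpotent cooperads. On first-level maps, $q_1$ is the linear projection $\End_\Aa(n)\oplus s^{-1}\End^\Aa_\VV(n)\oplus\End_\VV(n)\to\End_\Aa(n)$, hence surjective, so $q_1$ is an $\infty$-epimorphism, and $j$ is $(x,y)\mapsto x+0+y$, hence injective, so $j$ is an $\infty$-monomorphism. By Lemma \ref{lemma:hpull}, $\PP(\Aa,p,\VV)$ is the homotopy pullback of the cospan $\End_\Aa\to\End^\Aa_\VV\leftarrow\End_\VV$; since $p$ is an acyclic fibration of chain complexes, both legs of this cospan are weak equivalences of $\mbs$-modules (this is where characteristic zero enters, via Maschke, to know that $\Aa^{\otimes n}$ and $\VV^{\otimes n}$ are ``cofibrant enough'' for the relevant internal-hom functors to preserve these equivalences), the leg $\End_\Aa\to\End^\Aa_\VV$ moreover being a fibration. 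Hence the homotopy-pullback projection $q_1$ is a weak equivalence, and so is the canonical map $j$ from the strict pullback to the homotopy pullback, i.e. both are $\infty$-quasi-isomorphisms. By the proposition characterizing cofibrations, fibrations and weak equivalences among $\infty$-morphisms as the $\infty$-mono-, $\infty$-epi-morphisms and $\infty$-quasi-isomorphisms respectively, $q_1$ is an acyclic fibration and $j$ an acyclic cofibration. This step is the main obstacle; everything afterwards is formal.

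Since $\CCC$ is cofibrant, I would lift $c_\gamma$ along the acyclic fibration $q_1$ to a morphism $\tilde c\colon \CCC\to B_c\PP(\Aa,p,\VV)$ with $q_1\tilde c = c_\gamma$. The cooperad $B_c\PP(\Aa,p,\VV)$ is fibrant by Proposition \ref{prop:fibrant}, so the acyclic cofibration $j$ between fibrant objects admits a retraction $r\colon B_c\PP(\Aa,p,\VV)\to B_c(\End_\Aa\times_{\End^\Aa_\VV}\End_\VV)$, which is then a homotopy inverse of $j$; in particular $jr$ is homotopic to the identity. Set $c':=r\tilde c$. By the bar-cobar adjunction and the description of morphisms of $\Omega_u\CCC$-algebras recalled above, $c'$ amounts to $\Omega_u\CCC$-algebra structures $\gamma'$ on $\Aa$ and $\gamma_\VV$ on $\VV$ for which $p$ is a strict morphism of $\Omega_u\CCC$-algebras $(\Aa,\gamma')\to(\VV,\gamma_\VV)$; this already yields the two items of the statement, pending the comparison of $\gamma'$ with $\gamma$.

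It remains to construct the $\infty$-isotopy $(\Aa,\gamma)\to(\Aa,\gamma')$. Composing $c'$ with $q_1 j\colon B_c(\End_\Aa\times_{\End^\Aa_\VV}\End_\VV)\to B_c\End_\Aa$ (the image under $B_c$ of the projection onto $\End_\Aa$, which encodes the structure $\gamma'$ on $\Aa$) gives $q_1 j c' = q_1(jr)\tilde c$, which is homotopic to $q_1\tilde c = c_\gamma$ because $jr$ is homotopic to the identity. As $B_c\End_\Aa$ is fibrant, this homotopy can be realized as a right homotopy, that is, as a morphism $\CCC\to B_c(\Phi[1]\otimes\End_\Aa)$ into the path object of $B_c\End_\Aa$ of Proposition \ref{prop:path} restricting at the two endpoints to $c_\gamma$ and to the morphism encoding $\gamma'$. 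By the corollary describing morphisms into $B_c(\Phi[1]\otimes\End_\Aa)$, such a morphism is the datum of two $\Omega_u\CCC$-algebra structures on $\Aa$ --- namely $\gamma$ and $\gamma'$ --- together with an $\infty$-isotopy between them, which is the desired $i\colon (\Aa,\gamma)\to(\Aa,\gamma')$.
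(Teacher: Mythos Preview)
Your proof is correct and follows the same strategy as the paper: lift $c_\gamma$ along the acyclic fibration $B_c(q_1)$, retract through the acyclic cofibration $B_c(j)$ of Lemma \ref{lemma:hpullback} to obtain $\gamma'$ and $\gamma_\VV$, and then use the path object $B_c(\Phi[1]\otimes\End_\Aa)$ to extract the $\infty$-isotopy. The only cosmetic difference is in the last step: the paper produces the homotopy by directly lifting the square with $B_c(j)$ on the left against the path fibration $B_c(\Phi[1]\otimes\End_\Aa)\to B_c\End_\Aa\times B_c\End_\Aa$ on the right, whereas you invoke the general model-categorical fact $jr\simeq\mathrm{id}$ and then realize the resulting homotopy through the same path object; the two arguments are equivalent.
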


\begin{lemma}\label{lemma:hpullback}
 In the context of Proposition \ref{prop:htt}, the morphism $B_c(\End_\Aa \times_{\End^\Aa_\VV} \End_\VV) \to B_c(\PP(\Aa,p,\VV))$ introduced in Proposition \ref{prop:canmorph} is an acyclic cofibration. 
\end{lemma}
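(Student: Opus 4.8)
The plan is to reduce everything to the characterisation already established, namely that an $\infty$-morphism of homotopy operads becomes a cofibration (resp. a weak equivalence) of curved conilpotent cooperads under $B_c$ exactly when it is an $\infty$-monomorphism (resp. an $\infty$-quasi-isomorphism); so an $\infty$-morphism that is simultaneously an $\infty$-monomorphism and an $\infty$-quasi-isomorphism yields an acyclic cofibration. Applying this to the $\infty$-morphism $\iota \colon \End_\Aa \times_{\End^\Aa_\VV} \End_\VV \to \PP(\Aa,p,\VV)$ of Proposition \ref{prop:canmorph}, it suffices to prove that its first level map is both a monomorphism and a quasi-isomorphism of dg $\mbs$-modules.

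First I would dispatch the $\infty$-monomorphism part, which is immediate: the first level map of $\iota$ is $(x,y)\mapsto x+0+y$, an obvious inclusion of dg $\mbs$-modules.

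For the $\infty$-quasi-isomorphism part, the first step is to check that the post-composition map $\End_\Aa \to \End^\Aa_\VV$ is an acyclic fibration of dg $\mbs$-modules. Since $p$ is an acyclic fibration of chain complexes over the field $\mbk$, the iterated tensor power $p^{\otimes n}$ is again surjective and, by the Künneth formula, a quasi-isomorphism; the functor $[\Aa^{\otimes n},-]$ preserves surjections and quasi-isomorphisms over $\mbk$, and Maschke's Theorem makes the $\mbs_n$-equivariance free of charge. The second step uses this: because $\End_\Aa \to \End^\Aa_\VV$ is a fibration, the strict pullback $\End_\Aa \times_{\End^\Aa_\VV} \End_\VV$ is a homotopy pullback of the cospan $\End_\Aa \to \End^\Aa_\VV \leftarrow \End_\VV$ in the model category of dg $\mbs$-modules; by Lemma \ref{lemma:hpull}, $\PP(\Aa,p,\VV)$ is another homotopy pullback of the very same cospan; and $\iota$ is compatible with the two projections onto $\End_\Aa$ and $\End_\VV$, hence is a comparison morphism between two homotopy pullbacks of one diagram and is therefore a weak equivalence of dg $\mbs$-modules. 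Combining the two parts with the characterisation recalled above gives the lemma.

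The only point I expect to require a little care is the last one: identifying both $\mbs$-modules as homotopy pullbacks of the same cospan and invoking that a cone morphism between two homotopy pullbacks of a fixed diagram is automatically a weak equivalence (this is just $2$-out-of-$3$ applied to the canonical model of the homotopy pullback). If one prefers to avoid this abstract comparison, the alternative is to build a contracting homotopy on the cokernel of the first level map of $\iota$, the summand $s^{-1}\End^\Aa_\VV$ being cancelled against $\End_\Aa$ by means of any set-theoretic section of the surjection $p^{\otimes n}$, in the spirit of the filtration arguments used elsewhere via Theorem \ref{maclane-homology}; the homotopy-pullback route, however, is the shortest.
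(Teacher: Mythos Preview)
Your proof is correct and follows essentially the same route as the paper's: both show that the first level map is an injection (hence the morphism of cooperads is a cofibration) and a quasi-isomorphism by comparing two models of the same homotopy pullback via Lemma \ref{lemma:hpull} (hence it is a weak equivalence). A small remark: for the strict pullback to be a homotopy pullback you only need that $\End_\Aa \to \End^\Aa_\VV$ is a fibration, not an acyclic one, and that follows directly from the surjectivity of $p$ via $[\Aa^{\otimes n},-]$ --- the discussion of $p^{\otimes n}$ is superfluous here.
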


\begin{proof}
Since $p$ is a fibration, then the pullback  $\End_\Aa \times_{\End^\Aa_\VV} \End_\VV$ is also a homotopy pullback in the model category of $\mbs$-modules. So by Lemma \ref{lemma:hpull}, the map $\End_\Aa \times_{\End^\Aa_\VV} \End_\VV \to \PP(\Aa,p,\VV)$ is a quasi-isomorphism and so the morphism $B_c(\End_\Aa \times_{\End^\Aa_\VV} \End_\VV) \to B_c(\PP(\Aa,p,\VV))$ is a weak equivalence. Moreover, it is an injection and so a cofibration.
\end{proof}

\begin{proof}[Proof of Proposition \ref{prop:htt}]
The structure of $\Omega_u\CCC$-algebra on $\Aa$ is given by a morphism of curved conilpotent cooperads  $\CCC \to B_c \End_\Aa$. Moreover, since $p$ is an acyclic fibration, then the map $\PP(\Aa,p,\VV) \to \End_\Aa$ is also an acyclic fibration and hence the following diagram has a lifting
 \[
 \xymatrix{\emptyset \ar[r] \ar[d] & B_c \PP(\Aa,p,\VV) \ar[d] \\
 \CCC \ar[r] & B_c \End_\Aa\ .}
 \]
Besides, the map of Lemma \ref{lemma:hpullback} has a left inverse. The following composite map
\[
\CCC \to B_c \PP(\Aa,p,\VV) \to B_c( \End_\Aa \times_{\End^\Aa_\VV} \End_\VV) )
\]
induces a new structure of $\Omega_u\CCC$-algebra on $\Aa$ and a structure of $\Omega_u\CCC$-algebra on $\VV$ such that $p$ is a morphism of $\Omega_u\CCC$-algebras. The following diagram is commutative and has a lifting.
 $$
 \xymatrix{B_c( \End_\Aa \times_{\End^\Aa_\VV} \End_\VV)\ar[r] \ar[d] &  B_c (\End_\Aa) \ar[r] & B_c (\Phi[1]\otimes \End_\Aa) \ar[d] \\
 B_c  \PP(\Aa,p,\VV) \ar[rr] && B_c \End_\Aa \times B_c \End_\Aa}
 $$
 So the new structure of $\Omega_u\CCC$-algebra on $\Aa$ is homotopic to the old one. This corresponds to an $\infty$-isotopy.
\end{proof}


\section*{Appendix A: Colored bar-cobar adjunction}

Consider the adjunction $\Omega_u \dashv B_c$ described above and relating curved conilpotent cooperads to operads. We have shown that the projective model structure on the category of operads may be transferred to the category of curved conilpotent cooperads along this adjunction. In other words, there exists a model structure on the category of  curved conilpotent cooperads whose cofibrations (resp. weak equivalences) are the morphisms whose image under $\Omega_u$ is a cofibration (resp. weak equivalence). In this appendix, we show that this method cannot be extended to the multi-colors framework, that is to dg categories and curved conilpotent cocategories. As an immediate consequence, it cannot be extended to colored operads.

\begin{defin}
 A dg (resp. graded) quiver $(X,\VV)$ is the data of a set of objects $X$ and a chain complex (resp. graded $\mbk$-module) $\VV(x,x')$ for any $(x,x')\in X^2$. A morphism of quivers $F$ from $(X,\VV)$ to $(Y,\WW)$ is the data of a function $F: X\to Y$ and morphisms $F_{x,x'}: \VV(x,x')\to \WW(F(x),F(x'))$.
\end{defin}

\begin{eg}
 For any set $X$, we denote by $I_X$ the quiver whose set of object is $X$ and such that
 \[
\begin{cases}
 I_X(x,y)= 0 \text{ if }x \neq y\ ,\\
 I_X(x,x)=\mbk\ .
\end{cases}
 \]
\end{eg}

\begin{defin}
A differential graded (dg) category $\AAA=(X, \Aa, \gamma, (1_x)_{x \in X})$ is the data of a dg quiver $(X,\Aa)$, an associative composition $\gamma_{x,y,z}: \Aa(x,y) \otimes \Aa(y,z)\to \Aa(x,z)$ together with units $1_x\in \Aa(x,x)_0$ for this composition. 
\end{defin}

\begin{defin}
 A curved conilpotent cocategory $\CCC=(X, \CC, \Delta, d, \theta)$ is the data of a graded quiver $(X,\CC)$, a conilpotent coassociative decomposition $\Delta: \CC(x,z) \to \bigoplus_y \CC(x,y) \otimes \CC(y,z)$ together with a degree $-1$ map $d:\CC(x,y)\to\CC(x,y)$ for any $(x,y)\in X^2$, and degree $-2$ maps $\theta: \CC(x,x)\to \mbk$ such that
\begin{align*}
&\Delta d  = (d \otimes Id + Id \otimes d) \Delta\ ,\\
 & d^2 = (\theta \otimes Id-Id \otimes \theta)\Delta\ ,\\
 &\theta d =0\ . 
\end{align*}
\end{defin}

Curved conilpotent cocategories are related to dg categories by an adjunction \`a la bar cobar that we denote $\Omega_u \dashv B_c$ since it extends the adjunction between unital algebras and curved conilpotent coalgebras that we described in \cite[\S 8.3]{LeGrignou16} and that was already denoted $\Omega_u \dashv B_c$. On the one hand, let $\AAA:=(X,\Aa, \gamma, (1_x)_{x \in X})$ be a dg category. Its bar construction is the curved conilpotent cocategory $B_c \AAA := \Tfree^c (s\Aa \oplus  s^{2}I_X)$. It is equipped with the coderivation which extends the following map.
\begin{align*}
 \Tfree (s\Aa \oplus s^{2}I_X) \twoheadrightarrow \ov \Tfree^{\leq 2} (s\Aa \oplus s^{2}I_X) & \ra s\Aa \oplus s^{2}I_X\\
 sx \otimes sy &\mapsto (-1)^{|x|} s \gamma_\AAA (x \otimes y)\\
 sx \otimes s^2 1_c & \mapsto 0\\
 s^2 1 & \mapsto s1\\\
 sx & \mapsto -sdx\ .
\end{align*}
Its curvature is the degree $-2$ map.
\begin{align*}
 \Tfree (s\Aa \oplus s^{2}I_X) \twoheadrightarrow  s^{2}I_X & \ra I_X\\
s^2 1 &\mapsto 1\ .
 \end{align*}

On the other hand, let $\CCC:=(C,\CC,\Delta,d ,\theta)$ be a curved conilpotent cooperad. Its cobar construction is made up of the graded category
$$
\Omega_u \CCC := \Tfree (s^{-1}  \CC)\ ,
$$
together with the following derivation,
$$
s^{-1}x \mapsto  \theta (x) 1 - s^{-1} dx - \sum (-1)^{|x_1|} s^{-1}x_1 \otimes s^{-1}x_2\ ,
$$
where $\Delta x = \sum x_1 \otimes x_2$.\\

\begin{prop}
The bar construction and the cobar construction are both functors. Moreover, the functor $\Omega_u$ is left adjoint to the functor $B_c$.
\end{prop}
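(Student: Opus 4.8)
The plan is to follow, almost verbatim, the bar--cobar adjunction between unital associative algebras and curved conilpotent coalgebras established in \cite[\S 8.3]{LeGrignou16}, replacing the tensor (co)algebra on a graded $\mbk$-module by the free (co)category on a graded quiver; so I will only indicate the steps and point out where the several objects make a difference.

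\textbf{Well-definedness and functoriality.} First I would check that the two constructions make sense. For $B_c\AAA$ one verifies on cogenerators that the stated coderivation squares to $(\theta\otimes Id-Id\otimes\theta)\Delta$ and is annihilated by the curvature $\theta$; this reduces to the associativity of $\gamma_\AAA$ and the Leibniz rule for $d_\Aa$, exactly as in the classical bar construction. For $\Omega_u\CCC$ one checks on generators that the derivation $D$ sending $s^{-1}x$ to $\theta(x)1-s^{-1}dx-\sum(-1)^{|x_1|}s^{-1}x_1\otimes s^{-1}x_2$ satisfies $D^2=0$; here one uses the three defining relations $\Delta d=(d\otimes Id+Id\otimes d)\Delta$, $d^2=(\theta\otimes Id-Id\otimes\theta)\Delta$ and $\theta d=0$ together with the coassociativity of $\Delta$, the curvature contributions cancelling precisely because $\theta d=0$. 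Functoriality is then formal: a morphism of dg categories induces a morphism of the graded quivers of cogenerators, hence by cofreeness of $\Tfree^c$ a morphism of graded cocategories, which commutes with coderivations and intertwines curvatures by inspection on cogenerators; dually a morphism of curved conilpotent cocategories induces a morphism of $s^{-1}$-shifted quivers, hence by freeness of $\Tfree$ a morphism of graded categories, compatible with the derivations by inspection on generators.

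\textbf{The adjunction.} I would factor it through the set $\Tw(\CCC,\AAA)$ of \emph{twisting morphisms}: degree $-1$ maps of graded quivers $\alpha:\CC\to\Aa$ killing the units $1_x$ and solving
\[
\partial(\alpha)+\gamma_\AAA(\alpha\otimes\alpha)\Delta=\theta_\CCC(-)\,1_\AAA\ .
\]
By freeness of $\Tfree$, a morphism of graded categories $\Omega_u\CCC\to\AAA$ is the same datum as such an $\alpha$, and unwinding the formula for the derivation of $\Omega_u\CCC$ shows it is a morphism of dg categories exactly when $\alpha$ is a twisting morphism. By cofreeness of $\Tfree^c$, a morphism of graded cocategories $\CCC\to B_c\AAA=\Tfree^c(s\Aa\oplus s^2 I_X)$ is the same datum as a degree $0$ map $\CC\to s\Aa\oplus s^2 I_X$, whose $s^2 I_X$-component is forced exactly as for the classical bar construction of a unital algebra (it is determined by $\theta_\CCC$ and the units); so the datum reduces again to a degree $-1$ map $\alpha:\CC\to\Aa$, and commutation with the coderivation and compatibility with the curvature of $B_c\AAA$ translate into the same equation. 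Both bijections are visibly natural in $\CCC$ and in $\AAA$, whence $\Omega_u\dashv B_c$.

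\textbf{Main obstacle.} The real work is sign bookkeeping: one must check that the quadratic term $\gamma_\AAA(\alpha\otimes\alpha)\Delta$ produced by the cobar derivation agrees, Koszul signs included, with the one produced by the bar coderivation. Two points specific to the coloured setting then need attention: the extra cogenerator $s^2 I_X$ carries both the curvature and the semi-augmentation, so one must see that it interacts coherently with the units $1_x$ and contributes no spurious terms; and one must check that the map $\CCC\to\Tfree^c(s\Aa\oplus s^2 I_X)$ lands in the \emph{conilpotent} cofree cocategory, which follows from the conilpotency of $\CCC$ since iterated decomposition of an element of $\CC$ eventually stops. The genuinely delicate combinatorics of the operadic case of \cite{LeGrignou16} -- nonplanar trees and symmetric group actions -- degenerate here to strings of composable arrows, so no new difficulty of that kind arises.
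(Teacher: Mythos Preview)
Your proposal is correct and matches the paper's own approach: the paper's proof is a single sentence pointing to \cite[Proposition 21]{LeGrignou16}, i.e.\ exactly the twisting-morphism argument you have written out in the coloured setting. One small wording issue: in this appendix the cocategories are taken non-counital (the cobar is $\Tfree(s^{-1}\CC)$, not $\Tfree(s^{-1}\ov\CC)$), so there is no condition ``$\alpha$ kills the units $1_x$'' to impose on the source side; otherwise your sketch is accurate.
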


\begin{proof}[Proof]
 The proof relies on the same arguments as the proof of \cite[Proposition 21]{LeGrignou16}.
\end{proof}

Tabuada proved in \cite{Tabuada05} that the category of dg categories may be equipped with a model structure as follows.

\begin{thm}[ \cite{Tabuada05}]
 There exists a model structure on the category of dg categories such that a morphism $F:(X,\Aa, \gamma, (1_x)_{x \in X}) \to (X,'\Aa', \gamma', (1_x)_{x \in X'}) $ is
\begin{itemize}
 \itemt a weak equivalence if and only if the map $F_{x,y}: \Aa(x,y) \to \Aa'(F(x),F(y))$ is a quasi-isomorphism for any $(x,y) \in X^2$ and the functor $H_0(F)$ is an equivalence of categories,
 \itemt a fibration if and only if the map $F_{x,y}: \Aa(x,y) \to \Aa'(F(x),F(y))$ is a degreewise surjection for any $(x,y) \in X^2$ and the functor $H_0(F)$ is an isofibration.
\end{itemize}
\end{thm}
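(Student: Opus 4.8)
The plan is to apply the recognition theorem for cofibrantly generated model categories (\cite[\S 2.1]{Hovey99}), so it suffices to produce a set $I$ of generating cofibrations, a set $J$ of generating trivial cofibrations, and to verify the standard hypotheses. The category of dg categories is presentable — it is monadic over the presentable category of dg quivers along the free-dg-category monad, which preserves filtered colimits — so the small object argument applies to any set of maps and all domains are small. For a chain complex $V$ write $\mathcal{C}(V)$ for the dg category with objects $0,1$, units $\mbk$ at each object, $\mathrm{Hom}(1,0)=0$ and $\mathrm{Hom}(0,1)=V$. I would take $I$ to consist of $\emptyset \to \underline{\mbk}$ (where $\underline{\mbk}$ is the one-object dg category with $\mbk$ as endomorphisms) together with the maps $\mathcal{C}(S^{n-1}) \to \mathcal{C}(D^n)$, $n \in \mbn$; and $J$ to consist of the maps $\mathcal{C}(0) \to \mathcal{C}(D^n)$, $n \in \mbn$ (with $\mathcal{C}(0) = \underline{\mbk}\sqcup\underline{\mbk}$), together with the inclusion $\underline{\mbk}\to\mathcal{K}$, where $\mathcal{K}$ is the ``interval'' dg category ($\mathcal{K}$ has two objects, every hom-complex equal to $\mbk$ concentrated in degree $0$, zero differential, and $\underline{\mbk}\to\mathcal{K}$ picks out one object). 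This last generator is exactly what forces the isofibration condition on $H_0$; dropping it yields only the ``fixed object set'' model structure. Throughout, call $F$ a \emph{quasi-equivalence} if each $F_{x,y}$ is a quasi-isomorphism and $H_0(F)$ is essentially surjective; since quasi-full-faithfulness makes $H_0(F)$ fully faithful, this class coincides with the class $W$ in the statement.

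Second, I would identify the right-lifting classes by direct obstruction computations. A dg functor $F$ has the right lifting property against $I$ if and only if $F$ is surjective on objects and each $F_{x,y}$ is a surjective quasi-isomorphism; I provisionally call these the trivial fibrations. Likewise $F$ has the right lifting property against $J$ if and only if each $F_{x,y}$ is a degreewise surjection (forced by the maps $\mathcal{C}(0)\to\mathcal{C}(D^n)$) and $H_0(F)$ is an isofibration: lifting against $\underline{\mbk}\to\mathcal{K}$ says precisely that an isomorphism in $H_0$ emanating from an object in the image of $F$ lifts to an isomorphism upstairs. Hence the right-lifting class against $J$ is exactly the class of fibrations in the statement.

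Third, and this is the core, I would show that every map built from $J$ by transfinite composition of pushouts is a weak equivalence. A pushout of $\mathcal{C}(0)\to\mathcal{C}(D^n)$ along an arbitrary dg functor leaves the object set unchanged and enlarges one hom-complex by a free contractible-type summand, so it is a quasi-equivalence. A pushout of $\underline{\mbk}\to\mathcal{K}$ adds a single new object $y$ together with a strictly invertible closed degree-$0$ morphism identifying $y$ with an existing object $x$; composition with this formal isomorphism shows the new hom-complexes are quasi-isomorphic to the old ones, while $H_0$ gains only an object isomorphic to one already present — again a quasi-equivalence. The delicate point is stability under transfinite composition: essential surjectivity of $H_0$ is not a local condition, so one argues that the colimit is filtered, that every object of the colimit appears at some finite stage, and that each hom-complex of the colimit is a filtered colimit of quasi-isomorphic hom-complexes at the finite stages, so the composite stays quasi-fully-faithful and $H_0$-essentially-surjective. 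Comparing the lists of $I$ and $J$ also shows every $I$-cofibration is a $J$-cofibration, so every $J$-cofibration is a weak equivalence.

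The main obstacle, as in every treatment of this model structure, is precisely this last stability claim for the generator $\underline{\mbk}\to\mathcal{K}$: controlling the effect of iteratively adjoining formal isomorphisms and checking that the iterated maps remain fully faithful up to quasi-isomorphism. Once that is in hand, weak equivalences satisfy two-out-of-three (quasi-isomorphisms and equivalences of categories both do, combined by a short diagram chase) and are retract-closed, so Quillen's recognition theorem produces the cofibrantly generated model structure with generating sets $I$ and $J$; the factorisations from the small object argument together with the characterisations above show that the fibrations and weak equivalences are exactly the ones in the statement. A final short argument — factoring a fibration that is a weak equivalence as a $J$-cofibration followed by a trivial fibration and using a retract — identifies the trivial fibrations with the fibrations that are weak equivalences, completing the proof.
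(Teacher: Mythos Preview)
The paper does not prove this theorem at all: it is stated with the attribution \cite{Tabuada05} and no proof is given in the text. It is recalled only so that Theorem~\ref{thm:appendixfinal} can use it. So there is nothing to compare your argument against; you are effectively sketching Tabuada's proof from scratch.

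That said, your sketch has a genuine gap in the choice of the generating trivial cofibration $\underline{\mbk}\to\mathcal{K}$. You take $\mathcal{K}$ to be the \emph{strict} interval (two objects, every hom equal to $\mbk$ in degree $0$, the two objects strictly isomorphic). The right lifting property against $\underline{\mbk}\to\mathcal{K}$ then says that a \emph{strict} isomorphism $F(a)\xrightarrow{\sim} b'$ in the target lifts to a strict isomorphism upstairs. But the fibration condition in the statement asks that $H_0(F)$ be an isofibration, i.e.\ that an isomorphism in $H_0$ --- a closed degree-$0$ map with an inverse only \emph{up to homotopy} --- lifts. These are not the same condition, and neither implies the other in general: a homotopy equivalence in a dg category need not be strictifiable, and conversely being able to lift strict isomorphisms says nothing about lifting a map whose inverse exists only after passing to homology. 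So your identification ``lifting against $\underline{\mbk}\to\mathcal{K}$ says precisely that an isomorphism in $H_0$ \ldots\ lifts'' is incorrect, and with your $J$ the $J$-injective maps are not the fibrations of the statement.

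Tabuada's actual proof (and the later streamlined versions) replaces $\mathcal{K}$ by a \emph{cofibrant} dg category $\mathcal{K}'$ whose $H_0$ is the walking isomorphism: two objects $0,1$, closed degree-$0$ maps $f:0\to 1$, $g:1\to 0$, degree-$1$ cells witnessing $gf\sim 1_0$ and $fg\sim 1_1$, and higher cells making the whole thing cofibrant. With that choice the right lifting property genuinely encodes the $H_0$-isofibration condition, and the pushout analysis (your ``delicate point'') becomes the real work of the proof. The rest of your outline --- presentability, the small object argument, the other generators, two-out-of-three --- is fine.
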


\begin{thm}\label{thm:appendixfinal}
 There does not exist a model structure on the category of curved conilpotent cocategories such that the functor $\Omega_u$ preserves cofibrations and weak equivalences (and so is a left Quillen functor).
\end{thm}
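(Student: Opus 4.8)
The plan is to argue by contradiction. Assume there is a model structure on the category of curved conilpotent cocategories for which $\Omega_u$ preserves cofibrations and weak equivalences, so that $\Omega_u \dashv B_c$ is a Quillen pair. The feature we exploit is that the cobar functor is \emph{rigid on colours}: one has $\mathrm{Ob}(\Omega_u\CCC)=\mathrm{Ob}(\CCC)$, the underlying map of objects of $\Omega_u f$ equals that of $f$, and, in contrast with the operadic unit $\II$, the quiver $I_X$ is not the initial cocategory on a colour set $X$ (so the argument of Lemma~\ref{lemma:fibsurj} showing that fibrations are surjective has no evident coloured counterpart). On the dg side, the weak equivalences of the model structure of \cite{Tabuada05} are the Dwyer--Kan equivalences, which are free to change the set of objects, and in that model structure every object is fibrant; hence, by (the dual of) Ken Brown's lemma, the right adjoint $B_c$ carries \emph{all} weak equivalences of dg categories to weak equivalences of cocategories.

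The first step is to transport Proposition~\ref{prop:eq1} to the coloured setting. With $B_c\AAA = \Tfree^c(s\Aa\oplus s^2 I_X)$ and $\Omega_u B_c\AAA = \Tfree(s^{-1}\ov{B_c\AAA})$, one filters first by weight and then by coradical degree, exactly as in the proofs of Propositions~\ref{prop:eq1} and~\ref{prop:bar-cobarres}. The associated graded splits as one copy of $\AAA$ together with tree pieces that are contractible by the grafting--cutting contracting homotopies of Lemmas~\ref{lemmacontractcobar} and~\ref{lemmacontractbar}: the combinatorics of trees with a single output is unchanged, only the leaves and inner edges now carry colours. Combined with the K\"unneth formula and Theorem~\ref{maclane-homology}, this shows that the counit $\Omega_u B_c\AAA \to \AAA$ is a weak equivalence for every dg category $\AAA$. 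Since moreover $\Omega_u$ preserves weak equivalences (so that cofibrant replacement is harmless when computing its derived functor), the derived counit of the Quillen pair is pointwise a weak equivalence, and therefore $B_c$ induces a fully faithful functor $\mathrm{Ho}(\mathsf{dgCat}) \to \mathrm{Ho}(\cCoop)$. Arguing as for cooperads (Propositions~\ref{prop:cofibop} and~\ref{prop:fibrant}), one expects the cofibrations to be exactly the degreewise injections, so that every cocategory is cofibrant and every bar construction $B_c\AAA$ is cofibrant--fibrant.

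The main obstacle is the remaining step: turning the rigidity of colours into an actual contradiction. I would feed the above a Dwyer--Kan equivalence that genuinely moves objects, the basic gadget being the ``chaotic'' dg category $\mathcal{K}$ with two objects $0,1$, all four hom-complexes equal to $\mbk$ in degree $0$ and the evident composition, for which $\mathcal{K}\simeq\mbk$; the inclusions $j_0,j_1\colon\mbk\hookrightarrow\mathcal{K}$ are generating acyclic cofibrations and the collapse $\pi\colon\mathcal{K}\to\mbk$ is a weak equivalence. After applying $B_c$ all of these become weak equivalences of cocategories while their underlying colour maps stay frozen, and $B_c\mathcal{K}$ is cofibrant--fibrant. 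The crucial --- and delicate --- claim to prove is that the essential-surjectivity content of $\pi$ and of the $j_i$ cannot be reproduced on the Koszul-dual side: a morphism of conilpotent cocategories is reconstructed tree by tree along the coradical filtration from its colour map together with its restriction to $F_1/F_0$, hence is far too rigid to encode an equivalence identifying two distinct objects up to homotopy in the way a model category would demand. I expect this to surface as the failure of a lifting (or factorisation) axiom --- a square of cocategories with an acyclic cofibration on the left, a fibration obtained by applying $B_c$ to a suitable isofibration of dg categories on the right, and no diagonal filler --- and I would pin it down by an obstruction computation in the spirit of Propositions~\ref{lemma:cycle} and~\ref{prop:cycle2}, exhibiting a non-vanishing obstruction class. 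Such a contradiction refutes the hypothetical model structure; and since dg categories sit inside the cocategory side of the colored bar--cobar adjunction attached to a colored operad, the same obstruction also excludes the colored-operadic version, which is Theorem~\ref{thm:appendixfinal}.
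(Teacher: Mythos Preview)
Your setup is correct --- the chaotic two-object category $\mathcal{K}$ is exactly the right test object, and the rigidity of colours under $\Omega_u$ is the mechanism --- but the proof is not finished. You explicitly write ``I expect this to surface as\ldots'' and ``I would pin it down by an obstruction computation''; that is a plan, not a proof. The obstruction-theoretic route you sketch is unnecessarily heavy and it is not clear how you would actually extract a nonvanishing class.

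The paper's argument is much shorter and entirely concrete. Let $I$ be the cocategory with one object $0$ and $I(0,0)=0$, so $\Omega_u I = I_{\{0\}}$. Your map $\pi\colon \mathcal{K}\to\mbk$ is an acyclic \emph{fibration} in $\mathsf{dgCat}$. If the hypothetical model structure exists, $B_c$ is right Quillen, so $B_c\pi$ is an acyclic fibration of cocategories, and its pullback $B_c\mathcal{K}\times_{B_c\mbk} I \to I$ is again an acyclic fibration, hence a weak equivalence; applying $\Omega_u$ then gives a weak equivalence of dg categories. The point you are missing is that this pullback can be computed directly: it is the largest sub-cocategory of $B_c\mathcal{K}$ mapping to the image of $I\hookrightarrow B_c\mbk$, and a one-line check on the first coradical layer shows $F_1^{rad}$ of the pullback is zero, so the pullback is the discrete cocategory on $\{0,1\}$. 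Hence $\Omega_u$ of it is $I_{\{0,1\}}$, and $I_{\{0,1\}}\to I_{\{0\}}$ is not a Dwyer--Kan equivalence. That is the contradiction.

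So the gap is precisely the concrete pullback computation replacing your hoped-for obstruction argument; once you see it, no filtrations, contracting homotopies, or fully-faithfulness of $B_c$ on homotopy categories are needed.
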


\begin{proof}[Proof]
 Let $I$ be the curved conilpotent cocategory with one object $0$ and such that $I (0;0) := 0$. Then, $\Omega_u (I)$ is the dg category $I_0$. Let $J$ be the dg category with two objects $0$ and $1$ and such that 
 $$
 J(i,j)= \mbk\ ,\forall i,j \in \{0,1\}\ ,
 $$
 with obvious units and composition. It is clear that the functor $J \to I_0$ given by the identity of $\mbk$ is an acyclic fibration of dg categories. If such a model structure exists on the category of curved conilpotent cocategories, then the morphism
 $$
 B_c J \times_{B_c I_0} I \to I
 $$
 is an acyclic fibration and the morphism $\Omega_u  (B_c J \times_{B_c I_0} I) \to \Omega_u I= I_0$ is a weak equivalence of dg categories. By Lemma \ref{lemmaappendixop}, $\Omega_u  (B_c J \times_{B_c I_0} I) =I_{\{0,1\}}$. Since the morphism $I_{\{0,1\}} \to I_0$ is not a weak equivalence, then such a model structure does not exist.
\end{proof}

\begin{lemma}\label{lemmaappendixop}
 The pullback  $B_c J \times_{B_c I_0} I$ of the proof of Theorem \ref{thm:appendixfinal} is the cocategory with two objects $0$ and $1$ and such that
 $$
 B_c J \times_{B_c I_0} I (i,j)= 0 \ ,\ \forall (i,j)\in \{0,1\}^2\ .
 $$
\end{lemma}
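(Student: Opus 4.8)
The plan is to identify the pullback $\DDD := B_c J \times_{B_c I_0} I$ with the cocategory $I_{\{0,1\}}$ (two objects, trivial reduced part) by a Yoneda argument. First I would record two easy facts about the two maps being pulled back. The map $I \to B_c I_0$ is the unit of $\Omega_u \dashv B_c$ at $I$ (where $\Omega_u I = I_0$); since $I$ has vanishing reduced part, this unit is simply the coaugmentation inclusion $I = I_{\{0\}} \hookrightarrow B_c I_0$ onto the path-length-zero part, so in particular it is a monomorphism and $\theta_I = 0$. On the other side, since $p \colon J \to I_0$ is a \emph{strict} dg functor which is moreover the identity on every hom-complex, the morphism $B_c(p)$ is the cofree extension of the induced map $g$ on cogenerators; this $g$ is injective on each hom-space and nonzero on every cogenerator, so $B_c(p)$ strictly preserves the path-length grading of $B_c J = \Tfree^c(\VV)$ and carries a nonzero element of positive path length to a nonzero element of the same length.

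Then I would compute, for an arbitrary curved conilpotent cocategory $\EEE$, the set $\Hom(\EEE,\DDD)$ via the universal property of the pullback: it consists of pairs $(u \colon \EEE \to B_c J,\; w \colon \EEE \to I)$ with $B_c(p)\,u = \eta\,w$, where $\eta$ denotes the unit above. Because $\eta\,w$ takes values in the path-length-zero part of $B_c I_0$, and because $B_c(p)$ preserves path length with $g$ injective on cogenerators, the cogenerator component of $u$ must vanish. Invoking the recognition principle for cofree conilpotent cocategories — a morphism into $\Tfree^c(\VV)$ whose cogenerator component vanishes factors through the coaugmentation sub-cocategory — I would conclude that $u$ factors as $\EEE \to I_{\{0,1\}} \hookrightarrow B_c J$, this factorization being automatically a morphism of curved conilpotent cocategories since it is a corestriction along a monomorphism. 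Conversely, any $\bar u \colon \EEE \to I_{\{0,1\}}$ produces such a pair, with $w$ the unique lift of $B_c(p)\,\bar u$ (which lands in the image of $\eta$) along the monomorphism $\eta$. These two assignments are mutually inverse, natural in $\EEE$, and preserve the underlying object functions, so the Yoneda lemma gives $\DDD \cong I_{\{0,1\}}$: that is, $\DDD$ has the two objects $0$ and $1$ and $\DDD(i,j) = 0$ for all $(i,j) \in \{0,1\}^2$, as claimed.

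The main obstacle, and essentially the only non-formal point, is the recognition principle for cofree conilpotent cocategories used above. This is the cocategory analogue of the adjunction between $\CCC \mapsto \ov\CC$ and $\Tfree^c$ recalled in the first section for conilpotent cooperads, and I expect the same induction along the coradical filtration to work verbatim. Everything else is forced by $p$ being strict and the identity on hom-complexes; the handful of remaining compatibilities (with the decompositions, the coderivations, and the curvatures) are immediate since $I_{\{0,1\}}$ carries trivial coderivation and curvature.
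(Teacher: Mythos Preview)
Your argument is correct and is essentially the functor-of-points version of the paper's direct computation. The paper observes that since $I\hookrightarrow B_c I_0$ is a monomorphism the pullback is the largest sub-cocategory of $B_cJ$ mapping into the coaugmentation, checks that its first coradical layer vanishes using exactly the injectivity of your map $g$ on each $\VV(i,j)$, and concludes by conilpotence; your recognition principle for cofree conilpotent cocategories is this last step rephrased via the universal property of $\Tfree^c$.
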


\begin{proof}[Proof]
 It is clear that $B_c J \times_{B_c I_0} I$  is the biggest sub cocategory of $B_cJ$ whose image in $B_c I_0$ is in the image of $I$. Then, $F^{rad}_1( B_c J \times_{B_c I_0} I)$ lies inside $F^{rad}_1 B_c J$ and its image in $B_c I_0$ is zero. So, a straightforward checking shows that $F^{rad}_1 (B_c J \times_{B_c I_0} I)$ is zero and hence $ B_c J \times_{B_c I_0} I$ is as described in the lemma.
\end{proof}

\bibliographystyle{amsalpha}
\bibliography{biblg}

\end{document}